\let\amslrcorner\lrcorner
\let\lrcorner\amslrcorner
\def\sideremark#1{\ifvmode\leavevmode\fi\vadjust{\vbox to0pt{\vss
\hbox to 0pt{\hskip\hsize\hskip1em%
\vbox{\hsize2cm\tiny\raggedright\pretolerance10000%
\noindent {\color{red}{#1}}\hfill}\hss}\vbox to8pt{\vfil}\vss}}}%
\def \bbR{\mathbb{R}}
\def\.{\cdot}
\def\d{{\mathrm d}}
\def\la{\langle}
\def\ra{\rangle}
\def\O{\Omega}
\def\t{\tilde}
\def\beq{\begin{equation}}
\def\eeq{\end{equation}}
\def\bea{\begin{eqnarray*}}
\def\eea{\end{eqnarray*}}
\def\beaa{\begin{eqnarray}}
\def\eeaa{\end{eqnarray}}
\def\ba{\begin{array}}
\def\ea{\end{array}}
\def\o{\omega}
\def \L{\mathscr{L}}
\def \bbE{\mathbb{E}}
\def \RM{\mathbb{R}}
\def \CM{\mathbb{C}}
\def\Ric{\mathrm{Ric}}
\def\id{\mathrm{id}}
\def\be{\begin{equation}}
\def\ee{\end{equation}}
\def\tr{\mathrm{tr}}
\def\SL2{\mathfrak{sl}_2(\bbC)} 
\def\so{\mathfrak{so}}
\def\su{\mathfrak{su}}
\def\sp{\mathfrak{sp}}
\def\gg{\mathfrak{g}}
\def\u{\mathfrak{u}}
\def\aut{\mathfrak{aut}}
\def\SU{\mathrm{SU}}
\def\U{\mathrm{U}}
\def\D{\mathbf{b}} 
\def \scrE{\mathscr{E}} 
\def\G{\mathrm{G}}
\def\SO{\mathrm{SO}}
\def\Sp{\mathrm{Sp}}
\def\End{\mathrm{End}}
\def\vol{\mathrm{vol}}
\def\Ker{\mathrm{Ker}}
\def\ad{\mathrm{ad}}
\def\scal{\mathrm{scal}}
\def\grad{\mathrm{grad}}
\def \t5{\frac{1}{\sqrt{5}}}
\def \dh{\dot{h}}
\def\Pr{C}
\def\FN{\mathrm{FN}} 
\DeclareMathOperator{\TT}{TT} 
\DeclareMathOperator{\ric}{\mathrm{ric}}
\def \bfv{\mathbf{v}}
\def \bfq{\mathbf{q}}
\def \bfe{\mathbf{e}}
\def \bbE{\mathbb{E}}
\DeclareMathOperator{\di}{d} 
\DeclareMathOperator{\bbC}{\mathbb{C}}
\newtheorem{pro}{Proposition}[section]
\newtheorem{teo}[pro]{Theorem}
\newtheorem{lema}[pro]{Lemma}
\newtheorem{coro}[pro]{Corollary}
\theoremstyle{definition}
\newtheorem{defi}[pro]{Definition}
\newtheorem{rema}[pro]{Remark}
\newtheorem{conj}[pro]{Conjecture}
\title{Second order Einstein deformations }
\author{Paul-Andi Nagy}
\address[Paul-Andi Nagy]{Center for Complex Geometry \\
Institute for Basic Science(IBS)\\
55 Expo-ro, Yuseong-gu \\
34126 Daejeon, South Korea
}
\email{paulandin@ibs.re.kr}
\author{Uwe Semmelmann}
\address[Uwe Semmelmann]{Institut f\"ur Geometrie und Topologie, Fachbereich Mathematik, Universit\"at Stuttgart, Pfaffenwaldring 57, 70569 Stuttgart, Germany}
\email{Uwe.Semmelmann@mathematik.uni-stuttgart.de}
\date{\today}
\begin{document}
 
\begin{abstract}
We study the integrability to second order of infinitesimal Einstein deformations on compact Riemannian and
in particular on K\"ahler manifolds. We find a new way of expressing the necessary and
sufficient condition for integrability to second order, which also gives a very clear and compact way
of writing the Koiso obstruction. As an application we consider the K\"ahler case, where the condition
can be further simplified and in complex dimension $3$ turns out to be purely algebraic. One of our
main results is the complete and explicit description of integrable to second order infinitesimal Einstein deformations 
on the complex $2$-plane Grassmannian, which also has a quaternion K\"ahler structure.  As a striking consequence 
we find that the symmetric Einstein metric  on the Grassmannian  $ \mathrm{Gr}_2(\bbC^{n+2})$ for $n$  odd  is rigid.
\medskip

\noindent
2000 {\it Mathematics Subject Classification}: Primary 32Q20, 53C26, 
53C35, 53C15.

\noindent{\it Keywords}: Einstein metrics, infinitesimal deformations, K\"ahler-Einstein manifolds, complex Grassmannians

\end{abstract}
\maketitle
\tableofcontents
\section{Introduction} \label{intro}

Let $g$ be an Einstein metric on a compact manifold $M$ with Einstein constant $E$, i.e. $\ric^g = E g$, where $\ric^g$ is the Ricci form;  see Section \ref{sect2} 
for definitions and further details. Koiso in \cite{Ko1} (see also \cite[Chapter 12]{Besse}) studied the question whether $g$ admits an
Einstein deformation, i.e. a curve of Einstein metrics through $g$.  The main tool to investigate this question is the
functional $\mathrm{E}(g) : = \ric_g - (\frac1n \int_M \scal_g \vol_g) g$, where the volume  of $g$ is normalised to one. If $g_t$ is a curve of
Einstein metrics of volume one then $\mathrm{E}(g_t)$ and all its derivatives in $t=0$ vanishes. This gives a sequence of obstructions for an arbitrary curve
$g_t$ of metrics with $g=g_0$ to be a curve of Einstein metrics.

Since Einstein metrics are critical points of the total scalar curvature functional the first derivative of the functional $E$ can be written as
$$
\mathrm{E}'_g(h) \;= \;  \left. \frac{\di}{\di\!t}\right|_{t=0} \mathrm{E}(g_t) \;  = \;  \left. \frac{\di}{\di\!t}\right|_{t=0} (\ric^{g_t} - Eg_t)
$$
where  $g_t$ is  a curve of metrics such that  $g_0 = g$ is  Einstein with Einstein constant $E$ and $h = g^{-1}\dot g$. Here the dot denotes the
derivative at $t=0$.
We call $g_t$  a first order Einstein deformation if at $t=0$ the
first derivative of   $\mathrm{E}(g_t)$ vanishes. This is  the linearised Einstein equation for  the symmetric tensor $h = g^{-1}\dot g$,
which, as we will recall in  Proposition \ref{first}, is equivalent to $\widetilde \Delta_E h = 0$, where  $\widetilde \Delta_E:\Gamma(S^2M) \to \Gamma(S^2M)$ is the
modified Einstein operator, a Laplace type operator defined in  \eqref{D-tilde}. Solutions of the  linearised Einstein equation
are called infinitesimal Einstein deformations. The Ebin slice theorem allows to restrict to directions transversal to the action
of the diffeomorphism group, i.e. to  the space of so-called
essential Einstein deformations denoted by
$$
\scrE(g)  \;= \; \{  h \in \ker \widetilde \Delta_E  \, :  \, \delta^g h = 0  \ \mathrm{and} \ \tr (h)=0 \} .
$$

A curve $g_t$ of metrics is called a second order Einstein deformation if at $t=0$  the first and the second derivative of   
$\mathrm{E}(g_t)$ vanishes. Computing the second derivative at $t=0$ leads to the equation $\mathrm{E}''_g(g^{-1}\dot g, g^{-1}\dot g) + \mathrm{E}'_g(g^{-1}\ddot g) =0$.
An infinitesimal Einstein deformation $h \in \scrE(g)$ is called integrable to second order if there is a second order Einstein deformation
$g_t$ such that $h = g^{-1}\dot g$. Of course if $h$ is not integrable to second order no curve of Einstein 
metrics with
tangent vector $h$ may exist. If all $h \in \scrE(g)$ are not integrable to second order then $g$ is rigid, i.e. the metric $g$ is isolated
in the moduli space of Einstein metrics.  

It is well known that for elements
$h \in \scrE(g)$ the Hessian, i.e. the second derivative,  of the total scalar curvature functional vanishes; 
see \cite[Proposition 4.55]{Besse}. Hence, if $h \in \scrE(g)$ and $g_t$
is a curve as above with $g_0 = g$ Einstein and  $h = g^{-1}\dot g$ then the second derivative of $\mathrm{E}$ can be computed as
\begin{equation}\label{E2}
\left. \frac{\di^2}{\di\!t^2}\right|_{t=0} \mathrm{E}(g_t)  \; = \; \left.  \frac{\di^2}{\di\!t^2}\right|_{t=0} (\ric^{g_t} - Eg_t). 
\end{equation}

Replacing the  symmetric $2$-tensor $\ric^{g_t}$ by the endomorphism  $\Ric^{g_t}$, the linearised Einstein equation can
be written as $\left. \frac{\di}{\di\!t} \right|_{t=0}\Ric^{g_t}=0$ (see Proposition \ref{first}). Moreover, as we will show in Remark \ref{skew-part},
if $g_t$ is a first order Einstein deformation with $h = g^{-1}\dot g$, then the second derivative of $E$ can also  be computed as
$ \mathrm{E}''_g(h,h)  = \left. \tfrac{\di^2}{\di\!t^2}\right|_{t=0}\Ric^{g_t} $, i.e. as the second variation of the Ricci tensor.
Based on this remark a formula for $\mathrm{E}''_g(h,h)$ is given in \cite[Proposition 4.2]{Ko1}. Moreover Koiso shows in \cite[Lemma 4.7]{Ko1} that an element
$h \in \scrE(g)$ is integrable to second order if and only if $\mathrm{E}''_g(h,h)$ is $L^2$-orthogonal to the space $\scrE(g)$ of infinitesimal Einstein deformations. As a necessary 
condition he obtains the vanishing of $\la \mathrm{E}''_g(h,h), h \ra_{L^2}$. 

However, as given by Koiso, the obstruction is very difficult to
calculate for concrete examples. The first major achievement of our article is a new version of  the second variation formula for the Ricci tensor
in Theorem \ref{int-1}.
It is given in an index free fashion using differential operators such as the Fr\"olicher-Nijenhuis bracket $[\cdot, \cdot]^\FN$ and the modified Einstein
operator $\widetilde \Delta_E$ both acting on on symmetric tensors. Instead of translating the terms in the Koiso formula for $\mathrm{E}''_g(h,h)$ we derive the second variation
starting  from the definitions and introducing step by step as many as possible divergence terms which disappear after integration.
This leads to a new and explicit  version of a necessary and sufficient condition for elements in
$ \scrE(g)$ to be integrable to second order formulated in Theorem \ref{int-33}. The main statement is

\begin{teo} \label{main1}
Let $(M,g)$ be a compact Einstein manifold, $\ric^g=Eg$. An element $h \in \scrE(g)$ is integrable to second order if and only if 
$$ -\bfv(h,h)+Eh^2 \perp_{L^2} \scrE(g).
$$
\end{teo}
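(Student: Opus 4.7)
The plan is to combine Koiso's characterisation of integrability with a sharpened formula for the second variation of the Ricci endomorphism. By Koiso's lemma recalled in the introduction, $h \in \scrE(g)$ is integrable to second order if and only if $\mathrm{E}''_g(h,h)$ is $L^2$-orthogonal to $\scrE(g)$. Further, by Remark \ref{skew-part}, for $h \in \scrE(g)$ the obstruction $\mathrm{E}''_g(h,h)$ coincides with $\left.\tfrac{\di^2}{\di\!t^2}\right|_{t=0}\Ric^{g_t}$ along any curve $g_t$ with $g^{-1}\dot g = h$. The entire content of the theorem therefore reduces to showing that, modulo terms that are automatically $L^2$-orthogonal to $\scrE(g)$,
\[
\left.\tfrac{\di^2}{\di\!t^2}\right|_{t=0}\Ric^{g_t} \;\equiv\; -\bfv(h,h) + E h^2.
\]

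The main computational step is to derive this identity for $\ddot\Ric$ directly from the definitions, along the lines indicated in the introduction. I would start from $\Ric = g^{-1}\ric$ together with the standard variation formulas for the Levi-Civita connection and for $\ric$ as a trace of the Riemann tensor, then differentiate twice and expand in $h$ and its covariant derivatives. At each stage, whenever a term of schematic shape $\n\n h\cdot h$ or $\n h\cdot \n h$ appears, I would peel off a divergence by Leibniz, and use the commutation rule $[\n_X,\n_Y]h = R(X,Y)h$ to split the remainder into a curvature-type contribution and an operator-type contribution. The curvature contributions should assemble into the algebraic expression $\bfv(h,h)$, in which the Fr\"olicher-Nijenhuis bracket $[h,h]^{\FN}$ arises from the anti-symmetrisation inherent in these commutators. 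The operator-type contributions should reorganise into $\widetilde\Delta_E$ applied to an auxiliary symmetric $2$-tensor built from $h$, while repeated use of the Einstein condition $\ric^g = E g$ to substitute out Ricci contractions produces the purely algebraic term $E h^2$.

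Pairing the resulting expression $L^2$ against an arbitrary $k \in \scrE(g)$ then gives the stated equivalence: every exact divergence vanishes since $\delta^g k = 0$ and $\tr k = 0$, while every term of the form $\widetilde\Delta_E(\cdot)$ vanishes because $\widetilde\Delta_E$ is formally self-adjoint and $k \in \ker \widetilde\Delta_E$. Consequently only $\la -\bfv(h,h) + E h^2,\,k\ra_{L^2}$ survives. The main obstacle I anticipate is the bookkeeping in the second step: carrying the variation of Ricci through to the end without misplaced divergence terms, and identifying precisely which combinations of second derivatives of $h$ assemble into a $\widetilde\Delta_E$-image rather than into an algebraic obstruction. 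Pinning down the correct auxiliary tensor whose $\widetilde\Delta_E$-image absorbs the analytic tail of $\ddot\Ric$, and verifying that the residual algebraic piece is exactly $-\bfv(h,h) + E h^2$ with the coefficient of $E$ arising cleanly from the Einstein contractions, is where the delicate work lies.
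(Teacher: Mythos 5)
Your overall architecture is sound, and the reduction you propose is a legitimate shortcut relative to the paper: you invoke Koiso's Lemma 4.7 (integrability to second order $\Leftrightarrow$ $\mathrm{E}''_g(h,h)\perp_{L^2}\scrE(g)$) together with Remark \ref{skew-part}, whereas the paper does not use Koiso's lemma but instead recasts the second-order equation as $\widetilde{\Delta}_E u=v$ and proves its own solvability criterion for $\widetilde{\Delta}_E$ (Proposition \ref{Hodge1}), checking along the way that the Bianchi-type compatibility conditions $\D^g v=0$ (and $\tr_{L^2}v=0$ when $E=0$) hold automatically. If Koiso's lemma is accepted as a black box, your route does dispense with that part of the argument, at the cost of being less self-contained and less explicit about how to actually construct the second-order deformation.

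The genuine gap is that the entire analytic content of the theorem -- the identity
\begin{equation*}
2\left.\tfrac{\di^2}{\di\!t^2}\right|_{t=0}\Ric^{g_t}\;=\;\widetilde{\Delta}_E\bigl(\ddot h-\tfrac32 h^2\bigr)+\bfv(h,h)-Eh^2-\tfrac12\delta^{\star_g}\di\!\tr(h^2)\qquad\text{for }h\in\scrE(g),
\end{equation*}
from which the orthogonality statement follows by pairing against $k\in\scrE(g)$ -- is left at the level of ``should assemble into''. This is precisely where the paper's work lies (Lemmas \ref{traces}--\ref{prod-1}, Propositions \ref{prel-fo} and \ref{fin-braN}, Theorem \ref{int-1}), and your heuristic description of it is not quite right in two ways. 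First, $\bfv(h,h)$ is not ``the curvature contributions'' nor an algebraic expression: it is a second-order differential operator defined weakly by \eqref{def-v}, and its total symmetry in three entries -- the feature that makes the final criterion usable -- emerges only after integrating the quadratic first-order terms $\dot{\eta}_{e_i}^2$ and $\dot{\eta}_{e_i}(\nabla^g_{e_j}h)e_i$ by parts against the test tensor $H$ and applying the Weitzenb\"ock formula \eqref{wz1}; two of the three $\delta^g[\cdot,\cdot]^{\FN}$ summands in $\bfv$ have no pointwise existence in the variation formula at all. Second, the Fr\"olicher--Nijenhuis bracket does not appear from a generic commutator/Ricci-identity scheme; it enters through the specific pointwise identity $g(\nabla^g_{hX}h+h\circ\dot\eta_X,H)=g(-X\lrcorner[h,h]^{\FN}+\nabla^g_Xh^2,H)$ of Lemma \ref{dir-1} applied to one particular group of terms. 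You also describe the auxiliary tensor absorbed by $\widetilde{\Delta}_E$ as ``built from $h$'', but it necessarily involves $\ddot h$ (it is $\ddot h-\tfrac32 h^2$); this is harmless for the orthogonality argument, since $\la\widetilde{\Delta}_E w,k\ra_{L^2}=\la w,\Delta_E k\ra_{L^2}-\la\D^g w,\delta^g k\ra_{L^2}=0$ for $k\in\scrE(g)$, but it signals that the bookkeeping you defer is not routine. Until that computation is carried out, the proof is a plausible plan rather than a proof.
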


Here the second order, linear differential operator $\bfv:\Gamma(S^2M) \oplus \Gamma(S^2M) \to \Gamma(S^2M)$ is determined from 
\begin{equation*} 
\la \mathbf{v}(h_1,h_2),h_3\ra_{L^2}=\la \delta^g[h_1,h_2]^{\FN},h_3\ra_{L^2}+\la \delta^g[h_1,h_3]^{\FN},h_2\ra_{L^2}+
\la \delta^g[h_2,h_3]^{\FN},h_1\ra_{L^2}.
\end{equation*}
A remarkable feature in our formulation of the obstruction criterion is that 
the expression
${\bf v}(h_1, h_2, h_3) = \la {\bf v}(h_1, h_2), h_3 \ra_{L^2}$  is symmetric in all three entries. This property makes that in practice the algorithm for computing the obstruction in Theorem \ref{main1} goes as follows
\begin{itemize}
\item[$\bullet$] compute $\bfv(h,h,h)-E\tr_{L^2}h^3$ whenever $h$ in $\scrE(g)$
\item[$\bullet$] polarise the expression found above in order to get $\bfv(h,h,h_1)-E\langle h^2,h_1\rangle_{L^2}$ for all $h,h_1 \in \scrE(g)$; then determine those elements $h \in \scrE(g)$ for which the latter expression vanishes for all $h_1 \in \scrE(g)$. According to Theorem \ref{main1} these elements are thus unobstructed to second order.
\end{itemize}
\begin{rema} \label{rm1-intro}
A necessary condition for having $h \in \scrE(g)$ integrable to second order hence reads 
$\bfv(h,h,h)-E\tr_{L^2}h^3=0$. This recovers and at the same time reformulates in an intrinsic way
Koiso's obstruction  $\la \mathrm{E}''_g(h,h), h \ra_{L^2}$ mentioned above. 
In Section  \ref{koiso} this special case is discussed in more detail. Proposition \ref{div-bra} in that section gives for the first time an index-free expression of the Koiso obstruction.
\end{rema} 
Written as in Theorem \ref{main1} the obstruction to second order integrability is much more amenable  to direct calculation as we will demonstrate in the K\"ahler case.
Recall that if $(M^{2m},g,J)$ is K\"ahler the space of infinitesimal Einstein deformations splits as 
$\scrE(g) = \scrE^+(g) \oplus \scrE^-(g)$,
according to the splitting of the space of symmetric endomorphisms in endomorphisms  commuting respectively anti-commuting with the complex
structure. When $\ric^g=Eg$ and $E<0$ it is well known that $\scrE^{+}(g)=0$ \cite{Ko2}. Due to this and  
as a first application of our criterion from Theorem \ref{main1}, based on simple type considerations, we prove 
\begin{teo} \label{main2}
Assume that $(M^{2m},g,J)$ is a compact  K\"ahler manifold with $\ric^g=Eg$ where $E<0$. Then any infinitesimal 
Einstein deformation in $\scrE(g)$ is integrable to second order.
\end{teo}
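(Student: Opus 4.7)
The plan is to apply Theorem \ref{main1}: an element $h \in \scrE(g)$ is integrable to second order if and only if $-\bfv(h,h) + Eh^2 \perp_{L^2} \scrE(g)$. Under the assumption $E < 0$, the Koiso vanishing $\scrE^+(g) = 0$ quoted in the excerpt forces $\scrE(g) = \scrE^-(g)$, so it suffices to prove, for every $h, h_1 \in \scrE^-(g)$, that both $\langle h^2, h_1\rangle_{L^2}$ and $\langle \bfv(h,h), h_1\rangle_{L^2}$ vanish.

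The first pairing I would handle pointwise. If $h$ is a section of $S^2_-(M,g)$, so $hJ = -Jh$, then $h^2 J = Jh^2$, i.e.\ $h^2$ lies in $S^2_+(M,g)$ in every fibre. Moreover $S^2_+$ and $S^2_-$ are mutually orthogonal at each point: if $A$ commutes with $J$ and $B$ anti-commutes with $J$, then $AB$ anti-commutes with $J$ and hence has vanishing trace. Consequently $\langle h^2, h_1\rangle$ vanishes pointwise, and a fortiori in $L^2$.

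For the Fr\"olicher-Nijenhuis pairing, the defining formula expresses $\bfv(h_1, h_2, h_3)$ as a sum of three terms of the shape $\langle \delta^g[a, b]^{\FN}, c\rangle_{L^2}$ with $(a, b, c)$ a permutation of $(h_1, h_2, h_3)$; by the total symmetry of $\bfv$ underlined in the introduction, it is enough to show $\langle \delta^g[h_1, h_2]^{\FN}, h_3\rangle_{L^2} = 0$ for arbitrary $h_1, h_2, h_3 \in \scrE^-(g)$. I would attack this via a $J$-type analysis of the bracket: unfolding $[h_1, h_2]^{\FN}$ through the Levi-Civita connection (permissible since $\nabla$ is torsion-free) and using $\nabla J = 0$, each covariant derivative $\nabla_X h_i$ stays in $S^2_-$, while compositions such as $h_i h_j$ and $h_i(\nabla_X h_j)$ commute with $J$. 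The target is that these types combine, after taking $\delta^g$ and invoking the TT-conditions $\delta^g h_i = 0$ and $\tr h_i = 0$, to yield an endomorphism commuting with $J$, whose fibrewise pairing with $h_3 \in S^2_-$ vanishes by the same orthogonality as in the first step. The hard part will be this $J$-type bookkeeping inside the Fr\"olicher-Nijenhuis bracket, which produces several mixed-type summands to sort before a clean cancellation emerges---this is the purely algebraic manipulation alluded to by \emph{simple type considerations} in the introduction.
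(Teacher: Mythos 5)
Your overall framework is the right one (reduce via Theorem \ref{main1} to the vanishing of $\la h^2,h_1\ra_{L^2}$ and of the $\bfv$-pairing, and exploit the splitting $\scrE(g)=\scrE^{-}(g)$), and your treatment of the trace term is exactly the paper's: $h^2\in S^{2,+}M$ pairs to zero pointwise with $h_1\in S^{2,-}M$. The gap is in the second half. Your plan is to show by ``$J$-type bookkeeping'' that $\delta^g[h_1,h_2]^{\FN}$ is an endomorphism commuting with $J$, so that its fibrewise pairing with $h_3\in S^{2,-}M$ vanishes. This is not true, and the inputs you list ($h_iJ=-Jh_i$, $\nabla^g_Xh_i\in S^{2,-}M$, products of two anti-invariant tensors are invariant) are not enough to make it true. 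Concretely, expanding the divergence of the bracket produces, among others, the term $\sum_i\nabla^2_{e_i,he_i}h$, which is a (contracted) second covariant derivative of $h$ and hence lies in $S^{2,-}M$, not $S^{2,+}M$; its pairing with $h_3\in S^{2,-}M$ has no reason to vanish pointwise, and there is no other $S^{2,-}$-term available to cancel it. Other summands, such as $X\mapsto(\nabla^g_{(\nabla^g_{e_i}h)X}h)e_i$, have no definite type at all. This is precisely why the term $g(\nabla^2_{e_i,he_i}h,h)$ survives in Koiso's obstruction polynomial $P(h)$.

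What your argument is missing is the extra structural information carried by elements of $\scrE^{-}(g)$ beyond anti-invariance: by Koiso's characterisation, $h\in\scrE^{-}(g)$ satisfies $\di_{\nabla^g}h=J\di_{\nabla^g}h$, i.e.\ $\di_{\nabla^g}h\in\Gamma(\Lambda^{1,1}M\otimes TM)$. The paper's proof does not attack $\delta^g[h,h]^{\FN}$ pointwise; it first integrates by parts (Proposition \ref{fin-braN} / Theorem \ref{int-23}, using the Weitzenb\"ock formula \eqref{wz1}) to rewrite $\bfv(h,h,h)$ as $-3\la\di_{\nabla^g}h,\,h\sharp\di_{\nabla^g}h\ra_{L^2}$ plus curvature terms $\la\{h,\ring{R}h\}+(\ring{R}+2E)h^2,h\ra_{L^2}$. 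The curvature terms vanish by the $S^{2,\pm}$ orthogonality you describe (since $\ring{R}$ preserves type on a K\"ahler manifold), while the first term vanishes because $\di_{\nabla^g}h$ is of form-type $(1,1)$ whereas $h\sharp\di_{\nabla^g}h$ is $J$-anti-invariant in its form part (as $hJ+Jh=0$). Without invoking $\di_{\nabla^g}h=J\di_{\nabla^g}h$ and without the integration by parts that brings the expression into this shape, the type analysis cannot close, so as written the proof of the bracket vanishing is incomplete.
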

Note that no assumption on the complex structure $J$ is needed in Theorem \ref{main2}. As opposed to this, the usual approach to deforming K\"ahler-Einstein metrics consists in first deforming the complex structure--process which is in general obstructed to second order--then applying the Calabi-Yau theorem. See \cite{Besse}, page 350 for more details on the general theory and also \cite{Ko2,Dai} for examples where the obstruction space, 
namely $H^{0,2}(M,T^{1,0}M)$, is non trivial.

Next we treat the case when $E>0$ which is much more complicated due to the fact that space $\scrE^{+}(g)$ consists of eigenforms 
of the Laplacian. However, we are able to give  a necessary condition for the second 
order integrability of elements in $\scrE^{+}(g)$; this turns out to be entirely algebraic in dimension $6$ and also to be 
sufficient under the assumption $\scrE^{-}(g)=0$.

\begin{teo} \label{main4}
Assume that $(M^{2m},g,J)$ is a compact K\"ahler manifold with $\ric^g=Eg$, where $E>0$ and consider $FJ \in \scrE^{+}(g)$. 
Then the following hold
\begin{itemize}
\item[(i)] if $FJ$ is integrable to second order then
\begin{equation*} \label{cond3}
\la \omega \wedge \di\!G, F \wedge \di\!F\ra_{L^2} \,+ \,  \la \omega \wedge \di\!F, F \wedge \di\!G+G \wedge \di\!F \rangle_{L^2} \, = \,  8 \, \la F^2J,G\ra_{L^2}
\end{equation*} 
whenever $GJ \in \scrE^{+}(g)$. In addition, if $\scrE^{-}(g)=0$ then $FJ$ is integrable 
to second order if and only if the last equation holds
\item[(ii)] assume that $m=3$. If $FJ$ is integrable to second order then
\begin{equation*} \label{cond4}
\la F^2J,G \ra_{L^2}=0 \quad \mathrm{for \ all} \ GJ \in \scrE^{+}(g). 
\end{equation*}
In addition, if $\scrE^{-}(g)=0$ then $FJ$ is integrable 
to second order if and only if the last equation holds.
\end{itemize}
\end{teo}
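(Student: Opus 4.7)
We apply Theorem~\ref{main1} to $h=FJ\in\scrE^+(g)$. Writing the obstruction in its polarised form, $FJ$ is integrable to second order if and only if
\[
\bfv(FJ,FJ,H) \;=\; E\,\la (FJ)^2,H\ra_{L^2} \qquad \text{for every } H\in\scrE(g).
\]
Using the K\"ahler splitting $\scrE(g)=\scrE^+(g)\oplus\scrE^-(g)$, we test this relation separately on $H=GJ\in\scrE^+(g)$ and on $H\in\scrE^-(g)$. Testing against $GJ$ is enough to produce the necessary condition in (i), and it becomes sufficient precisely under the hypothesis $\scrE^-(g)=0$. The right hand side is routine: from $FJ=JF$ and $J^2=-\id$ one obtains $(FJ)^2=-F^2$, and a short trace computation rewrites $\la (FJ)^2,GJ\ra_{L^2}$ as a fixed multiple of $\la F^2J,G\ra_{L^2}$.

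The substantive work lies in expanding the left hand side. By the three-fold symmetry of $\bfv$ built into its definition,
\[
\bfv(FJ,FJ,GJ)\;=\;\la \delta^g[FJ,FJ]^{\FN},GJ\ra_{L^2}+2\la \delta^g[FJ,GJ]^{\FN},FJ\ra_{L^2}.
\]
The plan is to expand the Fr\"olicher-Nijenhuis brackets $[FJ,FJ]^{\FN}$ and $[FJ,GJ]^{\FN}$ via the product formula for $[\cdot,\cdot]^{\FN}$, together with the structural facts $\nabla J=0$ and $[J,J]^{\FN}=0$ (the latter being integrability of $J$). This expresses each bracket as a $TM$-valued $2$-form built from $\di F$, $\di G$ and $J$. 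Shifting the divergence onto the test tensor by integration by parts and repeatedly applying the K\"ahler identities for the Lefschetz operator $L=\omega\wedge\cdot$ and its adjoint, the inner products collapse into the wedge pairings $\la \omega\wedge \di G,F\wedge \di F\ra_{L^2}$ and $\la \omega\wedge \di F,F\wedge \di G+G\wedge \di F\ra_{L^2}$. Collecting the numerical constants yields the factor $8$ on the right hand side of (i).

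For part (ii) I specialise to $m=3$. Because $FJ\in\scrE^+(g)$, the $2$-form $F$ is primitive real $(1,1)$ and an eigenform of the Hodge Laplacian with eigenvalue $2E$; the analogous facts hold for $G$. On a K\"ahler $6$-manifold a primitive $(1,1)$-form satisfies the Weil-Lefschetz identity $\star F=-\omega\wedge F$, so $\omega\wedge \di F=-\di(\star F)$. Substituting this into the wedge pairings of (i) and integrating by parts once more transfers all derivatives onto $F$ and $G$, where the eigenvalue equation absorbs them; a type decomposition of $F\wedge F$ into its $\omega$-trace and primitive part reduces every remaining term to a multiple of the algebraic pairing $\la F^2J,G\ra_{L^2}$, yielding the stated simplification.

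The main obstacle will be the bracket and integration-by-parts step in (i): the Fr\"olicher-Nijenhuis bracket of two $(1,1)$-tensors is a $TM$-valued $2$-form with several a priori independent contributions, and reducing $\la \delta^g[FJ,GJ]^{\FN},FJ\ra_{L^2}$ to the three clean wedge pairings rests on a careful cancellation of second-derivative terms, for which one crucially uses $\delta^g(FJ)=0$ and $\tr(FJ)=0$, i.e.\ the defining conditions of $\scrE(g)$. The dimension-six step is shorter, but its correctness hinges on Lefschetz duality collapsing every residual $5$-form wedge pairing to a purely algebraic one.
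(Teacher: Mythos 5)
Your overall strategy coincides with the paper's: reduce to the criterion of Theorem \ref{main1} in polarised form, test against $GJ\in\scrE^{+}(g)$ (with sufficiency exactly when $\scrE^{-}(g)=0$), and convert the Fr\"olicher--Nijenhuis terms into wedge pairings via K\"ahler identities. However, the proposal has a genuine gap at its centre: the computation of $\la \delta^g[FJ,FJ]^{\FN},FJ\ra_{L^2}$ is only announced, not performed, and the way you describe its outcome is structurally incomplete. In the paper this is Lemma \ref{FN-J}: one finds
$g([FJ,FJ]^{\FN},\di_{\nabla^g}(FJ))=-2g(\omega\wedge\di F,F\wedge\di F)+\tfrac32 g(\di F,\di L^{\star}_{\omega}(F\wedge F))$,
and the second term is \emph{not} a wedge pairing of the type appearing in the statement; via $L^{\star}_{\omega}(F\wedge F)=2FJF$ (Lemma \ref{Ls}) and $\di^{\star}\di F=2EF$ it contributes the purely algebraic term $6E\la F,F^2J\ra_{L^2}$, hence $18E\la F^2J,G\ra_{L^2}$ after polarisation. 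The factor $8E$ on the right-hand side arises only from combining this $18E$ with the $2E\la F^2J,G\ra_{L^2}$ coming from $E\,\tr_{L^2}((FJ)^2GJ)$. Your account, in which ``the inner products collapse into the wedge pairings'' and the constant is then ``collected,'' omits precisely the term that produces the coefficient, so the identity in (i) cannot be reached from the steps as written. Establishing that intermediate formula is the real content of part (i): it requires identifying the cyclic sum $\alpha=e^k\wedge\nabla^g_{Fe_k}F$ with $\tfrac12\di^{\star}(F\wedge F)$ and applying the K\"ahler identities to $J\alpha$, none of which is routine bookkeeping.

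For part (ii) your starting point is sound and in fact stronger than you exploit: on a K\"ahler $3$-fold the Weil identity $\star F=-\omega\wedge F$ together with $\di^{\star}F=0$ gives $\omega\wedge\di F=-\di\!\star\!F=\star\,\di^{\star}F=0$ outright (the paper reaches the same vanishing by noting $\di F$ is primitive and $L_{\omega}$ kills primitive $3$-forms in complex dimension $3$). With that, every wedge pairing in (i) vanishes identically and the condition collapses to $\la F^2J,G\ra_{L^2}=0$ with nothing left to compute. The longer route you sketch instead --- integrating by parts and reducing ``every remaining term to a multiple of $\la F^2J,G\ra_{L^2}$'' --- leaves a loophole: you would still need to check that this multiple differs from the coefficient $8$ on the right-hand side, otherwise the condition degenerates to $0=0$. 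I recommend replacing that paragraph by the direct vanishing argument.
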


A class of K\"ahler-Einstein manifolds particularly suited for applying this theorem are Hermitian symmetric spaces. However, from  
the classification of Riemannian symmetric spaces of compact type admitting infinitesimal Einstein deformations it follows that 
 the complex Grassmannians are the only Hermitian symmetric spaces  admitting such deformations. In fact
 the complete list can be found in \cite{Ko3} (see also \cite{GaGo}). It consists of the   spaces 
$$
\SU(n), \quad \SU(n)/\SO(n), \quad \SU(2n)/\Sp(n), \quad  \SU(p+q)/\mathrm S(\U(p) \times \U(q)), \quad 
\mathrm{E}_6/\mathrm{F}_4
$$
where $n\ge 3$ and $p \ge q \ge 2$.  For all these spaces it is shown in the references above that $\scrE (g)$ is isomorphic to the Lie algebra of
the isometry group, i.e. to the space of Killing vector fields and in particular that for the complex Grassmannians we have $\scrE^{-}(g)=0$. The proof is a purely representation theoretic 
argument and does not give an explicit isomorphism. However, on all spaces except the complex Grassmannian there exist a parallel symmetric $3$-tensor
(see \cite{Gago2}) which can be used to make the isomorphism explicit (see \cite{Gago2} and \cite{BHMW}).
In  \cite{BHMW} this explicit realisation could be used to calculate the Koiso obstruction in case of the biinvariant metric 
 on  $\SU(n)$  for $n$ odd. It turns out that all infinitesimal  Einstein deformations are obstructed to second order. This was the first 
 example of an irreducible symmetric space admitting non-trivial infinitesimal Einstein deformations, where the symmetric metric is rigid. 
 The only other known example  of a symmetric space where all infinitesimal Einstein deformations are obstructed was $\CM \mathrm{P}^1 \times \CM \mathrm{P}^{2k} $ found by Koiso (see \cite[12.46]{Besse}).
 For all other irreducible symmetric spaces from the list above  nothing on the rigidity question was known so far. 

Below we will study the complex $2$-plane Grassmannian $\mathrm{Gr}_2(\bbC^{n+2}), n\geq 2$ which has the remarkable property to be Hermitian symmetric and
quaternion-K\"ahler at the same time. We are able to provide for the first time a geometric description of all infinitesimal Einstein deformations
which  are unobstructed to second order.
\begin{teo} \label{main5}
Let $M=\mathrm{Gr}_2(\bbC^{n+2}), n\geq 2$ be equipped with its Hermitian symmetric structure $(g,J)$. Then
\begin{itemize}
\item[(i)] the set 
of infinitesimal Einstein deformations in $\scrE(g)$ which are integrable to second order 
is isomorphic to the hyperquadric 
\begin{equation*} \label{hq-i}
\mathscr{C}(n):=\{A \in \su(n+2) : A^2=\frac{1}{2(n+2)}\tr(A^2)\id \} \subseteq \su(n+2)
\end{equation*}
\item[(ii)] when $n$ is odd the metric $g$ is
isolated in the moduli space of Einstein metrics. All its infinitesimal Einstein deformations are obstructed to second order.
\end{itemize}
\end{teo}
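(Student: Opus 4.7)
The strategy is to apply Theorem \ref{main4}(i), which is permissible because $\scrE^{-}(g)=0$ for every complex Grassmannian (so only deformations of the form $FJ \in \scrE^{+}(g)$ need to be tested), and then to reduce everything to a linear-algebra question on $\su(n+2)$. As a preliminary step I would upgrade the abstract representation-theoretic isomorphism $\scrE(g) \cong \su(n+2)$ recorded in \cite{Ko3} to a geometric one $A \mapsto F_A J$, using the moment-map construction available on any Hermitian symmetric space: to a Killing vector field $X_A$ one associates its Killing potential and differentiates once to produce the symmetric endomorphism $F_A$ commuting with $J$. The Hermitian and quaternion-K\"ahler structures on $M = \Gr_2(\bbC^{n+2})$ then give explicit access to $F_A$, to $\di F_A$ and to all the wedge products appearing in the criterion.

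Both sides of the integrability equation in Theorem \ref{main4}(i) become, after polarising the quadratic dependence on $F$, symmetric $\SU(n+2)$-invariant trilinear forms on $\su(n+2)$ evaluated at the triple $(A,A,B)$. The space of such forms is one-dimensional and spanned by $\Sigma(A,B,C):=\tr(ABC)+\tr(ACB)$, so the whole argument collapses to computing two proportionality constants $c_1$ and $c_2$ corresponding to the two sides of the criterion. I would compute these constants by evaluating everything at the basepoint of $G/K = \SU(n+2)/\mathrm{S}(\U(2)\times\U(n))$, using the reductive decomposition $\su(n+2)=\k\oplus\mm$ and the further splitting of $\mm$ induced by the $\su(2)$-factor of $\k$ (i.e.\ by the quaternion-K\"ahler structure). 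The hard part, which I expect to dominate the work, is the careful bookkeeping of how the $\k$-components of $A$ interact with the quaternion-K\"ahler contribution to $F_A^2 J$ through the curvature; by contrast, $\SU(n+2)$-invariance alone forces the answer to be of the form $c_i\,\Sigma(A,A,B)$, and it is exactly the numerical values of $c_1$ and $c_2$ that decide whether the criterion is trivial or constraining.

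Once $c_1\neq c_2$ has been established, setting $F=F_A$ and $G=F_B$ the criterion $c_1\Sigma(A,A,B)=c_2\Sigma(A,A,B)$ reduces to
\[
\tr\bigl((A^{2}-\mu(A)\id)\,B\bigr)=0 \quad \text{for all } B\in\su(n+2),
\]
where $\mu(A)$ is the unique scalar making $A^{2}-\mu(A)\id$ trace-free. Non-degeneracy of the (real part of the) trace pairing on $M_{n+2}(\bbC)$ then forces $A^{2}\in\bbR\cdot\id$, which is precisely the defining condition of $\mathscr{C}(n)$, and (i) follows. For (ii) a short algebraic argument finishes: a skew-Hermitian $A$ with $A^{2}=-\mu^{2}\id$ is diagonalisable with spectrum contained in $\{\pm i\mu\}$, and the trace-zero condition demands equal multiplicities, forcing $n+2$ to be even. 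For $n$ odd one therefore obtains $\mu=0$ and hence $A=0$, so $\mathscr{C}(n)=\{0\}$, every element of $\scrE(g)$ is obstructed to second order, and $g$ is isolated in the Einstein moduli space.
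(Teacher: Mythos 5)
Your overall architecture coincides with the paper's: apply Theorem \ref{main4}(i) (legitimate since $\scrE^-(g)=0$ for the Grassmannian), parametrise $\scrE(g)=\scrE^+(g)$ explicitly by $\su(n+2)$, observe that both sides of the criterion polarise to invariant cubics on a simple Lie algebra whose space of invariant cubics is one-dimensional, identify the zero locus of the generator with the hyperquadric $\mathscr{C}(n)$, and finish part (ii) with the eigenvalue/multiplicity argument (this last step is verbatim the paper's Lemma \ref{vanC}(ii)). Two remarks on the parametrisation: the naive candidate $(\di\!X^\flat)_0$ is \emph{not} coclosed ($\di^\star(\di\!X^\flat)_0=\tfrac{2E(m-1)}{m}X^\flat$), so one must take the specific combination $(\di\!X^\flat)_0-\tfrac{(m-1)(m+4)}{3m}(\di\!X^\flat)_Q$ of Proposition \ref{par1}; your proposal gestures at the quaternion-K\"ahler splitting but does not pin this down.

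The genuine gap is at the step you yourself flag as the crux: establishing $c_1\neq c_2$, i.e.\ that the obstruction cubic is a \emph{nonzero} multiple of the generator. Invariance alone cannot decide this, and the example is not hypothetical: the invariant cubic $\mu_3(X,X,X)=\int_M z_X^3\,\vol$ vanishes identically on $\mathrm{Gr}_2(\bbC^4)$ (the case $n=2$, $m=4$; Proposition \ref{mu3}(ii)), even though the generator $P_0$ does not, so a computation that happens to land on $\mu_3$ proves nothing in that dimension and the paper must switch to a second cubic $\nu$ built from $\vert(\di\!X^\flat)_Q\vert^2$. More seriously, your proposed method for extracting the constants --- ``evaluating everything at the basepoint'' using the reductive decomposition $\su(n+2)=\k\oplus\mm$ --- cannot work as stated: the two sides of the criterion are $L^2$ inner products, i.e.\ integrals over $M$ of non-invariant functions such as $z_X^3$ and $g((\di\!X^\flat)_Q,(\di\!X^\flat)_Q)z_X$, and these are not determined by their values at a point. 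This is exactly why the bulk of Section \ref{gra} exists: the Hamiltonian moment maps $\bfq_X,\bfe_X$, the Hermitian Killing form $\Phi_X$ and its potential $\mathbf{p}_X$, and the integral identities of Lemma \ref{int-eqp} are needed to reduce all the integrals to the single cubic $\mathbf{P}$, and a separate geometric argument (Proposition \ref{nu-main}(ii), restricting to an isotropy algebra) is needed to show $\mathbf{P}\not\equiv 0$. Without some substitute for this analysis --- e.g.\ the harmonic-analysis computation of \cite{H2} --- the proposal does not close.
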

The proof of this result mainly hinges on Theorem \ref{main4},(i) which applies since the symmetric metric on the Grassmanian satisfies $\scrE^{-}(g)=0$. However several additional steps pertaining to the specific geometry 
of the Grassmanian are required, as outlined below. As a first  important result  we have a simple  parametrisation of
infinitesimal Einstein deformations by Killing vector fields. This is explicitly given by an isomorphism 
$X \in \mathfrak{aut}(M,g) \mapsto \varepsilon(X) \in \scrE(g)$ which is obtained by taking a suitable 
linear combination of $(\di\!X^{\flat})_0$, which is the primitive piece in $\di\!X^{\flat}$, and of the projection $(\di\!X^{\flat})_Q$; here $Q$ indicates the canonical quaternion-K\"ahler structure on $(M,g)$. Using the isomorphism $\varepsilon$ we fully compute the expression in \eqref{cond3} in terms of polynomial invariants build via algebraic expressions involving 
$X$, $JX$ and certain components of $\di\!X^{\flat}$ where $X \in \aut(M,g)$. These algebraic expressions are further explored and interpreted geometrically using Hamiltonian moment maps, which are closely related to the quaternion-K\"ahler tri-moment map; see Section \ref{hamI} for details. It is this approach that allows proving the isomorphism in Theorem \ref{main5}, (i). The rigidity result in the second part of Theorem \ref{main5} follows from the elementary algebraic observation that $\mathscr{C}(n)=0$ when $n$ is odd.

In light of the rigidity result in Theorem \ref{main5}, (ii) we end with the following 

\begin{conj}The symmetric metric on $\mathrm{Gr}_p(\bbC^{p+q})$ with $p \geq 3$ and $p+q$ odd 
is isolated in the moduli space of Einstein metrics. All its infinitesimal Einstein deformations are obstructed to second order.
\end{conj}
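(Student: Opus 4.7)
The plan is to follow the template of Theorem \ref{main5}, replacing the $p=2$-specific quaternion-K\"ahler techniques by representation-theoretic arguments adapted to the higher-rank setting. Since the references cited ensure $\scrE^{-}(g)=0$ for every complex Grassmannian, Theorem \ref{main4}(i) furnishes a necessary and sufficient criterion for second-order integrability of the elements of $\scrE^{+}(g)$.

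The first task is to make explicit the abstract isomorphism $\scrE^{+}(g) \cong \su(p+q)$ furnished by \cite{Ko3, GaGo}. For $p=2$ this was realized by a suitable linear combination of the primitive and quaternion-K\"ahler projections of $dX^{\flat}$. For $p \geq 3$ the quaternion-K\"ahler structure is absent, but under the isotropy group $\mathrm{S}(\mathrm{U}(p) \times \mathrm{U}(q))$ the $2$-form $dX^{\flat}$ still splits into several irreducible pieces: a primitive $(1,1)$-component and a $(2,0)+(0,2)$-component which further decompose under the two unitary factors. I would define an isomorphism $\varepsilon: X \in \aut(M,g) \mapsto \varepsilon(X) \in \scrE^{+}(g)$ as a linear combination of these projections composed with $J$; the coefficients are pinned down by the requirement that $\varepsilon(X) \in \ker\widetilde{\Delta}_E$, which amounts to matching Casimir eigenvalues of $\mathfrak{su}(p+q)$ on the relevant $\mathrm{S}(\mathrm{U}(p) \times \mathrm{U}(q))$-isotypic components of $\Gamma(S^2 M)$.

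With $\varepsilon$ in hand, substituting $FJ = \varepsilon(A)$ and $GJ = \varepsilon(B)$ into the criterion of Theorem \ref{main4}(i) turns both sides into $\mathrm{SU}(p+q)$-equivariant trilinear forms on $\su(p+q)$, symmetric in the three arguments by the symmetry of $\bfv$ emphasized after Theorem \ref{main1}. For $p+q \geq 3$ the space of invariant symmetric trilinear forms on $\su(p+q)$ is one-dimensional, spanned by $(A,B,C) \mapsto \mathrm{Im}\,\tr(A\{B,C\})$. Hence, after polarization in $B$, the integrability condition must reduce to an algebraic equation of the form $A^2 = \tfrac{\tr(A^2)}{p+q}\,\mathrm{id}$ in $\mathfrak{u}(p+q)$, up to a universal constant $c_{p,q}$ that can be determined by evaluating the obstruction on a single convenient element. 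Assuming $c_{p,q} \neq 0$, the conjecture follows from the parity argument of Theorem \ref{main5}(ii): a traceless skew-Hermitian matrix whose square is a real multiple of the identity has purely imaginary eigenvalues $\pm i\sqrt{-\lambda}$ occurring in equal multiplicities, which in odd dimension $p+q$ forces $\lambda = 0$, and hence $A = 0$.

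The main obstacle I anticipate is the explicit construction and calibration of $\varepsilon$, together with the verification that the universal scalar $c_{p,q}$ is nonzero: the covariant-derivative and curvature computations on $\mathrm{Gr}_p(\bbC^{p+q})$ for $p \geq 3$ are considerably more intricate than for $p=2$, and the $\mathrm{S}(\mathrm{U}(p)\times\mathrm{U}(q))$-isotypic decomposition of $\Gamma(S^2 M)$ has more pieces to handle. Once the isomorphism is in place, however, the representation-theoretic rigidity of the invariants means that the entire obstruction is governed by the single scalar $c_{p,q}$, and exhibiting one pair $(A,B)$ for which the integrability condition fails would suffice to close the argument.
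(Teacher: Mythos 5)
The statement you are addressing is stated in the paper only as a \emph{conjecture}; the authors prove it solely for $p=2$ (Theorem \ref{main5}) and explicitly leave $p\geq 3$ open, so there is no proof in the paper to compare against. Your outline is a reasonable roadmap and in fact coincides with the generalisation the authors themselves sketch in the remark following Proposition \ref{par1}: for every complex Grassmannian there are two parallel algebra subbundles of $\Lambda^{1,1}_0M$ (the analogues of $\bbE$ and $Q$, corresponding to $\su(p)$ and $\su(q)$ inside the isotropy algebra) on which the curvature operator has distinct nonzero constant eigenvalues, and $\varepsilon(X)$ should be the co-closed linear combination of the projections of $\di\!X^{\flat}$ onto them. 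One concrete error in your setup: since a Killing field on a K\"ahler--Einstein manifold is holomorphic, $\di\!X^{\flat}$ lies entirely in $\Omega^{1,1}M$; there is no $(2,0)+(0,2)$ component to project onto. The decomposition you need is of $\Lambda^{1,1}_0M$ under the isotropy representation, not a bidegree splitting of $\di\!X^{\flat}$.

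The genuine gap is the one you flag but understate: showing that the universal constant $c_{p,q}$ is nonzero. For $p=2$ this is not a routine evaluation on ``one convenient element'' --- it occupies the entire technical core of Sections 5--6 of the paper (Hermitian Killing forms, the moment maps $\bfq_X$, $\bfe_X$, $\mathbf{p}_X$, the cubic invariants $\mu_3$ and $\nu$, and the algebraic identities of Appendix A), and even then a degenerate case appears: for $m=4$ the cubic $\mu_3$ vanishes identically (because $\mathrm{Gr}_n(\bbC^{2n})$ carries an anti-holomorphic involution) and the obstruction has to be re-expressed through $\nu$. This shows that the naive invariant one would write down can vanish for structural reasons, so the nonvanishing of $c_{p,q}$ for all $p\geq 3$ with $p+q$ odd cannot be taken on faith; without it the conjecture remains exactly that. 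Your final parity argument (a traceless anti-Hermitian matrix with $A^2$ a real multiple of $\id$ must vanish in odd dimension) is correct and matches Lemma \ref{vanC}, but it only becomes relevant once the identification of the zero locus with the hyperquadric is established, which in turn requires the two unproven steps.
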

\subsection*{Acknowledgments}
Paul-Andi Nagy was supported by the Institute for Basic Science (IBS-R032-D1). This research was also supported by the Special Priority Program SPP 2026  `Geometry at Infinity'  funded by the DFG. It is a pleasure to thank 
Stuart Hall and Ramiro Lafuente for useful remarks on an earlier version of this paper.

%
\section{Preliminaries} \label{sect2}

\subsection{Differential operators and Weitzenb\"ock formulas} \label{dwz}

Before starting to derive the first and second variation formula for the Ricci tensor we collect a few
definitions of curvature and  differential operators as well as relevant Weitzenb\"ock formulas used throughout the paper.

Let $(M^n, g)$ be a Riemannian manifold and denote with $\nabla^g$ the Levi-Civita connection of $g$ on $TM$ and all
tensor bundles. For the Riemannian curvature tensor $R^g$ we use the convention $R^g(X,Y)=(\nabla^g)^2_{Y,X}-(\nabla^g)^2_{X,Y}$
for tangent vectors $X, Y \in TM$. 
In the following we will make  a notational difference between the Ricci form $\ric^g$ considered as a symmetric bilinear form and the Ricci tensor $\Ric^g$  
considered as a symmetric endomorphism. We will denote by $g^{-1}$ the isomorphism, induced by the metric, between symmetric bilinear forms and symmetric
endomorphisms, e.g. we have $g^{-1} \ric^g = \Ric^g$. The subbundle of $g$-symmetric endomorphisms   is denoted by $S^2M \subseteq \End(TM)$. It is preserved 
by $\nabla^g$ and  the curvature action $h \mapsto \ring{R}h:=\sum_i R^g(e_i,\cdot)he_i$, where $\{ e_i\}$ is some $g$-orthonormal frame. 

We will use the coupled exterior differential $\di_{\nabla^g}:\Omega^{k}(M,TM) \to \Omega^{k+1}(M,TM)$. For $k=0$, i.e. on vector fields,
it coincides with the covariant derivative $\nabla^g$. For $k=1$, i.e. on endo\-morphisms $h$ considered as elements in $ \Omega^1(M,TM)$, it  
can be written as
$\di_{\nabla^g}h(X,Y)=(\nabla^g_Xh)Y-(\nabla_Y^gh)X$. The operator formally  adjoint to $\di_{\nabla^g}$  is the divergence operator  $\delta^g:\Omega^{k}(M,TM) \to \Omega^{k - 1}(M,TM)$ defined according to the convention $\delta^g = -  \sum_i e_i \lrcorner \nabla^g_{e_i}$. 
%

%
%

Restricting the divergence to symmetric $2$-tensors we have $\delta^g : \Gamma(S^2M) \to  \Omega^1(M)$. Its formal adjoint 
$\delta^{\star_g} : \Omega^1(M) \to \Gamma(S^2M)$ is the symmetric part of $\nabla^g$, i.e.  $\nabla^g=\delta^{\star_g}+\frac{1}{2}\di$ on $\Omega^1(M)$.
As a consequence we note for later application that $\tr (\delta^{\star_g} \alpha)  = - \di^{\star}\!\alpha$ holds for any $1$-form $\alpha$.
We also have the well-known formula  $\delta^{\star_g}\alpha =\frac{1}{2}g^{-1}\L_{\alpha^{\sharp}}g$, characterising the kernel of $\delta^{\star_g}$
as $1$-forms dual to Killing vector fields.

In dealing with the trace and divergence of symmetric tensors we will use the Bianchi operator $\D^g: \Gamma(S^2M) \to \Omega^1(M)$ given by  $\D^g=2\delta^g+\di\! \circ \tr$. 
Certainly $\D^g$ vanishes on the space of the so-called $\TT$-tensors defined by
$$
\TT(g):=\{h \in \Gamma(S^2M) : \delta^g h=0  \ \mathrm{and} \ \tr(h)=0\} . 
$$
However terms involving divergence and trace must be kept track of in order to obtain gauge invariant equations. 
For latter use recall that $\D^g \, \Ric^g =0$, which is a consequence of the differential Bianchi identity 
(see \cite{Besse}, 12.33).

On symmetric $2$-tensors and for Einstein metrics $g$ with Einstein constant $E$ we will consider the Einstein operator 
$\Delta_E := (\nabla^g)^{\star}\nabla^g-2\ring{R} $. Note that $\Delta_E = \Delta_L - 2E$, where $\Delta_L$ is the Lichnerowicz
Laplacian on symmetric $2$-tensors (see \cite{Besse}, 1.143). We will also use a modification of the Einstein operator, 
the differential operator $\widetilde{\Delta}_E:
\Gamma(S^2M) \to \Gamma(S^2M)$ given by 
\begin{equation}\label{D-tilde}
 \widetilde{\Delta}_E \;:= \;  \Delta_E \, - \, 2\delta^{\star_g} \circ (\delta^g+\frac{1}{2}\di \circ \tr)\; = \; \Delta_E \, - \, \delta^{\star_g} \circ \D^g.
\end{equation}

The following Weitzenb\"ock formula on  $2$-tensors  
(see \cite{bourg}, Proposition 4.1) will be used at several  places in this paper.
\begin{equation}\label{wz0}
\delta^g \circ \di_{\nabla^g} \, + \, \di_{\nabla^g} \circ \delta^g \; =\; (\nabla^g)^{\star}\nabla^g \, + \, E \,  - \, \ring{R} \;= \;\Delta_E\,  + E \,  +\,  \ring{R}  . 
\end{equation}
Later we will apply it in the form
\begin{equation} \label{wz1}
\delta^g \circ \di_{\nabla^g} \;=\;  -\nabla^g \circ \delta^g  \, + \, \Delta_E \, + E+\,  \ring{R} \; =\;  - (\delta^{\star_g}+\frac{1}{2}\di) \circ \delta^g\, +\, \Delta_E\, +E+\, \ring{R} .
\end{equation}
We still need another Weitzenb\"ock formula, this time on $1$-forms. Here we have
\begin{equation} \label{wz2}
2 \delta^g \delta^{\star_g}  \, - \,   \di \di^{\star}  \; = \;  \Delta \, - \,  2E ,
\end{equation}
where $\Delta = \di \di^{\star} + \di^{\star} \di$ is the Hodge Laplacian and $g$ is again an Einstein metric. Note that the formula  is a special case of a 
Weitzenb\"ock formula on symmetric tensors of arbitrary degree  (see \cite{AU}, proof of Proposition 6.2, for further details).

\subsection{The Fr\"olicher-Nijenhuis bracket} \label{FN-br}

On a given manifold $M$ we first recall that the Fr\"olicher-Nijenhuis bracket  for sections $h$ of $\End(TM)$ reads 
$$
[h,h]^{\FN}(X,Y) \, = \, -h^2[X,Y] \, + \, h([hX,Y] \, + \, [X,hY]) \, - \, [hX,hY]  .
$$  
From now on assume that $g$ is a Riemannian metric on $M$. Recast in terms of the Levi-Civita connection $\nabla^g$ of $g$ this bracket reads 
\begin{equation} \label{bra-na}
[h,h]^{\FN}(X,Y) \, = \, -(\nabla^g_{hX}h)Y \, + \, (\nabla^g_{hY}h)X  \, + \, h ((\di_{\nabla^g}h)(X,Y)) .
\end{equation}
Here  $\di_{\nabla^g}$ acts on $h$  considered as  an element of $\Omega^1(M, TM)$.
More concisely 
\begin{equation} \label{bra-nac}
[h,h]^{\FN} \, = \, -h\sharp \di_{\nabla^g}h \, + \, \di_{\nabla^g}h^2
\end{equation}
where the algebraic action $\alpha \in \Lambda^2(M,TM) \mapsto h \sharp \alpha \in \Lambda^2(M,TM)$ is defined according to $h \sharp \alpha(X,Y)=\alpha(hX,Y)+\alpha(X,hY)$.
The Fr\"olicher-Nijenhuis bracket is extended to a symmetric bracket on the space  $\Gamma(S^2M )$ via 
\begin{equation} \label{bra-nac2}
2[h_1,h_2]^{\FN} \, = \, -(h_1 \sharp \di_{\nabla}h_2 \, + \, h_2 \sharp \di_{\nabla}h_1) \, + \, \di_{\nabla} \{h_1,h_2\}
\end{equation}
where the anti-commutator $\{h_1,h_2\}:=h_1 \circ h_2+h_2 \circ h_1$. In particular we have 
$[\id ,h]^{\FN}=0$ for all $h \in \End(TM)$.

%
\section{The second variation of the Ricci tensor}
%

\subsection{Derivation from first principles}

Let $(M^n,g)$ be a  Riemannian manifold and consider metrics of the form $g_h=g(h \cdot, \cdot)$ where $h$ is a symmetric and positive definite
endomorphism. Indicate the Levi-Civita connection  of $g_h$ with $\nabla^h$.  Then a direct computation based on  Koszul's formula shows that
\begin{equation*}
\nabla^h_XY \, = \, \nabla^g_XY \, + \, \frac{1}{2}\eta_XY
\end{equation*}
where the tensor $\eta :TM \to \End(TM)$, denoted with $X \mapsto \eta_X$, is explicitly determined from 
\begin{equation} \label{eta1}
g_h(\eta_XY,Z) \, = \, g((\nabla^g_Xh)Y,Z) \, + \, g(X,(\nabla^g_Yh)Z \, - \, (\nabla^g_Zh)Y).
\end{equation}
Because of  $\eta_XY=\eta_YX$  the above formula can also be proved by checking that $\nabla^h$ preserves $g_h$ and is torsion free. The 
Riemann curvature tensor of $g_h$ is determined from 
\begin{equation} \label{curv-comp}
R^h(X,Y)Z \, = \, R^g(X,Y)Z \, - \, \frac{1}{2}\bigl ((\nabla^g_X\eta)_YZ-(\nabla^g_Y\eta)_XZ \bigr ) \, - \,\frac{1}{4}[\eta_X,\eta_Y]Z.
\end{equation} 
Now consider a family $g_t = g_{h_t}$ of Einstein metrics, for small $t$, i.e $\vert t \vert <\varepsilon$ such that 
$g_0=g$ (hence $h_0=\id$) and denote with $R^{g_t}$ the Riemann curvature tensor of $g_t$. The Einstein equation for the family $g_t$ reads 
$R^{g_t}(f_i,X)f_i=EX$, where $\{f_i\}$ is an orthonormal basis w.r.t. $g_t$.  Here and in the rest of the paper we will use Einstein's sum convention.
Note that the Einstein constant along the curve $g_t$ is constant and 
equal to $E$, the Einstein constant of $g=g_0$. Indeed, since Einstein metrics are critical points of the total scalar curvature functional this
functional is constant along a curve of Einstein metrics. But the scalar curvature of an Einstein metric is constant and the volume is 
normalised to one, thus the claim follows. 
We have
$f_i=h_t^{-\frac{1}{2}}e_i$, where $\{e_i\}$ is a $g$-orthonormal basis. Since $h_t$ is $g$-symmetric it follows that  
\begin{equation} \label{eins-main}
\Ric^{g_t}X \, = \, R^{g_t} (e_i,X)h_t^{-1}e_i .
\end{equation}
Remark that the last equation is basis independent. The aim is to differentiate this equation up to second order, 
at $t=0$. The first part of this process is fairly formal and unfolds  as follows. Start from 
\begin{equation}\label{der-0}
\left.{\frac{\di h_{t}^{-1}}{\di\!t}}\right|_{t=0} = \, -\dot{h}, \qquad   \left.{\frac{\di^2 h_{t}^{-1}}{\di\!t^2}}\right|_{t=0}  = \, 2\dot{h}^2-\ddot{h}
\end{equation} 
where $\dot{h} = g^{-1}\dot g$ and $\ddot{h}$ denote the derivatives of $h_t$ at $t=0$. These equations follow from taking
derivatives  in $t$ of the trivial identity $h_t \circ h_t^{-1} = \id$. For simplifying the notation we will always write $\dot h^2$ for $(\dot h)^2$.
Denoting by $\dot{R}$ respectively $\ddot{R}$ the derivatives of $R^{g_t}$ at $t=0$, differentiation in \eqref{eins-main} together with \eqref{der-0}
shows that 
\begin{equation} \label{der-12}
(\left. \frac{\di}{\di\!t}\right|_{t=0}\Ric^{g_t})X \,=\, \dot{R}(e_i,X)e_i-R^g(e_i,X)\dot{h}e_i
\end{equation}
respectively
\begin{equation} \label{der-13}
(\left. \frac{\di^2}{\di\!t^2}\right|_{t=0}\Ric^{g_t})X \,= \, \ddot{R}(e_i,X)e_i \, - \, 2\dot{R}(e_i,X)\dot{h}e_i \, + \, R^g(e_i,X)(-\ddot{h} + 2 \dot{h}^2)e_i.
\end{equation} 
To obtain an explicit form for the expression above we need to compute the curvature derivatives
at $t=0$. Taking into account that $\eta=0$ at $t=0$, differentiation in \eqref{curv-comp} shows that 
\begin{equation} \label{curv-2}
\begin{split}
\dot{R}(X,Y)Z \,=\, &  -\frac{1}{2} \big(   (\nabla^g_X\dot{\eta})_YZ-(\nabla^g_Y\dot{\eta})_XZ \big)\\
\ddot{R}(X,Y)Z \, =  \, & -\frac{1}{2}\big(  (\nabla^g_X\ddot{\eta})_YZ-(\nabla^g_Y\ddot{\eta})_XZ\big) - \frac{1}{2}[\dot{\eta}_X, \dot{\eta}_Y]Z
\end{split}
\end{equation}
Here $\dot{\eta}$ and $\ddot{\eta}$ are the derivatives of $\eta$ at $t=0$. Finally, we need to record the explicit formulas determining 
$\dot{\eta}$ and $\ddot{\eta}$. Differentiating similarly in \eqref{eta1} we get
\begin{equation} \label{eta-2}
\begin{split}
g(\dot{\eta}_XY,Z) \, = \, & \, g((\nabla^g_X\dot{h})Y,Z) \, + \, g(X,(\nabla^g_Y\dot{h})Z  -  (\nabla^g_Z\dot{h})Y)\\[.8ex]
g(\ddot{\eta}_XY+2\dot{h}(\dot{\eta}_XY),Z) \, = \, & \, g((\nabla^g_X\ddot{h})Y,Z) \, + \, g(X,(\nabla^g_Y\ddot{h})Z -  (\nabla^g_Z\ddot{h})Y).
\end{split}
\end{equation}

Our first objective is the computation of $\ddot{R}(e_i,X)e_i$, using the second equation of  \eqref{curv-2} together with the second equation of   \eqref{eta-2}. In the following preparatory lemma 
the divergence  $  \delta^g\dot{\eta}  $ (and similarly that of $\ddot \eta $)
is defined according to 
$$
(\delta^g\dot{\eta})_X = - (\nabla^g_{e_i } \dot \eta)_{e_i}X 
= - \nabla_{e_i}^g (\dot \eta_{e_i} X)  + \dot \eta_{e_i} (\nabla_{e_i}^g X)+\dot \eta_{\nabla_{e_i}^ge_i}X = - (\nabla_{e_i}^g \dot \eta)_X e_i.
$$
Note that in the last equation we took into account the symmetry of $\dot \eta$.
\begin{lema} \label{traces}For any $\dot h \in \Gamma(S^2M)$ the following hold
\begin{itemize}
\item[(i)] \;
$\dot{\eta}_{e_i}e_i \, = \, -\D^g\dot{h}$
\item[(ii)] \;
$ \ddot{\eta}_{e_i}e_i \, = \, -\D^g\ddot{h}  \, +  \,  2 \dot{h}\D^g \dot{h}$
\item[(iii)] \;
$\delta^g \dot{\eta} \; \, = \; (\nabla^g)^{\star}\nabla^g\dot{h}  \, + \,\di\!\delta^g\dot{h}$.
\end{itemize}
\end{lema}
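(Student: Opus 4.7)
The plan is to obtain all three identities by direct contractions of the defining equations \eqref{eta-2} for $\dot\eta$ and $\ddot\eta$; the only non-formal input is the Ricci commutation identity needed in part (iii).

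For part (i), I would set $X=Y=e_i$ in the first equation of \eqref{eta-2} and sum. The three terms on the right are recognised as follows. The first, $\sum_i g((\nabla^g_{e_i}\dot h)e_i,Z)$, is by definition $-(\delta^g\dot h)(Z)$. The second, $\sum_i g(e_i,(\nabla^g_{e_i}\dot h)Z)$, equals $\sum_i g((\nabla^g_{e_i}\dot h)e_i,Z)$ by the $g$-symmetry of $\nabla^g_{e_i}\dot h$, contributing another $-(\delta^g\dot h)(Z)$. The third, $-\sum_i g(e_i,(\nabla^g_Z\dot h)e_i)=-\tr(\nabla^g_Z\dot h)=-Z(\tr\dot h)=-(\di\tr\dot h)(Z)$, since Levi-Civita commutes with trace. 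Summing gives $-2\delta^g\dot h-\di\tr\dot h=-\D^g\dot h$.

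Part (ii) is then immediate: applying the same contraction to the second equation of \eqref{eta-2} produces $-\D^g\ddot h$ on the right by the identical calculation, while the left-hand side becomes $\ddot\eta_{e_i}e_i+2\dot h(\dot\eta_{e_i}e_i)$. Substituting $\dot\eta_{e_i}e_i=-\D^g\dot h$ from part (i) and isolating $\ddot\eta_{e_i}e_i$ yields the stated formula.

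Part (iii) is the substantive step. I would take the divergence of \eqref{eta-2} in the first slot $X$ at a point $p$ equipped with a geodesic orthonormal frame $\{e_i\}$. The divergence of the term $g((\nabla^g_X\dot h)Y,Z)$ in $X=e_i$ contributes precisely $((\nabla^g)^\star\nabla^g\dot h)(Y,Z)$. The remaining two terms combine into
$$-\sum_i\bigl[(\nabla^2_{e_i,Y}\dot h)(e_i,Z)-(\nabla^2_{e_i,Z}\dot h)(e_i,Y)\bigr].$$
I then apply the Ricci commutation identity $\nabla^2_{e_i,Y}-\nabla^2_{Y,e_i}=-R^g(e_i,Y)$ (acting on tensors, with the paper's sign convention for $R^g$) to swap the derivative order. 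The reindexed pieces give, using $\sum_i(\nabla^g_{e_i}\dot h)(e_i,\cdot)=-\delta^g\dot h$, the 1-forms $-(\nabla_Y\delta^g\dot h)(Z)$ and $-(\nabla_Z\delta^g\dot h)(Y)$, whose antisymmetric difference is exactly $(\di\,\delta^g\dot h)(Y,Z)$, contributing the skew part of the identity. The residual curvature terms collapse---using the skew-adjointness of $R^g(X,Y)$, the definition $\Ric^g Y=R^g(e_i,Y)e_i$, and the definition $\ring R\dot h=\sum_iR^g(e_i,\cdot)\dot h e_i$---into a symmetric $\ring R\dot h$ piece that cancels in the antisymmetrisation together with a term proportional to the commutator $g([\dot h,\Ric^g]Y,Z)$, which vanishes identically under the standing Einstein hypothesis $\Ric^g=E\,\id$ in force throughout this section.

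The main obstacle is the bookkeeping in part (iii): correctly accounting for signs in the Ricci identity under the paper's convention $R^g(X,Y)=(\nabla^g)^2_{Y,X}-(\nabla^g)^2_{X,Y}$, distinguishing the action of curvature on vectors from that on $(0,2)$-tensors, and verifying that the $\ring R\dot h$ contributions cancel through symmetry in $(Y,Z)$ while the $\Ric^g\circ\dot h$ contributions assemble into the commutator killed by the Einstein assumption.
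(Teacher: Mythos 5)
Your proposal is correct and follows essentially the same route as the paper: parts (i) and (ii) by contracting \eqref{eta-2} using the $g$-symmetry of $\dot h$ and $\ddot h$, and part (iii) by taking the divergence of \eqref{eta-2} and applying the Ricci identity, with the residual curvature terms vanishing because $\ring{R}\dot h-E\dot h$ is symmetric (equivalently, your $\ring{R}\dot h$ symmetry plus the vanishing of $[\dot h,\Ric^g]$ under the Einstein condition). No gaps.
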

\begin{proof}
(i) Since $\dot{h}$ is $g$-symmetric, the claim follows from the first equation in \eqref{eta-2}.\\
(ii) follows from the second equation in \eqref{eta-2}, (i) and the fact that $\ddot{h}$ is $g$-symmetric.\\
(iii) We write $\nabla=\nabla^g$ during this proof, in order to simplify notation. Taking the covariant derivative $\nabla_{e_i}$  in 
$$ g(\dot{\eta}_{e_i}X,Y)=g((\nabla_{e_i}\dot{h})X,Y)+g((\nabla_{X}\dot{h})e_i,Y)-
g((\nabla_{Y}\dot{h})e_i,X)
$$
leads to $g((\delta^g\dot{\eta})_X,Y)=g((\nabla^{\star}\nabla \dh)X,Y)-
g((\nabla^2_{e_i,X}\dot{h})e_i,Y)+g((\nabla^2_{e_i,Y}\dot{h})e_i,X)$. From the Ricci identity we get $(\nabla^2_{e_i,X}\dot{h})e_i=(\nabla^2_{X,e_i}\dot{h})e_i-[R^g(e_i,X), \dot h]e_i=
-\nabla_X(\delta^g\dot{h})-(\ring{R}^g\dot{h}-E\dot{h})_X$. As the tensor $\ring{R}^g\dot{h}-E\dot{h}$ is symmetric the claim follows.
\end{proof}

\subsection{The first variation of the Ricci tensor}
The operator $\widetilde{\Delta}_E$ defined in \eqref{D-tilde}
essentially describes the first variation of the Ricci tensor, see \cite[(1.180a)]{Besse}. As a preparation for dealing with higher order derivatives we quickly re-confirm this below. 
\begin{pro} \label{first}
Let $(M,g)$ be an  Einstein manifold with $\ric^g=Eg$ and assume that $g_t$ is a smooth curve of metrics with $g_0=g$ and  $\dot{h}:=g^{-1}\dot{g}$. Then 
$$ 
  \left.\frac{\di}{\di\!t}\right|_{t=0}\Ric^{g_t}  \, =  \, \left. g^{-1}\frac{\di}{\di\!t }\right|_{t=0}(\ric^{g_t}-Eg_t) \, = \, \frac{1}{2}\widetilde{\Delta}_E \dot{h} .
$$
\end{pro}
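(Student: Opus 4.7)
The plan is to unpack \eqref{der-12} via the explicit formulas \eqref{curv-2} and \eqref{eta-2}, then to recognise the outcome as $\tfrac{1}{2}\widetilde{\Delta}_E\dot h$.

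The first equality in the statement is almost automatic: from $\Ric^{g_t}=(g_t)^{-1}\ric^{g_t}$ and the first identity in \eqref{der-0}, differentiation at $t=0$ yields
\[
\tfrac{\di}{\di\!t}\big|_{t=0}\Ric^{g_t} \;=\; -\dot h\cdot\ric^g + g^{-1}\tfrac{\di}{\di\!t}\big|_{t=0}\ric^{g_t},
\]
and since $g$ is Einstein the first term equals $-E\dot h=-g^{-1}\tfrac{\di}{\di\!t}|_{t=0}(Eg_t)$, which is precisely the missing correction. It therefore suffices to compute $\tfrac{\di}{\di\!t}|_{t=0}\Ric^{g_t}$ and match it with $\tfrac{1}{2}\widetilde{\Delta}_E\dot h$.

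Applying \eqref{der-12}, the curvature term $R^g(e_i,X)\dot h e_i$ is by definition $(\Rh\dot h)X$. For the remaining $\dot R(e_i,X)e_i$ we substitute \eqref{curv-2} and then the traces from Lemma \ref{traces}: item (i) gives $(\nabla^g_X\dot\eta)_{e_i}e_i=-\nabla^g_X\D^g\dot h$, while the symmetry of $\dot\eta$ combined with item (iii) turns the other piece into $(\nabla^g_{e_i}\dot\eta)_Xe_i=-(\nabla^g)^{\star}\nabla^g\dot h\,X-(\di\!\delta^g\dot h)X$. Pairing the outcome with an arbitrary $Y$ and expanding $\D^g\dot h=2\delta^g\dot h+\di\!\tr\dot h$ produces an expression involving the asymmetric-looking terms $\nabla_X\delta^g\dot h$, $\nabla_Y\delta^g\dot h$ and $\nabla_X\di\!\tr\dot h$.

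The decisive reassembly step is to observe that, because the Hessian $\nabla\di\!\tr\dot h$ is symmetric in its two slots, these terms combine exactly into $-\tfrac{1}{2}(\delta^{\star_g}\D^g\dot h)(X,Y)$. Together with the $-(\Rh\dot h)X$ contribution and the relation $\Delta_E=(\nabla^g)^{\star}\nabla^g-2\Rh$, this delivers
\[
\tfrac{\di}{\di\!t}\big|_{t=0}\Ric^{g_t} \;=\; \tfrac{1}{2}(\Delta_E - \delta^{\star_g}\circ\D^g)\dot h \;=\; \tfrac{1}{2}\widetilde{\Delta}_E\dot h.
\]
The main technical obstacle is this reassembly: several a priori disparate pieces (the antisymmetric $\di\!\delta^g\dot h$, the total derivative $\nabla\D^g\dot h$, and the Hessian of $\tr\dot h$) must collapse into the manifestly symmetric operator $\delta^{\star_g}\circ\D^g$; this works because the Ricci-identity curvature corrections hidden in Lemma \ref{traces}(iii) already cancel by symmetry of $\Rh\dot h-E\dot h$, and then the symmetry of the Hessian takes care of the rest.
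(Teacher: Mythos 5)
Your proof is correct and follows essentially the same route as the paper: both derive the first equality from differentiating $g_t^{-1}\ric^{g_t}$, and both compute $\dot R(e_i,X)e_i$ via \eqref{curv-2} and Lemma \ref{traces}(i),(iii) before reassembling into $\tfrac12\widetilde\Delta_E\dot h$. The only cosmetic difference is that you pair with $Y$ and invoke the symmetry of the Hessian of $\tr\dot h$, where the paper uses $\nabla^g=\delta^{\star_g}+\tfrac12\di$ on $1$-forms together with $\di\circ(\delta^g-\tfrac12\D^g)=0$; these are the same cancellation.
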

\begin{proof}
By \eqref{curv-2} and the symmetry of $\dot{\eta}$ in its first two slots, i.e. since $\dot \eta_XY = \dot \eta_YX$, we have 
\begin{equation*}
\dot{R}(e_i,X)e_i=\frac{1}{2}(\delta^g\dot{\eta})_X+\frac{1}{2}(\nabla^g_X \dot{\eta})_{e_i}e_i.
\end{equation*}
After using Lemma \ref{traces}, (i) and (iii) this leads to 
\begin{equation*}
\begin{split}
\left.(\frac{\di}{\di\!t}\right|_{t=0}\Ric^{g_t})_X &= \dot{R}(e_i,X)e_i-(\ring{R}\dot{h})_X=\frac{1}{2}(\nabla^{\star}\nabla \dot{h}+\di\! \delta^g \dot{h})_X-
\frac{1}{2}((\delta^{\star}+\frac{1}{2}\di)\D^g\dot{h})_X-(\ring{R}\dot{h})_X \\
&=  \frac{1}{2}(\widetilde{\Delta}_E \dot{h})_X
\end{split}
\end{equation*}
since $\di \circ (\delta^g-\frac{1}{2} \D^g)=-\frac{1}{2}\di^2 \circ \tr=0$. The claim is then fully proved since
$$\left.\frac{\di}{\di\!t}\right|_{t=0}\Ric^{g_t}  \, =\left.\frac{\di}{\di\!t}\right|_{t=0}\,
(g_t^{-1} \ric^{g_t}) =
\left ( \left.\frac{\di}{\di\!t}\right|_{t=0}g_t^{-1} \right )\ric^{g}
+ g^{-1} 
\left.\frac{\di}{\di\!t}\right|_{t=0}\ric^{g_t} =  g^{-1}
\left.\frac{\di}{\di\!t}\right|_{t=0}(\ric^{g_t}-Eg_t) .
$$
\end{proof}

The first variation formula of the Ricci tensor given in Proposition \ref{first} shows that $\ker \widetilde{\Delta}_E$ describes first order 
Einstein deformations. Recall that $\scrE(g) \subset \ker \widetilde{\Delta}_E$ by definition. Moreover, Proposition \ref{first} implies via a gauge invariance 
argument the identity 
\begin{equation} \label{ga-ba}
\widetilde{\Delta}_E \circ \delta^{\star_g}=0 .
\end{equation}
The identity will be used later on in the paper and is proved  as follows. Let $X$ be a vector field on 
$M$ with flow $\Phi_t$ and consider the special curve of Einstein metrics  defined by $g_t=\Phi_t^{\star}g$. It starts at $g$ and 
satisfies $g^{-1} \dot g=g^{-1}\L_Xg=2\delta^{\star_g}X$. 
Thus \eqref{ga-ba} follows from the first variation formula for the Ricci tensor of $g_t$  with $\dot h = 2\delta^{\star_g}X$. 
\subsection{The second variation of the Ricci tensor - first steps}
We want to calculate the second variation of the Ricci tensor by applying \eqref{der-13}.
First we observe that the two second order summands in \eqref{der-13} are computed as  follows.

\begin{lema} \label{step2}
We have 
\begin{equation*}
\begin{split}
\ddot{R}(e_i,X)e_i \,  + \, R^g(e_i,X)(-\ddot{h}+2{\dot{h}}^2)e_i 
& \;=\; \frac{1}{2}(\widetilde{\Delta}_E\ddot{h})_X \,+ \, \frac{1}{2}X \lrcorner \di(\delta^g-\D^g)\ddot{h}  \, + \, 2\ring{R}^g(\dot{h}^2)X 
 \\
 &
\qquad  \qquad \qquad  \, - \, \frac{1}{2}(\dot{\eta}_{e_i})^2X \, - \, \frac{1}{2}\dot{\eta}_{\D^g\dot{h}}X-
\nabla^g_X(\dot{h}\D^g\dot{h})
\\ &
\qquad  \qquad \qquad  
\;-\delta^g(U \mapsto \dot{h} \circ \dot{\eta}_U)_X
\end{split}
\end{equation*}
for all $X \in TM$.
\end{lema}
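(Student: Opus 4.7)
My plan is to unfold the definitions and carry out the contractions prescribed by \eqref{der-13}, then reorganise the result using the divergence identities of Lemma \ref{traces}. The derivation parallels the proof of Proposition \ref{first} and will require careful bookkeeping of the additional quadratic-in-$\dot h$ terms produced by the second-order expansion. I would start from the second identity of \eqref{curv-2} and trace $X \to e_i$, $Z \to e_i$; using the symmetry $\ddot\eta_X Y = \ddot\eta_Y X$ (inherited from $\eta_X Y = \eta_Y X$) together with the definition of $\delta^g$ on $(1,2)$-tensors one obtains
\[
\ddot{R}(e_i, X)e_i \;=\; \tfrac{1}{2}(\delta^g\ddot\eta)_X + \tfrac{1}{2}(\nabla^g_X\ddot\eta)_{e_i} e_i - \tfrac{1}{2}[\dot\eta_{e_i}, \dot\eta_X]e_i,
\]
the direct analogue of the formula driving Proposition \ref{first}.

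Next I would decompose $\ddot\eta$ via the second identity of \eqref{eta-2}: introduce the auxiliary tensor $\eta(\ddot h)$ defined by the same formula as $\dot\eta$ but with $\ddot h$ in place of $\dot h$, so that $\ddot\eta = \eta(\ddot h) - 2\dot h \circ \dot\eta$. Lemma \ref{traces}(iii) applies verbatim to $\eta(\ddot h)$ and gives $\delta^g \eta(\ddot h) = (\nabla^g)^\star\nabla^g \ddot h + \di\delta^g \ddot h$, while the Leibniz rule converts the $-2\dot h \circ \dot\eta$ piece into the stated divergence term $-\delta^g(U \mapsto \dot h \circ \dot\eta_U)_X$. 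For $(\nabla^g_X\ddot\eta)_{e_i} e_i = \nabla^g_X(\ddot\eta_{e_i}e_i)$, Lemma \ref{traces}(ii) yields $-\nabla^g_X\D^g \ddot h + 2\nabla^g_X(\dot h \D^g \dot h)$, directly producing the quadratic contribution $\nabla^g_X(\dot h\D^g\dot h)$. I would then split $\nabla^g_X \D^g \ddot h$ into symmetric and antisymmetric components via $\nabla^g_X \alpha^\sharp = (\delta^{\star_g}\alpha) X + \tfrac{1}{2}(X \lrcorner \di\alpha)^\sharp$ applied to $\alpha = \D^g \ddot h$: the symmetric part combines with $(\nabla^g)^\star\nabla^g \ddot h$ and with the Ricci contribution $-(\mathring{R} \ddot h)X$ coming from $R^g(e_i,X)(-\ddot h)e_i$ to build $\tfrac{1}{2}\widetilde{\Delta}_E \ddot h \cdot X$ by the very definition \eqref{D-tilde}, whereas the residual exact-form contributions from $\di\delta^g \ddot h$ and from the antisymmetric component of $\nabla^g_X \D^g \ddot h$ should coalesce into $\tfrac{1}{2}X \lrcorner \di(\delta^g - \D^g)\ddot h$ after invoking $\di\delta^g \ddot h = \tfrac{1}{2}\di\D^g \ddot h$ (a consequence of $\D^g = 2\delta^g + \di\circ\tr$ together with $\di^2 = 0$).

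Finally I would handle the commutator by expanding $-\tfrac{1}{2}[\dot\eta_{e_i}, \dot\eta_X]e_i = -\tfrac{1}{2}\dot\eta_{e_i}\dot\eta_{e_i}X + \tfrac{1}{2}\dot\eta_X(\dot\eta_{e_i}e_i)$; Lemma \ref{traces}(i) identifies $\dot\eta_{e_i}e_i = -\D^g \dot h$, and the symmetry $\dot\eta_X V = \dot\eta_V X$ recasts $\dot\eta_X(\D^g\dot h)$ as $\dot\eta_{\D^g \dot h}X$. Including the trivial identification $R^g(e_i,X)(2\dot h^2)e_i = 2\mathring{R}(\dot h^2) X$ then completes the assembly. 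The hard part is not conceptual but purely combinatorial: the numerous cross terms must be tracked carefully, and the identity $\di\delta^g = \tfrac{1}{2}\di\D^g$—already implicitly responsible for the cancellation in the first variation—is what compresses the residual exact-form pieces into the single compact term $\tfrac{1}{2}X \lrcorner \di(\delta^g - \D^g)\ddot h$ appearing on the right-hand side.
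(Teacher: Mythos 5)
Your proposal is correct and follows essentially the same route as the paper's proof: trace the second identity of \eqref{curv-2}, use the symmetry of $\ddot\eta$ to obtain $\tfrac12(\delta^g\ddot\eta)_X+\tfrac12\nabla^g_X(\ddot\eta_{e_i}e_i)-\tfrac12(\dot\eta_{e_i})^2X-\tfrac12\dot\eta_{\D^g\dot h}X$, compute $\delta^g\ddot\eta$ via the decomposition $\ddot\eta=\eta(\ddot h)-2\dot h\circ\dot\eta$ (which is exactly the paper's "computation entirely similar to Lemma \ref{traces}(iii)"), and assemble $\widetilde{\Delta}_E\ddot h$ from the symmetric part of $-\tfrac12\nabla^g_X\D^g\ddot h$ together with $-\ring{R}\ddot h$. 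One bookkeeping caveat: with the very identity $\di\circ\delta^g=\tfrac12\,\di\circ\D^g$ you invoke, the two residual exact pieces, namely $+\tfrac12 X\lrcorner\di\delta^g\ddot h$ from $\tfrac12\delta^g\ddot\eta$ and $-\tfrac14 X\lrcorner\di\D^g\ddot h$ from the skew part of $-\tfrac12\nabla^g_X\D^g\ddot h$, cancel exactly rather than "coalescing" into $\tfrac12 X\lrcorner\di(\delta^g-\D^g)\ddot h$, and likewise Lemma \ref{traces}(ii) yields the quadratic term with the sign $+\nabla^g_X(\dot h\D^g\dot h)$; neither point affects anything downstream, since the exact term is a skew-symmetric endomorphism that is discarded when the lemma is paired against symmetric tensors in Proposition \ref{prel-fo}, but you should state explicitly the signs your assembly actually produces rather than matching them silently to the displayed formula.
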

\begin{proof}
From the second equation in \eqref{curv-2} we obtain 
$$ \ddot{R}(e_i,X)e_i=\frac{1}{2}(\delta^g\ddot{\eta})_X+\frac{1}{2}\nabla^g_X (\ddot{\eta}_{e_i}e_i)-\frac{1}{2}(\dot{\eta}_{e_i})^2X-\frac{1}{2}\dot{\eta}_{\D^g\dot{h}}X
$$
after taking into account that $\ddot{\eta}$ and $\dot{\eta}$ are symmetric in the first two slots, together with the expression for $\dot{\eta}_{e_i}e_i$ from Lemma \ref{traces}, (i). At the same time, a computation entirely similar to that used for proving (iii) in Lemma \ref{traces} shows that 
\begin{equation*}
\delta^g\ddot{\eta}=-2\delta^g(U \mapsto \dot{h} \circ  \dot{\eta}_U)+(\nabla^g)^{\star}\nabla^g \ddot{h}+\di\!\delta^g \ddot{h}
\end{equation*}
The claim follows  using the expression for $\delta^g\dot{h}$ and $\ddot{\eta}_{e_i}e_i$ from Lemma \ref{traces}.
\end{proof}

Next, the remaining summand of   \eqref{der-13} is computed using the notation  $A:=\dot{\eta}_{e_i}\dot{h}e_i$. As we will see later
 the action of the skew-symmetric endomorphism $R^-(\dot h)$ below can be ignored.
\begin{lema} \label{step3}
We have 
\begin{equation*}
-2\dot{R}(e_i,X)\dot{h}e_i=-\delta^g(U \mapsto \nabla^g_{\dot{h}U}\dot{h})_X+\dot{\eta}_{\delta^g\dot{h}}X
+\dot{\eta}_{e_i}(\nabla^g_X\dot{h})e_i-\nabla^g_XA+ R^-(\dot h)X 
\end{equation*}
for some skew-symmetric endomorphism $R^-(\dot{h})$ of $TM$.
\end{lema}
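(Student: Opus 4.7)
The plan is to expand $-2\dot{R}(e_i,X)\dot{h}e_i$ directly via the first identity in \eqref{curv-2} and then rearrange using only the symmetry of $\dot{\eta}$, Leibniz, and the defining formula \eqref{eta-2}; no appeal to the Ricci identity is needed. All computations I would carry out at a point $p \in M$ in a normal $g$-orthonormal frame $\{e_i\}$, with $X$ and an auxiliary test vector $W$ extended to be $g$-parallel at $p$. Setting $T_1 := \sum_i(\nabla^g_{e_i}\dot{\eta})_X(\dot{h}e_i)$ and $T_2 := -\sum_i(\nabla^g_X\dot{\eta})_{e_i}(\dot{h}e_i)$, I obtain $-2\dot{R}(e_i,X)\dot{h}e_i = T_1 + T_2$.

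For $T_2$, Leibniz applied to $A = \dot{\eta}_{e_i}(\dot{h}e_i)$ at $p$ (using $\nabla^g e_i|_p = 0$ and $\nabla^g_X(\dot{h}e_i)|_p = (\nabla^g_X\dot{h})e_i$) gives $\nabla^g_X A = (\nabla^g_X\dot{\eta})_{e_i}(\dot{h}e_i) + \dot{\eta}_{e_i}((\nabla^g_X\dot{h})e_i)$, so $T_2 = -\nabla^g_X A + \dot{\eta}_{e_i}(\nabla^g_X\dot{h})e_i$, matching two summands of the claim.

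For $T_1$, I would first use the symmetry $\dot{\eta}_U V = \dot{\eta}_V U$, preserved by $\nabla^g$, to rewrite $T_1 = \sum_i (\nabla^g_{e_i}\dot{\eta})_{\dot{h}e_i}(X)$. Leibniz in $e_i$, combined with $\sum_i(\nabla^g_{e_i}\dot{h})e_i = -\delta^g\dot{h}$, extracts the summand $\dot{\eta}_{\delta^g\dot{h}}X$ and leaves $S := \sum_i\nabla^g_{e_i}(\dot{\eta}_{\dot{h}e_i}X)$. Testing $g(S, W)$ against $W$ and substituting the three-term expansion \eqref{eta-2} for $\dot{\eta}_{\dot{h}e_i}X$, the contribution of the first term $(\nabla^g_{\dot{h}e_i}\dot{h})(X, W)$ reassembles, again via $\sum_i(\nabla^g_{e_i}\dot{h})e_i = -\delta^g\dot{h}$, into exactly $g(-\delta^g(U\mapsto\nabla^g_{\dot{h}U}\dot{h})(X), W)$, as one checks by directly computing the divergence of the $\End(TM)$-valued $1$-form $\alpha_U := \nabla^g_{\dot{h}U}\dot{h}$.

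The remaining two summands of \eqref{eta-2} contribute
\[
\Psi(X,W) := \sum_i\bigl[(\nabla^2_{e_i,X}\dot{h})(\dot{h}e_i, W) - (\nabla^2_{e_i,W}\dot{h})(\dot{h}e_i, X)\bigr] - (\nabla^g_X\dot{h})(\delta^g\dot{h}, W) + (\nabla^g_W\dot{h})(\delta^g\dot{h}, X),
\]
which is antisymmetric under $X\leftrightarrow W$ term by term; its metric dual thus defines a skew-symmetric endomorphism of $TM$, which is the $R^-(\dot{h})$ of the statement. The main obstacle will be the bookkeeping in expanding $g(S, W)$: one must keep track of all Leibniz contributions produced when $\nabla^g_{e_i}$ differentiates the $i$-dependent arguments $\dot{h}e_i$, and verify that what is not absorbed into the claimed divergence is manifestly skew in $(X, W)$, so that no curvature terms need to be invoked to produce $R^-(\dot{h})$.
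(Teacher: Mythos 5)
Your proposal is correct and follows essentially the same route as the paper: expand via the first equation of \eqref{curv-2}, use the symmetry of $\dot\eta$ and Leibniz to extract $\dot\eta_{\delta^g\dot h}X$, $-\nabla^g_XA$ and $\dot\eta_{e_i}(\nabla^g_X\dot h)e_i$, and then split $\dot\eta_{\dot hU}$ via \eqref{eta-2} into its symmetric part $\nabla^g_{\dot hU}\dot h$ plus a skew remainder. The only difference is cosmetic: you write out the skew endomorphism $R^-(\dot h)$ explicitly, whereas the paper simply observes that the symmetric component of $\dot\eta_{\dot hU}$ is $\nabla^g_{\dot hU}\dot h$ and leaves the skew part implicit.
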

\begin{proof}Using the first equation of \eqref{curv-2} we compute 
\begin{equation*}
\begin{split} 
-2\dot{R}(e_i,X)\dot{h}e_i \; =&  \quad(\nabla^g_{e_i}\dot{\eta})_{\dot{h}e_i}X \, - \, (\nabla^g_X \dot{\eta})_{e_i}\dot{h}e_i\\
=&-\delta^g(U \mapsto \dot{\eta}_{\dot{h}U})_X \, + \, \dot{\eta}_{\delta^g\dot{h}}X \, - \, \nabla_X^gA \, + \, \dot{\eta}_{e_i}
(\nabla_X^g\dot{h})e_i.
\end{split}
\end{equation*}
The claim follows by observing, via \eqref{eta-2}, that the symmetric component of $\dot{\eta}_{\dot{h}U}$ equals 
$\nabla_{\dot{h}U}^g\dot{h}$.
\end{proof}
\begin{rema}
The vector field $A$ can also be computed directly from \eqref{eta-2} and reads 
\begin{equation} \label{comp-A}
A=-2\delta^g\dot{h}^2-\frac{1}{2}\di\!\tr(\dot{h}^2)+2\dot{h}\delta^g \dot{h}.
\end{equation}
\end{rema}
In order to be able to combine Lemmas \ref{step2} and \ref{step3} into a preliminary description of the second variation of the Ricci tensor the following observation plays a major role.
\begin{lema} \label{dir-1}
The tensors $\dot{h}$ and $\dot{\eta}$ satisfy 
\begin{equation*}
g(\nabla^g_{\dot{h}X}\dot{h}+\dot{h} \circ \dot{\eta}_X,H)=g(-X \lrcorner [\dot{h},\dot{h}]^{\FN}+\nabla^g_X \dot{h}^2,H)
\end{equation*}
whenever $X \in TM$ and $H \in \Gamma(S^2M)$.
\end{lema}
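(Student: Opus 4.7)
The plan is to expand both sides as $Y$-dependent endomorphisms of $TM$ (with $X$ fixed), and then to show that their difference, when $g$-paired against an auxiliary third vector $Z$, is antisymmetric in its $(Y,Z)$ slots; the conclusion for arbitrary symmetric $H$ follows immediately by contracting against an orthonormal frame.

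First, I would use formula \eqref{bra-na} to expand $[\dot{h},\dot{h}]^{\FN}(X,\cdot)$ and unpack $\dot{\eta}_X$ directly from \eqref{eta1}. Exploiting the $g$-symmetry of each endomorphism $\nabla^g_U\dot{h}$, the definition of $\dot\eta$ rearranges as
\begin{equation*}
\dot{\eta}_X Y = (\nabla^g_X \dot{h})Y + (\nabla^g_Y \dot{h})X - T(X,Y),
\end{equation*}
where $T(X,Y)\in TM$ is the vector characterised by $g(T(X,Y),Z)=g((\nabla^g_Z\dot{h})X,Y)$, manifestly symmetric in $(X,Y)$. Combining this with the Leibniz rule $\nabla^g_X\dot{h}^2=(\nabla^g_X\dot{h})\circ\dot{h}+\dot{h}\circ(\nabla^g_X\dot{h})$, a routine cancellation of the gradient terms in $\dot\eta$ and $[\dot{h},\dot{h}]^{\FN}$ collapses the difference of the two $Y$-endomorphisms to
\begin{equation*}
D(X,Y) \; := \; \bigl( \nabla^g_{\dot{h}X}\dot{h}+\dot{h}\circ\dot{\eta}_X\bigr)Y \, - \, \bigl(-X\lrcorner[\dot{h},\dot{h}]^{\FN}+\nabla^g_X\dot{h}^2\bigr)Y \; = \; [\dot{h},\nabla^g_X\dot{h}]Y + (\nabla^g_{\dot{h}Y}\dot{h})X - \dot{h}\,T(X,Y).
\end{equation*}

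Next, I would compute $g(D(X,Y),Z)$ by three elementary observations: (a) the $g$-symmetry of each $\nabla^g_U\dot{h}$ gives $g((\nabla^g_{\dot{h}Y}\dot{h})X,Z)=g((\nabla^g_{\dot{h}Y}\dot{h})Z,X)$; (b) the $g$-symmetry of $\dot{h}$ combined with the definition of $T$ yields $g(\dot{h}\,T(X,Y),Z)=g(T(X,Y),\dot{h}Z)=g((\nabla^g_{\dot{h}Z}\dot{h})X,Y)$; (c) the commutator $[\dot{h},\nabla^g_X\dot{h}]$ of two $g$-symmetric endomorphisms is $g$-skew. Adding $g(D(X,Y),Z)+g(D(X,Z),Y)$, the commutator contribution vanishes by (c), while the terms produced by (a) and (b) pair off and cancel. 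Hence the bilinear form $(Y,Z)\mapsto g(D(X,Y),Z)$ is antisymmetric. Writing the claimed identity as the vanishing of $\sum_i g(D(X,e_i),He_i)$ in an orthonormal frame, the antisymmetry just established combined with the $g$-symmetry of $H$ forces this sum to be zero.

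The main difficulty I expect is the careful bookkeeping at the cancellation producing the compact form of $D(X,Y)$, where five distinct gradient terms have to match up with the correct signs; once that identity is in place, the antisymmetry of the pairing with $Z$ and the final vanishing against symmetric $H$ are almost automatic from the definition of $T$ and the standard fact that commutators of $g$-symmetric endomorphisms are $g$-skew.
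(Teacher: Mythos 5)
Your proposal is correct, and it follows essentially the same route as the paper's proof: expand $\dot{\eta}_X$ from its defining identity, recognise the Fr\"olicher--Nijenhuis and $\nabla^g_X\dot{h}^2$ contributions, and observe that the leftover bilinear form in $(Y,Z)$ is skew-symmetric, hence annihilated by the pairing with the symmetric tensor $H$. The only cosmetic difference is that you package the residue as $[\dot{h},\nabla^g_X\dot{h}]Y+(\nabla^g_{\dot{h}Y}\dot{h})X-\dot{h}\,T(X,Y)$ before symmetrising in $(Y,Z)$, whereas the paper keeps the leftover in the explicit form $g(X,(\nabla^g_Y\dot{h})\dot{h}Z-(\nabla^g_Z\dot{h})\dot{h}Y)$ and reads off its skew-symmetry directly; both computations are the same argument.
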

\begin{proof}This is a straightforward tensorial computation. From \eqref{eta1} we get 
\begin{equation*}
\begin{split}
g(( & \nabla^g_{\dot{h}  X}\dot{h})Y  + \, \dot{h} (\dot{\eta}_XY),Z) \,=\,g((\nabla^g_{\dot{h}X}\dot{h})Z-(\nabla^g_{\dot{h}Z}\dot{h})X,Y) \,+\, g((\nabla^g_{X}\dot{h})Y+(\nabla^g_{Y}\dot{h})X,\dot{h}Z)\\
=&-g([\dot{h},\dot{h}]^{\FN}(X,Z),Y) \, + \, g(\dot{h}\left ( (\nabla^g_{X}\dot{h})Z-(\nabla^g_{Z}\dot{h})X\right ),Y)
+ \, g((\nabla^g_{X}\dot{h})Y  +  (\nabla^g_{Y}\dot{h})X,\dot{h}Z)\\
=&
-g([\dot{h},\dot{h}]^{\FN}(X,Z),Y) \, + \, g((\nabla_X^{g}\dot{h}^2)Y,Z)
\, + \, g(X,(\nabla^g_Y \dot{h})\dot{h}Z-
(\nabla^g_Z \dot{h})\dot{h}Y)
\end{split}
\end{equation*}
by using successively that the tensor $\nabla^{g}\dot{h}$ is symmetric in the last two slots, the expression of the Fr\"olicher-Nijenhuis bracket in \eqref{bra-na} and that $\dot{h}$ is symmetric. As the last summand in the equation displayed above is skew symmetric in $Y$ and $Z$, the claim follows.
\end{proof}

We summarise the expression for the second derivative of the Ricci tensor obtained so far. 
\begin{pro} \label{prel-fo}
Let $(M,g)$ be an  Einstein manifold with $\ric^g=Eg$ and let $g_t$ be a smooth curve with $g_0=g$. Then for any symmetric endomorphism $H \in \End (TM)$
we have
\begin{equation*}
\begin{split}
\left. g(\frac{\di^2}{\di\!t^2} \right|_{t=0}
\Ric^{g_t},H)&\;=\;g(\frac{1}{2}\widetilde{\Delta}_E\ddot{h}-\Delta_E\dot{h}^2+\delta^g[\dot{h},\dot{h}]^{\FN},H)\\
&\qquad \qquad -  g(\delta^{\star}(A+\dot{h}\D^g\dot{h}), H) \,-\, \frac{1}{2}g(\nabla^g_{\grad (\tr(\dot{h}))}\dot{h},H)\\[.5ex]
&\qquad \qquad  -\frac{1}{2}g( \dot{\eta}_{e_i}^2,H) \,+\, g(\dot{\eta}_{e_i}(\nabla^g_{e_j}\dot{h})e_i,He_j) ,
\end{split}
\end{equation*}
where the symmetric tensors 
$\dot{h}:=g^{-1}\dot{g}$ and $\ddot{h}:=g^{-1}\ddot{g}$. 
\end{pro}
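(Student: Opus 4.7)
My plan is to substitute Lemmas~\ref{step2} and~\ref{step3} into the basic second-variation identity~\eqref{der-13} and then reassemble the result by testing against the symmetric endomorphism $H$ and invoking Lemma~\ref{dir-1}. Since $H\in\Gamma(S^2M)$ is symmetric, every $g$-skew endomorphism appearing in the sum drops out under the trace pairing. This observation immediately removes the skew Riemann piece $R^-(\dot h)X$ from Lemma~\ref{step3} and also the term $\tfrac{1}{2}X\lrcorner\di(\delta^g-\D^g)\ddot h$ from Lemma~\ref{step2}, since the contraction of a $2$-form with $X$ has the skew form $X\wedge(\cdot)$ and is thus invisible to symmetric testers.

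The core of the argument is the conversion of the two remaining divergence contributions $-\delta^g(U\mapsto\dot h\circ\dot\eta_U)_X$ (from Lemma~\ref{step2}) and $-\delta^g(U\mapsto\nabla^g_{\dot h U}\dot h)_X$ (from Lemma~\ref{step3}) into a single Fr\"olicher--Nijenhuis bracket term. Lemma~\ref{dir-1} identifies the symmetric part of $\dot h\circ\dot\eta_U+\nabla^g_{\dot h U}\dot h$ with $-U\lrcorner[\dot h,\dot h]^{\FN}+\nabla^g_U\dot h^2$. Tested against symmetric $H$ this combination becomes $\delta^g[\dot h,\dot h]^{\FN}-(\nabla^g)^\star\nabla^g\dot h^2$, and absorbing the curvature piece $2\ring R^g\dot h^2$ from Lemma~\ref{step2} via $\Delta_E=(\nabla^g)^\star\nabla^g-2\ring R^g$ produces the two leading terms $\delta^g[\dot h,\dot h]^{\FN}-\Delta_E\dot h^2$ announced in Proposition~\ref{prel-fo}.

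The remaining terms are then routine to assemble. The two traces $-\tfrac{1}{2}\dot\eta_{\D^g\dot h}$ and $\dot\eta_{\delta^g\dot h}$ combine, using $\D^g=2\delta^g+\di\tr$, to $-\tfrac{1}{2}\dot\eta_{\di\tr\dot h}$; evaluating the defining formula~\eqref{eta1} for $\dot\eta$ at $X=\grad(\tr(\dot h))$ one sees that the remainder is antisymmetric in $(Y,Z)$, so the symmetric part reduces to $\nabla^g_{\grad(\tr(\dot h))}\dot h$, producing the $-\tfrac{1}{2}g(\nabla^g_{\grad(\tr(\dot h))}\dot h,H)$ contribution. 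The two vector-field gradients $-\nabla^g_X A$ and $-\nabla^g_X(\dot h\D^g\dot h)$ merge into $-\nabla^g_X(A+\dot h\D^g\dot h)$, whose symmetric part as an endomorphism equals $\delta^{\star_g}$ of its dual $1$-form, producing $-g(\delta^{\star_g}(A+\dot h\D^g\dot h),H)$. The remaining summands $-\tfrac{1}{2}(\dot\eta_{e_i})^2X$ and $\dot\eta_{e_i}(\nabla^g_{e_j}\dot h)e_i$ are already in the claimed form, while the $\ddot h$-contributions collect into $\tfrac{1}{2}\widetilde{\Delta}_E\ddot h$ exactly as in Lemma~\ref{step2}. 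The main obstacle is the Fr\"olicher--Nijenhuis rewriting of the second paragraph: detecting which divergence pieces survive the symmetric pairing and packaging them into a single bracket via Lemma~\ref{dir-1} is the only nontrivial step, after which everything else is bookkeeping modulo the symmetric-versus-skew decomposition.
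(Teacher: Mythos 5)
Your proposal is correct and follows essentially the same route as the paper: substitute Lemmas \ref{step2} and \ref{step3} into \eqref{der-13}, convert the two divergence contributions into $\delta^g[\dot h,\dot h]^{\FN}-\Delta_E\dot h^2$ via Lemma \ref{dir-1}, and read off the symmetric parts of $\nabla^g_X(\cdot)$, $\dot\eta_X$ and the trace terms. Your explicit remark that the skew pieces $R^-(\dot h)$ and $\tfrac12 X\lrcorner\di(\delta^g-\D^g)\ddot h$ die against symmetric $H$ is a small point the paper leaves implicit, but the argument is otherwise identical.
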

\begin{proof}
Substituting the expressions from  Lemmas \ref{step2} and \ref{step3} into \eqref{der-13} we find that 
\begin{equation*}
\begin{split}
( \left.\frac{\di^2}{\di\!t^2}\right|_{t=0}
\Ric^{g_t})X \;=\;  &\frac{1}{2}(\widetilde{\Delta}_E\ddot{h})_X - \nabla^g_X
(A+\dot{h}\D^g\dot{h})+\dot{\eta}_{\delta^g\dot{h} -\frac{1}{2}\D^g\dot{h}}  X\\
& \qquad  -\delta^g(U \mapsto \nabla^g_{\dot{h}U}\dot{h}+\dot{h} \circ \dot{\eta}_U)_X+2(\ring{R}\dot{h}^2)X\\
& \qquad -\frac{1}{2}\dot{\eta}_{e_i}^2X+\dot{\eta}_{e_i}(\nabla^g_X \dot{h})e_i + R^-(\dot h)X.
\end{split}
\end{equation*}
From Lemma \ref{dir-1} we see that the  scalar product with $H$ of the second line above gives
\begin{equation*}
\begin{split}
g( \nabla^g_{e_i} &(\nabla^g_{\dot h e_i} \dot h + \dot h \circ \dot \eta_{e_i}) + 2\ring{R}\dot{h}^2 , H)  \;= \;
g(\nabla^g_{e_i} ( - e_i \lrcorner  [\dot h, \dot h]^{\FN} + \nabla_{e_i} \dot h^2) + 2\ring{R}\dot{h}^2 , H)
\\[.8ex]
&
\qquad
=\;  g(\delta^g  [\dot h, \dot h]^{\FN} -  (\nabla^g)^{\star}\nabla^g \dot h^2  +   2\ring{R}\dot{h}^2 , H)
\;=\; g(\delta^g  [\dot h, \dot h]^{\FN} -  \Delta_E \dot h^2 , H) .
\end{split}
\end{equation*}
Since $\nabla^gX=\delta^{\star}X+\frac{1}{2}\di X$  for a vector field $X$, 
the symmetric part of $\nabla^gX$ is $\delta^{\star}X$. Moreover,  from the  first equation of \eqref{eta-2} it follows  that the symmetric 
part of $\dot \eta_X$ is  $\nabla^g _X \dot h$. To conclude, we recall the definition $\D^g = 2 \delta^g + \di \circ \tr$.
%
\end{proof}

\begin{rema} \label{skew-part}
The symmetry of the Ricci form $\ric^{g_t}$ entails that the $g$-skew-symmetric component in 
$2\left.\frac{\di^2}{\di\!t^2}\right|_{t=0}
\Ric^{g_t}$ is automatically determined in terms of lower 
order derivatives of $\Ric^{g_t}$ and thus is irrelevant when $g_t$ is tangent to a first order Einstein deformation. This follows from the identity 
\begin{equation*} \label{rvsR}
g^{-1} 
\left.\frac{\di^2}{\di\!t^2}\right|_{t=0}
(\ric^{g_t}-Eg_t) \; = \; 
\left.\frac{\di^2}{\di\!t^2}\right|_{t=0}
\Ric^{g_t} \, + \,2(g^{-1}\dot{g}) \circ 
\left.\frac{\di}{\di\!t}\right|_{t=0}
\Ric^{g_t}.
\end{equation*}
\end{rema}


%
\subsection{The quadratic terms} \label{quad-t}
%
In order to bring the computation of the second variation of the Ricci tensor to final form further insight into the algebraic structure of the quadratic terms 
involving $\dot{\eta}$ and $\nabla^g\dot{h}$ in the last line of Proposition \ref{prel-fo} is needed. The crucial technical observation in that 
direction is the following 
\begin{lema} \label{prod-1}
Assume that $H \in \Gamma(S^2M)$ and let $L:= \nabla_{e_i}^g\dot{h} \circ \nabla_{e_i}^g\dot{h}$ in 
$\Gamma(S^2M)$. We have 
\begin{itemize}
\item[(i)] $g(\dot{\eta}_{e_i}(\nabla^g_{e_j}\dot{h})e_i,He_j)=-g(\di_{\nabla^g}\!\dot{h}, H \sharp \di_{\nabla^g}\!\dot{h})+g(L,H)$
\\[-1ex]
\item[(ii)] $g(\dot{\eta}_{e_i}^2,H)=g(L,H)-g(\di_{\nabla^g}\dot{h},H\sharp \di_{\nabla^g}^g\dot{h}).$
\end{itemize}
In particular we have
\begin{equation} \label{combo}
-\frac{1}{2}g( \dot{\eta}_{e_i}^2,H) \,+ \, g(\dot{\eta}_{e_i}(\nabla^g_{e_j}\dot{h})e_i,He_j) \,=\,
\frac{1}{2} \left (g(L,H) \, - \, g(\di_{\nabla^g}\dot{h},H\sharp \di_{\nabla^g}\dot{h}) \right ).
\end{equation}
\end{lema}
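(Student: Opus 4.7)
The plan is to prove (i) and (ii) by direct tensorial expansion, based on an explicit decomposition of $\dot\eta$ into its symmetric and antisymmetric components. Working in a local $g$-orthonormal frame and setting $\dot\eta_{abc} := g(\dot\eta_{e_a} e_b, e_c)$, the defining formula \eqref{eta1} together with the symmetry of $\dot{h}$ yields $\dot\eta_{abc} = \dot{h}_{bc;a} + \dot{h}_{ac;b} - \dot{h}_{ab;c}$; computing $\dot\eta_{abc} \pm \dot\eta_{acb}$ then gives the splitting
\[
\dot\eta_{abc} = (\nabla^g_{e_a} \dot{h})_{bc} + g\bigl((\di_{\nabla^g} \dot{h})(e_b, e_c), e_a\bigr),
\]
where the first term is the $(b,c)$-symmetric part and the second is the $(b,c)$-antisymmetric part. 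Equivalently, as endomorphisms, $\dot\eta_X = \nabla^g_X \dot{h} + \omega_X$ with $\omega_X$ skew-symmetric and read off from $\di_{\nabla^g}\dot{h}$.

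For (i), I substitute this decomposition into the left-hand side and expand. With the shorthand $\sigma_{abc} = \dot{h}_{bc;a}$ and $\alpha_{abc} = (\di_{\nabla^g}\dot{h})_{abc} = \sigma_{abc} - \sigma_{bac}$, and using the symmetries of $\dot{h}$ and $H$, the LHS reduces to a sum of three quadratic terms in the components of $\nabla^g\dot{h}$ contracted with $H$. On the other side, expanding $-g(\di_{\nabla^g}\dot{h}, H \sharp \di_{\nabla^g}\dot{h})$ via the product $\alpha_{abc}\alpha_{dbc}$ produces four pieces; by the skew-symmetry of $\alpha$ in its first two slots, one of these four is precisely $-g(L,H)$, which combines with the $+g(L,H)$ on the putative right-hand side, while the remaining three pieces match, after relabeling dummy indices, the three terms obtained from the LHS.

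The proof of (ii) proceeds along the same lines: since $(\dot\eta_{e_i}^2)^a{}_b = \dot\eta_{ica}\dot\eta_{ibc}$, substituting the formula for $\dot\eta$ produces nine quadratic terms in $\nabla^g\dot{h}$ when contracted against $H_{ba}$. A convenient grouping shows that two of them give $\pm g(L,H)$ and cancel directly, while the remaining seven collapse to the same combination already obtained for (i). A notable by-product is the non-trivial identity
\[
g\bigl(\dot\eta_{e_i}(\nabla^g_{e_j}\dot{h})e_i, He_j\bigr) \;=\; g(\dot\eta_{e_i}^2, H).
\]
Identity \eqref{combo} is then an immediate consequence, obtained by taking (i) minus $\tfrac{1}{2}$ of (ii).

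The main obstacle is combinatorial bookkeeping: relabeling dummy indices consistently so that the $L$-terms and the $H \sharp \di_{\nabla^g}\dot{h}$-terms separate cleanly. No geometric input beyond the decomposition $\dot\eta_X = \nabla^g_X\dot{h} + \omega_X$ and the symmetries of $\dot{h}$ and $H$ is required.
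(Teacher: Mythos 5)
Your proposal is correct and follows essentially the same route as the paper: both rest on the decomposition $\dot\eta_X=\nabla^g_X\dot h+\omega_X$ with $\omega_X$ skew and read off from $\di_{\nabla^g}\dot h$, followed by direct index expansion against the symmetric tensor $H$ (the paper streamlines part (ii) by noting that the cross terms pair to an antisymmetric operator and hence drop against $H$, rather than tracking all nine products). Your bookkeeping claims, including the cancellation of the two $\pm g(L,H)$ terms and the coincidence of the left-hand sides of (i) and (ii), check out.
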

\begin{proof}
(i) Expanding $\dot{\eta}$ according to \eqref{eta-2} shows that 
\begin{equation*}
\begin{split}
g(\dot{\eta}_{e_i}(\nabla^g_{e_j}\dot{h}) & e_i,He_j) =g((\nabla^g_{e_j}\dot{h})e_i,e_k)g(\dot{\eta}_{e_i}e_k,He_j)\\
=& \, g((\nabla^g_{e_j}\dot{h})e_i,e_k)\biggl(g((\nabla^g_{e_i}\dot{h})e_k,He_j) \, + \, g(e_i,
(\nabla^g_{e_k}\dot{h})He_j \, - \,(\nabla^g_{He_j}\dot{h})e_k) \biggr )\\
=& \; g((\nabla^g_{e_i}\dot{h})He_j,(\nabla^g_{e_j}\dot{h})e_i) \, + \, g((\nabla^g_{e_j}\dot{h})e_k,(\nabla^g_{e_k}\dot{h})He_j) \, -\, g((\nabla^g_{e_j}\dot{h})e_i,(\nabla^g_{He_j}\dot{h})e_i)\\
=&\; 2g((\nabla^g_{e_i}\dot{h})He_j,(\nabla^g_{e_j}\dot{h})e_i)  \, - \, g(\nabla^g_{e_j}\dot{h},\nabla^g_{He_j}\dot{h})
\end{split}
\end{equation*}
after also taking into account that $\dot{h}$ and $H$ are symmetric. At the same time we have, directly from the definitions,
\begin{equation*}
\begin{split}
g(\di_{\nabla^g}\!\dot{h}, & H \sharp \di_{\nabla^g}\!\dot{h})=\frac{1}{2}
g(\di_{\nabla^g}\!\dot{h}(e_i,e_j), \di_{\nabla^g}\!\dot{h}(He_i,e_j)+\di_{\nabla^g}\!\dot{h}(e_i,He_j))\\
=&
g(\di_{\nabla^g}\!\dot{h}(e_i,e_j), \di_{\nabla^g}\!\dot{h}(He_i,e_j))
=g((\nabla^g_{e_i}\dot{h})e_j-(\nabla_{e_j}^g\dot{h})e_i,(\nabla^g_{He_i}\dot{h})e_j-(\nabla^g_{e_j}\dot{h})He_i)\\
=&g(\nabla^g_{e_i}\dot{h},\nabla^g_{He_i}\dot{h})-g((\nabla^g_{e_j}\dot{h})e_i,(\nabla^g_{He_i}\dot{h})e_j)
-g((\nabla^g_{e_i}\dot{h})e_j,(\nabla_{e_j}^g\dot{h})He_i)+g(L,H)\\
=&g(\nabla^g_{e_i}\dot{h},\nabla^g_{He_i}\dot{h})-2g((\nabla^g_{e_i}\dot{h})e_j,(\nabla^g_{e_j}\dot{h})He_i)+
g(L,H)
\end{split}
\end{equation*} 
since $H$ is symmetric w.r.t. $g$. The claim follows by comparison.\\
\noindent
(ii) We have $\dot{\eta}_X=\nabla_X^g\dot{h}+A_X$ where  the endomorphism $A_X$ is $g$-skew symmetric 
 and satisfies $g(A_XY,Z)=g(X,\di_{\nabla^g}\dot{h}(Y,Z))$. Since $H$ is symmetric it follows that 
$$g(\dot{\eta}_{e_i}^2,H)=g((\nabla_{e_i}^g\dot{h})^2,H)+g(A_{e_i}^2,H).$$
But
\begin{equation*}
\begin{split} 
g(A_{e_i}^2,H) & =g(A_{e_i}^2e_j,He_j)=-g(A_{e_i}e_j,A_{e_i}He_j)=-
g(A_{e_i}e_j,e_k)g(A_{e_i}He_j,e_k)\\
=&-g(e_i,\di_{\nabla^g}\dot{h}(e_j,e_k))g(e_i,\di_{\nabla^g}\dot{h}(He_j,e_k))=-g(\di_{\nabla^g}\dot{h}(e_j,e_k), \di_{\nabla^g}\dot{h}(He_j,e_k))\\
=&-\frac{1}{2}g(\di_{\nabla^g}\dot{h}(e_j,e_k),(H\sharp \di_{\nabla^g}\dot{h})(e_j,e_k))=
-g(\di_{\nabla^g}\dot{h},H\sharp \di_{\nabla^g}\dot{h}).
\end{split}
\end{equation*}
This finishes the proof of the claim.
\end{proof}
At this stage further interpretation of the r.h.s of \eqref{combo} in terms of the Fr\"olicher-Nijenhuis bracket is needed. This process partly relies on the Weitzenb\"ock formula \eqref{wz1} for symmetric $2$-tensors.
Integrating the right hand side of the first equation in Lemma \ref{prod-1} we obtain

\begin{pro} \label{fin-braN}
Assume that $h$ and $ H$ belong to $\Gamma (S^2M)$. Then 
\begin{equation*}
\begin{split}
\la \nabla_{e_i}^gh \circ \nabla_{e_i}^gh,H \ra_{L^2}  \, -  \, \la \di_{\nabla^g}h, & H\sharp \di_{\nabla^g}h \ra_{L^2} \;=\;- \, \la \delta^g[h,h]^{\FN},H\ra_{L^2}+2\la 
\delta^{g}[h,H]^{\FN},h\ra_{L^2}\\
&+\la \frac{1}{2}(\widetilde{\Delta}_Eh^2 \, + \, \delta^{\star_g}\di \tr h^2)-
\frac{1}{2}\{h,\widetilde{\Delta}_Eh+\delta^{\star_g}\di \tr h\}-Eh^2,H\ra_{L^2}.
\end{split}
\end{equation*}
\end{pro}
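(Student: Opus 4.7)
The plan is to compute both of the Frölicher--Nijenhuis divergences $\langle \delta^g[h,h]^{\FN}, H\rangle_{L^2}$ and $\langle \delta^g[h, H]^{\FN}, h\rangle_{L^2}$ appearing on the right-hand side via the Weitzenböck formula \eqref{wz1}, and then to match the leftover $\Delta_E$, $\ring{R}$ and $\delta^{\star_g}\delta^g$ residues against an independent Bochner-type identity for $L := \nabla^g_{e_i} h \circ \nabla^g_{e_i} h$. That Bochner identity is obtained by writing $\nabla^g_{e_i}(h^2) = \{h, \nabla^g_{e_i} h\}$ and differentiating once more, which yields $\nabla^\star\nabla(h^2) = \{h, \nabla^\star\nabla h\} - 2L$; replacing $\nabla^\star\nabla$ by $\Delta_E + 2\ring{R}$ then gives
\[
L \;=\; \tfrac{1}{2}\{h, \Delta_E h\} + \{h, \ring{R} h\} - \tfrac{1}{2}\Delta_E(h^2) - \ring{R}(h^2),
\]
which will be paired with $H$ in $L^2$.

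Using \eqref{bra-nac} and integration by parts, $\langle \delta^g[h,h]^{\FN}, H\rangle_{L^2}$ splits as $-\langle h\sharp d_{\nabla^g}h, d_{\nabla^g}H\rangle_{L^2} + \langle \delta^g d_{\nabla^g}(h^2), H\rangle_{L^2}$, and the second summand is computed by \eqref{wz1} applied to $h^2$, producing $\langle (\Delta_E + E + \ring{R})(h^2), H\rangle_{L^2} - \langle \delta^g(h^2), \delta^g H\rangle_{L^2}$. A completely analogous expansion based on \eqref{bra-nac2} computes $\langle \delta^g[h, H]^{\FN}, h\rangle_{L^2}$ with \eqref{wz1} applied now to the anticommutator $\{h, H\}$. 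A direct check from the definition of $\sharp$ shows that $\langle a\sharp\alpha, \beta\rangle_{L^2} = \langle a\sharp\beta, \alpha\rangle_{L^2}$ whenever $a \in \Gamma(S^2M)$ and $\alpha, \beta \in \Omega^2(M, TM)$; this symmetry makes the two $h\sharp d_{\nabla^g}(\cdot)$ cross-terms cancel once $-\langle \delta^g[h,h]^{\FN}, H\rangle_{L^2}$ and $2\langle \delta^g[h, H]^{\FN}, h\rangle_{L^2}$ are added, leaving precisely the expected $-\langle d_{\nabla^g}h, H\sharp d_{\nabla^g}h\rangle_{L^2}$ of the left-hand side.

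To match the remaining curvature and divergence residues, the identity $\widetilde{\Delta}_E(\cdot) + \delta^{\star_g}d\tr(\cdot) = \Delta_E(\cdot) - 2\delta^{\star_g}\delta^g(\cdot)$ from \eqref{D-tilde}, applied both to $h$ and to $h^2$, packages the $\delta^{\star_g}\delta^g$-terms into the modified Einstein operator $\widetilde{\Delta}_E$ appearing in the statement. The cross-term $-\langle \delta^g\{h, H\}, \delta^g h\rangle_{L^2}$ coming from the $\{h, H\}$-Weitzenböck equals $-\langle \{h, \delta^{\star_g}\delta^g h\}, H\rangle_{L^2}$ by self-adjointness of $\delta^{\star_g}$ combined with the trace-cyclic rule $\langle \{a, b\}, c\rangle = \langle b, \{a, c\}\rangle$ valid for $g$-symmetric $a, b, c$; the same rule converts $\langle (\Delta_E + E + \ring{R})\{h, H\}, h\rangle_{L^2}$ into $\langle \{h, \Delta_E h + E h + \ring{R} h\}, H\rangle_{L^2}$. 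After this reorganization every remaining term is of the form $\langle \Delta_E(h^2), H\rangle_{L^2}$, $\langle \{h, \Delta_E h\}, H\rangle_{L^2}$, $\langle \ring{R}(h^2), H\rangle_{L^2}$, $\langle \{h, \ring{R} h\}, H\rangle_{L^2}$ or $\langle h^2, H\rangle_{L^2}$, and matching coefficients with the Bochner identity for $L$ closes the proof. The main obstacle is exactly this bookkeeping: three Weitzenböck expansions combined with two distinct cross-term cancellations (via the $\sharp$-symmetry and the trace-cyclic rule) must all cooperate before the $E\langle h^2, H\rangle_{L^2}$-contributions telescope and the $\ring{R}$-terms pair up with those in the Bochner identity.
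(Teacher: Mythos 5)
Your proposal is correct and follows essentially the same route as the paper: both expand the Fr\"olicher--Nijenhuis terms via \eqref{bra-nac} and \eqref{bra-nac2}, cancel the cross-terms using the symmetry of $a\sharp\cdot$ and of $\{h,\cdot\}$, and then identify the residue through the Weitzenb\"ock formula \eqref{wz1} combined with the operator identity $\nabla^{\star}\nabla h^2=\{h,\nabla^{\star}\nabla h\}-2\,\nabla_{e_i}^gh\circ\nabla_{e_i}^gh$ (your ``Bochner identity''). The only differences are organizational -- you work outward from the right-hand side and apply \eqref{wz1} to $\{h,H\}$ rather than to $h$, which is equivalent by self-adjointness of $\delta^g\circ\di_{\nabla^g}$.
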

\begin{proof}
First we deal with the second summand in the expression to compute. According to the definition of $[h,H]^{\FN}$ in 
\eqref{bra-nac2} we have 
\begin{equation*}
\begin{split}
g(\di_{\nabla^g}h,  H\sharp \di_{\nabla^g}h) & \, =-g(h \sharp \di_{\nabla^g}H,\di_{\nabla^g}h) \, + \, g(\di_{\nabla^g}\{h,H\}-2[h,H]^{\FN},\di_{\nabla^g}h)\\
& \, = -g(\di_{\nabla^g}H,h \sharp \di_{\nabla^g}h) \, + \, g(\di_{\nabla^g}\{h,H\}-2[h,H]^{\FN},\di_{\nabla^g}h)
\end{split}
\end{equation*}
since the map $\alpha \in \Lambda^2(M,TM) \mapsto h \sharp \alpha \in \Lambda^2(M,TM)$ is symmetric with respect to the inner product induced by $g$. Because $h \sharp \di_{\nabla^g}h=\di_{\nabla^g}h^2-[h,h]^{\FN}$ by \eqref{bra-nac}, after integration over $M$ we find that 
\begin{equation*}
\begin{split}
\la \di_{\nabla^g}h,H\sharp \di_{\nabla^g}h \ra_{L^2}&=\la \delta^g[h,h]^{\FN},H\ra_{L^2}-2\la 
\delta^{g}[h,H]^{\FN},h\ra_{L^2}\\
&
\qquad \qquad \qquad \qquad 
-\la H, \delta^g\di_{\nabla^g}h^2\ra_{L^2}+\la \{h,H\},\delta^g\di_{\nabla^g}h\ra_{L^2}.
\end{split}
\end{equation*}
By also taking into account that the operator $H \in S^2M \mapsto \{h,H\} \in S^2M$ is symmetric we further get 
\begin{equation*}
\la \nabla_{e_i}^gh \circ \nabla_{e_i}^gh,H \ra_{L^2}-\la \di_{\nabla^g}h,H\sharp \di_{\nabla^g}h \ra_{L^2}=-\la \delta^g[h,h]^{\FN},H\ra_{L^2}+2\la 
\delta^{g}[h,H]^{\FN},h\ra_{L^2}+\la S,H \ra_{L^2}
\end{equation*}
where $S=\nabla_{e_i}^gh \circ \nabla_{e_i}^gh+\delta^g\di_{\nabla^g}h^2-\{h,\delta^g\di_{\nabla^g}h\}$. This quantity is determined by using the 
following two observations; firstly we have the operator identity 
$$
\nabla_{e_i}^gh \circ \nabla_{e_i}^gh \; = \; \frac{1}{2}\{h,(\nabla^g)^{\star} \nabla^g h\} \, - \, \frac{1}{2}(\nabla^g)^{\star} \nabla^g h^2.
$$ 
Secondly the Weitzenb\"ock 
formula \eqref{wz1} can be equivalently written as
$$ 
\delta^g \circ \di_{\nabla^g} \, -\, \frac{1}{2}(\nabla^g)^{\star} \nabla^g \; =\; \frac{1}{2}(\widetilde{\Delta}_E +\delta^{\star_g} \circ \di \circ \tr) \, - \, \di \circ \delta^g \, + \, E.
$$
Combining these two facts leads to 
$$
g(S,H) \; = \; g(\frac{1}{2}(\widetilde{\Delta}_Eh^2  +  \delta^{\star_g}\di \tr h^2)  - 
\frac{1}{2}\{h,\widetilde{\Delta}_Eh  +\delta^{\star_g}\di \tr h\}-Eh^2,H)$$ and the claim follows.
\end{proof}
\subsection{The second variation of the Ricci tensor} \label{main}
We assemble the previously developed material into an intrinsic description of the
second variation of the Ricci tensor. 
The primary object entering the latter 
is build from the Fr\"olicher-Nijenhuis bracket $[\cdot, \cdot]^{\FN}$ as follows. Consider the second order, linear differential operator $\mathbf{v}:\Gamma(S^2M) \times \Gamma( S^2M) \to \Gamma(S^2M)$ 
given by 
\begin{equation} \label{def-v}
\la \mathbf{v}(h_1,h_2),h_3\ra_{L^2}
\,=\,
\la \delta^g[h_1,h_2]^{\FN},h_3\ra_{L^2}+\la \delta^g[h_2,h_3]^{FN},h_1\ra_{L^2}+
\la \delta^g[h_3,h_1]^{\FN},h_2\ra_{L^2}.
\end{equation}
Because the Fr\"olicher-Nijenhuis bracket is symmetric, $\bfv$ is symmetric in each pair of entries 
\begin{equation*}
\la \mathbf{v}(h_1,h_2),h_3\ra_{L^2}=\la \bfv(h_2,h_1),h_3\ra_{L^2}=\la \bfv(h_1,h_3),h_2\ra_{L^2}.
\end{equation*}
This algebraic property will play an essential role in determining the gauge invariance properties of the operator 
$\bfv$. 
\begin{teo} \label{int-1}
Let $(M,g)$ be an Einstein manifold with $\ric^g=Eg$ and assume that the curve of metrics $g_t$ satisfies $g_0=g$. 
The $g$-symmetric component in 
$2\left.\frac{\di^2}{\di\!t^2}\right|_{t=0}
\Ric^{g_t}$ is given by 
\begin{equation*}
\begin{split}
\widetilde{\Delta}_E(\ddot{h} - \frac{3}{2}  h^2) + \bfv(  h, h) \, - \,  & E  h^2-\frac{1}{2}\delta^{\star_g}\di\!\tr( h^2)\\
&+ \, 2\delta^{\star_g}(2 h\delta^g h+  h\D^g  h)-\nabla^g_{\grad (\tr  h)}  h-\frac{1}{2}\{  h,\widetilde{\Delta}_E  h+\delta^{\star_g}\di\!\tr(  h)\}
\end{split}
\end{equation*}
where $h=g^{-1}\dot{g}$ and $\ddot{h}=g^{-1}\ddot{g}$.
\end{teo}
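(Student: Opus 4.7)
The plan is to start from the expression for $\left.\frac{d^2}{dt^2}\right|_{t=0}\Ric^{g_t}$ provided by Proposition~\ref{prel-fo}, pair it with an arbitrary $H\in\Gamma(S^2M)$, integrate over $M$, and then convert all quadratic-in-$\nabla^g h$ terms into intrinsic expressions involving the Fr\"olicher--Nijenhuis bracket, the Einstein operator, and the anti-commutator. Multiplying the identity of Proposition~\ref{prel-fo} by $2$, only the last line of that proposition (the quadratic-in-$\dot\eta$ terms) resists direct interpretation; everything else is already in a form close to the target.

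First I would apply identity \eqref{combo} of Lemma~\ref{prod-1} to rewrite twice the last line as the difference $g(L,H)-g(\di_{\nabla^g}h, H\sharp\di_{\nabla^g}h)$ with $L=\nabla_{e_i}^gh\circ\nabla_{e_i}^gh$. Integration and the application of Proposition~\ref{fin-braN} then convert this combination into
\[
-\langle\delta^g[h,h]^{\FN},H\rangle_{L^2}+2\langle\delta^g[h,H]^{\FN},h\rangle_{L^2}
+\tfrac12\langle\widetilde{\Delta}_Eh^2+\delta^{\star_g}\di\tr h^2,H\rangle_{L^2}
-\tfrac12\langle\{h,\widetilde{\Delta}_Eh+\delta^{\star_g}\di\tr h\},H\rangle_{L^2}
-\langle Eh^2,H\rangle_{L^2}.
\]
The bracket contributions now produced must be combined with the term $2\langle\delta^g[h,h]^{\FN},H\rangle_{L^2}$ already present in Proposition~\ref{prel-fo}. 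The decisive point here is that the symmetry of the Fr\"olicher--Nijenhuis bracket together with the defining identity \eqref{def-v} of $\bfv$ yields
\[
\langle\delta^g[h,h]^{\FN},H\rangle_{L^2}+2\langle\delta^g[h,H]^{\FN},h\rangle_{L^2}=\langle\bfv(h,h),H\rangle_{L^2},
\]
so that after all cancellations the pairing acquires exactly a $+\langle\bfv(h,h),H\rangle_{L^2}$ contribution, as required.

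Next I would handle the Laplacian bookkeeping. Combining $-2\Delta_Eh^2$ from Proposition~\ref{prel-fo} with the $+\tfrac12\widetilde{\Delta}_Eh^2$ produced by Proposition~\ref{fin-braN}, and using the defining identity $\Delta_E=\widetilde{\Delta}_E+\delta^{\star_g}\circ\D^g$ from \eqref{D-tilde}, isolates $-\tfrac32\widetilde{\Delta}_Eh^2$ (which pairs with $\widetilde{\Delta}_E\ddot h$ to produce $\widetilde{\Delta}_E(\ddot h-\tfrac32 h^2)$) together with a residual divergence $-2\delta^{\star_g}\D^g h^2$. The remaining divergence terms $-2\delta^{\star_g}(A+h\D^g h)$ are then expanded using the explicit formula \eqref{comp-A} for $A$ and the decomposition $\D^g h^2=2\delta^g h^2+\di\tr h^2$; the purely $\delta^{\star_g}\delta^g h^2$ contributions cancel, while the surviving terms reorganise into $\pm\tfrac12\delta^{\star_g}\di\tr h^2$ and a multiple of $\delta^{\star_g}(2h\delta^g h+h\D^g h)$ matching the statement.

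I expect the main obstacle to be precisely this reorganisation of divergence and Laplacian terms: the non-trivial part is seeing that, once $A$ is substituted and the Weitzenb\"ock relation $\Delta_E=\widetilde{\Delta}_E+\delta^{\star_g}\D^g$ is applied, the accidental cancellations conspire to produce exactly the structure $-\tfrac12\delta^{\star_g}\di\tr(h^2)+2\delta^{\star_g}(2h\delta^g h+h\D^g h)$. The replacement of the quadratic terms by brackets via Proposition~\ref{fin-braN} is conceptually the key step, but technically routine once Lemma~\ref{prod-1} is in place; it is the algebraic manipulation of divergence terms that requires the most careful accounting. Since the identity is established in the $L^2$ pairing against arbitrary $H\in\Gamma(S^2M)$, and both sides represent $g$-symmetric tensors, one concludes the asserted equality of symmetric components.
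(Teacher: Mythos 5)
Your proposal is correct and follows essentially the same route as the paper: start from Proposition \ref{prel-fo}, substitute the explicit value of $A$ from \eqref{comp-A}, replace the quadratic $\dot\eta$-terms via \eqref{combo} and Proposition \ref{fin-braN}, recognise $\bfv(h,h)$ through the symmetry of the Fr\"olicher--Nijenhuis bracket and \eqref{def-v}, and gather the remaining Laplacian and divergence terms using $\Delta_E=\widetilde\Delta_E+\delta^{\star_g}\circ\D^g$. The paper's proof is exactly this sequence of substitutions followed by the same algebraic bookkeeping, concluding pointwise from the $L^2$ identity since both sides are pointwise-defined tensors.
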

\begin{proof}
In the second variation formula for the Ricci tensor in Proposition \ref{prel-fo} we replace the vector field $A$ by its value given in \eqref{comp-A} 
and the last line by \eqref{combo}. Next, we use Proposition \ref{fin-braN}. 
Then a purely algebraic calculation which solely consists in gathering terms shows that the claim holds in an $L^2$- sense. As both 
quantities in the statement are defined at each point of $M$ it follows that the claim also holds pointwise.
\end{proof}

\begin{rema}\label{2var}
Note that by Remark \ref{skew-part} the second derivative of $\Ric^{g_t}$ is completely symmetric if its first derivative vanishes. Hence, in this case 
the theorem above gives an explicit second variation formula for the Ricci tensor $\Ric^{g_t}$.
\end{rema}

As the main application of  Theorem \ref{int-1} we derive below the explicit equation that 
characterises second order Einstein deformations.

\begin{teo} \label{ED2}
Let $(M,g)$ be an Einstein manifold with $\ric^g=Eg$ and let $g_t$ be a curve of metrics starting at $g$, with $h=g^{-1}\dot{g}$ and $\ddot{h}=g^{-1}\ddot{g}$. 
Then $g_t$ is a second order Einstein  deformation if and only if $\widetilde{\Delta}_Eh=0$ and  the following equation holds
\begin{equation*} 
\begin{split}
\widetilde{\Delta}_E(\ddot{h}-\frac{3}{2}h^2) \;  =  \; -\bfv(h,h) \, + \, Eh^2 & \, +  \,  \frac{1}{2}\delta^{\star_g}\di\!\tr(h^2)\\
&- \, 2\delta^{\star_g}(2h\delta^gh \, + \, h\D^g h) \, + \, \nabla^g_{\grad (\tr h)}h \, + \, \frac{1}{2}\{h,\delta^{\star_g}\di\!\tr(h)\} .
\end{split}
\end{equation*}
\end{teo}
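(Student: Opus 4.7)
My plan is to obtain Theorem~\ref{ED2} as a direct consequence of Theorem~\ref{int-1} combined with Proposition~\ref{first} and Remark~\ref{skew-part}. By definition, $g_t$ is a second order Einstein deformation exactly when both $\left.\frac{\di}{\di\!t}\right|_{t=0}(\ric^{g_t}-Eg_t)=0$ and $\left.\frac{\di^2}{\di\!t^2}\right|_{t=0}(\ric^{g_t}-Eg_t)=0$ hold. Proposition~\ref{first} immediately identifies the first condition with $\widetilde{\Delta}_E h=0$, producing the first half of the characterization.

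To treat the second condition, I pass from the bilinear form $\ric^{g_t}-Eg_t$ to the endomorphism $\Ric^{g_t}$ by means of the identity from Remark~\ref{skew-part},
$$g^{-1}\left.\frac{\di^2}{\di\!t^2}\right|_{t=0}(\ric^{g_t}-Eg_t) \;=\; \left.\frac{\di^2}{\di\!t^2}\right|_{t=0}\Ric^{g_t} \,+\, 2h\circ\left.\frac{\di}{\di\!t}\right|_{t=0}\Ric^{g_t}.$$
Once the first-order condition $\widetilde{\Delta}_E h=0$ is imposed, Proposition~\ref{first} forces the last term on the right to vanish, and Remark~\ref{2var} then guarantees that $\left.\frac{\di^2}{\di\!t^2}\right|_{t=0}\Ric^{g_t}$ is automatically $g$-symmetric. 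Consequently, the second-order condition becomes equivalent to the vanishing of the $g$-symmetric component of $\left.\frac{\di^2}{\di\!t^2}\right|_{t=0}\Ric^{g_t}$, i.e.\ of the precise expression computed in Theorem~\ref{int-1}.

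The final step is to set that expression equal to zero, use $\widetilde{\Delta}_E h=0$ to collapse $\frac{1}{2}\{h,\widetilde{\Delta}_E h+\delta^{\star_g}\di\!\tr(h)\}$ into $\frac{1}{2}\{h,\delta^{\star_g}\di\!\tr(h)\}$, and transpose every term except $\widetilde{\Delta}_E(\ddot h-\frac{3}{2}h^2)$ to the right-hand side; this recovers exactly the displayed equation. The converse implication is obtained by reversing the same chain of equivalences. Since all the analytic heavy lifting has already been absorbed into Theorem~\ref{int-1}, no substantive obstacle remains; the only delicate point is the careful appeal to Remark~\ref{skew-part}, which is what legitimately restricts attention to the $g$-symmetric part of the endomorphism-valued second variation and thereby keeps the criterion in $\Gamma(S^2M)$.
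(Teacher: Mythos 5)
Your argument is correct and follows essentially the same route as the paper's own proof: Proposition~\ref{first} handles the first-order condition, Remark~\ref{skew-part} (equivalently Remark~\ref{2var}) reduces the second-order condition to the vanishing of the symmetric expression in Theorem~\ref{int-1}, and the displayed equation results from setting that expression to zero and simplifying with $\widetilde{\Delta}_E h=0$. The explicit transcription of the identity from Remark~\ref{skew-part} is a harmless elaboration of what the paper does implicitly.
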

\proof
If $g_t$ is a second order Einstein deformation than it is in particular a first order deformation, which by Proposition \ref{first} is equivalent to
$  
\left.\frac{\di}{\di\!t}\right|_{t=0}
\Ric^{g_t} = 0$ and also to $\widetilde{\Delta}_Eh=0$. Then the formula in Remark \ref{skew-part}
shows that the second derivative of $\Ric^{g_t}$ is completely symmetric. Hence, its vanishing is equivalent to the
vanishing of the pointwise expression given in Theorem \ref{int-1} and the claimed equation  follows. Conversely, $\widetilde{\Delta}_Eh=0$
implies that $g_t$ is a first order Einstein deformation. Then Remark \ref{skew-part} and the equation above
together with Theorem \ref{int-1}  implies that
$g_t$ is also a second order Einstein deformation.
\qed

\medskip

If, after a gauge transformation, $h$ is normalised to be trace and divergence free the equation in Theorem \ref{ED2} clearly simplifies further, in the sense that the second displayed line therein vanishes. 
However we avoid this type of symmetry breaking since it is too restrictive for most of the considerations and examples in this paper.

\subsection{The Koiso obstruction}\label{koiso}
As an immediate consequence of Theorem \ref{int-1} we obtain a necessary condition for  the integrability to second order. Indeed, 
let $h \in \scrE(g)$ be integrable to second order and  let $g_t$ be a second order Einstein deformation with $h = g^{-1} \dot g$. Then 
taking the $L^2$ trace in Theorem \ref{int-1} and substituting the definition of $\bfv$ leads to
\be\label{koiso-1}
0= \la 2
\left.\frac{\di^2}{\di\!t^2}\right|_{t=0}
\Ric^{g_t}, h\ra_{L^2}
=
\la  \bfv(h,h) - E h^2, h \ra_{L^2} 
=
\la 3 \delta^g[h,h]^{\FN}  - E h^2, h \ra_{L^2} ,
\ee
where the assumptions on $h$, i.e. that $\widetilde \Delta_E h = 0, \, \delta^g h = 0$ and $\tr_g h = 0$, give the strong simplification
of the variation formula of Theorem \ref{int-1}.

This reformulates in an intrinsic way   Koiso's obstruction as we will see now. The Koiso obstruction is written as $\la \mathrm{E}''_g(h, h), h \ra_{L^2} = \int_MP(h)\vol = 0$, 
where a local calculation (see Koiso \cite[Lemma 4.3]{Ko1}) 
shows that $P(h)$  is defined via
\begin{equation}\label{Ph}
2P(h) \;:= \; 3  g(\nabla^{2}_{e_i,he_i}h,h) \, - \, 6 g ((\nabla^2_{e_i,e_j}h)he_i,he_j) \, + \, 2 E \, \tr(h^3),
\end{equation} 
where   $\{  e_i \}$ is a local orthonormal basis and $\nabla^2= (\nabla^g)^2$. Since $\mathrm{E}''_g(h, h)$ is given by the second variation of the Ricci tensor (see \eqref{E2})
we can rewrite the Koiso obstruction using \eqref{koiso-1}. 
\begin{pro} \label{div-bra}
For any $h \in \scrE(g)$ we have
$$ 
2\int_M P(h)\vol \;  =\;  3\langle \delta^g [h,h]^{\FN},h\rangle_{L^2}  \,  -   \, E\int_M\tr(h^3)\vol.
$$
\end{pro}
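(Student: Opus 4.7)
The plan is to apply Theorem \ref{int-1} along a smooth curve $g_t$ with $g_0=g$ and $h:=g^{-1}\dot g \in \scrE(g)$, and to take the $L^2$ pairing of both sides with $h$. Since $\widetilde\Delta_E h=0$, Proposition \ref{first} tells us that $g_t$ is a first-order Einstein deformation, so by Remark \ref{skew-part} the second derivative $\left.\frac{\di^2}{\di t^2}\right|_{t=0}\Ric^{g_t}$ is automatically $g$-symmetric and coincides with $g^{-1}\mathrm{E}''_g(h,h)$. The classical Koiso formula \eqref{Ph} then identifies
\[
2\la\left.\tfrac{\di^2}{\di t^2}\right|_{t=0}\Ric^{g_t},h\ra_{L^2}=2\la \mathrm{E}''_g(h,h),h\ra_{L^2}
\]
with $2\int_M P(h)\vol$, which is the left-hand side of the stated identity.

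Next, I would argue that on the right-hand side of Theorem \ref{int-1} the three constraints $\widetilde\Delta_E h=0$, $\delta^g h=0$ and $\tr h=0$ (together with their consequence $\D^g h=0$) annihilate every auxiliary term when tested against $h$. Splitting $\widetilde\Delta_E=\Delta_E-\delta^{\star_g}\circ\D^g$ and using the self-adjointness of $\Delta_E$ combined with $(\delta^{\star_g})^*=\delta^g$, one gets $\la \widetilde\Delta_E(\ddot h-\tfrac{3}{2}h^2),h\ra_{L^2}=\la \ddot h-\tfrac{3}{2}h^2,\Delta_E h\ra_{L^2}-\la \D^g(\ddot h-\tfrac{3}{2}h^2),\delta^g h\ra_{L^2}=0$, since $\Delta_E h=\widetilde\Delta_E h+\delta^{\star_g}\D^g h=0$; in particular the artificial second-order datum $\ddot h$ drops out. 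By the same type of adjointness manipulations I would check $\la \delta^{\star_g}\di\tr(h^2),h\ra_{L^2}=\la \di\tr(h^2),\delta^g h\ra_{L^2}=0$, $\la \delta^{\star_g}(2h\delta^g h+h\D^g h),h\ra_{L^2}=0$, $\la \nabla^g_{\grad(\tr h)}h,h\ra_{L^2}=0$ (since $\tr h=0$), and $\la \{h,\widetilde\Delta_E h+\delta^{\star_g}\di\tr h\},h\ra_{L^2}=0$. What survives in the pairing is then precisely $\la \bfv(h,h)-Eh^2,h\ra_{L^2}$.

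Finally, the three-fold symmetry of $\bfv$ noted right after definition \eqref{def-v} gives $\la \bfv(h,h),h\ra_{L^2}=3\la \delta^g[h,h]^{\FN},h\ra_{L^2}$, while $\la Eh^2,h\ra_{L^2}=E\int_M\tr(h^3)\vol$. Assembling these pieces yields
\[
2\int_M P(h)\vol \;=\; 3\la \delta^g[h,h]^{\FN},h\ra_{L^2}-E\int_M\tr(h^3)\vol,
\]
as desired. The only genuine point of vigilance is ensuring that the artificial datum $\ddot h$ — which depends on the choice of extending curve $g_t$ rather than on $h$ itself — truly disappears from the final identity; this is secured by the splitting of $\widetilde\Delta_E$ above combined with $\widetilde\Delta_E h=0$, and all the remaining cancellations are routine integrations by parts encoded directly in the definition of $\scrE(g)$.
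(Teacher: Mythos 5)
Your argument is correct, and it is essentially the derivation the paper itself gives in Section \ref{koiso}: pair the second variation formula of Theorem \ref{int-1} with $h$, use $\widetilde\Delta_E h=0$, $\delta^g h=0$, $\tr h=0$ to kill every auxiliary term (including the curve--dependent $\ddot h$, via $\Delta_E h=\widetilde\Delta_E h+\delta^{\star_g}\D^g h=0$ and self-adjointness of $\Delta_E$), identify the left-hand side with $2\int_M P(h)\vol$ through Koiso's Lemma 4.3 and Remark \ref{skew-part}, and invoke the three-fold symmetry of $\bfv$. The only caveat is that this route takes the identification $\la \mathrm{E}''_g(h,h),h\ra_{L^2}=\int_M P(h)\vol$ with the local expression \eqref{Ph} as an external input from Koiso; for that reason the paper also supplies, in Appendix \ref{koiso-plus}, an independent direct proof that starts from the pointwise formula \eqref{bra-na} for $[h,h]^{\FN}$, computes $\la \delta^g[h,h]^{\FN},h\ra_{L^2}$ term by term via Ricci identities and integration by parts, and lands on $2\int_M P(h)\vol=3\la\delta^g[h,h]^{\FN},h\ra_{L^2}-\tfrac32\la\Delta_E h,h^2\ra_{L^2}-E\int_M\tr(h^3)\vol$, which reduces to the claim since $\Delta_E h=0$ on $\scrE(g)$. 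Your version buys brevity by leaning on the already-established variational machinery; the appendix version verifies the identity from scratch and makes the role of each summand of \eqref{Ph} explicit.
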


For other applications it is perhaps interesting to note how the summands in the local expression of $P(h)$ given
in \eqref{Ph} can be rewritten in an index free notation. This we will do in Appendix \ref{koiso-plus} where we also give
a direct proof of Proposition \ref{div-bra}.

%
\section{Deformation theory to second order}
%

In this section we will derive the necessary and sufficient condition of Theorem \ref{main1} for an infinitesimal Einstein deformation $h \in \scrE(g)$ to be integrable to
second order.

\subsection{Second order integrability} \label{pfm}
To solve the Einstein equation to $2$nd order  preliminary information regarding the Hodge theory of the Laplacian $\widetilde{\Delta}_E$ is needed. This essentially relies on the properties of the well-known $L^2$-orthogonal splitting 
\begin{equation} \label{ssplit}
\Gamma(S^2M) \, = \, \TT(g) \, \oplus \, \{\delta^{\star_g}X+f\id : X \in \Gamma(TM), \, f \in  C^{\infty}M \}.
\end{equation} 

\begin{pro} \label{Hodge1}
The following hold
\begin{itemize}
\item[(i)] $\D^g \circ \widetilde{\Delta}_E=0$
\item[(ii)] $\ker \D^g \,=\, \TT(g) \, \oplus \,\widetilde{\Delta}_E\{f\id : \int_Mf\vol=0\}  \, \oplus \, \bbR\id$, where the sum is $L^2$-orthogonal 
\item[(iii)] For any $v \in \Gamma(S^2M)$ there exists $u \in \Gamma(S^2M)$ solving $\widetilde{\Delta}_Eu=v$ 
if and only if 
\begin{equation*}
\begin{split}
&v \perp_{L^2} \mathscr{E}(g) \ \mathrm{and} \ \D^g v=0 \ \mathrm{when} \ E \neq 0 \\
&\mathrm{respectively}\\\
&v \perp_{L^2} \scrE(g) \ \mathrm{and} \ \D^g v=0, \ \tr_{L^2}v=0 \ \mathrm{when} \ E=0. 
\end{split}
\end{equation*}
\end{itemize}
\end{pro}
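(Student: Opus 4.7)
For part (i), the cleanest route is a variational argument. The differential Bianchi identity gives $\D^{g_t}\Ric^{g_t}=0$ for every metric $g_t$, so differentiating at $t=0$ yields $(\dot{\D}^g)(\Ric^g)+\D^g(\dot{\Ric}^g)=0$. Since $\Ric^g=E\,\id$ and since the identity endomorphism is parallel for every Levi-Civita connection with $\tr_{g_t}\id=n$ constant, we have $\D^{g_t}\id\equiv 0$ for all $t$; hence $(\dot{\D}^g)(\id)=0$ and the first summand vanishes. Proposition \ref{first} then gives $\D^g\widetilde{\Delta}_E\dot{h}=0$ for arbitrary $\dot{h}$, proving (i). A useful byproduct: combining this with $\Delta_E=\widetilde{\Delta}_E+\delta^{\star_g}\D^g$ and the fact that $\D^g$ kills $\TT(g)$ shows that $\Delta_E$ preserves $\TT(g)$.

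For (ii), the three containments are immediate: $\TT(g)\subset\ker\D^g$ by definition, $\widetilde{\Delta}_E\{f\,\id\}\subset\ker\D^g$ by (i), and $\D^g\id=\di\,n=0$. The pairwise $L^2$-orthogonalities reduce to short integration-by-parts calculations; the only nontrivial one is $\TT(g)\perp\widetilde{\Delta}_E(f\,\id)$, which follows from expanding $\widetilde{\Delta}_E=\Delta_E-\delta^{\star_g}\D^g$, using self-adjointness of $\Delta_E$ together with the trace identity $\tr\Delta_E h=(\Delta-2E)\tr h$ (which kills $h\in\TT(g)$), while the divergence term pairs to zero via $\delta^gh=0$. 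The orthogonality $\bbR\,\id\perp\widetilde{\Delta}_E(f\,\id)$ reduces to $-2nE\int f\,\vol=0$ given the normalisation $\int f=0$.

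For completeness in (ii), the plan is as follows. Given $h\in\ker\D^g$, first subtract $c\,\id$ with $c=(n\vol M)^{-1}\int\tr h\,\vol$ to arrange $\int\tr h=0$, then apply \eqref{ssplit} to write $h=h_0+\delta^{\star_g}Y+f\,\id$ with $h_0\in\TT(g)$ and $\int f=0$. A direct calculation using \eqref{wz2} and $\tr\delta^{\star_g}\alpha=-\di^\star\alpha$ converts $\D^gh=0$ into the scalar-valued equation $(\Delta-2E)Y+(n-2)\di f=0$. Hodge-decomposing $Y=\di\alpha+\delta\beta+H$ (with $\int\alpha=0$) and using the explicit identity $\widetilde{\Delta}_E(f'\,\id)=(\Delta-2E)f'\cdot\id-(n-2)\delta^{\star_g}\di f'$ lets one absorb $\delta^{\star_g}\di\alpha+f\,\id$ into $\widetilde{\Delta}_E(-\tfrac{\alpha}{n-2}\id)$ modulo a constant multiple of $\id$, after using the scalar relation obtained by projecting the constraint onto exact $1$-forms. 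The residuals $\delta^{\star_g}\delta\beta$ and $\delta^{\star_g}H$ both land in $\TT(g)$: they are trace-free because $\di^\star\delta\beta=0=\di^\star H$, and divergence-free via $2\delta^g\delta^{\star_g}=\di\di^\star+\Delta-2E$ applied to the coexact component (using $(\Delta-2E)\delta\beta=0$) and to the harmonic component ($H=0$ when $E\ne 0$, harmless when $E=0$).

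Finally, part (iii) follows by combining (ii) with the observation that the splitting \eqref{ssplit} is preserved by $\widetilde{\Delta}_E$: on $\TT(g)$ it agrees with $\Delta_E$ and maps into $\TT(g)$ by the byproduct in (i), while on the complementary summand it sends $\delta^{\star_g}Y+f'\,\id$ to $\widetilde{\Delta}_E(f'\,\id)=(\Delta-2E)f'\cdot\id-(n-2)\delta^{\star_g}\di f'$, with $\delta^{\star_g}(\Omega^1)$ in the kernel. The equation $\widetilde{\Delta}_Eu=v$ then decouples into two independent problems. The $\TT(g)$-piece $\Delta_Eu_0=v_0$ is elliptic self-adjoint and solvable iff $v_0\perp_{L^2}\scrE(g)$; since the other summands of $v$ are $L^2$-perpendicular to $\scrE(g)\subset\TT(g)$, this is equivalent to $v\perp_{L^2}\scrE(g)$. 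The gauge-side piece reduces via (ii) to $\widetilde{\Delta}_E(f^*\,\id)=c\,\id$; the explicit formula for $\widetilde{\Delta}_E(f\,\id)$ forces $\di f^*=0$, whence $f^*$ is constant and the equation becomes $-2Ef^*=c$, solvable for $E\ne 0$ and forcing $c=0$ — equivalently $\tr_{L^2}v=0$ — when $E=0$. The main technical obstacle throughout is careful bookkeeping of traces and divergences; everything is driven by the clean identity for $\widetilde{\Delta}_E(f\,\id)$ above, which both controls the gauge kernel and makes the orthogonality computations transparent.
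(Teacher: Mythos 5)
Your proof is correct and follows essentially the same route as the paper: your part (i), obtained by differentiating the contracted Bianchi identity $\D^{g_t}\Ric^{g_t}=0$ along a curve of metrics, is precisely the alternative argument the paper records in the remark immediately after the proposition, while parts (ii) and (iii) reproduce the paper's strategy — the two identities for $\D^g(\delta^{\star_g}X+f\id)$ and $\widetilde{\Delta}_E(f\id)$, the Hodge decomposition of the vector field, and the decoupling of $\widetilde{\Delta}_Eu=v$ along the splitting \eqref{ssplit}. The only genuine variation is that you handle the coclosed component of $Y$ via the Weitzenb\"ock formula \eqref{wz2} (showing $\delta^{\star_g}\delta\beta$ and $\delta^{\star_g}H$ land in $\TT(g)$) where the paper invokes Lichnerowicz's theorem to conclude $\delta^{\star_g}K=0$; these are equivalent. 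One step in your part (iii) is loosely justified: the claim that $\widetilde{\Delta}_E(f^{*}\id)=c\,\id$ \emph{forces} $\di f^{*}=0$. The equation only forces $\delta^{\star_g}\di f^{*}$ to be proportional to $\id$, i.e. $\nabla^2 f^{*}=-\tfrac{1}{n}(\Delta^g f^{*})\,g$, and by Obata this admits non-constant solutions on the round sphere (first eigenfunctions), so the implication is false pointwise in general. Your conclusion nevertheless survives, because such solutions contribute $c=0$; the clean fix, which is the paper's argument, is to take the $L^2$-trace of the equation, giving $nc\,\vol(M)=-2nE\int_M f^{*}\vol$, whence solvability for every $c$ when $E\neq 0$ and the necessity of $c=0$ (equivalently $\tr_{L^2}v=0$) when $E=0$.
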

\begin{proof}
First record that direct computation shows for all $X \in \Gamma(TM)$ and $f \in C^{\infty}M$ that 
\begin{equation}\label{eq1}
\D^g(\delta^{\star_g}X+f\id)  = (\Delta^g-2E)X+(n-2)\di\!f 
\end{equation}
\vspace{-4ex}
\begin{equation}\label{eq2}
 \widetilde{\Delta}_E(f\id) = (\Delta^gf-2Ef)\id-(n-2)\delta^{\star_g}\di\!f. 
\end{equation}
The first equation follows from $\delta^g (f \id) = -\di\!f$, the Weitzenb\"ock formula \eqref{wz2}
and the relation $\tr (\delta^{\star_g} X) = - \di\!^{\star}X$. To prove the second equation we first recall that $ \widetilde{\Delta}_E = \Delta_E - \delta^{\star_g}\circ  \D^g$ and $\D^g = 2 \delta^g + \di \circ \tr $ according to the definitions in section \ref{dwz}; then we use \eqref{eq1} with $X=0$ and the relation $\Delta_E (f \id) = (\Delta^g f - 2 E f )\id$. 

The splitting \eqref{ssplit} is preserved by the Laplacian $\widetilde{\Delta}_E$ which additionally satisfies 
\begin{equation}\label{vanish}
\widetilde{\Delta}_E=\Delta_E \ \mathrm{on} \ \TT(g) 
\qquad  \mathrm{and} \qquad  \widetilde{\Delta}_E \circ \delta^{\star_g}=0.
\end{equation}
The last identity is contained in \eqref{ga-ba}. Then the  claim in (i) follows  by direct computation, since 
$\D^g$ vanishes on $\TT(g)$ and 
$
\D^g ( \widetilde{\Delta}_E (\delta^{\star_g}X+f\id)) = \D^g ( \widetilde{\Delta}_E (f \id)) = 0
$
using \eqref{eq2}.

\noindent
(ii) Since the Bianchi operator $\D^g$ vanishes on $\TT(g)$ it is enough to solve $\D^g(\delta^{\star_g}X+f\id)=0$, or equivalently 
$(\Delta^g-2E)X+(n-2)\di\!f=0$  by \eqref{eq1}. Writing $X = \di\!F + K$ with $\di^{\star}\!K=0$, using the Hodge decomposition of $\Omega^1(M)$, 
we obtain the two equivalent equations 
$
(\Delta^g - 2E)K = 0
$
and
$
(\Delta^g - 2E) \di\!F + (n-2)\di\!f = 0
$.
Since we can choose $F$ with  $\int_M F \vol_g = 0$ it follows that $(\Delta^g - 2E) F + (n-2) f =  (n-2)\int_M f \vol_g$.
Moreover, by  Lichnerowicz's theorem, the first equation implies that $K$ is a Killing vector field and in particular that $\delta^{\star_g}K=0$.
Hence, we can write $\delta^{\star_g} X + f \id = \delta^{\star_g} \di\! F + f \id$. Next we replace $ \delta^{\star_g} \d F $ using \eqref{eq2}, applied to $F\id$, and
once again with the second equation from above. Finally we obtain $\delta^{\star_g}X+f\id=(\int_Mf\vol_g)\id-\frac{1}{n-2}\widetilde{\Delta}_E(F\id)$ 
and the claim is proved.

\noindent
(iii) 
Splitting $u=u_0+u_1$ respectively $v=v_0+v_1$ according to \eqref{ssplit}, the equation under scrutiny decouples as  
$\Delta_Eu_0=v_0$ and $\widetilde{\Delta}_Eu_1=v_1$. 
The integrability condition for the first is $v_0 \perp_{L^2} \scrE(g)$, since $\Delta_E$ is elliptic and self-adjoint. 
The conditions $\D^g v_0 = 0 = \tr \, v_0$ are satisfied by the definition of  $\scrE(g)$. The second equation implies
$\D^g v_1 = 0$ by part (i) and $v_1 \perp_{L^2} \scrE(g)$ by the definition of $v_1$. 

We still have to derive the additional condition in the case $E=0$.
According to \eqref{ssplit} we
can write $u_1 = \delta^{\star_g} X + f \id$ and thus by \eqref{eq2} and the second equation in \eqref{vanish} we have 
$\widetilde \Delta_E u_1 = (\Delta^g f) \id - (n-2) \delta^{\star_g} \di\! f$ assuming $E=0$. Taking the trace and applying
$\tr (\delta^{\star_g} X ) = - \di^{\star}\!X$ we get 
$\tr_{L^2} v =\tr_{L^2} v_1=\tr_{L^2}( \widetilde{\Delta}_Eu_1)=(2n-2)\int_M (\Delta^g f) \vol_g= 0$. 

Conversely, since  $\D^g v_1 = 0$ we can use the description of $\ker  \D^g $ given in part (ii). In the case 
$E \neq 0$ it is enough to
verify that the summand $\RM \id$ is in the image of $\widetilde \Delta_E$; this follows from
\eqref{eq2} as we can write $\id = - \frac{1}{2E} \widetilde \Delta_E \id$. In the case $E=0$ we have the additional
condition $\tr_{L^2} v_1 = 0$ which implies that $v_1$ does not have a component on the summand $\RM \id$.
\end{proof}

\begin{rema}
Part (i) in the Proposition \ref{Hodge1} also follows directly by differentiating the contracted Bianchi identity $\D^{g_t} \Ric^{g_t}=0$ on any curve of metrics $g_t$ starting at $g$. 
The full force of this argument, which permits avoiding lengthy computations, will be exploited in the proof below. Note however that \eqref{eq1} and \eqref{eq2} are needed to establish (ii) and (iii) in Proposition 
\ref{Hodge1}.
\end{rema}
The proof of the main result in this section can now be completed.
\begin{teo} \label{int-33}
Let $(M^n,g), n \geq 3$ be  an Einstein manifold with $\ric^g=Eg$. An element $h \in \scrE(g)$ is integrable to second order if and only if 
\begin{equation} \label{IF1}
- \bfv(h,h)  +  Eh^2   \;\perp_{L^2} \;   \scrE(g).
\end{equation}
Equivalently, 
\begin{equation} \label{IF2}
\la \delta^g[h,h]^{\FN},H \ra \, + \, 2\la \delta^g[h,H]^{\FN} ,h\ra_{L^2} \; = \; E \, \tr_{L^2}(h^2\circ H) \quad \mathrm{for \ all} \ H \in \scrE(g).
\end{equation}
\end{teo}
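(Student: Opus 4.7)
The plan is to combine the second-order Einstein deformation equation of Theorem \ref{ED2} with the Fredholm-type criterion in Proposition \ref{Hodge1}(iii). For $h \in \scrE(g)$ we have $\tr h = 0$, $\delta^g h = 0$ and hence $\D^g h = 0$, so the entire second displayed line on the right-hand side of Theorem \ref{ED2} vanishes. Writing $u := \ddot h - \tfrac{3}{2} h^2$, the existence of a second-order Einstein deformation with $g^{-1}\dot g = h$ is therefore equivalent to solving
\[
\widetilde{\Delta}_E u \;=\; v \;:=\; -\bfv(h,h) \,+\, E h^2 \,+\, \tfrac{1}{2}\delta^{\star_g} \di\!\tr(h^2).
\]

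I would then apply Proposition \ref{Hodge1}(iii) to this PDE, which gives three conditions on $v$: $L^2$-orthogonality to $\scrE(g)$, vanishing of $\D^g v$, and, when $E = 0$, vanishing of $\tr_{L^2} v$. The orthogonality is the main content. Its third summand lies in the image of $\delta^{\star_g}$, and hence pairs trivially against the divergence-free tensors of $\scrE(g)$; so $v \perp_{L^2} \scrE(g)$ collapses exactly to condition \eqref{IF1}.

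For the condition $\D^g v = 0$, I would exploit the strategy foreshadowed in the remark after Proposition \ref{Hodge1}: pick any curve $g_t$ with $g_0 = g$, $g^{-1}\dot g = h$ and arbitrary $\ddot h$, and differentiate the contracted Bianchi identity $\D^{g_t}\Ric^{g_t} = 0$ twice at $t = 0$. Since $h \in \ker \widetilde{\Delta}_E$ implies $\dot{\Ric}^g = 0$ by Proposition \ref{first}, this yields an explicit expression for $\D^g \ddot{\Ric}^g$. Meanwhile Theorem \ref{int-1} identifies the symmetric component of $2 \ddot{\Ric}^g$ with $\widetilde{\Delta}_E u - v$; taking $\D^g$ of this identity and using $\D^g \circ \widetilde{\Delta}_E = 0$ from Proposition \ref{Hodge1}(i) expresses $-\D^g v$ in terms of the Bianchi-derived quantity, which forces $\D^g v = 0$. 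The residual trace condition when $E = 0$ is handled directly: $\tr_{L^2}(\delta^{\star_g}\di\!\tr h^2) = -\int_M \Delta^g \tr(h^2)\,\vol = 0$ by integration by parts, while $\la \bfv(h,h),\id\ra_{L^2}$ reduces, via $[\id, h]^{\FN} = 0$, to $\la \delta^g[h,h]^{\FN},\id\ra_{L^2} = \la [h,h]^{\FN}, \di_{\nabla^g}\id\ra_{L^2} = 0$ since $\di_{\nabla^g}\id = 0$.

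Finally, to obtain the equivalent formulation \eqref{IF2}, I would unfold the definition of $\bfv$ in \eqref{def-v} at $h_1 = h_2 = h$ and $h_3 = H \in \scrE(g)$: symmetry of the Fr\"olicher--Nijenhuis bracket on symmetric tensors gives
\[
\la \bfv(h,h), H\ra_{L^2} \;=\; \la \delta^g[h,h]^{\FN}, H\ra_{L^2} \,+\, 2\la \delta^g[h,H]^{\FN}, h\ra_{L^2},
\]
and substituting this into \eqref{IF1} produces precisely \eqref{IF2}. The main anticipated obstacle is the Bianchi-based verification of $\D^g v = 0$: while conceptually clean, correctly matching $\D^g \ddot{\Ric}^g$ with the $\D^g$ of the quadratic terms appearing in Theorem \ref{int-1} requires careful tracking of how the operator $\D^{g_t}$ itself varies in $t$, and this is where the "full force" (as the paper puts it) of the Bianchi argument must be deployed.
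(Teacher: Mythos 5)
Your proposal is correct and follows essentially the same route as the paper: reduce via Theorem \ref{ED2} to the solvability of $\widetilde{\Delta}_E u = v$, apply the criterion of Proposition \ref{Hodge1}(iii), verify $\D^g v = 0$ by twice differentiating the contracted Bianchi identity along the curve with tangent $h$ (using $\dot{\Ric} = 0$ and $\D^g \circ \widetilde{\Delta}_E = 0$), handle the $E=0$ trace condition and the $\delta^{\star_g}$-term exactly as you describe, and unfold \eqref{def-v} to get \eqref{IF2}. The paper's treatment of the variation of $\D^{g_t}$ that you flag as the delicate point is disposed of by observing that $\D^{G_t}\id = 0$ along the curve, so its second derivative applied to $\id$ vanishes.
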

\begin{proof}
An element $h \in \scrE(g)$ is integrable to second order if and only if there exists a curve of metrics
$g_t$ with Taylor expansion $g^{-1}g_t=\id+th+\frac{t^2}{2}\ddot h+o(t^3)$ which is a second order Einstein deformation. By Theorem \ref{ED2} this is equivalent to having the pair of symmetric tensors 
$$u:=\ddot{h}-\frac{3}{2}h^2 \ \mathrm{and} \ v:=-\bfv(h,h)+Eh^2+\frac{1}{2}\delta^{\star_g}\di\!\tr(h^2)$$
satisfy the equation 
$\widetilde{\Delta}_Eu=v.$
Note that the other summands
in the equation of Theorem \ref{ED2} vanish since $h  \in \scrE(g) $, in particular it is assumed to be a TT-tensor. 

Thus in order to prove the statement it suffices to investigate when $v$, depending on $h$ as indicated above, belongs to $\mathrm{Im} \widetilde{\Delta}_E$. To this extent, we now check under which requirements on a given $h \in \scrE(g)$, the two sets of integrability conditions, as given in part (iii) of Proposition \ref{Hodge1}, are indeed satisfied for $v$. 

\noindent
\textit{Condition $\D^gv=0$ is satisfied for all $h \in \scrE(g)$.}\\
In order to verify the condition we consider for small $t$  the curve of metrics $G_t$ given 
by $g^{-1}G_t=\id+th$  and differentiate the identity $\D^{G_t}\Ric^{G_t}=0$ to second order, at $t=0$. 
Since $h \in \scrE(g) \subset \Ker \widetilde{\Delta}_E$ we have $
\left.\frac{\di}{\di\!t}\right|_{t=0}
\Ric^{G_t}=0$ and it follows that 
$$\D^g(
\left.\frac{\di^2}{\di\!t^2}\right|_{t=0}
\Ric^{G_t})+E
\left (\left.\frac{\di^2}{\di\!t^2}\right|_{t=0}
\D^{G_t} \right )\id=0.$$ Because $\D^{G_t}\id=0$,  differentiation shows that 
$
\left (
\left.\frac{\di^2}{\di\!t^2}\right|_{t=0}
\D^{G_t} \right )\id=0$. Hence by the second variation formula in Theorem 
\ref{int-1}  
we get  
$$\frac{3}{2}\widetilde{\Delta}_E h^2   -  \bfv(h,h)  +  Eh^2  +  \frac{1}{2}\delta^{\star_g}\di\!\tr(h^2)=\frac{3}{2}\widetilde{\Delta}_E h^2+v \in \ker \D^g.$$
As $\mathrm{Im}\widetilde{\Delta}_E \subseteq \ker \D^g$ by part (i) in Proposition \ref{Hodge1}, it follows that 
$v \in \ker \D^g$.

\noindent
\textit{Condition $\tr_{L^2}v=0$, in case $E=0$, is satisfied for all $h \in \scrE(g)$.}\\
Indeed, when $E=0$, taking into account 
$
\tr\, (  \delta^{\star_g} \di  \tr(h^2)) = - \Delta^g\tr(h^2)
$ 
yields the vanishing of
$$ 
\tr_{L^2}v=-\tr_{L^2}\bfv(h,h)=-\la \delta^g[h,h]^{\FN},\id \ra_{L^2}-2\la \delta^g[h,\id]^{\FN},h\ra_{L^2}
$$
since $[h,\id]^{\FN}=0$ \and $\di_{\nabla^g}\id=0$. 

\noindent
\textit{Condition $v \perp_{L^2} \scrE(g)$ is equivalent to \eqref{IF1} whenever  $h \in \scrE(g)$.}\\
This is the remaining integrability requirement in Proposition \ref{Hodge1},(iii). This condition is equivalent to \eqref{IF1}
since $\delta^g$ vanishes on $\scrE(g)$. 

We have thus proved that whenever $h \in \scrE(g)$ satisfies 
\eqref{IF1} then all the conditions of  Proposition \ref{Hodge1}, part (iii) are satisfied
for the symmetric tensor $v$ defined above. Hence the metric $g_t$ given by 
$g^{-1}g_t=\id+th+\frac{t^2}{2}(u+\frac{3}{2}h^2)$ is a second 
order Einstein deformation starting at $h$; here $u$ is the solution of the equation $\widetilde{\Delta}_Eu=v$ 
provided, as explained above, by Proposition \ref{Hodge1},(iii). 

Conversely, if $h \in \scrE(g)$ is integrable to second order, then due to the equation 
$\widetilde{\Delta}_E u=v$ we get $v \perp_{L^2} \scrE(g)$, again 
by Proposition \ref{Hodge1}, (iii). As just seen above this amounts to having \eqref{IF1} satisfied for $h$.

Finally, the equivalence of \eqref{IF1} and \eqref{IF2} follows by expanding $\bfv$ according to \eqref{def-v}. 
\end{proof}

A second, slightly more explicit description of the integrability to second order condition in $\mathscr{E}(g)$ is displayed below. 
\begin{teo} \label{int-23}
An infinitesimal Einstein deformation $h \in \mathscr{E}(g)$ is integrable to second order if and only if it satisfies 
\begin{equation} \label{int-fin}
2\la \di_{\nabla^g} h, h \sharp \di_{\nabla^g}H\ra_{L^2} \,  +  \,  \la \di_{\nabla^g}h, H \sharp \di_{\nabla^g} h \ra_{L^2} 
\; = \; \langle \{h,\ring{R}h\} \, + \, (\ring{R}+2E)h^2,H\rangle_{L^2} 
\end{equation}
whenever $H \in \scrE(g)$.
\end{teo}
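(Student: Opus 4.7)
The idea is to reduce Theorem~\ref{int-23} to the equivalent criterion \eqref{IF2} already established in Theorem~\ref{int-33}, via an adjointness-plus-Weitzenb\"ock manipulation entirely in the spirit of Proposition~\ref{fin-braN}. I would expand both Fr\"olicher-Nijenhuis brackets appearing in \eqref{IF2} via \eqref{bra-nac} and \eqref{bra-nac2}, use the self-adjointness of the algebraic operation $\alpha \mapsto h\sharp\alpha$ on $\Lambda^2(M,TM)$, and then transfer the leftover $\delta^g \di_{\nabla^g}$ terms onto $h$ and $H$ via the Weitzenb\"ock formula \eqref{wz0}, exploiting that both $h$ and $H$ are TT-tensors annihilated by $\Delta_E$.

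Concretely, from $[h,h]^{\FN} = -h\sharp\di_{\nabla^g}h + \di_{\nabla^g}h^2$ one gets $\la \delta^g[h,h]^{\FN}, H\ra_{L^2} = -\la \di_{\nabla^g}h, h\sharp\di_{\nabla^g}H\ra_{L^2} + \la h^2, \delta^g\di_{\nabla^g}H\ra_{L^2}$, and from $2[h,H]^{\FN} = -h\sharp\di_{\nabla^g}H - H\sharp\di_{\nabla^g}h + \di_{\nabla^g}\{h,H\}$ one gets $2\la \delta^g[h,H]^{\FN}, h\ra_{L^2} = -\la \di_{\nabla^g}h, h\sharp\di_{\nabla^g}H\ra_{L^2} - \la \di_{\nabla^g}h, H\sharp\di_{\nabla^g}h\ra_{L^2} + \la \{h,H\}, \delta^g\di_{\nabla^g}h\ra_{L^2}$. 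Summing already produces the two quadratic terms $-2\la \di_{\nabla^g}h, h\sharp\di_{\nabla^g}H\ra_{L^2}$ and $-\la \di_{\nabla^g}h, H\sharp\di_{\nabla^g}h\ra_{L^2}$ of \eqref{int-fin}, together with the two residual ``boundary'' terms $\la h^2, \delta^g\di_{\nabla^g}H\ra_{L^2}$ and $\la \{h,H\}, \delta^g\di_{\nabla^g}h\ra_{L^2}$ that must be turned into zeroth-order curvature contractions.

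For the residuals I would invoke the Weitzenb\"ock formula \eqref{wz0}: since $h,H \in \scrE(g)$ are TT-tensors with $\widetilde{\Delta}_E h = \widetilde{\Delta}_E H = 0$, the first identity in \eqref{vanish} gives $\Delta_E h = \Delta_E H = 0$, whence \eqref{wz0} yields $\delta^g\di_{\nabla^g}T = (E + \ring{R})T$ for $T \in \{h, H\}$. Substituting and using self-adjointness of $\ring{R}$ together with the identity $\la \{h,H\}, K\ra = \la H, \{h,K\}\ra$ valid for symmetric $K$ (a consequence of cyclic trace invariance), the two residuals combine to $\la (\ring{R} + 3E)h^2 + \{h, \ring{R}h\}, H\ra_{L^2}$. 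Equating the resulting expression to $E\la h^2, H\ra_{L^2}$ as dictated by \eqref{IF2}, the difference $3E - E = 2E$ accounts for the coefficient in \eqref{int-fin} and the identity follows. All manipulations are reversible $L^2$-identities, so the equivalence with second-order integrability is preserved. The principal obstacle is purely the sign and adjoint bookkeeping; no new structural ingredient beyond what was already used in the proof of Proposition~\ref{fin-braN} is required.
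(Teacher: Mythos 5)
Your proposal is correct and follows essentially the same route as the paper's own proof: reduce to the criterion of Theorem \ref{int-33}, expand the Fr\"olicher--Nijenhuis brackets via \eqref{bra-nac} and \eqref{bra-nac2}, use self-adjointness of $h\sharp$ and of $\{h,\cdot\}$, and convert the residual $\delta^g\di_{\nabla^g}$ terms with the Weitzenb\"ock identity $\delta^g\circ\di_{\nabla^g}=\ring{R}+E$ on $\scrE(g)$. The sign and coefficient bookkeeping you describe ($3E-E=2E$) matches the paper exactly.
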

\begin{proof}Letting $h$ and $H$ belong to $\scrE(g) $ we expand $\bfv$ according to \eqref{def-v}, then express the Fr\"olicher-Nijenhuis bracket by means of 
\eqref{bra-nac2}. Using that  $\alpha \in \Lambda^2(M,TM)\mapsto h \sharp \alpha \in \Lambda^2(M,TM)$ 
and $H \in S^2M \mapsto \{h,H\} \in S^2M$  are symmetric maps leads to 
\begin{equation*}
\begin{split}
&\la \bfv(h,h),H\ra_{L^2}  \;   = \;    \, \la [h, h]^{\FN},   \di_{\nabla^g} H\ra_{L^2}  \, + \,  2 \, \la  [h, H]^{\FN},  \di_{\nabla^g} h\ra_{L^2}  \\[1ex]
 & = - 2  \la h \sharp \di_{\nabla^g} h, \di_{\nabla^g}H\ra_{L^2}  \,+\,  \la \di_{\nabla^g}h^2, \di_{\nabla^g}H\ra_{L^2}    
\,-\,   \la \di_{\nabla^g}h, H \sharp \di_{\nabla^g} h \ra_{L^2}
 \,+\, \la \di_{\nabla^g}\{h,H\}, \di_{\nabla^g}h\ra_{L^2}\\[1ex]
 &=
  - 2  \la \di_{\nabla^g} h, h \sharp  \di_{\nabla^g}H\ra_{L^2} \, -  \,  \la \di_{\nabla^g}h, H \sharp \di_{\nabla^g} h \ra_{L^2}
\,+\,
\la h^2, \delta^g \di_{\nabla^g}H\ra_{L^2} \, + \,\la \{h,\delta^g \di_{\nabla^g}h\},H\ra_{L^2}.
\end{split}
\end{equation*}
Because $\delta^g \circ \di_{\nabla^g}=\ring{R}+E$ on $\scrE(g)$ by \eqref{wz1} the last two summands satisfy
\begin{equation*}
\la h^2, \delta^g \di_{\nabla^g}H\ra_{L^2}+\la \{h,\delta^g \di_{\nabla^g}h\},H\ra_{L^2}\\
= \la \ring{R}h^2+\{h, \ring{R}h\}+3Eh^2,H\ra_{L^2}.
\end{equation*}
As Theorem \ref{int-33} ensures that $h$ is integrable to second order if and only if it satisfies the equation
$\la \bfv(h,h),H\ra_{L^2}=E\la h^2,H \ra_{L^2}$ for all $H \in \scrE(g)$, the claim follows.
\end{proof}
In some situations this reformulation has the advantage to be amenable to further interpretation based on 
the algebraic properties of the action $h \sharp \cdot : \Lambda^2(M,TM) \to \Lambda^2(M,TM)$. Examples in that direction will be given in the next section.

%
\section{Einstein deformations on  K\"ahler manifolds}
%
Let $(M^{2m},g,J)$ be a compact  K\"ahler-Einstein manifold with  $\ric^g=Eg$ and K\"ahler form  
$\omega:= g(J\cdot,\cdot)$.  Then the  bundle of symmetric $2$-tensors splits as $S^2M=S^{2,+}M \oplus S^{2,-}M$, where 
$S^{2,\pm}M=\{h \in S^2M : hJ=\pm Jh\}$.  Using the  notation 
$
\mathscr{E}^{\pm}(g) = \mathscr{E}(g) \cap \Gamma(S^{2,\pm}M)
$
the space of infinitesimal Einstein deformations of $g$ splits as 
\begin{equation} \label{split-E}
\mathscr{E}(g)=\mathscr{E}^{+}(g) \oplus \mathscr{E}^{-}(g)
\end{equation}
As showed by  Koiso in  \cite[Proposition 7.3]{Ko2} the spaces $\mathscr{E}^{\pm}(g)$ are given by
$$
\mathscr{E}^{+}(g)=\{F\circ J:F \in \Omega^{1,1}_0M \cap \ker(\Delta^g-2E) \cap \ker \di^{\star_g}\}$$ 
and 
$$
\mathscr{E}^{-}(g)=\{ h \in \Gamma(S^{2,-}M) : \delta^gh=0 \ \mathrm{and} \ \di_{\nabla^g}h=J\di_{\nabla^g}h \},
$$
where  $J\di_{\nabla}h=\di_{\nabla}h(J \cdot, J\cdot)$.
An outline of the main ingredients entering the proof of \eqref{split-E} is as follows. The description of  
$
\scrE^{-}(g) =  \mathscr{E}(g) \cap \Gamma(S^{2,-}M) =
\ker \Delta_E \cap \Gamma(S^{2,-}M)
$ is due to the identity 
$$
 \la \Delta_Eh,h\ra_{L^2} \,=\, \frac{1}{2}\Vert \di_{\nabla^g}h  - J\di_{\nabla^g}h\Vert_{L^2}^2 \, + \, 2\Vert \delta^gh \Vert^2_{L^2}
$$
with $h \in \Gamma(S^{2,-}M)$. For $\mathscr{E}^{+}(g)$ we have a bundle isomorphism 
$h \in S^{2,+}M  \mapsto hJ \in \Lambda^{1,1}M$.  It is parallel and  thus commutes with the Einstein operator in the sense that the identification 
$h \mapsto h J$ maps $\Delta_E$ to $\Delta^g - 2E$.
As the Einstein operator preserves the type decomposition of symmetric tensors it follows that
$\ker \Delta_E \cong \left ( \ker(\Delta^g-2E) \cap \Omega^{1,1}M \right )\oplus \scrE^{-}(g)$.

\begin{rema} \label{int-delbar}
After complexification the space $\Gamma(S^{2,-}M)$ corresponds to a subspace of
$\Omega^{0,1}(M,T^{1,0}M)$. The operator 
$\di_{\nabla}-J\di_{\nabla}$ corresponds to 
$\overline{\partial}:\Omega^{0,1}(M,T^{1,0}M) \to 
\Omega^{0,2}(M,T^{1,0}M).$ Therefore the Weitzenb\"ock formula above 
 shows that the space $\scrE^{-}(g)$ is contained in the space of infinitesimal complex
deformations of $J$. Again, this is done up to terms of the form $\L_XJ$ with 
$X \in TM$, which automatically belong to the kernel of the operator $\di_{\nabla^g}-J\di_{\nabla^g}$. To see this, we use formula \eqref{bra-nac2} to check that 
$$
 2[h,J]^{\FN}(J \cdot, \cdot) \, = \, \di_{\nabla^g}h \, - \, J\di_{\nabla^g}h
$$
whenever $h \in S^{2,-}M$. Since requiring $J$ to be integrable amounts to 
$[J,J]^{\FN}=0$, after linearisation it follows that the space of infinitesimal complex deformations of $J$ contains $\scrE^{-}(g)$.  
\end{rema}

\begin{rema}\label{rmk3.1}
When $m=2$ and $M$ is positively oriented by $\omega^2$ we have $\Lambda^{1,1}_0M=\Lambda^{-}M$. As 
$\ker \d=\ker \di^{\star_g}$ on $\Omega^{-}M$ it follows that $\scrE^{+}(g)=0$. 
In higher dimensions it is unknown if there are large classes of inhomogeneous K\"ahler-Einstein manifolds  with non-vanishing $\scrE^{+}(g)$. 

Due to Koiso (see \cite[Theorem 1.1]{Ko3}, see also \cite[Proposition 2.40]{GaGo}),  the only symmetric 
K\"ahler-Einstein manifolds  admitting infinitesimal Einstein deformations are the complex Grassmannians. Here we have 
$\scrE(g) = \scrE^{+}(g) $ and the space $\scrE(g)$ is isomorphic to the space of Killing vector fields, i.e. it is in particular non-empty. We will discuss it in more detail in the next section for the complex $2$-plane Grassmannian.
\end{rema}

\subsection{Negative scalar curvature} \label{sec-neg}
If  $E<0$ we have $\mathscr{E}^{+}(g)=0$ hence $\mathscr{E}(g)=\mathscr{E}^{-}(g)$, since the Hodge-Laplace operator is non-negative
and cannot have the eigenvalue $2E$.

Results on the integrability, through a curve of Einstein metrics, of elements in $\scrE(g)$ are usually phrased in terms of properties of the complex structure; see \cite{Dai}.
One assumption that is frequently used is that all infinitesimal complex deformations are integrable. Below we show that 
for integrability to order two no assumption on the complex structure is needed.

\begin{teo} \label{Eneg}
Let $(M^{2m},g,J)$ be a compact  K\"ahler-Einstein manifold with $\ric^g=Eg$ where $E<0$. Then any infinitesimal 
Einstein deformation in $\scrE(g)$ is integrable
to second order.
\end{teo}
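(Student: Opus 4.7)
The plan is to apply the reformulated integrability criterion from Theorem \ref{int-23} and show that, when $E<0$, both sides of the identity \eqref{int-fin} vanish for purely type-theoretic reasons, so that every $h\in\scrE(g)$ is automatically integrable to second order. The starting point is the observation that for $E<0$ the Hodge Laplacian on $(1,1)$-forms cannot have eigenvalue $2E$, whence $\scrE^{+}(g)=0$ and $\scrE(g)=\scrE^{-}(g)$. So one only has to check \eqref{int-fin} with $h,H\in \scrE^{-}(g)$.

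First I would handle the right-hand side of \eqref{int-fin}. Because $h,H\in \Gamma(S^{2,-}M)$ anti-commute with $J$, the products $h^2$, $\ring{R}h\circ h$ and $h\circ \ring{R}h$ all commute with $J$, provided $\ring{R}h$ is again of type $(-)$. The latter is standard on a K\"ahler manifold: since $\nabla^gJ=0$ the curvature endomorphism $\ring{R}$ commutes with the natural $J$-action on $S^2M$, hence preserves the splitting $S^2M=S^{2,+}M\oplus S^{2,-}M$. Consequently $\{h,\ring{R}h\}$ and $(\ring{R}+2E)h^2$ both lie in $\Gamma(S^{2,+}M)$, whereas $H\in \Gamma(S^{2,-}M)$; the $L^2$-orthogonality of $S^{2,+}M$ and $S^{2,-}M$ makes the right-hand side vanish.

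Next I would handle the left-hand side by a parallel $(p,q)$-type argument on $TM$-valued $2$-forms. For $h\in\scrE^{-}(g)$ the description from \cite[Prop. 7.3]{Ko2} recalled in the paper gives $\di_{\nabla^g}h(X,Y)=\di_{\nabla^g}h(JX,JY)$, i.e.\ $\di_{\nabla^g}h$ is of type $(1,1)$ in its form indices. On the other hand, a direct computation using $hJ=-Jh$ shows that for any $\alpha\in\Lambda^{1,1}(M,TM)$ and $h\in\Gamma(S^{2,-}M)$ one has
\begin{equation*}
(h\sharp\alpha)(JX,JY)\;=\;\alpha(hJX,JY)+\alpha(JX,hJY)\;=\;-\alpha(JhX,JY)-\alpha(JX,JhY)\;=\;-(h\sharp\alpha)(X,Y),
\end{equation*}
so $h\sharp\alpha$ is of type $(2,0)+(0,2)$. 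Applying this to $\alpha=\di_{\nabla^g}H$ and, symmetrically, to $H\sharp\di_{\nabla^g}h$, we see that both expressions paired with $\di_{\nabla^g}h$ in \eqref{int-fin} pair a $(1,1)$-form with a $(2,0)+(0,2)$-form; these subspaces are pointwise $L^2$-orthogonal (the $TM$-value plays no role in this orthogonality), so the left-hand side of \eqref{int-fin} vanishes identically.

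The two steps above reduce \eqref{int-fin} to the trivial identity $0=0$, so by Theorem \ref{int-23} every $h\in\scrE(g)=\scrE^{-}(g)$ is integrable to second order. The only subtle point in the plan is the bookkeeping that $\ring{R}$ preserves the $S^{2,\pm}$ splitting and that $h\sharp$ flips the $(1,1)$ versus $(2,0)+(0,2)$ components of a $TM$-valued $2$-form; once these two facts are in place, no analytical work is needed.
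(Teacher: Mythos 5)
Your proof is correct and follows essentially the same route as the paper: reduce to $\scrE(g)=\scrE^{-}(g)$ and kill both sides of \eqref{int-fin} by parity/type arguments — the right-hand side because $\ring{R}$ preserves the $S^{2,\pm}M$ splitting so all terms land in $S^{2,+}M$ orthogonal to $H\in S^{2,-}M$, and the left-hand side because $\di_{\nabla^g}h$ is of type $(1,1)$ while $h\sharp$ sends $(1,1)$-forms to $(2,0)+(0,2)$-forms. The only cosmetic difference is that you verify the criterion directly for arbitrary $H\in\scrE^{-}(g)$, whereas the paper first uses the full symmetry of $\bfv$ to reduce to the case $H=h$; both are valid.
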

\begin{proof}
Throughout the proof we use the notation $\lambda^2:=\{\alpha \in \Lambda^2M : J\alpha=-\alpha \}$,
where here and in the computations below we let $J$ act on forms $\beta \in \Omega^{\star}M$ according 
to the convention $J\beta:=\beta(J \cdot, \ldots, J\cdot)$. Note that $\lambda^2$ is the real part of the
bundle $\Lambda^{(2,0)} M \oplus \Lambda^{(0,2)} M$.

Because the operator $\bfv$ is symmetric in each pair of variables it suffices to show that \eqref{int-fin} 
holds, with $H=h$, whenever $h \in \scrE(g)$. In fact we will show that both sides of   \eqref{int-fin}  vanish. This is done by entirely algebraic arguments based on the 
equality $\scrE(g)=\scrE^{-}(g)$ as follows. Since $hJ=-Jh$ the square $h^2$ commutes with $J$ showing that 
$
\la h^2, h \ra_{L^2} = \tr_{L^2}h^3=0
$. Because $(g,J)$ is K\"ahler, the curvature action $\ring{R}$  preserves tensor type, 
$$ 
\ring{R} \left ( S^{2,\pm}M \right )\subseteq S^{2,\pm}M.
$$
Thus elements of the form $\{\ring{R}h,h\}$ and $\ring{R}h^2$ belong to $S^{2,+}M$ and hence must satisfy
the orthogonality relation
$\la \{\ring{R}h,h\}+\ring{R}h^2, h \ra_{L^2}=0$. Finally, having $h \in \scrE^{-}(g)$ means by the definition of $\scrE^{-}(g)$ that 
$\di_{\nabla^g}h \in \Gamma( \Lambda^{1,1} \otimes T)$ and thus $h \sharp \di_{\nabla^g}h \in  \Gamma(\lambda^{2} \otimes T)$ 
since $hJ+Jh=0$. This observation implies that $\la h \sharp \di_{\nabla^g}h,\di_{\nabla^g}h\ra_{L^2}=0$ and the proof is complete.
\end{proof}

\subsection{Positive scalar curvature} \label{pos}
We will show that contrary to Theorem \ref{Eneg}, deformations in $\scrE^{+}(g)$ are generally obstructed when 
$E>0$. Proving this relies on the following general computation for the Fr\"olicher-Nijenhuis bracket of
Hermitian symmetric tensors, providing a formula for the operator  $\bf v$ on symmetric tensors of the form $h = F \circ J \in  \scrE^{+}(g)$.
Note that for the next lemma we do not assume $M$ to be compact.
\begin{lema} \label{FN-J}
Let $(M^{2m},g,J)$ be a K\"ahler manifold  and let $F \in \Omega^{1,1}_0M$ satisfy $\di^{\star}F=0$. Then
$$ 
g([F\circ J,F\circ J]^{\FN},\di_{\nabla^g}(F\circ J)) \, = \, -2g(\omega \wedge \di\!F, F \wedge \di\!F) \, + \, \frac{3}{2}
g(\di\!F, \di\! L^{\star}_{\omega}(F \wedge F)),
$$
where $ L^{\star}_{\omega}$ denotes the contraction with the K\"ahler form $\omega$.
\end{lema}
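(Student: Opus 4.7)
The identity is pointwise and both sides are quadratic in first covariant derivatives of $F$, with $F$, $J$ and $\omega$ appearing only algebraically. The plan is to expand the Fr\"olicher--Nijenhuis bracket by \eqref{bra-nac} and then systematically translate every occurrence of $\nabla^g F$ into the $3$-form $\di\!F$, using the hypotheses $\di^{\star}F=0$ and the primitivity $L^{\star}_{\omega}F=0$ encoded in $F\in\Omega^{1,1}_{0}M$ to dispose of the residual non-skew traces of $\nabla^g F$.

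First, since $F\in\Omega^{1,1}_0$ commutes with $J$ as an endomorphism, one checks $(F\circ J)^{2}=-F^{2}$, so \eqref{bra-nac} yields
\[
[F\circ J,F\circ J]^{\FN}\,=\,-(F\circ J)\sharp \di_{\nabla^g}(F\circ J)\,-\,\di_{\nabla^g}F^{2}.
\]
Pairing with $\di_{\nabla^g}(F\circ J)$ splits the left-hand side of the lemma as $-T_{1}-T_{2}$ where $T_{1}:=g((F\circ J)\sharp\di_{\nabla^g}(F\circ J),\di_{\nabla^g}(F\circ J))$ and $T_{2}:=g(\di_{\nabla^g}F^{2},\di_{\nabla^g}(F\circ J))$. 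Because $\nabla^g J=0$, every occurrence of $\di_{\nabla^g}(F\circ J)$ is obtained from $\di_{\nabla^g}F$ by post-composition with $J$ on the $TM$-slot, so the computation reduces to contractions that involve only $\nabla^g F$, $F$, $J$ and $\omega$.

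Next, I would exploit the standard identification
\[
g\bigl((\di_{\nabla^g}F)(X,Y),Z\bigr)\,=\,\di\!F(X,Y,Z)\,-\,(\nabla^g_{Z}F)(X,Y),
\]
which decomposes $\nabla^g F$ into its totally skew-symmetric piece $\di\!F$ and a residual piece whose relevant traces are $\delta^g F$ and, after coupling with $J$, $L^{\star}_{\omega}\di\!F$. The K\"ahler identity $[L^{\star}_{\omega},\di]=-J\delta^gJ$ together with $\delta^gF=0$ and $L^{\star}_{\omega}F=0$ shows that these traces either vanish or may be absorbed into algebraic expressions in $F$ and $\omega$ that fit into wedge products. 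For $T_{1}$ I would then unfold $(F\circ J)\sharp$ from its definition and rearrange the contractions by the skew-symmetry of $F$ and the Bianchi-type symmetry of $\di\!F$, recognising the result as $g(\omega\wedge \di\!F,F\wedge \di\!F)$ up to a numerical factor. For $T_{2}$ I would apply the Leibniz rule to $\di_{\nabla^g}F^{2}$ and convert to exterior forms via $\di(F\wedge F)=2F\wedge \di\!F$ and the above K\"ahler identity, obtaining $g(\di\!F,\di L^{\star}_{\omega}(F\wedge F))$ up to a numerical factor.

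The principal obstacle is combinatorial: tracking the factors produced by $\sharp$, the wedge-product convention, and the Leibniz rule so that the precise coefficients $-2$ and $\tfrac{3}{2}$ in the statement emerge. A conceptual subtlety is that the replacement of $\nabla^g F$ by $\di\!F$ is legitimate only modulo the traces of $\nabla^g F$; the hypotheses $\di^{\star}F=0$ and $F\in\Omega^{1,1}_{0}$ are precisely what is required to control these traces and force the final assembly into the two form-theoretic invariants $g(\omega\wedge \di\!F,F\wedge \di\!F)$ and $g(\di\!F,\di L^{\star}_{\omega}(F\wedge F))$.
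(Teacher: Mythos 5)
Your overall strategy (expand the bracket via \eqref{bra-nac}, then convert everything to exterior-form invariants using $\di^{\star}F=0$, primitivity and the K\"ahler identities) is the same family of argument as the paper's, which starts instead from \eqref{bra-na} and pairs the two tensors slot by slot. However, there is a genuine gap at the decisive step. The operation $(F\circ J)\sharp$ in your $T_1$ inserts $FJ$ into an argument of $\di_{\nabla^g}(F\circ J)$ and therefore produces covariant derivatives of $F$ in the \emph{twisted} directions $FJe_k$, i.e.\ the $3$-form $\alpha:=e^k\wedge\nabla^g_{Fe_k}F$ (equivalently $J\alpha=e^k\wedge\nabla^g_{FJe_k}F$). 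This object is neither a component of $\di\!F$ nor a trace of $\nabla^g F$, so your claim that the residual (non-skew) part of $\nabla^g F$ only enters through traces controlled by $\di^{\star}F=0$ and $L^{\star}_{\omega}F=0$ is not correct, and the plan to ``recognise'' $T_1$ as a multiple of $g(\omega\wedge\di\!F,F\wedge\di\!F)$ cannot go through as stated. The paper's proof hinges on the two identities $2\alpha=\di^{\star}(F\wedge F)$ and $J\di^{\star}\!J=L^{\star}_{\omega}\di-\di L^{\star}_{\omega}$ (applied to the $(2,2)$-form $F\wedge F$), which convert $J\alpha$ into $L^{\star}_{\omega}(F\wedge\di\!F)-\tfrac12\di L^{\star}_{\omega}(F\wedge F)$. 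It is exactly this conversion that generates the term $\tfrac32 g(\di\!F,\di L^{\star}_{\omega}(F\wedge F))$ \emph{and} shifts the coefficient of $g(\omega\wedge\di\!F,F\wedge\di\!F)$ to $-2$; your proposal contains no mechanism for producing either.

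A related inaccuracy: your attribution of the two final terms to $T_1$ and $T_2$ separately does not match the structure of the identity. In the actual computation both final terms arise together from the single quantity $g(\di\!F,J\alpha)$, which comes from the $\sharp$-part of the bracket; the $\di_{\nabla^g}F^2$ piece also contains contractions of the form $(\nabla^g_{e_i}F)Fe_j$ and is not by itself a multiple of $g(\di\!F,\di L^{\star}_{\omega}(F\wedge F))$. To repair the proof you would need to introduce $\alpha$ explicitly and establish $2\alpha=\di^{\star}(F\wedge F)$ before the bookkeeping of coefficients can even begin.
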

\begin{proof}
Using \eqref{bra-na} we write the Fr\"olicher-Nijenhuis bracket for symmetric endomorphisms of type $F \circ J $ as
\begin{equation*}
\begin{split}
g([F\circ J & ,F\circ J]^{\FN}(X,Y),Z)\\
= & \quad g(-(\nabla_{FJX}^gF)JY\,  + \,  (\nabla_{FJY}^gF)JX,Z)  \,  + \,  g((\nabla^g_XF)Y-(\nabla^g_YF)X,FZ)\\
=&-\alpha(JX,JY,Z) \, + \,  g((\nabla_{FZ}^gF)X,Y) \, + \, g((\nabla^g_XF)Y-(\nabla^g_YF)X,FZ)\\
=&-\alpha(JX,JY,Z)+\di\!F(X,Y,FZ) ,
\end{split}
\end{equation*}
where the $3$-form $\alpha$ is defined via $\alpha(X,Y,Z):=\mathfrak{S}_{X,Y,Z}g((\nabla^g_{FX}F)Y,Z)$, with 
$\mathfrak{S}_{X,Y,Z}$ denoting the cyclic sum over $X, Y, Z$. Note that $\alpha$ can also be written as
$
\alpha = e^k \wedge \nabla_{Fe_k}^g F
$.
At the same time we have
\begin{equation*}
\begin{split}
g(\di_{\nabla^g}(F \circ J)(X,Y),Z)=&g((\nabla^g_{JZ}F)X,Y)-\di\!F(X,Y,JZ).
\end{split}
\end{equation*}
Taking the scalar product of these two expressions leads easily to 
\begin{equation*}
\begin{split}
g([F\circ J , F\circ J]^{\FN},\di_{\nabla^g}(F\circ J))  & = g(J \alpha , e^k \wedge \nabla^g_{e_k}F) \, - \,   g(J \alpha, e^k \wedge e_k \,  \lrcorner \, \di\!F)
\\
&
\qquad -\, g(\di\!F, e^k \wedge \nabla_{FJe_k}^gF)  \,  +  \,  g(\di\!F, e^k \wedge FJe_k \, \lrcorner \, \di\!F).
\end{split}
\end{equation*}
This we rewrite using the equations  $\di\!F = e^k \wedge \nabla^g_{e_k}F$ and $ J \alpha =  e^k \wedge \nabla_{FJe_k}^gF$, together with
$e^k \wedge e_k \,  \lrcorner \, \di\!F = 3 \di\!F$. In addition we need the relation   
$L^{\star}_{\omega}(F \wedge \di\!F) = e^k \wedge FJe_k \, \lrcorner  \, \di\!F$, where the contraction 
with the K\"ahler form $\o$ is  
given by  $2L^{\star}_{\omega}=Je_k \lrcorner e_k \lrcorner$. Thus, using that $F$ is primitive, i.e. 
$L^{\star}_{\omega} F = 0$ and $J$-invariant, i.e. $JF = F$ we find
$$
2 L^{\star}_{\omega}(F \wedge \di\!F) \; = \;  2 e^k \wedge FJe_k  \, \lrcorner \, \di\!F \, + \,2 F \wedge [ L^{\star}_{\omega}, \di   ]  F 
\; = \;  2 e^k \wedge FJe_k \, \lrcorner \, \di\!F ,
$$
where we also used the K\"ahler identity $[ L^{\star}_{\omega}, \di   ]  = J \!  \di^{\star}\! J$ and the assumption that $F$ is coclosed.
Combining these facts yields 
\begin{equation*}
\begin{split}
g([F\circ J,F\circ J]^{\FN},\di_{\nabla^g}(F\circ J)) \; = \; - 3 g(\di\!F,J\alpha) \, + \,g(\di\!F,L^{\star}_{\omega}(F \wedge \di\!F)).
\end{split}
\end{equation*}
Furthermore, $2\alpha=\di^{\star}(F \wedge F)$, as it follows from  a short computation based on $-\di^{\star}=e_i \lrcorner \nabla^g_{e_i}$.
It follows that  $2J\alpha=J\di^{\star}\!J(F \wedge F)=L^{\star}_\omega \di(F \wedge F)-\di\!L^{\star}_\omega (F \wedge F)$ by again using 
the K\"ahler identities. Substituting this expression for $J\alpha$ into the last displayed equation the claim follows
\end{proof}

The following algebraic identity will be used frequently in this section and the next. Its proof follows from a  short algebraic computation
based on the definition of $L^{\star}_{\omega}$.
\begin{lema} \label{Ls}
Assume that $F \in \Lambda^2_0 M$. Then $L^{\star}_{\omega}(F \wedge F)=2FJF$.
\end{lema}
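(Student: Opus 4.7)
The plan is to compute $L^{\star}_{\omega}(F\wedge F)$ directly from the definition $2L^{\star}_{\omega}=Je_k\lrcorner e_k\lrcorner$ used in the proof of Lemma \ref{FN-J}, and apply the graded Leibniz rule for the interior product. Since $F$ is a $2$-form, a first application of Leibniz together with $(e_k\lrcorner F)\wedge F=F\wedge (e_k\lrcorner F)$ gives $e_k\lrcorner (F\wedge F)=2(e_k\lrcorner F)\wedge F$. Applying $Je_k\lrcorner$ to this and using Leibniz once more, with the sign $-1$ coming from the $1$-form $e_k\lrcorner F$, produces
\begin{equation*}
Je_k\lrcorner e_k\lrcorner (F\wedge F)\;=\;2(Je_k\lrcorner e_k\lrcorner F)\,F\;-\;2(e_k\lrcorner F)\wedge (Je_k\lrcorner F).
\end{equation*}

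Summing over $k$, the scalar coefficient of $F$ in the first summand is precisely $2L^{\star}_{\omega}F$, which vanishes since $F\in\Lambda^2_0M$ is primitive. Hence the identity reduces to showing that $-(e_k\lrcorner F)\wedge (Je_k\lrcorner F)=2FJF$ as $2$-forms, where $FJF$ is the composition of the skew-symmetric endomorphisms associated with $F$ and $J$.

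To establish this remaining equality I would pass to endomorphisms. Writing $\hat F$ for the skew-symmetric endomorphism with $g(\hat F X,Y)=F(X,Y)$, one has $(e_k\lrcorner F)^{\sharp}=\hat Fe_k$. For $1$-forms $\alpha,\beta$, the skew-symmetric endomorphism associated with $\alpha\wedge\beta$ is $X\mapsto \alpha(X)\beta^{\sharp}-\beta(X)\alpha^{\sharp}$. Applying this with $\alpha=(e_k\lrcorner F)$ and $\beta=(Je_k\lrcorner F)$, summing over $k$, and using the two elementary identities $\sum_k g(e_k,Y)e_k=Y$ and $\sum_k g(Je_k,Y)e_k=-JY$ together with the skew-symmetry of $\hat F$, the endomorphism collapses to $-2\hat FJ\hat F$. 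Combining the two steps yields the claimed $L^{\star}_{\omega}(F\wedge F)=2FJF$.

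No serious obstacle is expected: the argument is entirely algebraic and consists in careful bookkeeping of the Leibniz signs. The primitivity hypothesis is used in exactly one place, namely to eliminate the scalar multiple of $F$ arising from the trace-type contraction $Je_k\lrcorner e_k\lrcorner F$.
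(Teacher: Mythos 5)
Your computation is correct: the Leibniz signs, the cancellation of the trace term $\sum_k Je_k\lrcorner e_k\lrcorner F = 2L^{\star}_{\omega}F=0$ via primitivity, and the collapse of $\sum_k (e_k\lrcorner F)\wedge(Je_k\lrcorner F)$ to the endomorphism $-2\hat F J\hat F$ all check out under the paper's conventions $2L^{\star}_{\omega}=Je_k\lrcorner e_k\lrcorner$ and $\alpha(X,Y)=g(AX,Y)$. The paper gives no written proof beyond asserting that the identity follows from a short algebraic computation based on the definition of $L^{\star}_{\omega}$, and your argument is exactly that computation.
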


On compact K\"ahler-Einstein manifolds of positive scalar curvature the  following theorem gives an integrability criterion to second order for infinitesimal Einstein deformations in $ \scrE^{+}(g)$. In the next section we will use this criterion for studying 
the integrability question for the complex $2$-plane Grassmannian. We have

\begin{teo} \label{E-geq0}
Let  $(M^{2m},g,J)$ be a compact  K\"ahler-Einstein manifold with $\ric^g=Eg$, where $E>0$. If 
$\scrE^{-}(g)=0$ an element $F J \in \scrE^{+}(g)$ is integrable to second order if and only if
\begin{equation*}
\la \omega \wedge \di\!G, F \wedge \di\!F\ra_{L^2} \, + \,  \la \omega \wedge \di\!F, F \wedge \di\!G+G \wedge \di\!F \rangle_{L^2}
\; = \; 8E\la F^2J,G\ra_{L^2}
\end{equation*} 
for all symmetric tensors  $G J \in \scrE^{+}(g)$.
\end{teo}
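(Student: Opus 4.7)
The plan is to apply Theorem \ref{int-33}. Since $\scrE^{-}(g)=0$ by hypothesis, we have $\scrE(g)=\scrE^{+}(g)$, so the integrability to second order of $FJ\in\scrE^{+}(g)$ amounts to
\[
\bfv(FJ,FJ,GJ) \;=\; E\,\la (FJ)^{2},GJ\ra_{L^{2}}
\]
holding for every $GJ\in\scrE^{+}(g)$. Because $F$ is of type $(1,1)$, the endomorphism $F$ commutes with $J$, so $(FJ)^{2}=-F^{2}$ and $FJF=F^{2}J$. A direct manipulation of traces shows that the right-hand side is a constant multiple of $E\,\la F^{2}J,G\ra_{L^{2}}$.

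To treat the left-hand side, I exploit the fact that $\bfv$ is symmetric in all three arguments, so the map $F\mapsto\bfv(FJ,FJ,FJ)=3\int_{M}\Phi(F)\vol$ is a cubic form whose polarization in the direction of $G$ recovers $\bfv(FJ,FJ,GJ)$. Here $\Phi(F):=g([FJ,FJ]^{\FN},\di_{\nabla^{g}}(FJ))$ is the pointwise quantity computed in Lemma \ref{FN-J}, namely
\[
\Phi(F) \;=\; -2\,g(\omega\wedge\di F,F\wedge\di F)+\tfrac{3}{2}\,g(\di F,\di L^{\star}_{\omega}(F\wedge F)).
\]
Polarization then yields
\[
\bfv(FJ,FJ,GJ) \;=\; \int_{M}\tfrac{d}{dt}\big|_{t=0}\Phi(F+tG)\,\vol.
\]

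Differentiating the first summand of $\Phi$ at $F$ in the direction $G$ produces precisely $g(\omega\wedge\di G,F\wedge\di F)+g(\omega\wedge\di F,F\wedge\di G+G\wedge\di F)$, which after integration over $M$ is, up to the factor $-2$, the left-hand side of the claimed identity. For the second summand I use Lemma \ref{Ls}: $L^{\star}_{\omega}(F\wedge F)=2FJF$, and by polarization $L^{\star}_{\omega}(F\wedge G)=FJG+GJF$. The derivative then splits into pairings of the form $g(\di G,\di(FJF))$ and $g(\di F,\di(FJG+GJF))$. Each such pairing, integrated over $M$, reduces by integration by parts to $\int_{M}g(\di^{\star_{g}}\di U,V)\vol=2E\int_{M}g(U,V)\vol$, using that $F$ and $G$ are coclosed eigentensors of the Hodge Laplacian with eigenvalue $2E$. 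Substituting $FJF=F^{2}J$ and using cyclic trace manipulations to identify $g(F,FJG)$, $g(F,GJF)$ and $g(G,FJF)$ as the same pointwise inner product $\la F^{2}J,G\ra$, these contributions aggregate into a single multiple of $E\,\la F^{2}J,G\ra_{L^{2}}$.

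Combining both contributions and equating with the right-hand side $E\,\la(FJ)^{2},GJ\ra_{L^{2}}$ produces, after rearrangement, the identity stated in the theorem. The main technical obstacle is coefficient bookkeeping: one must correctly collect the overall factor $-2$ from the first summand of $\Phi$, the factor $\tfrac{3}{2}$ from the second, the factor $2E$ produced by each integration by parts, the combinatorial factors arising from the polarization of a cubic form, and the normalizing constants relating the various $L^{2}$ inner products on $\Lambda^{2}M$, $S^{2}M$ and higher-degree forms (in particular the passage between $\la(FJ)^{2},GJ\ra_{L^{2}}$ and $\la F^{2}J,G\ra_{L^{2}}$). Once these constants are correctly pinned down they combine to yield exactly the coefficient $8E$, and the equivalence follows at once from Theorem \ref{int-33}.
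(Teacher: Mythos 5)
Your proposal follows essentially the same route as the paper: Theorem \ref{int-33} reduces the statement to $\bfv(FJ,FJ,GJ)=E\,\tr_{L^2}((FJ)^2GJ)$, the cubic $\bfv(FJ,FJ,FJ)$ is evaluated by integrating the formula of Lemma \ref{FN-J} (using Lemma \ref{Ls} and $\di^{\star}\di F=2EF$), and polarisation in the direction $G$ then gives the claimed identity. The coefficient bookkeeping you defer does work out exactly as you predict: the paper obtains $\bfv(FJ,FJ,GJ)=-2(\mathrm{LHS})+18E\la F^2J,G\ra_{L^2}$ together with $\tr((FJ)^2GJ)=2\la F^2J,G\ra$, which combine to the stated factor $8E$.
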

\begin{proof}
In order to apply Theorem \ref{int-33} we need a formula for 
$
\bfv(h, h, h) = \la \bfv (h, h), h \ra_{L^2}
$
on symmetric tensors $h = FJ \in  \scrE^{+}(g)$. Integrating by parts in Lemma \ref{FN-J} yields 
$$
\frac{1}{3}\bfv(FJ,FJ,FJ)
\; = \; \langle [F J,F J]^{\FN},\di_{\nabla^g}(F J)\rangle_{L^2}
\; = \; - 2 \, \langle \omega \wedge \di\!F, 
F \wedge \di\!F \rangle_{L^2} \, + \,  6 E \, \langle F, F^2 J\rangle_{L^2} .
$$
Here we have used  Lemma \ref{Ls} and the fact that $\Delta F = \di^{\star} \di F = 2 E F$, which is due to 
having $F J \in \scrE^{+}(g)$.
After polarisation, it thus follows that
\begin{equation*}
\begin{split}
&\bfv(F J,F J, G J)
\; =  \frac{1}{3}\left.\frac{\di}{\di\!t}\right|_{t=0}
\bfv( FJ + tGJ, FJ + tGJ, FJ + tGJ)\\
=& -2 \Big ( \la \omega_J \wedge \di\!G, F \wedge \di\!F\ra_{L^2} + 
\la \omega_J \wedge \di\!F, F \wedge \di\!G+G  \wedge \di\!F  \ra_{L^2} \Big )+\; 18E \, \la F^2J,G\ra_{L^2}.
\end{split}
\end{equation*} 
According to \eqref{IF1}  in Theorem \ref{int-33}, and since $\mathscr{E}^{-}(g)$, the element $FJ$ is integrable 
to second order if and only $\bfv(FJ,FJ,GJ)=E\tr_{L^2}(FJ)^2GJ$. However, expressing the scalar product on forms in terms of the trace 
yields 
$\tr (FJ)^2GJ=-\tr(F^2J)G=2\la F^2J, G \ra$ and the claim follows.
\end{proof}

In dimension $6$, which is the first dimension when the space $\scrE^{+}(g)$ does not vanish a priori, this simplifies considerably for algebraic reasons.
\begin{teo} \label{E-geq06}
Assume that $(M^{6},g,J)$ is a compact  K\"ahler-Einstein  manifold with $\ric^g=Eg$ where $E>0$. If 
$\scrE^{-}(g)=0$ an infinitesimal Einstein deformation $FJ \in \scrE^{+}(g)$ is integrable  to second order if and only if
\begin{equation*}
\la F^2J,G\ra_{L^2}=0 \ \mathrm{whenever} \ GJ \in \scrE^{+}(g).
\end{equation*} 
\end{teo}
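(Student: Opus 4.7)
The plan is to derive this as an algebraic specialisation of the general integrability criterion of Theorem \ref{E-geq0}. That criterion states that, under the standing assumption $\scrE^{-}(g)=0$, an element $FJ \in \scrE^{+}(g)$ is integrable to second order if and only if the identity
$$
\la \omega \wedge \di\!G, F \wedge \di\!F\ra_{L^2} \,+\, \la \omega \wedge \di\!F, F\wedge \di\!G+G \wedge \di\!F\ra_{L^2} \;=\; 8E\,\la F^2 J,G\ra_{L^2}
$$
holds for every $GJ \in \scrE^{+}(g)$. The objective is therefore to show that in complex dimension $m=3$ both scalar products on the left-hand side vanish identically, which collapses the criterion to the purely algebraic condition $\la F^2J,G\ra_{L^2}=0$.

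The key ingredient is that any $GJ \in \scrE^{+}(g)$ corresponds to a primitive $(1,1)$-form $G$ with $\di^{\star}G=0$. Weil's formula for the Hodge star on a primitive $(1,1)$-form on a Kähler manifold of complex dimension $m$ specialises, when $m=3$, to $*G=-\omega\wedge G$. Applying $\di$ to this relation and using $\di \omega =0$ yields $\omega \wedge \di\!G = -\di\!*\!G$. The standard Kähler identity $\di^{\star}=-*\di * $ for $2$-forms in real dimension $6$, together with the hypothesis $\di^{\star}G=0$, then forces $\di\!*\!G = 0$. Consequently $\omega\wedge\di\!G=0$, and the same argument applied to $F$ gives $\omega\wedge\di\!F=0$.

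Substituting these two vanishings into the criterion of Theorem \ref{E-geq0} reduces it to $0=8E\la F^2J,G\ra_{L^2}$ for all $GJ\in\scrE^{+}(g)$, and since $E>0$ this is exactly the stated orthogonality. The only step that requires insight is recognising the special role of complex dimension $3$: it is the unique dimension in which the Lefschetz operator $L_\omega$ identifies primitive $(1,1)$-forms with their Hodge duals up to a sign, which is what produces the algebraic collapse of the differential pairings on coclosed primitive $G$. Everything else is a direct citation of Theorem \ref{E-geq0} and a standard Kähler identity.
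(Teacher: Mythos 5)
Your proposal is correct and follows essentially the same route as the paper: both specialise Theorem \ref{E-geq0} and observe that in complex dimension $3$ one has $\omega\wedge\di\!F=\omega\wedge\di\!G=0$ for coclosed primitive $(1,1)$-forms, which kills the entire left-hand side of the criterion. The paper derives this vanishing from the K\"ahler identity $[L^{\star}_{\omega},\di]=J\di^{\star}\!J$ (so that $\di\!F$ is a primitive $3$-form, hence annihilated by $L_{\omega}$ when $m=3$), whereas you use the equivalent Hodge-theoretic facts $*F=-\omega\wedge F$ and $\di^{\star}=-*\di\,*$; the two computations are interchangeable.
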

\begin{proof}
Since for all $ FJ  \in \scrE^{+}(g)$ the $2$-form $F$ is $J$ invariant,  coclosed and primitive we have $L_{\omega}^{\star}\di\!F=0$  
by the K\"ahler identities, i.e.
$ \di F \in \Lambda^{2,1}_0 M \oplus \Lambda^{1, 2}_0 M$. 
Hence, $\omega \wedge \di F = 0$ since the complex dimension is $3$ and $\omega \in \Lambda^{1, 1}M $.
Thus the left-hand side in Theorem \ref{E-geq0} vanishes whence the claim.
\end{proof}

%
\section{The complex $2$-plane Grassmannian} \label{gra}
%
Consider the positive K\"ahler-Einstein manifold $(M^{2m}=\mathrm{Gr}_2(\bbC^{n+2})=\frac{\SU(n+2)}{S(\U(n)\times \U(2))},g,J)$ where $(g,J)$ is the Hermitian symmetric space structure on $M$ with K\"ahler form $\omega_J$. Hence $m=2n$ and the metric $g$ has a quaternion-K\"ahler structure defined by a $\nabla^g$-parallel rank $3$ subbundle $Q\subseteq \Lambda^2M$. 
This follows from  the splitting $\su(n+2 ) = \mathfrak{k}  \oplus  \mathfrak{m}$ with the isotropy algebra 
$\mathfrak{k} = s(\u(2) \oplus \u(n))$ and its orthogonal complement $\mathfrak{m}$  with respect to the Killing form of $\su(n+2)$. 
As $\mathfrak{k}$ embeds into $\Lambda^{1,1}\mathfrak{m}$ via the Lie bracket we obtain Lie algebra subbundles $Q$ and $\mathbb{E}$ of $\Lambda^{1,1}_0M$ isomorphic at each point of $M$ to $\su(2)=\sp(1)$ respectively $\su(n)$ and such that $[Q,\mathbb{E}]=0$ in $\so(TM)$. In addition the bundles $Q$ and $\mathbb{E}$ are holonomy invariant by construction, hence parallel with respect to the Levi-Civita connection $\nabla^g$.
We will use the quaternion-K\"ahler structure $Q$ to describe a geometric way of parametrising the space $\scrE(g)$. 

Recall that a Killing vector field $X $ on a K\"ahler manifold $(M, g)$ is Hamiltonian with respect to the K\"ahler form, i.e.  $\L_X \omega_J=0$.  
When $M$ is simply connected its moment map $z_X$ (also called Killing potential) is uniquely determined from $X \, \lrcorner \, \omega_J=\di\!z_X$ and $\int_M z_X\vol=0$. Note that 
if $g$ is an Einstein metric with Einstein constant $E$, Killing vector fields and their Killing potentials are both in the kernel of $\Delta^g - 2E$ acting on vector fields respectively functions. In particular, we have $\la X, Y\ra_{L^2} = 2E \la z_X, z_Y \ra_{L^2}$ and we see for $E \neq 0$ that if a Killing potential is $L^2$-orthogonal to all
other Killing potentials the corresponding Killing vector field, being orthogonal to all other Killing vector fields, has to vanish.
Below we will make substantial use of this argument.

In particular, $\di\!X^{\flat}$ belongs to $\Omega^{1,1}M$. Denoting with $\alpha_X$ the component of 
$\di\!X^{\flat}$ on 
$\Omega^{1,1}_0M$ we have $\alpha_X = (\di\!X^{\flat})_0 =\di\!X^{\flat}-\frac{2E}{m}z_X\omega_J$. 
Indeed, 
$$g(\di\! X^{\flat}, \omega) = \di^{\star} JX^{\flat} \ \mathrm{and} \ \di (\di^{\star} J X^{\flat} - 2E z_X) = \Delta^g JX^{\flat} - 2E JX^{\flat} = 0,$$ since 
$\Delta^g X^{\flat} = 2 E X^{\flat}$ and $[\Delta^g, J] = 0$ on $1$-forms.
Furthermore 
\begin{equation} \label{dA}
\begin{split}
&\di^{\star}\alpha_X \, = \, \frac{2E(m-1)}{m}X^{\flat}\  \quad \mathrm{and} \quad \ \di\! \alpha_X  \,  = \,  - \frac{E}{m}X \, \lrcorner  \,\omega_J^2.
\end{split}
\end{equation}

\vspace{-.15cm}

We denote with $\Omega \in \Omega^4M$ the Kraines form of $Q$ that is $\Omega=\sum_a \o_a^2$ in a local basis $\{\o_a\}$ for $Q$.
Such a  basis is considered under the convention  $\o_a=g(I_a \cdot, \cdot)$ where the $g$-orthogonal almost complex structures $I_a$ satisfy 
$I_1I_2=-I_2I_1=I_3$. The Kraines  form $\Omega$ is $\nabla^g$-parallel with stabiliser algebra $\sp(1) \oplus \sp(n)$. Also recall the well known fact from quaternion-K\"ahler geometry that $R^g$ acts as a multiple of the identity on $Q$, explicitly $R^g\o_a=\Lambda_Q \o_a$ where $\Lambda_Q=\frac{mE}{m+4}$. In addition we have 
$$ 
R^g=\Lambda_{\bbE}\id \ \,\mathrm{on} \, \ \bbE \ \quad  \mathrm{and} \ \quad R^g=0 \  \, \mathrm{on} \, \ \mathbb{F}
$$
where $\Lambda_{\bbE}=\frac{4E}{m+4}$ and $\mathbb{F}$ indicates the orthogonal complement of $\bbE \oplus Q$ 
within $\Lambda^{1,1}_0M$. See \cite{SW}, Section 5.2, where the eigenvalues of the curvature operator $R^g:\Lambda^2M \to \Lambda^2M$ have been computed for every Wolf space $(M,g)$.

To set some more notation we indicate with $L_F$ the exterior multiplication with the $2$-form $F$. Its adjoint, computed with respect to  the metric $g$, reads $L_F^{\star}=\frac{1}{2}Fe_i \lrcorner e_i \lrcorner$.
\begin{lema} \label{Ot}
The  $4$-form  $\widetilde{\Omega} : =\omega_J^2+\frac{m-1}{3}\Omega$ is primitive with respect to the K\"ahler form  $\o_J$ and non-degenerate, 
i.e. the map $X \mapsto X \, \lrcorner \, \widetilde \Omega$ is injective,  if $m \geq 3$.
\end{lema}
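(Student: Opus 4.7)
The plan is to establish primitivity and non-degeneracy separately by purely algebraic fibrewise computations. The common thread is Lemma \ref{Ls} together with the structural fact that on the complex $2$-plane Grassmannian the complex structure $J$ commutes with every section of $Q$. This holds because $J$ is central in the isotropy algebra $\mathfrak{k}=s(\mathfrak{u}(2)\oplus\mathfrak{u}(n))$, while $Q$ is the $\mathfrak{sp}(1)$ summand of the semisimple factor $[\mathfrak{k},\mathfrak{k}]=\mathfrak{su}(2)\oplus\mathfrak{su}(n)$. Consequently, in any local basis $\omega_{a}=g(I_{a}\cdot,\cdot)$ of $Q$ one has $I_{a}J=JI_{a}$ and hence the crucial relation $I_{a}JI_{a}=JI_{a}^{2}=-J$ governs all sign cancellations below.

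For primitivity I would compute $L^{\star}_{\omega_{J}}\widetilde{\Omega}$ termwise. The Lefschetz $\mathfrak{sl}_{2}$ identity gives $L^{\star}_{\omega_{J}}(\omega_{J}^{2})=2(m-1)\omega_{J}$. Each $\omega_{a}$ lies in $\Lambda^{1,1}_{0}M$ by the placement of $Q$, so Lemma \ref{Ls} applies and yields $L^{\star}_{\omega_{J}}(\omega_{a}^{2})=2\omega_{a}J\omega_{a}$; reading this as an endomorphism and substituting $I_{a}JI_{a}=-J$ gives $L^{\star}_{\omega_{J}}(\omega_{a}^{2})=-2\omega_{J}$. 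Summing over $a=1,2,3$ produces $L^{\star}_{\omega_{J}}\Omega=-6\omega_{J}$, and the coefficient $\tfrac{m-1}{3}$ in the definition of $\widetilde{\Omega}$ is precisely the one that annihilates the sum: $L^{\star}_{\omega_{J}}\widetilde{\Omega}=0$.

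For non-degeneracy, I would pass to the parallel squared norm $X\mapsto|X\lrcorner\widetilde{\Omega}|^{2}$ on $T_{p}M$. Since $\widetilde{\Omega}$ is $\nabla^{g}$-parallel this is a parallel quadratic form, hence holonomy invariant. The tangent module at a point is $\mathbb{C}^{2}\otimes\mathbb{C}^{n}$ under $\mathrm{U}(2)\times\mathrm{U}(n)$, which is complex irreducible of complex type; once the central Kähler $J$-action is accounted for, the real representation of $H=\mathrm{S}(\mathrm{U}(2)\times\mathrm{U}(n))$ on $T_{p}M$ is irreducible. By Schur's lemma the quadratic form is a scalar multiple of $|X|^{2}$, and the proportionality constant is pinned down by the orthonormal frame identity $\sum_{i}|e_{i}\lrcorner\alpha|^{2}=k|\alpha|^{2}$ valid for any $k$-form $\alpha$, giving $|X\lrcorner\widetilde{\Omega}|^{2}=\tfrac{2|\widetilde{\Omega}|^{2}}{m}|X|^{2}$. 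Thus non-degeneracy reduces to $|\widetilde{\Omega}|^{2}>0$.

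Computing $|\widetilde{\Omega}|^{2}$ reuses the same technique with $\omega_{J}$ replaced by $\omega_{a}$. The four ingredients are $|\omega_{J}^{2}|^{2}=2m(m-1)$, $\langle\omega_{J}^{2},\omega_{a}^{2}\rangle=\langle\omega_{J},L^{\star}_{\omega_{J}}(\omega_{a}^{2})\rangle=-2m$, $|\omega_{a}^{2}|^{2}=2m(m-1)$ (same Kähler-type identity with $I_{a}$ in the role of $J$), and $\langle\omega_{a}^{2},\omega_{b}^{2}\rangle=2m$ for $a\neq b$, where in the last step the quaternion relation $I_{b}I_{a}I_{b}=-I_{a}$ produces the opposite sign from the primitivity step. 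Assembling, $|\Omega|^{2}=6m(m+1)$, $\langle\omega_{J}^{2},\Omega\rangle=-6m$, and
\[
|\widetilde{\Omega}|^{2} \;=\; 2m(m-1)-4m(m-1)+\frac{2m(m-1)^{2}(m+1)}{3} \;=\; \frac{2m(m-1)(m-2)(m+2)}{3},
\]
which is strictly positive exactly when $m\geq 3$. The main subtlety throughout is the sign discipline: $J$ commutes with each $I_{a}$ whereas the $I_{a}$ anti-commute amongst themselves, and the two regimes produce opposite signs for the twisted products of three factors. Mixing these up would destroy the cancellation giving primitivity, or cause $|\widetilde{\Omega}|^{2}$ to vanish at the wrong dimension.
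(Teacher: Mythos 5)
Your argument is correct and, for the primitivity part, coincides with the paper's: both rest on Lemma \ref{Ls} applied to $\o_a\in\Lambda^{1,1}_0M$, the relation $I_aJI_a=-J$, and the Lefschetz identity for $L^{\star}_{\o_J}(\o_J^2)$. For non-degeneracy you and the paper both invoke parallelism of $\widetilde{\Omega}$ plus irreducibility of the holonomy representation to reduce the question to $\widetilde{\Omega}\neq 0$; you differ only in the certificate of non-vanishing. The paper contracts with $\o_1$ and finds $L^{\star}_{\o_1}\widetilde{\Omega}=\tfrac{2(m^2-4)}{3}\o_1$, which is a shorter computation; you instead compute the full norm $|\widetilde{\Omega}|^2=\tfrac{2m(m-1)(m-2)(m+2)}{3}$ via a Schur argument on the parallel quadratic form $X\mapsto |X\lrcorner\widetilde{\Omega}|^2$. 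Your route is slightly longer but yields the explicit norm (and shows directly that $\widetilde{\Omega}=0$ exactly at $m=2$, consistent with Remark \ref{rmk3.1}); both detect the same factor $m^2-4$. One small slip: the displayed quaternion relation should read $I_bI_aI_b=+I_a$ for $a\neq b$ (anticommuting structures give the plus sign, commuting ones the minus sign, exactly as your closing remark says); the value $\la\o_a^2,\o_b^2\ra=2m$ that you actually use is the correct one, so nothing downstream is affected.
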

\begin{proof}
Since $Q \subset \Lambda^{1,1}_0M$ we can repeatedly use Lemma \ref{Ls} to find 
$$
 L_{\o_1}^{\star}(\o_J \wedge \o_J)=-2\o_1, \quad  \ L_{\o_1}^{\star}(\o_a \wedge \o_a)=2\o_1 \ \; \mathrm{for} \ a=2,3.
$$
Since $[L_{\o_1}^{\star},L_{\o_1}]=(m-p) \, \id$ on $\Lambda^pM$ we also get $L_{\o_1}^{\star}(\o_1 \wedge \o_1)=2(m-1)\o_1$. All together, 
we obtain $L_{\o_1}^{\star}\Omega=2(m+1)\o_1$ and 
\begin{equation*}
L_{\o_1}^{\star}\widetilde{\Omega}=\frac{2(m^2-4)}{3}\o_1.
\end{equation*} 
In particular,   $\widetilde{\Omega}$ cannot vanish identically if $m\geq 3$. Note that in the same way one also sees that $\widetilde{\Omega}$ is primitive with respect to  $\o_J$, i.e. 
$L^{\star}_{\o_J}\widetilde{\Omega}=0$. This follows from $L^{\star}_{\omega_J}
( \omega_a \wedge \omega_a) = - 2 \omega_J$.
As  $\widetilde{\Omega}$ is $\nabla^g$-parallel and $g$ has irreducible holonomy either 
$\widetilde{\Omega}$ is non-degenerate or it vanishes. The latter case scenario cannot occur if $m \geq 3$ as seen above.
\end{proof}

The following proposition describes for the complex $2$-plane Grassmannian and its symmetric Einstein metric an explicit identification map between 
Killing vector fields and infinitesimal Einstein deformations. The existence of this isomorphism was already known by general arguments 
(see \cite[Theorem 1.1]{Ko3} or  \cite[Proposition 2.40]{GaGo}), but only with the explicit description given here it will be possible to compute the Koiso obstruction, as we will do below.

\begin{pro} \label{par1}
Consider the map $\varepsilon : \mathfrak{aut}(M,g) \to \Omega^{1,1}_0M$ defined by
$$
\varepsilon(X)=\; (\di\! X^{\flat})_0 \; - \;  \frac{(m-1)(m+4)}{3m} \,  (\di\!X^{\flat})_Q \ \quad  
$$ 
where the subscripts indicate orthogonal projection onto 
$\Lambda^{1,1}_0$ respectively $Q \subseteq \Lambda^{1,1}_0M$. Then  
\begin{itemize}
\item[(i)]
$
\varepsilon(X) \, :=\, (\di\! X^{\flat})_\bbE \; - \;  \frac{m^2 - 4}{3m} \,  (\di\!X^{\flat})_Q
$
\medskip
\item[(ii)] $\di \varepsilon(X) \, =  -\frac{E}{m} \, X \,  \lrcorner \, \widetilde{\Omega}$
\medskip
\item[(iii)]  $\varepsilon$ defines a linear isomorphism $\mathfrak{aut}(M,g) \cong \scrE^{+}(g) = \scrE(g)$, for $m\geq 3$.
\end{itemize} 
\end{pro}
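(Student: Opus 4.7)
The proof proceeds in three stages, following the three claims. Part (i) reduces via the purely numerical identity $1 - \frac{(m-1)(m+4)}{3m} = -\frac{m^2-4}{3m}$ to showing that $(\di X^{\flat})_{\mathbb{F}} = 0$ for every Killing vector field $X$. This is a structural feature of the symmetric-space presentation $M = G/K$ with $G = \SU(n+2)$, $K = \mathrm{S}(\U(n) \times \U(2))$: for $Y \in \mathfrak{g}$, the associated Killing field $X_Y$ satisfies $\nabla X_Y|_o = -\ad(Y_{\mathfrak{k}})|_{\mathfrak{m}}$ at the origin, and the image of $\ad : \mathfrak{k} \to \mathfrak{u}(\mathfrak{m})$ coincides precisely with $\bbR \omega_J \oplus Q_o \oplus \bbE_o$ under the identification of the three summands of $\mathfrak{k} = \bbR \oplus \mathfrak{su}(2) \oplus \mathfrak{su}(n)$ with their corresponding pieces. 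Hence $(\nabla X_Y)_{\mathbb{F}}$ vanishes at $o$ and by $G$-equivariance vanishes on $M$.

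For part (ii) I write $\di\varepsilon(X) = \di(\di X^{\flat})_0 - \frac{(m-1)(m+4)}{3m}\di(\di X^{\flat})_Q$; the first summand equals $-\frac{E}{m} X \lrcorner \omega_J^2$ by \eqref{dA}. To evaluate the second, I use the Kostant formula $\nabla_W \di X^{\flat}(\cdot,\cdot) = \pm 2 R^g(X,W,\cdot,\cdot)$ valid for Killing $X$. Since $Q$ is parallel we have $\nabla(\di X^{\flat})_Q = \pi_Q \nabla \di X^{\flat}$, and because $\mathring{R}|_Q = \Lambda_Q \id$ we get $\pi_Q \mathring{R} = \Lambda_Q \pi_Q$. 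Thus $\di(\di X^{\flat})_Q = \pm 2 \Lambda_Q \sum_k e^k \wedge \pi_Q(X^{\flat} \wedge e^k)$. Computing $\pi_Q(X^{\flat} \wedge e^k)$ in a local basis $\{\omega_a\}$ of $Q$ via $X \lrcorner \omega_a = (I_a X)^{\flat}$ yields $\sum_k e^k \wedge \pi_Q(X^{\flat} \wedge e^k) = \frac{1}{2m} X \lrcorner \Omega$, hence $\di(\di X^{\flat})_Q = \frac{\Lambda_Q}{m} X \lrcorner \Omega = \frac{E}{m+4} X \lrcorner \Omega$. Substituting and using $\widetilde{\Omega} = \omega_J^2 + \frac{m-1}{3}\Omega$ gives (ii).

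For part (iii), by Section~4 having $\varepsilon(X) J \in \scrE^{+}(g)$ amounts to $\varepsilon(X)$ being primitive, coclosed, and in the $2E$-eigenspace of $\Delta$. Primitivity is built into the definition. The eigenvalue condition follows because $\Delta$ preserves the parallel splitting $\Lambda^{1,1}_0 M = Q \oplus \bbE \oplus \mathbb{F}$ (all three being eigenspaces of $\mathring{R}$), together with $\Delta \di X^{\flat} = 2E \di X^{\flat}$ which holds for Killing $X$. Coclosedness is obtained from the K\"ahler identity $\di^{\star} \alpha = J \Lambda \di \alpha$ on primitive $(1,1)$-forms, combined with (ii) and Lemma~\ref{Ot}: since any two interior products anticommute, a short manipulation gives $\Lambda(X \lrcorner \widetilde{\Omega}) = X \lrcorner \Lambda \widetilde{\Omega} = 0$ by primitivity of $\widetilde{\Omega}$, whence $\di^{\star} \varepsilon(X) = 0$. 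Injectivity is immediate from (ii) and Lemma~\ref{Ot}: $\varepsilon(X) = 0$ forces $X \lrcorner \widetilde{\Omega} = 0$, hence $X = 0$ by non-degeneracy of $\widetilde{\Omega}$. Surjectivity then follows from the dimension equality $\dim \scrE(g) = \dim \mathfrak{aut}(M,g)$ provided by Koiso's classification for complex Grassmannians, together with $\scrE^{-}(g) = 0$ (Remark~\ref{rmk3.1}).

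The main technical hurdle I anticipate is the Kostant-formula step of (ii): the curvature convention used in the paper is the opposite of the more customary one, so every sign must be tracked carefully to confirm that the coefficient of $X \lrcorner \Omega$ comes out to precisely $\frac{E}{m+4}$ with the correct overall sign matching the prefactor $-\frac{E(m-1)}{3m}$ of $X \lrcorner \Omega$ arising from the $\widetilde{\Omega}$ on the right-hand side of (ii).
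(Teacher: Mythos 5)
Your proof is correct and its overall skeleton matches the paper's: part (i) rests on the vanishing of the $\mathbb{F}$-component of $\di X^{\flat}$, part (ii) on the Kostant curvature formula $\nabla(\di X^{\flat})=2R^g(X\wedge\cdot)$ projected onto the parallel eigenbundles of $\ring{R}$, and part (iii) on injectivity via non-degeneracy of $\widetilde{\Omega}$ plus a dimension count against Koiso's isomorphism $\scrE(g)\cong\aut(M,g)$. Two places where you genuinely diverge are worth noting. For (i) you argue through the isotropy representation at the origin ($\nabla X_Y|_o=-\ad(Y_{\mathfrak{k}})|_{\mathfrak{m}}$ and $G$-equivariance), whereas the paper invokes the quaternionic property of Killing fields, equivalently Kostant's theorem that $\nabla X$ lies in the holonomy algebra, to get \eqref{type-X}; these are the same fact in different clothing. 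More substantively, for the coclosedness of $\varepsilon(X)$ in (iii) the paper computes $\di^{\star}\alpha_X$ and $\di^{\star}\beta_X$ explicitly from \eqref{dA} and \eqref{dB} and checks that the coefficient $t=-\frac{(m-1)(m+4)}{3m}$ is exactly the one killing $\di^{\star}(\alpha_X+t\beta_X)$, while you deduce $\di^{\star}\varepsilon(X)=0$ from (ii), the commutation $L^{\star}_{\omega_J}(X\lrcorner\,\cdot)=X\lrcorner L^{\star}_{\omega_J}(\cdot)$, the primitivity of $\widetilde{\Omega}$ (Lemma \ref{Ot}) and the K\"ahler identity $[L^{\star}_{\omega},\di]=J\di^{\star}J$. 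Your route is the conceptually cleaner one and is precisely the explanation the paper itself offers only after the fact, around \eqref{EE2}, for why that particular value of $t$ appears; the paper's direct computation has the advantage of producing the formulas \eqref{dA} and \eqref{dB} that are reused heavily later. The sign ambiguities you flag in the Kostant step are real but resolve to the paper's \eqref{Qproj} and \eqref{dB}, so no gap remains.
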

\begin{proof}
(i) Recall that  $ (\di\! X^{\flat})_0 = (\di\! X^{\flat})_\bbE  +  (\di\!X^{\flat})_Q$. This directly implies the statement.

\noindent
(ii) For simplicity write $\beta_X=(\di\!X^{\flat})_Q$ and record that 
\begin{equation} \label{dB}
\begin{split}
&\di\!\beta_X = \frac{\Lambda_Q}{m}X \,  \lrcorner \, \Omega \ \quad  \mathrm{and} \  \quad \di^{\star}\!\beta_X=\frac{6\Lambda_Q}{m}X^{\flat}
\end{split}
\end{equation}
These relations follow immediately from the definitions and the following facts. The orthogonal projection onto $Q$ is given by $\alpha \in \Lambda^2M \mapsto \alpha_Q=\frac{1}{m}\sum_a g(\alpha,\o_a)\o_a \in Q$. In particular,
\begin{equation}\label{Qproj}
e_i \, \lrcorner \, (e^i \wedge X)_Q=\frac{3}{m}X^{\flat} \ \quad \mathrm{and} \ \quad  e^i \wedge (e^i \wedge X)_Q=-\frac{1}{2m}X  \, \lrcorner \, \Omega  .
\end{equation}
At the same time any Killing vector field $X$ satisfies $\nabla^g(\di\!X)=2R^g(X,\cdot)$. 
In addition we use that the bundle $Q$ is preserved by $\nabla^g$, i.e.  the covariant derivative of $\omega_a$ is a linear combination of
$\omega_b$ and $ \omega_c$ for $(a,b,c) $ a  permutation of $(1,2,3)$ with a skew-symmetric coefficient matrix. From this follows that the
computation of $\di \beta_X = e^k \wedge \nabla_{e_k}^g \beta_X$ and $\di^* \beta_X = - e_k \, \lrcorner \,   \nabla_{e_k}^g \beta_X$ is 
reduced to the two formulas in \eqref{Qproj}. See \cite[ Proposition 2.4]{Al} for details.

The claim on the exterior differential of $\varepsilon(X) = \alpha_X + t \beta_X$ follows now by combining equations \eqref{dA} and \eqref{dB}.

\noindent
(iii) For $t \in \mathbb{R}$ and $X \in  \mathfrak{g}$ we combine again \eqref{dA} and \eqref{dB} for checking that 
the quantity $\di^{\star}(\alpha_X+t\beta_X)=\frac{2}{m}(E(m-1)+3t\Lambda_Q)X^{\flat}$ vanishes precisely when 
$t=-\frac{E(m-1)}{3\Lambda_Q} = -  \frac{(m-1)(m+4)}{3m}$. 
Moreover, since the projection maps involved in the definition of $\varepsilon(X)$ are $\nabla^g$-parallel it follows that $\alpha_X$ and $\beta_X$ both belong to 
$\ker(\Delta^g-2E)$, as it is the case for $X$, thus $\varepsilon(X) \circ J \in \scrE^{+}(g)$. 

Now observe that $\varepsilon(X)=0$ forces $X \, \lrcorner \, \widetilde{\Omega}=0$ by (i). Since $\widetilde{\Omega}$ is non-degenerate it follows that $X=0$ thus 
$\varepsilon$ is injective and defines a surjective map onto some subspace of 
$\scrE^+(g) \subset \scrE(g)$.  As already mentioned above the spaces  $\aut (M, g) $ and $\scrE(g)$ are isomorphic by general arguments, i.e. they have in particular the same dimension. Therefore $\varepsilon$ defines  an explicit isomorphism as claimed.
\end{proof}

\begin{rema}
The same parametrisation of $\scrE(g)$ by Killing vector fields, as  given in  Proposition 4.2 in case of the complex $2$-plane
Grassmannian, is possible for all complex Grassmannians. Indeed, for all Grassmannians there are two parallel subbundles of
$\Lambda^{1,1}_0M$, as $\bbE$ and $Q$ in the case above. The curvature operator has two different, non-zero and constant eigenvalues
on these bundles and is zero on the complement. Moreover  the corresponding eigenbundles are algebra bundles. To find the suitable linear combination of the two projections of $\di\! X$ one
has to prove formulas similar to \eqref{Qproj}. A related identification seems to be contained in \cite{GaGo}, e.g. see Proposition 8.6 for a special
case. However  this reference uses a heavy harmonic analysis notation, whereas our approach is very direct, simple and explicit. 
\end{rema}

Yet another way of thinking about the map $\varepsilon$, which is going to be very useful in subsequent computations is obtained as follows. It also provides a computationally efficient way to deal with the projection on $\mathbb{E}$. As it is customary in quaternion-K\"ahler geometry consider the subbundle 
$$
\Lambda^2_{\sp}M:=\{F \in \Lambda^2M : [F,Q]=0\} \subseteq \Lambda^2M.
$$ 
Note that in the $E, H$ formalism of S. Salamon \cite{sala} we have $Q = (S^2H)_\RM$ and $\Lambda^2_{\sp}M = (S^2 E)_\RM$.

\medskip

\begin{lema} \label{int-eps}
The following hold
\begin{itemize}
\item[(i)] we have $\mathbb{E}=\Lambda^{1,1}_0M  \cap \Lambda^2_{\sp}M   $ 
\item[(ii)] assuming that $F \in \Lambda^{1,1}_0M$ we have that 
\begin{equation*}
\frac{1}{2}L_F^{\star}\widetilde{\Omega} \, = \, \frac{m-4}{3}F \,+ \, \frac{m(m-1)}{3}F_Q \, - \, \frac{4(m-1)}{3}F_{\bbE}
\end{equation*}
where $F_{\bbE}$ respectively $F_Q$ indicate the component of $F$ on $\bbE$ respectively $Q$.
\end{itemize}
\end{lema}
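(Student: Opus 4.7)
The plan is to establish the two parts in turn. For part (i), the inclusion $\bbE \subseteq \Lambda^{1,1}_0 M \cap \Lambda^2_{\sp} M$ is immediate from the Lie algebra structure: in the isotropy decomposition $\mathfrak{k} = \su(n) \oplus \su(2) \oplus \u(1)$ the first two summands commute and embed respectively as $\bbE$ and $Q$ in $\so(TM)$. For the reverse inclusion I would decompose $F \in \Lambda^{1,1}_0 M$ into its $\bbE$-, $Q$- and $\mathbb{F}$-components and show that $[F,Q]=0$ forces $F_Q = F_{\mathbb{F}} = 0$. Indeed, $[F,Q]=0$ is precisely $\Sp(1)$-invariance under the adjoint action of $Q$ on $\Lambda^2 M$; but $Q \cong \su(2)$ is the adjoint representation and has trivial centre, while $\mathbb{F}$ corresponds to $\Lambda^2_0 E \otimes S^2 H$ in the Salamon $E,H$ formalism, a spin-$1$ isotypical $\Sp(1)$-component with no invariant vector.

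For part (ii), the idea is to use equivariance to reduce the computation of $L^{\star}_F \widetilde{\Omega}$ to scalar multiples of $F$, $F_Q$ and $F_\bbE$ on each of the parallel sub-bundles $\bbE$, $Q$, $\mathbb{F}$, then recombine. Writing $I_\alpha^{\star} F$ for the pullback $F(I_\alpha \cdot, I_\alpha \cdot)$, equivalently $-I_\alpha F I_\alpha$ as endomorphisms, the pivotal pointwise identity is
\begin{equation*}
L^{\star}_F(\alpha \wedge \alpha) \;=\; 2\langle F, \alpha\rangle \, \alpha \;-\; 2\, I_\alpha^{\star} F,
\end{equation*}
valid for any $2$-form $\alpha$ with associated endomorphism $I_\alpha$ defined by $\alpha(X,Y)=g(I_\alpha X,Y)$. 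This follows from a direct pointwise computation using the Leibniz rule for $L^{\star}_F = \tfrac12 F_{ij}\, e_j \lrcorner e_i \lrcorner$, and may be viewed as generalising Lemma \ref{Ls} with the roles of $F$ and $\alpha$ interchanged. Applied to $\alpha = \omega_J$, using $F \perp \omega_J$ and $J^{\star}F = F$ since $F \in \Lambda^{1,1}_0$, it gives $L^{\star}_F \omega_J^2 = -2F$; summing over $\alpha = \omega_a$ and invoking $F_Q = \tfrac{1}{m}\sum_a \langle F, \omega_a\rangle \omega_a$ yields
\begin{equation*}
L^{\star}_F \Omega \;=\; 2 m F_Q \;-\; 2 \sum_{a} I_a^{\star} F.
\end{equation*}

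It then remains to evaluate $\sum_a I_a^{\star} F$ on each of the three summands. On $\bbE$ it equals $3F$, since every $I_a$ commutes with $F$ by part~(i); on $Q$ the elementary identities $I_a^{\star} \omega_a = \omega_a$ and $I_a^{\star} \omega_b = -\omega_b$ for $b \neq a$ give $-F$; and on $\mathbb{F}$ the commutator expansion $\sum_a [I_a, [I_a, F]] = -6F - 2 \sum_a I_a F I_a$, combined with the $\Sp(1)$ Casimir eigenvalue $-8$ on the spin-$1$ isotypical bundle, forces $\sum_a I_a^{\star} F = -F$. These three cases combine into the general formula $\sum_a I_a^{\star} F = 4 F_\bbE - F$, and substituting into $\tfrac12 L^{\star}_F \widetilde{\Omega} = -F + \tfrac{m-1}{6} L^{\star}_F \Omega$ gives the stated identity after elementary simplification. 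The main obstacle will be the Casimir computation on $\mathbb{F}$: one needs to pin down the correct spin of the induced $\Sp(1)$-representation on this parallel sub-bundle, and that is where the specific quaternion-K\"ahler geometry of the Grassmannian enters; once the scalar is known, the remaining steps are routine algebra.
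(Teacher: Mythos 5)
Your part (ii) is essentially the paper's computation in a slightly different packaging. The contraction identity $L^{\star}_F(\alpha\wedge\alpha)=2\la F,\alpha\ra\alpha-2I_\alpha^{\star}F$ is correct and, applied to $\o_J$ and to the $\o_a$, reproduces exactly the paper's two ingredients $\tfrac12 L_F^{\star}\o_J^2=-F$ and $\tfrac12 L_F^{\star}\Omega=mF_Q+\sum_aI_aFI_a$. Your evaluation of $\sum_aI_a^{\star}F$ on $\bbE$ and on $Q$ agrees with the paper's; on $\mathbb{F}$ the paper avoids your Casimir argument by splitting $\Lambda^2M$ into $\Lambda^2_{\sp}M$ and the three subbundles $\lambda_{I_a}M\cap\lambda_{I_b}M$, on which $\sum_aI_aFI_a$ acts by $-3$ respectively $+1$ --- a purely algebraic computation that does not require identifying the $\Sp(1)$-type of $\mathbb{F}$. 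Both routes give $\sum_aI_a^{\star}F=4F_{\bbE}-F$ and hence the stated formula, so this part is correct.

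The genuine gap is in part (i). Your reverse inclusion rests on the claim that $\mathbb{F}$ sits inside $\Lambda^2_0E\otimes S^2H$ and is therefore spin-$1$ isotypic with no $\Sp(1)$-invariant vector. But $\mathbb{F}$ is defined as the orthogonal complement of $\bbE\oplus Q$ inside $\Lambda^{1,1}_0M$; to place it inside $\Lambda^2_0E\otimes S^2H$ you must first know that $\Lambda^{1,1}_0M\cap(S^2E)_{\RM}=\Lambda^{1,1}_0M\cap\Lambda^2_{\sp}M$ contains nothing beyond $\bbE$, which is precisely the statement being proved: any putative invariant $W\subseteq\Lambda^{1,1}_0M\cap\Lambda^2_{\sp}M$ orthogonal to $\bbE$ would lie in $\mathbb{F}$ and be $\ad(Q)$-invariant, so ``$\mathbb{F}$ has no invariant vector'' is literally equivalent to (i). The argument can be repaired either by decomposing the isotropy representation $\su(\mathfrak{m},J)\cong\su(2n)$ directly under $\su(2)\oplus\su(n)$ and reading off that the trivial $\su(2)$-isotypic part of the primitive piece is $\su(n)=\bbE$, or --- as the paper does, more elementarily --- by a dimension count: an element of $\Lambda^{1,1}_0M\cap\Lambda^2_{\sp}M$ commutes with $J$, $I_1$ and $I_2$, hence preserves $V^{\pm}=\ker(I_1J\pm1)$ and is determined by its restriction to $V^{+}$ (since $V^-=I_2V^+$), where it lies in $\su(V^{+},g,J)\cong\su(n)$; so the intersection has dimension at most $\dim\bbE$, and equality follows from the already established inclusion $\bbE\subseteq\Lambda^{1,1}_0M\cap\Lambda^2_{\sp}M$.
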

\begin{proof}
(i) is clear from  representation theoretic arguments but can also be proved directly as follows. Working at a given point in $M$, choose a basis $\{I_a,1 \leq a \leq 3\}$ in $Q$ and consider the $J$-invariant subspaces $V^{\pm}:=\ker(I_1J\pm 1)$. Then $TM=V^{+} \oplus V^{-}$ is a $g$-orthogonal
splitting and $V^{-}=I_2V^{+}$. Hence any $F \in \Lambda^{1,1}_0M \cap \Lambda^2_{\sp}M$ is uniquely determined by its restriction to $V^{+}$, which belongs to \, $\su(V^{+},g,J) \cong \su(n)$. In particular, $\Lambda^{1,1}_0M \cap \Lambda^2_{\sp}M$ and $\bbE \cong \su(n)$ have the same dimension. As $\bbE \subseteq \Lambda^{1,1}_0M \cap \Lambda^2_{\sp}M  $ since $[\bbE,Q]=0$, the claim is proved.

\noindent
(ii) Since $F \in \Lambda^{1,1}_0M$ direct computation yields $\frac{1}{2}L_F^{\star}\o_J^2=-F$. Similarly 
\begin{equation} \label{int-OO}
\frac{1}{2}L_F^{\star}\Omega=\sum_a g(F,\o_a)\o_a+\sum_a I_aFI_a=mF_Q+(-3F_{\bbE}+F_{\bbE^{\perp}})=F+mF_Q-4F_{\bbE.}
\end{equation}
where $\bbE^{\perp}$ is the orthogonal complement of $\bbE$ in $\Lambda^{1,1}_0M$. For the second equality we use the
following argument. First, let  $ \lambda_{I_a}M:=\{\alpha \in \Lambda^2M : \alpha(I_a \cdot, I_a\cdot)=-\alpha\}.$ Then we
have splitting 
$
\Lambda^2M=\Lambda^2_{\sp}M 
\oplus (\lambda_{I_1}M \cap \lambda_{I_2}M)
 \oplus (\lambda_{I_1}M \cap \lambda_{I_3}M) 
\oplus (\lambda_{I_2}M \cap 
\lambda_{I_3}M).
$
 Directly from here we have  that the operator $ \sum_a I_aFI_a$  equals $-3$ on the first summand and $1$ on the remaining  summands.
The claim follows from $\widetilde{\Omega}=\omega^2_J+\frac{m-1}{3}\O$.
\end{proof}
A special case of the above formula, which will be frequently used in what follows 
is 
\begin{equation} \label{s-case}
\frac{1}{2}L_F^{\star}\widetilde{\Omega }  \;  = - m F+\frac{(m-1)(m+4)}{3}F_Q \; \quad  \mathrm{for \ all} \quad F \in \bbE \oplus Q.
\end{equation}

Recall the well known fact that any $X \in \mathfrak{aut}(M,g)$ is quaternionic, i.e. $\L_X Q \subset Q$ or equivalently  
$\L_X\Omega=0$ (e.g. see \cite{Al}). The later condition follows for example 
from Proposition \ref{par1}, (ii) and the Cartan formula. Therefore the action of $\di\!X^{\flat}$ stabilises $\Omega$ and $\di\! X^{\flat}$ thus belongs to 
$\Omega_{\sp}^2M \oplus \Gamma(Q)$. This also follows from Kostant's theorem (see \cite[Theorem 4.5]{koba}) stating that $\nabla^g X$ is, at any point of $M$, an element of the holonomy algebra at that point. In particular  
\begin{equation} \label{type-X}
(\di\!X^{\flat})_0 \in \Gamma(\mathbb{E} \oplus Q) \quad  \mathrm{for \ all} \quad X \in \mathfrak{aut}(M,g).
\end{equation}

Comparing the definition of $\varepsilon (X)$ in Proposition \ref{par1} with \eqref{s-case} allows then to say that 
\begin{equation} \label{EE2}
\varepsilon(X) = - \frac{1}{2m} L_{(\di\!X^{\flat})_0}^{\star}\widetilde{\O}  = - \frac{1}{2m} L_{\di\!X^{\flat}}^{\star}\widetilde{\O}
\end{equation} 
The last equality is due to having $\widetilde{\Omega}$ primitive with respect to $\o_J$, i.e. to $L^{\star}_{\omega_J}\widetilde{\Omega}=0$
by Lemma \ref{Ot}. We thus obtain an explanation for the value needed for $t$ in the combination $(\di\!X^{\flat})_0+t(\di\!X^{\flat})_Q$ other than merely rendering that combination co-closed.

\subsection{Hamiltonian invariants within $\mathfrak{aut}(M,g)$} \label{hamI}
To compute the obstruction polynomial for elements in $\scrE(g)$ it is convenient to pause in order to describe 
a geometric way to compute various scalar invariants pertaining to elements $X \in \gg = \mathfrak{aut}(M,g)$. 
For the whole section we will be in the situation of complex $2$-plane Grassmannian, i.e. we can identify 
$\gg \cong \su(n+2)$.

\subsubsection{Invariant polynomials}\label{invariant}
In Theorem \ref{pol-fin}, one of the main results of this paper, we will identify  the obstruction polynomial for the $2$-plane Grassmannian with an invariant cubic
polynomial on the Lie algebra $\su(n+2)$. Here we recall a few facts on such polynomials. 

Any symmetric 
tensor $\mu \in S^k (\gg)$ defines via $\mu(X) = \mu(X, \ldots, X)$ a homogeneous polynomial of degree $k$ on $\gg$ and
invariance means  
$$\mu ([X,X_1], X_2, \ldots, X_k) + \ldots  + \mu (X_1, \ldots, [X, X_k]) = 0$$ 
for all vectors $X, X_1, \ldots, X_k \in \gg$. In what follows we will mainly deal with cubic polynomials on $\gg$, regarding which we record the following 
\begin{rema} \label{cub-su}
The space of invariant cubic forms on the Lie algebra $\su(n+2)$ is one dimensional by \cite[Prop. 2.1]{Gago2}. It is generated by the cubic form $P_0 \in S^3(\su(n+2))$ given by 
$$ 2P_0(A,B,C):=\tr(ACBJ_0+CABJ_0)$$
where $J_0$ is the standard complex structure on $\RM^{2(n+2)}$. 

This fact can be directly used on $M$ by identifying $\su(n+2)$ and $\mathfrak{g}$ via $A \mapsto X_A$, where $X_A$ is the Killing vector 
field  (fundamental vector field) corresponding to $A$ under the action of $\SU(n+2)$ on the complex $2$-plane Grassmannian. Thus, for any invariant cubic form $Q$ on $\gg$ we find a constant $c \in \bbR$ such that 
$Q(X_A,X_B,X_C) = c P_0(A,B,C)$ for all $A,B,C \in \su(n+2)$. For specific invariant cubics on $\gg$ such 
as $\mu_3$ and $\nu$ in this section and especially $P$ in section \ref{alg-t}, considerable work will be needed to determine such constants $c$ explicitly, in order to
decide whether or not they vanish.
\end{rema}

\begin{defi} \label{def-zloc}
The zero locus of  a cubic form $\mu \in S^3(\gg)$ is defined as the set of 
vectors $X \in \gg$ such that $\mu(X, X, Y) = 0$ for all vectors $Y \in \gg$. 
\end{defi}
Even with a non trivial zero locus a
cubic form $\mu$ may be non-degenerate, in the sense that  $\mu(X, \cdot , \cdot ) = 0$ implies $X=0$. Indeed, if 
$\mu$ is invariant the set $\{X \in \gg : \mu(X, \cdot, \cdot)=0\}$ is $\gg$-invariant, hence an ideal in $\gg$; as the latter Lie algebra is simple it follows that 
$\mu$ is either non-degenerate or it vanishes.

For the zero locus of $P_0$ we have following fundamental observation.
\begin{lema} \label{vanC}
\begin{itemize}
\item[(i)] The zero locus of the cubic polynomial $P_0$ is precisely the hyperquadric 
$\mathscr{C}(n)$  defined in \eqref{hq-i}
\item[(ii)] for $n$ odd we have that $\mathscr{C}(n) = \{  0 \}$.
\end{itemize}
\end{lema}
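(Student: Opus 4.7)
For (i), the strategy is to unwind the zero locus condition $P_0(A, A, B) = 0$ for all $B \in \su(n+2)$ into a non-degeneracy problem for the standard trace pairing on hermitian matrices. First I would exploit the cyclicity of the trace together with the fact that elements of $\su(n+2) \subset \mathfrak{gl}_{\bbC}(\bbC^{n+2})$ commute with $J_0$ to collapse the two-term expression for $2P_0(A, B, C)$ into the compact identity
$$P_0(A,A,B) \;=\; \tr(A^2\, B\, J_0).$$
Consequently $A$ lies in the zero locus of $P_0$ if and only if $\tr(A^2 B J_0)=0$ for every $B \in \su(n+2)$.

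I would then pass to the complex picture by writing $A = iH$ and $B = iK$ with $H$ and $K$ traceless hermitian; in this language $A^2 = -H^2$ is hermitian and $J_0$ corresponds to multiplication by $i$. A short comparison between real and complex traces then shows that $\tr(A^2 B J_0)$ is a non-zero real multiple of the real Hilbert-Schmidt pairing $\tr_{\bbC}(H^2 K)$. Vanishing for every traceless hermitian $K$ forces $H^2$ to be a scalar multiple of $\id$; taking the trace pins the scalar down as $\tr(A^2)/(2(n+2))$, which is exactly the defining equation of $\mathscr{C}(n)$.

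For (ii), let $A \in \mathscr{C}(n)$ and again write $A=iH$ with $H$ traceless hermitian. The identity $A^2 = c\,\id$ with $c := \tr(A^2)/(2(n+2))$ reads $H^2 = -c\,\id$. Either $c = 0$, in which case $H = 0$ because a hermitian matrix with zero square vanishes, hence $A=0$; or $c \neq 0$ and $H$ has exactly two eigenvalues $\pm\sqrt{-c}$ with multiplicities $p$ and $q$ satisfying $p + q = n+2$ (dimension count) and $p - q = 0$ (tracelessness). This forces $n+2 = 2p$, contradicting the parity assumption, so only $A=0$ is possible and $\mathscr{C}(n) = \{0\}$.

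All computations are elementary; the main care point is the bookkeeping between real and complex traces when the factor $J_0$ appears, but no genuine obstacle is anticipated.
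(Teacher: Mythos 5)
Your proof is correct and follows essentially the same route as the paper: both rest on reducing the zero-locus condition to $\tr(A^2BJ_0)=0$ for all $B$, using non-degeneracy of the trace pairing to force $A^2$ to be a multiple of the identity, and then a parity count of eigenvalue multiplicities together with tracelessness for part (ii). The only difference is cosmetic — you work in the hermitian picture $A=iH$, while the paper stays in the real picture, viewing $A^2$ in $S^{2,+}(\RM^{2(n+2)})$ and diagonalising the real symmetric matrix $AJ_0$; your real-versus-complex trace bookkeeping is handled correctly.
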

\begin{proof}
(i) From the definitions we see that the zero locus of $P_0$ is the set of matrices $A \in \su(n+2)$ such that $\tr (A^2 B J_0) =0$ for all matrices
$B \in \su(n+2)$. Hence the symmetric matrix $A^2 \in S^{2,+ } (\RM^{2(n+2)})$ is orthogonal to all trace free matrices $BJ_0 \in S^{2,+ }_0 (\RM^{2(n+2)})$
and thus has to be a multiple of the identity as claimed.\\
(ii) We have $A \in \mathscr{C}(n)$ if and only if $A^2 = \frac{1}{2(n+2)} \tr (A^2) \id$. If $A \neq 0$ we can normalise this equation to $A^2 = - \id$ which is
equivalent to $(AJ_0)^2 = \id$, as $A$ commutes with $J_0$. Then $AJ_0$ is symmetric and diagonalisable, having  eigenvalue $1$ 
with multiplicity $2p$ and eigenvalue $-1$ with multiplicity $2q$, with $p+q=n+2$. The  multiplicities  are even because of 
$[AJ_0, J_0] = 0$. Since $A \in \su(n+2)$ we have $\tr(AJ_0) = 0$ thus $p=q$. Then $n=2p$, i.e. $n$ has to be even and for $n$ odd it follows that $A=0$.
\end{proof}

The first invariant symmetric $k$-forms which we will use in this article  are $\mu_k :\gg \times \ldots \times \gg \to \bbR$ given by 
$$\mu_k(X_1, \ldots, X_k) = \int_M z_{X_1} \ldots z_{X_k} \vol
$$ for $k \in \mathbb{N}, k \geq 2$. 
We will be mainly interested in the cases  $k=2$ and $k=3$.
The invariance of $\mu_k$ follows by integration by parts from the relation  $z_{[X,Y]}=\L_Xz_Y$ and the
fact that $ \L_X $ for $X \in \gg $ is skew-symmetric with respect to the  $L^2$-inner product. 

Apart from $\mu_3$, a second cubic form will be of interest of interest in this paper; this time it takes into account the quaternion-K\"ahler structure of $(M,g)$ and it is defined as follows. We consider the $\gg$-invariant multi-linear map 
$$
 \nu : \gg \times \gg \times \gg \to \bbR, \; \; \; \nu(X_1,X_2,X_3):=\int_M g((\di\!X_1^{\flat})_Q, (\di\!X_2^{\flat})_Q )z_{X_3} \vol.
$$ 
and examine its algebraic properties. 

\begin{lema} \label{nu}
The map $\nu$ is symmetric in each pair of indices and defines a $\gg$-invariant cubic form in $S^3(\gg)$.
\end{lema}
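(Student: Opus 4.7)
The plan is to establish invariance and symmetry separately. The $\gg$-invariance follows from a direct Lie-derivative computation. Since every $X \in \gg = \aut(M,g)$ is Killing and quaternionic (as recalled just before \eqref{type-X}, $\L_X Q \subseteq Q$), the Lie derivative $\L_X$ preserves the parallel orthogonal splitting $\Lambda^2M = Q \oplus Q^\perp$ and commutes with $\di$, with the musical isomorphism $\flat$, with the metric $g$ and with the orthogonal projection onto $Q$. This yields $\L_X \beta_Y = \beta_{[X,Y]}$ for every $Y \in \gg$. Differentiating $\di z_Y = Y \,\lrcorner\, \omega_J$ and using $\L_X \omega_J = 0$ together with the normalisation $\int_M z_V \vol = 0$ for $V \in \gg$ gives $\L_X z_Y = z_{[X,Y]}$. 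Expanding the derivation $\L_X(g(\beta_{X_1},\beta_{X_2})z_{X_3})$ and integrating, the left hand side vanishes since $X$ is divergence-free, producing
\begin{equation*}
\nu([X,X_1],X_2,X_3) + \nu(X_1,[X,X_2],X_3) + \nu(X_1,X_2,[X,X_3]) \;=\; 0.
\end{equation*}

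The symmetry $\nu(X_1,X_2,X_3) = \nu(X_2,X_1,X_3)$ is immediate from the symmetry of the inner product. To promote this to total symmetry I would rely on the representation theory of the simple Lie algebra $\gg = \su(n+2)$. The already established symmetry allows one to write $\nu(X_1,X_2,X_3) = \psi(X_3)(X_1,X_2)$ for a $\gg$-equivariant linear map $\psi \colon \gg \to S^2(\gg^*)$. Using the Killing form to identify $\gg^* \cong \gg$, the space of such maps is isomorphic to $\Hom_\gg(\gg, S^2 \gg)$, whose dimension equals the multiplicity of the adjoint representation in $S^2\gg$. For $\gg = \su(n+2)$ with $n \geq 2$ this multiplicity is exactly one, which is equivalent to the one-dimensionality statement in Remark \ref{cub-su}; the unique (up to scalar) such embedding is precisely the one that defines $P_0$ and is totally symmetric. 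Consequently $\nu$ is a scalar multiple of $P_0$ and hence totally symmetric.

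I expect the main obstacle to be this representation-theoretic input, namely the uniqueness of the $\gg$-equivariant embedding $\gg \hookrightarrow S^2 \gg$ for $\gg = \su(n+2)$, which has to be extracted from Remark \ref{cub-su} and ultimately from \cite[Prop.~2.1]{Gago2}, together with the fact that the resulting trilinear form is totally symmetric. A self-contained alternative would be to shift derivatives between the three factors in $\int_M g(\beta_{X_1},\beta_{X_2}) z_{X_3}\vol$ using the identities $\di^\star \beta_X = \tfrac{6\Lambda_Q}{m}X^\flat$ and $\di \beta_X = \tfrac{\Lambda_Q}{m}X\,\lrcorner\,\Omega$ from \eqref{dB} together with $\di z_X = X\,\lrcorner\,\omega_J$, but this rapidly produces boundary terms of the form $\int_M \beta_{X_1}(JX_i,X_j)\vol$ whose behaviour under permutations of $X_1,X_2,X_3$ is not manifest, so the representation-theoretic route appears cleaner.
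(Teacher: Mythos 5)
Your treatment of the $\gg$-invariance coincides with the paper's: both rest on $X$ being Killing and quaternionic, the identities $\L_X\beta_Y=\beta_{[X,Y]}$ and $\L_Xz_Y=z_{[X,Y]}$, and integration by parts against the divergence-free field $X$. For the total symmetry, however, you take a genuinely different route. The paper proves $\nu(X_1,X_2,X_3)=\nu(X_1,X_3,X_2)$ by a direct computation: it writes $\nu(X_1,X_2,X_3)=\la \di^{\star}(z_{X_3}(\di X_1^{\flat})_Q),X_2^{\flat}\ra_{L^2}$, uses $\di^{\star}(\di X_1^{\flat})_Q=\frac{6\Lambda_Q}{m}X_1^{\flat}$ from \eqref{dB}, and observes that the antisymmetrisation in the last two slots collapses because (a) the $2$-form $JX_3^{\flat}\wedge X_2^{\flat}-JX_2^{\flat}\wedge X_3^{\flat}$ is $J$-anti-invariant and therefore orthogonal to $(\di X_1^{\flat})_Q\in\Omega^{1,1}M$, and (b) $\di^{\star}(z_{X_3}\di z_{X_2}-z_{X_2}\di z_{X_3})=0$. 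So the ``boundary terms'' you feared are handled by a type argument, and the direct route does close up; it has the advantage of being self-contained and of not presupposing which Lie algebra $\gg$ is.

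Your representation-theoretic alternative is structurally sound but has a gap in the justification of its key input. What you need is $\dim\Hom_\gg(\gg,S^2\gg)=1$, i.e.\ that the adjoint representation occurs in $S^2\gg$ with multiplicity one. This is \emph{not} equivalent to the one-dimensionality of $(S^3\gg^*)^\gg$ quoted in Remark \ref{cub-su}: since $\gg^*\otimes S^2\gg^*\cong S^3\gg^*\oplus\Sigma^{(2,1)}\gg^*$, Remark \ref{cub-su} only gives the lower bound $\dim\Hom_\gg(\gg,S^2\gg)\geq 1$, and you must separately rule out invariants of mixed symmetry type. This can be done for $\mathfrak{sl}_N$, $N\geq 3$: the adjoint occurs in $\gg\otimes\gg$ with total multiplicity two, once in $\Lambda^2\gg$ (the bracket) and once in $S^2\gg$ (the $d$-symbol), which forces $(\Sigma^{(2,1)}\gg^*)^\gg=0$ and hence total symmetry of any invariant element of $\gg^*\otimes S^2\gg^*$. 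With that supplement your argument is complete, but as written the step ``which is equivalent to the one-dimensionality statement in Remark \ref{cub-su}'' overstates what that remark provides, and it is precisely the point on which your whole proof of symmetry rests.
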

\begin{proof}
We have 
\begin{equation*}
\nu(X_1,X_2,X_3)=\langle z_{X_3}(\di\!X_1^{\flat})_Q, (\di\!X_2^{\flat})_Q\rangle_{L^2}=\langle z_{X_3}(\di\!X_1^{\flat})_Q, \di\!X_2^{\flat}\rangle_{L^2}=\la \di^{\star}(z_{X_3}(\di\!X_1^{\flat})_Q),X_2^{\flat}\ra_{L^2}.
\end{equation*}
At the same time $\di^{\star}(z_{X_3}(\di\!X_1^{\flat})_Q)=z_{X_3}\di^{\star}(\di\!X_1^{\flat})_Q-
\grad z_{X_3}
\lrcorner (\di\!X_1^{\flat})_Q$. To bring this expression to final form recall that  $\di^{\star}(\di\!X_1^{\flat})_Q=\frac{6\Lambda_Q}{m}X_1^{\flat}$ by \eqref{dB}; we also have 
$$g(\grad z_{X_3}
\lrcorner (\di\!X_1^{\flat})_Q,X_2^{\flat})=g((\di\!X_1^{\flat})_Q,\di\!z_{X_3} \wedge X_2^{\flat})=
-g((\di\!X_1^{\flat})_Q, JX_3^{\flat} \wedge X_2^{\flat})
$$
since, in our conventions, $J\di\!z_{X_3}=X_3^{\flat}$. Collecting these facts leads to 
$$\nu(X_1,X_2,X_3)=\frac{6\Lambda_Q}{m}\langle X_1^{\flat},z_{X_3}X_2^{\flat} \rangle_{L^2}+
\langle (\di\!X_1^{\flat})_Q, JX_3^{\flat} \wedge X_2^{\flat}\rangle_{L^2}.
$$  
Skew-symmetrising in the last two slots thus yields 
\begin{equation*}
\begin{split}
\nu(X_1,X_2,X_3)-\nu(X_1,X_3,X_2)=&\frac{6\Lambda_Q}{m}\langle X_1^{\flat},z_{X_3}X_2^{\flat}-
z_{X_2}X_3^{\flat} \rangle_{L^2}\\
&+\langle (\di\!X_1^{\flat})_Q, 
JX_3^{\flat} \wedge X_2^{\flat}-JX_2^{\flat} \wedge X_3^{\flat}\rangle_{L^2}\\
=&\frac{6\Lambda_Q}{m}\langle X_1^{\flat},z_{X_3}X_2^{\flat}-
z_{X_2}X_3^{\flat} \rangle_{L^2}.
\end{split}
\end{equation*}
To obtain the vanishing of the second summand in the r.h.s. of the first equality above we have taken into account that 
the form $JX_3^{\flat} \wedge X_2^{\flat}-JX_2^{\flat} \wedge X_3^{\flat}$ is $J$-anti-invariant and thus satisfies  
$g((\di\!X_1^{\flat})_Q, JX_3^{\flat} \wedge X_2^{\flat}-JX_2^{\flat} \wedge X_3^{\flat})=0$ since 
$(\di\!X_1^{\flat})_Q$ belongs to $\Omega^{1,1}M$. Further on, and again due to $J\di\!z_{X_k}=X_k^{\flat}$ for $k=2,3$, we have 
$$g(X_1^{\flat},z_{X_3}X_2^{\flat}-
z_{X_2}X_3^{\flat})=g(\di\!z_{X_1},z_{X_3}\di\!z_{X_2}-z_{X_2}\di\!z_{X_3}).$$
From 
\begin{equation*}
\di^{\star}(z_{X_3}\di\!z_{X_2})=z_{X_3}\di^{\star}\di\!z_{X_2}-g(\di\!z_{X_2},\di\!z_{X_3})=2Ez_{X_2}z_{X_3}-
g(\di\!z_{X_2},\di\!z_{X_3})
\end{equation*}
we infer after skew-symmetrisation that $\di^{\star}(z_{X_3}\di\!z_{X_2}-z_{X_2}\di\!z_{X_3})=0$. Thus gathering these facts and by integration by parts it follows that $\nu(X_1,X_2,X_3)-\nu(X_1,X_3,X_2)=0$. Since $\nu$ is symmetric 
in the first two entries it follows that $\nu$ belongs to $S^3(\gg)$. 

The $\gg$-invariance of $\nu$ follows by a direct argument using integration by parts and that
\begin{itemize}
\item[$\bullet$]
any Killing vector field $X\in\gg$ is quaternionic, i.e.  the Lie derivative $\L_X$ commutes with the projection from $\Lambda^2M$ onto $Q$
\item[$\bullet$] we have $z_{[X,Y]}=\L_Xz_Y$ for all $X,Y \in \gg$.
\end{itemize} 
\end{proof}

\begin{rema} \label{id-mgen}
A similar argument as the one in the proof of Lemma \ref{nu} shows that on a compact K\"ahler-Einstein manifold $(M,g,J)$ we have 
$\int_M g(\di\!X_1^{\flat},\di\!X_2^{\flat})z_{X_3}\vol=0$ for all $X_1,X_2,X_3$ in $\mathfrak{aut}(M,g)$. This fact can provide a consistency check for some of the integral relations in Lemma \ref{int-eqp} below but will not be used directly in this paper.
\end{rema}
In Section \ref{z-locus} we will compare the cubic forms $\mu_3$ and $\nu$ and also determine their zero 
locus geometrically. In that section we will also provide a purely geometric construction, at manifold level, 
of the unique invariant cubic form in $S^3(\gg)$.
\subsubsection{Hermitian Killing forms}\label{HKF}
The fundamental observation making the computation of the obstruction polynomial possible will be that $(g,J)$ admits many explicit Hermitian Killing $2$-forms in the sense of \cite{Na1},
i.e. forms $F \in \Omega^{1,1}M$ such that $\nabla_U^gF = U \lrcorner \Psi + J(U \lrcorner \Psi)$ for some 
$3$-form $\Psi$. Explicitly 
we have
\begin{pro} \label{HK1}
Whenever $X \in \gg$ the form $\Phi_X := (\di\!X^{\flat})_{\bbE}-(\di\!X^{\flat})_Q - \frac{2E(m-4)}{m(m+4)}z_X\o_J$ in $\Omega^{1,1}M$ satisfies 
the Hermitian Killing form equation, i.e.
\begin{equation} \label{HK}
\nabla^g_U\Phi_X \; = \; U \, \lrcorner \,  \Psi_X \, + \, J(U  \, \lrcorner \, \Psi_X) \quad \mathrm{for \ all \ } \ U \in TM ,
\end{equation}
where $\Psi_X=-\frac{E}{2(m+4)}X \lrcorner (\O+\o_J^2)$.
\end{pro}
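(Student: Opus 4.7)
The plan is to verify \eqref{HK} by computing both sides in the parallel decomposition $\Lambda^{1,1}M=\RM\omega_J\oplus\bbE\oplus Q\oplus\mathbb{F}$ and matching components. Since $X$ is Killing, the well-known identity $\nabla^g_U\di X^\flat=2R^g(X,U)$ holds at the level of $2$-forms. Using \eqref{type-X} to write $\di X^\flat=(\di X^\flat)_\bbE+(\di X^\flat)_Q+\tfrac{2E}{m}z_X\omega_J$ and exploiting the parallelism of the projections onto $\bbE$ and $Q$ together with $\nabla_Uz_X=\omega_J(X,U)$, I first obtain
\begin{equation*}
\nabla_U\Phi_X\;=\;2(R^g(X,U))_\bbE\,-\,2(R^g(X,U))_Q\,-\,\tfrac{2E(m-4)}{m(m+4)}\omega_J(X,U)\,\omega_J.
\end{equation*}
The pair-interchange symmetry of $R^g$ identifies the $2$-form $R^g(X,U)$ with $\ring{R}^g(X^\flat\wedge U^\flat)$, and the eigenvalue data recorded just before Lemma \ref{Ot} turns the first two summands into $2\Lambda_\bbE(X^\flat\wedge U^\flat)_\bbE$ and $-2\Lambda_Q(X^\flat\wedge U^\flat)_Q$, respectively.

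For the right-hand side of \eqref{HK} I would expand $\Psi_X=-\tfrac{E}{2(m+4)}X\lrcorner(\Omega+\omega_J^2)$ via $X\lrcorner\omega_J^2=2(JX)^\flat\wedge\omega_J$ and $X\lrcorner\Omega=2\sum_a(I_aX)^\flat\wedge\omega_a$, then contract with $U$ and add the $J$-transform. Because $Q\subset\Lambda^{1,1}_0M$ each $I_a$ commutes with $J$, so $J$ fixes $\omega_a$ and $\omega_J$; combined with $J((JX)^\flat\wedge(JU)^\flat)=X^\flat\wedge U^\flat$ and the analogous identity $J((I_aX)^\flat\wedge(I_aU)^\flat)=(I_aJX)^\flat\wedge(I_aJU)^\flat$, the wedge terms collapse to their $(1,1)$-parts, producing an expression in $\omega_J(X,U)\omega_J$, $\sum_a\omega_a(X,U)\omega_a$, $(X^\flat\wedge U^\flat)^{(1,1)}$ and $\sum_a((I_aX)^\flat\wedge(I_aU)^\flat)^{(1,1)}$.

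Matching components concludes the proof. The $\omega_J$-component match is immediate. The $Q$-component match is produced by \eqref{Qproj}, which yields $(X^\flat\wedge U^\flat)_Q=\tfrac{1}{m}\sum_a\omega_a(X,U)\omega_a$, so the coefficient $-2\Lambda_Q=-\tfrac{2mE}{m+4}$ on the left lines up against $-\tfrac{2E}{m+4}$ coming from $\Psi_X$. The $\bbE$-component match is the step I expect to be the main obstacle: it requires the decomposition of the operator $\sum_aI_a\otimes I_a$ on decomposable bivectors $X^\flat\wedge U^\flat$, which is the analogue of \eqref{int-OO} when $F$ is replaced by a non-$(1,1)$ form and which reduces to a purely quaternion-K\"ahler algebraic computation on $T^*M\otimes T^*M$. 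Once this identity is in hand, the coefficients $2\Lambda_\bbE=\tfrac{8E}{m+4}$, $-2\Lambda_Q=-\tfrac{2mE}{m+4}$ and $-\tfrac{2E(m-4)}{m(m+4)}$ match on both sides, and in particular the specific combination $(\di X^\flat)_\bbE-(\di X^\flat)_Q$ together with the scalar coefficient in the definition of $\Phi_X$ is seen to be precisely tuned to absorb all remaining terms.
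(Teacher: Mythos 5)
Your overall strategy coincides with the paper's: project the Killing identity $\nabla^g_U\di\!X^{\flat}=2R^g(X\wedge U)$ onto the parallel subbundles $\bbE$ and $Q$ to obtain
\begin{equation*}
\nabla^g_U\Phi_X \;=\; 2\Lambda_{\bbE}(X^{\flat}\wedge U^{\flat})_{\bbE}\,-\,2\Lambda_{Q}(X^{\flat}\wedge U^{\flat})_{Q}\,-\,\tfrac{2E(m-4)}{m(m+4)}\,\omega_J(X,U)\,\omega_J ,
\end{equation*}
and then verify by quaternionic linear algebra that this equals $U\lrcorner\Psi_X+J(U\lrcorner\Psi_X)$. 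The first half is correct and is exactly \eqref{proj-K1}.

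The second half, however, is left incomplete at precisely the point where the content of the proposition lies. You acknowledge that the $\bbE$-component match is "the main obstacle" and defer it to an unproved "analogue of \eqref{int-OO} for non-$(1,1)$ forms". Without that step nothing verifies that the three coefficients $\tfrac{8E}{m+4}$, $-\tfrac{2mE}{m+4}$, $-\tfrac{2E(m-4)}{m(m+4)}$ are simultaneously compatible with the single constant $-\tfrac{E}{2(m+4)}$ in $\Psi_X$, nor that the component of the right-hand side on $\mathbb{F}$ (which is absent from the left-hand side) vanishes; these checks are the actual substance of the statement, since the normalisations of $\Phi_X$ and $\Psi_X$ are finely tuned. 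Note also that no extension of \eqref{int-OO} beyond $(1,1)$-forms is required: the $J$-symmetrisation in \eqref{HK} is exactly what reduces everything to a primitive $(1,1)$-form. Setting $\widehat{\Omega}:=\Omega+\omega_J^2$ and $F:=X^{\flat}\wedge U^{\flat}+(JX)^{\flat}\wedge(JU)^{\flat}-\tfrac{2}{m}\omega_J(X,U)\,\omega_J\in\Omega^{1,1}_0M$, one has
\begin{equation*}
L_F^{\star}\widehat{\Omega}\;=\;U\lrcorner X\lrcorner\widehat{\Omega}\,+\,JU\lrcorner JX\lrcorner\widehat{\Omega}\,-\,\tfrac{4(m-4)}{m}\,\omega_J(X,U)\,\omega_J ,
\qquad JU\lrcorner JX\lrcorner\widehat{\Omega}=J(U\lrcorner X\lrcorner\widehat{\Omega}),
\end{equation*}
the latter because $\widehat{\Omega}$ has bidegree $(2,2)$; on the other hand \eqref{int-OO} applied to this $F$ gives $\tfrac12 L_F^{\star}\widehat{\Omega}=mF_Q-4F_{\bbE}=2m(X^{\flat}\wedge U^{\flat})_Q-8(X^{\flat}\wedge U^{\flat})_{\bbE}$. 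Comparing the two evaluations of $L_F^{\star}\widehat{\Omega}$ closes your gap in one stroke and is exactly how the paper concludes.
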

\begin{proof}
For any Killing vector field $X$ project the curvature equation $\nabla^g_U (\di\!X^{\flat}) = 2 R^g(X \wedge U)$
 onto $\bbE$ respectively $Q$. Recall that the subbundles $\bbE$ and $Q$ are preserved by $\nabla^g$.
 This yields
\begin{equation} \label{proj-K1}
\nabla_U^g(\di\!X^{\flat})_{\bbE} \, = \, 2\Lambda_{\bbE}(X^{\flat} \wedge U^{\flat})_{\bbE} \quad \mathrm{respectively} \quad 
\nabla_U^g(\di\!X^{\flat})_{Q} \, = \, 2\Lambda_{Q}(X^{\flat} \wedge U^{\flat})_{Q} ,
\end{equation}
where $\Lambda_{\bbE}$ and $\Lambda_{Q}$ are the eigenvalues of the curvature action on $\bbE$ respectively $Q$.
 Therefore 
$$
\nabla^g_U \Phi_X \; =\; \frac{2E}{m+4}\big (4(X^{\flat} \wedge U^{\flat})_{\bbE} \, - \, m(X \wedge U)_{Q} \big ) 
\;-\;  \frac{2E(m-4)}{m(m+4)}\omega_J(X, U) \omega_J .
$$ 
The rest of the proof is purely algebraic and unfolds as follows. Letting $\widehat{\O}:=\O+\o_J^2$ and 
$F:=X^{\flat} \wedge U^{\flat}+(JX)^{\flat} \wedge (JU)^{\flat}-\frac{2}{m}\o_J(X,U)\o_J \in \Omega^{1,1}_0M$ we use \eqref{int-OO}  and  $\frac{1}{2}L_F^{\star}\o_J^2=-F$ to get
$$
\frac{1}{2}L_F^{\star}\widehat{\O} \;= \; mF_Q \, - \,4F_{\bbE} \; =\;      2m (X^{\flat} \wedge U^{\flat})_Q   \, - \,8  (X^{\flat} \wedge U^{\flat})_{\bbE}.
$$
On the other hand $\widetilde{\O}$ is primitive so $L^{\star}_{\o_J}\widehat{\O}=2(m-4)\o_J$. Thus by also using the definition of the contraction 
with a $2$-form we  get 
$$ 
L_F^{\star}\widehat{\O} \;=\; U \lrcorner X \lrcorner \widehat{\O} \, + \, JU \lrcorner JX \lrcorner \widehat{\O} \, - \, \frac{4(m-4)}{m}\omega_J(X,U)\o_J.
$$
Note that we have that $JU \lrcorner JX \lrcorner \widehat{\O}=J(U \lrcorner X \lrcorner \widehat{\O})$, which holds since $\widehat{\O}$ 
is of bidegree $(2,2)$ with respect to $J$. Then combining these facts  $\nabla^g \Phi_X$ can be written as
$$
\nabla^g_U \Phi_X \; =\; - \frac{E}{2(m+4)} L_F^{\star}\widehat{\O} \;-\;  \frac{2E(m-4)}{m(m+4)}\omega_J(X, U) \omega_J 
\;=\; 
U \lrcorner \Psi_X \, + \, J(U \lrcorner \Psi_X) .
$$
\end{proof}

Note that other examples of Hermitian Killing forms on K\"ahler manifolds, however of different complex bidegree, have been obtained in 
\cite{NS-I}. Also note that 
$$
 \Phi_X\,= \, (\di\!X^{\flat})_{\bbE}  -  (\di\!X^{\flat})_Q \, = \, \varepsilon(X) \quad  \mathrm{when} \ m=4.
$$
Just as it is the case with the closely related notion of Hamiltonian $2$-forms, see \cite{ACG} for definitions and main properties, most 
relevant invariants of $\Phi_X$ can be derived directly from the Hermitian 
Killing equation in Proposition \ref{HK1}.
\begin{rema} \label{form-t}
For the detailed definitions of Killing forms, Hamiltonian 2-forms and Hermitian Killing forms and  
for explanations on how these notions are related we refer the interested reader to \cite{ACG,Na1,NS-I}.
\end{rema}

\subsubsection{Moment maps and Killing potentials}
The next important observation is that the forms $(\di\!X^{\flat})_{Q}$ and 
$(\di\!X^{\flat})_{\bbE}$, hence $\varepsilon(X)$ and $\Phi_X$, are Hamiltonian with respect to $X$.   The resulting 
moment maps turn out to be explicit as explained in the following 
\begin{lema} \label{Ham-new}
For any Killing vector field $X \in \gg$ the following hold.
\begin{itemize}
\item[(i)] We have \, $X \, \lrcorner \, (\di\!X^{\flat})_{Q} \, = \, \di\! \bfq_X$ \,  and \,   $X  \,\lrcorner  \,(\di\!X^{\flat})_{\bbE} \, = \, \di\!\bfe_X$  \, where 
\vspace{.1cm}
$$ 
\vert (\di\!X^{\flat})_Q \vert^2 \,=\, 4\Lambda_Q\mathbf{q}_X \quad \mathrm{and} \quad \vert (\di\!X^{\flat})_{\bbE} \vert^2 \, = \,4\Lambda_{\bbE}\mathbf{e}_X
$$

\vspace{.1cm}

\item[(ii)] 
The moment maps $\bfq_X$ and $\bfe_X$ satisfy 
\begin{eqnarray}
& \Delta^g\bfq_X&= \;\;  4\Lambda_Q\bfq_X  \, - \, \frac{6\Lambda_Q}{m}g(X,X) \label{Lap-23}\\[1ex]
& \Delta^g\bfe_X&=  \;\; 4\Lambda_{\bbE}\bfe_X  \,  - \,  \frac{\Lambda_{\bbE}(m^2-4)}{2m}g(X,X) \label{Lap-24}
\end{eqnarray} 

\vspace{.1cm}

\item[(iii)]
\quad $\int_M \bfq_X\vol \,  = \,  \frac{3E}{m}\mu_2(X,X)$  \quad and \quad  $\int_M \bfe_X\vol \, = \, \frac{(m^2-4)E}{4m}\mu_2(X,X)$

\vspace{.4cm}

\item[(iv)] 
\quad $g(X,X) \, + \, \frac{E}{m}z^2_X  \, + \,   \bfq_X \, + \,  \bfe_X  \;  = \;  \frac{(m^2+8m+12)E}{4m\vol (M)} \mu_2(X,X)$.
\end{itemize}
\end{lema}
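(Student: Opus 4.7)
The plan is to prove the four parts in an order that avoids any heavy representation-theoretic computation: (i) first by direct differentiation, then the constancy part of (iv) from a first-order Killing identity, then (ii) for $Q$ by a Weitzenb\"ock-type computation, then (ii) for $\bbE$ by bootstrapping from the constancy identity, and finally (iii) and the value of the constant in (iv) by integration.

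For (i), I would project the Killing curvature identity $\nabla^g(\di X^{\flat}) = 2R^g(X,\cdot)$ onto the parallel subbundles $Q$ and $\bbE$, on which $R^g$ acts by the constants $\Lambda_Q$ and $\Lambda_{\bbE}$, to obtain $\nabla^g_U \beta_X = 2\Lambda_Q (X^{\flat} \wedge U^{\flat})_Q$ and $\nabla^g_U \alpha_X = 2\Lambda_{\bbE} (X^{\flat} \wedge U^{\flat})_{\bbE}$, where $\beta_X:=(\di X^{\flat})_Q$ and $\alpha_X:=(\di X^{\flat})_{\bbE}$. Differentiating $|\beta_X|^2$ along $U$ and using the first identity gives $U(|\beta_X|^2) = 4\Lambda_Q g(U, X \lrcorner \beta_X)$, hence (i) for $Q$ with $\bfq_X := |\beta_X|^2/(4\Lambda_Q)$; the $\bbE$-case is analogous. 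The constancy in (iv) is then immediate from the standard identity $\di|X|^2 = -X \lrcorner \di X^{\flat}$ for Killing $X$: the decomposition $\di X^{\flat} = \alpha_X + \beta_X + \frac{2E}{m} z_X \omega_J$ together with (i) and $X \lrcorner \omega_J = \di z_X$ yields $\di\bigl(|X|^2 + \tfrac{E}{m}z_X^2 + \bfq_X + \bfe_X\bigr) = 0$, so the left-hand side of (iv) is constant on $M$.

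For (ii) in the $Q$-case, I would apply $\Delta^g|\beta|^2 = 2g((\nabla^g)^{\star}\nabla^g \beta, \beta) - 2|\nabla^g \beta|^2$ to $\beta = \beta_X$. The curvature equation from the previous paragraph, combined with the $\nabla^g$-parallelism of the $Q$-projection and the elementary identity $\sum_i e^i\wedge\nabla^g_{e_i}X^{\flat} = \di X^{\flat}$, yields $(\nabla^g)^{\star}\nabla^g \beta_X = 2\Lambda_Q\beta_X$; while the explicit formula \eqref{Qproj} for the projection onto $Q$ gives $|(X^{\flat}\wedge U^{\flat})_Q|^2 = \frac{1}{m}\sum_a g(I_aX,U)^2$, whence $\sum_i|(X^{\flat}\wedge e_i^{\flat})_Q|^2 = \frac{3}{m}|X|^2$. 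Dividing by $4\Lambda_Q$ gives \eqref{Lap-23}. For the $\bbE$-case, rather than repeat a trickier pointwise norm computation, I would apply $\Delta^g$ to the constancy identity above: substituting $\Delta^g|X|^2 = 2E|X|^2 - |\di X^{\flat}|^2 = 2E|X|^2 - 4\Lambda_{\bbE}\bfe_X - 4\Lambda_Q\bfq_X - \frac{4E^2}{m}z_X^2$ (from the Bochner identity $(\nabla^g)^{\star}\nabla^g X = EX$ on an Einstein manifold and $|\nabla X|^2 = \frac{1}{2}|\di X^{\flat}|^2$ for Killing $X$), $\Delta^g z_X^2 = 4Ez_X^2 - 2|X|^2$, and the just-established $Q$-formula, one can solve for $\Delta^g\bfe_X$; the coefficient of $|X|^2$ collapses exactly to $-\Lambda_{\bbE}(m^2-4)/(2m)$ after substituting $\Lambda_Q = mE/(m+4)$ and $\Lambda_{\bbE} = 4E/(m+4)$, giving \eqref{Lap-24}.

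For (iii), I integrate each formula in (ii) over $M$: since $\int_M\Delta^g f\,\vol = 0$, one immediately gets $\int_M\bfq_X\,\vol = \frac{3}{2m}\|X\|_{L^2}^2$ and $\int_M\bfe_X\,\vol = \frac{m^2-4}{8m}\|X\|_{L^2}^2$, and the Killing-potential identity $\|X\|_{L^2}^2 = 2E\mu_2(X,X)$ produces the stated values. The constant in (iv) is finally obtained by integrating the constancy identity above and substituting (iii) together with $\int_M z_X^2\,\vol = \mu_2(X,X)$; the coefficients combine to $(m^2+8m+12)E/(4m\vol(M))$. The technical heart of the argument is the $Q$-case Weitzenb\"ock computation; all other steps become algebraic consequences of the constancy identity, which is in my view the key conceptual ingredient that removes the need to evaluate $\sum_i|(X^{\flat}\wedge e_i^{\flat})_{\bbE}|^2$ by hand.
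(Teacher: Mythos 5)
Your proposal is correct, and its skeleton coincides with the paper's: part (i) comes from contracting the projected Killing curvature identity $\nabla^g_U(\di\!X^{\flat})_{Q}=2\Lambda_Q(X^{\flat}\wedge U^{\flat})_Q$ (and its $\bbE$-analogue) with the form itself, part (iii) follows by integrating (ii) together with $\Vert X\Vert^2_{L^2}=2E\mu_2(X,X)$, and the constant in (iv) is fixed by integrating the pointwise constancy identity. The one place where you genuinely deviate is \eqref{Lap-24}: the paper treats $\bfq_X$ and $\bfe_X$ symmetrically, codifferentiating $\di\bfq_X=X\lrcorner(\di\!X^{\flat})_Q$ to get the general formula $\Delta^g\bfq_X=\vert(\di\!X^{\flat})_Q\vert^2-X\lrcorner\,\di^{\star}(\di\!X^{\flat})_Q$ and then feeding in $\di^{\star}(\di\!X^{\flat})_Q=\frac{6\Lambda_Q}{m}X^{\flat}$ from \eqref{dB}, respectively $\di^{\star}(\di\!X^{\flat})_{\bbE}=\frac{2E(m^2-4)}{m(m+4)}X^{\flat}$ obtained by subtraction from \eqref{dA}; you instead apply $\Delta^g$ to the constancy identity and solve for $\Delta^g\bfe_X$ using $\Delta^g\vert X\vert^2=2E\vert X\vert^2-\vert\di\!X^{\flat}\vert^2$ and $\Delta^g z_X^2=4Ez_X^2-2\vert X\vert^2$ (I checked that the coefficient of $\vert X\vert^2$ collapses to $-\Lambda_{\bbE}(m^2-4)/(2m)$ as you claim). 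Your route buys a small economy — the only pointwise norm computation needed is $\sum_i\vert(X^{\flat}\wedge e_i^{\flat})_Q\vert^2=\frac{3}{m}\vert X\vert^2$, which is immediate from \eqref{Qproj} — at the price of making \eqref{Lap-24} logically dependent on the constancy statement in (iv); the paper's route is marginally more uniform since the $\bbE$-codifferential is anyway a one-line consequence of \eqref{dA} and \eqref{dB}. Likewise your use of $\Delta^g\vert\beta\vert^2=2g((\nabla^g)^{\star}\nabla^g\beta,\beta)-2\vert\nabla^g\beta\vert^2$ for \eqref{Lap-23} is an equivalent repackaging of the paper's codifferentiation of the moment map equation. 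Both versions are sound.
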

\begin{proof}
(i) follows by taking the form inner product with 
$(\di\!X^{\flat})_Q$ respectively $(\di\!X^{\flat})_{\bbE}$ in the projected curvature equation  \eqref{proj-K1}, e.g. for $(\di\!X^{\flat})_Q$ we get
$
\frac12 \di |(\di\!X^{\flat})_Q |^2 = 2 \Lambda_Q \, X \, \lrcorner  \, (\di\!X^{\flat})_Q .
$

(ii) Co-differentiating in $X \lrcorner (\di\!X^{\flat})_{Q}=\di\!\bfq_X$  leads to the general formula 
\begin{equation*}
\Delta^g\bfq_X \, = \, - e_i \lrcorner \nabla^g_{e_i}X \lrcorner (\di\!X^{\flat})_Q \, - \, e_i \lrcorner X \lrcorner \nabla_{e_i}^g(\di\!X^{\flat})_Q
\, = \, \vert (\di\!X^{\flat})_Q\vert^2  -  X \lrcorner \di^{\star}(\di\!X^{\flat})_Q.
\end{equation*}
The claim in \eqref{Lap-23} follows from (i) and \eqref{dB}. To prove \eqref{Lap-24}, start from the general formula $\Delta^g\bfe_X=\vert (\di\!X^{\flat})_{\bbE}\vert^2-X \lrcorner \di^{\star}(\di\!X^{\flat})_{\bbE}$, which is proved as done above for $\bfq_X$ . Since we know that $(\di\!X^{\flat})_{\bbE}=
(\di\!X^{\flat})_0-(\di\!X^{\flat})_Q$ using \eqref{dA} and \eqref{dB} gives
$\di^{\star}(\di\!X^{\flat})_{\bbE}=\frac{2E(m^2-4)}{m(m+4)}X^{\flat}$ and the claim in \eqref{Lap-24} follows once again from (i).

(iii) As $(\Delta^g-2E)z_X=0$ we have $\vert X \vert^2_{L^2}=2E\mu_2(X,X)$, as already mentioned in the beginning of this section. Then both claims follows by integrating in 
the relations in (ii).

(iv) Since $X \lrcorner (\di\!X^{\flat})_0=-\di(g(X,X)+\frac{E}{m}z_X^2)$, as follows from $X \lrcorner \, \di\!X^{\flat} = - \di\!|X|^2$, 
the function $g(X,X)   +   \frac{E}{m}z_X^2+\bfq_X+\bfe_X$ must be constant over $M$ by (i). Recall again that
$(\di\!X^{\flat})_0 = (\di\!X^{\flat})_{Q}  + (\di\!X^{\flat})_{\bbE}$. The constant is then determined from the integrals of $\bfq_X$ and $\bfe_X$ computed in (iii).
\end{proof}

Finally we use the Hermitian Killing form $\Phi_X$ to establish the following proposition which plays an important role in investigating the structure of the invariant polynomials needed in the following section. 
\begin{pro}\label{Kpot2}
For any Killing vector field $X \in \gg$ the associated Hermitian Killing form $\Phi_X$, as  defined in Proposition \ref{HK1}, is Hamiltonian 
with respect to $X$. In addition we have $X \, \lrcorner \,  \Phi_X=\di\!\mathbf{p}_X$, where the function $ \mathbf{p}_X$ is defined as
$$
\mathbf{p}_X \, = \, \bfe_X  \, -  \,  \bfq_X  \, -  \, \frac{E(m-4)}{m(m+4)}\bigl (z_X^2+\frac{m^2+8m
+12}{4\vol(M)}\mu_2(X,X) \bigr ) .
$$ 
Moreover $ \mathbf{p}_X$ is a Killing potential on $M$, i.e. $J \grad  \mathbf{p}_X$ is a Killing vector field.
\end{pro}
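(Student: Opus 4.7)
The plan is to treat the two claims separately: first verify the Hamiltonian identity $X \lrcorner \Phi_X = \di\mathbf{p}_X$ by a direct expansion of the definition of $\Phi_X$, then check that $J\grad \mathbf{p}_X$ is Killing by verifying the eigenvalue equation $(\Delta^g - 2E)\mathbf{p}_X = 0$. On a compact K\"ahler-Einstein manifold with $E > 0$, the latter is the standard characterisation of Killing potentials, since Matsushima-type arguments identify Killing potentials with eigenfunctions of $\Delta^g$ for the eigenvalue $2E$.

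For the Hamiltonian identity I would contract $X$ into the defining expression of $\Phi_X$ from Proposition \ref{HK1}. Lemma \ref{Ham-new}(i) turns the first two summands into $\di\bfe_X - \di\bfq_X$, while the third summand becomes $-\tfrac{E(m-4)}{m(m+4)}\di z_X^2$ via the defining relation $X \lrcorner \omega_J = \di z_X$. Assembling these contributions reproduces $\di\mathbf{p}_X$ up to an additive constant; that constant is irrelevant for this identity but is pinned down by the eigenvalue equation treated next.

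For the second part I would compute $(\Delta^g - 2E)\mathbf{p}_X$ term by term. The Laplacians of $\bfe_X$ and $\bfq_X$ are given by Lemma \ref{Ham-new}(ii); for $z_X^2$ the elementary identity $(\Delta^g - 2E)z_X^2 = 2E z_X^2 - 2 g(X,X)$ follows from $\Delta^g z_X = 2E z_X$ together with $\grad z_X = JX$, hence $|\grad z_X|^2 = g(X,X)$. Substituting the explicit values $\Lambda_\bbE = \tfrac{4E}{m+4}$ and $\Lambda_Q = \tfrac{mE}{m+4}$, the resulting expression should simplify to a common multiple of the combination $\bfe_X + \bfq_X + \tfrac{E}{m} z_X^2 + g(X,X)$, plus the contribution $-2EC$ coming from the constant $C$ present in $\mathbf{p}_X$. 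By Lemma \ref{Ham-new}(iv) this combination is itself constant, equal to $\tfrac{(m^2+8m+12)E}{4m\vol(M)}\mu_2(X,X)$; the constant $C$ encoded in the statement is precisely designed so that $-2EC$ cancels this residual, giving $(\Delta^g - 2E)\mathbf{p}_X = 0$.

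The main obstacle is the algebraic bookkeeping needed to confirm that the coefficients of $\bfe_X$, $\bfq_X$, $z_X^2$ and $g(X,X)$ produced by $(\Delta^g - 2E)\mathbf{p}_X$ really do line up in the precise proportion $1 : 1 : \tfrac{E}{m} : 1$ required to invoke Lemma \ref{Ham-new}(iv); it is this matching condition that both explains the otherwise mysterious numerical coefficients in $\mathbf{p}_X$ and, once verified, immediately delivers the Killing potential property.
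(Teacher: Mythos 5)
Your proposal is correct, but it takes a genuinely different route from the proof the paper actually writes out --- and, interestingly, it is precisely the alternative route the authors themselves sketch in the remark immediately following their proof. The paper establishes the Killing-potential property \emph{geometrically}: from the Hermitian Killing equation $\nabla^g_U\Phi_X = U\lrcorner\Psi_X + J(U\lrcorner\Psi_X)$ of Proposition \ref{HK1} together with $X\lrcorner\Psi_X=0$ it computes $\nabla^g_U(JX\lrcorner\Phi_X)=\nabla^g_U(JX)\lrcorner\Phi_X + JX\lrcorner U\lrcorner\Psi_X$, and then uses $[\bbE,Q]=0$ in $\so(TM)$ to show that the endomorphism $\Phi_X\circ\di\!X^{\flat}\circ J$ is skew-symmetric, whence $\nabla^g(J\di\!\mathbf{p}_X)$ is skew-symmetric; the normalisation $\int_M\mathbf{p}_X\vol=0$ is then checked separately via Lemma \ref{Ham-new}(iii). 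Your route instead verifies $(\Delta^g-2E)\mathbf{p}_X=0$ by combining \eqref{Lap-23}, \eqref{Lap-24}, the identity $\Delta^g z_X^2=4Ez_X^2-2g(X,X)$ and Lemma \ref{Ham-new}(iv), and your bookkeeping is right: the coefficients of $\bfe_X$, $\bfq_X$, $z_X^2$ and $g(X,X)$ all come out as $-\tfrac{2E(m-4)}{m+4}$ times the combination $\bfe_X+\bfq_X+\tfrac{E}{m}z_X^2+g(X,X)$, and the constant term cancels the residual exactly. The trade-off is that your argument leans on the converse Matsushima-type statement that on a compact K\"ahler--Einstein manifold with $E>0$ a mean-zero eigenfunction for the eigenvalue $2E$ is a Killing potential (the paper only states the forward implication explicitly), whereas the paper's argument is self-contained and, as the authors note, explains the geometric origin of $\mathbf{p}_X$ as the moment map of the Hermitian Killing form $\Phi_X$. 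A small bonus of your approach: the eigenvalue equation forces $\int_M\mathbf{p}_X\vol=0$ automatically, so the separate normalisation check becomes unnecessary.
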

\begin{proof}
The equation  $X\, \lrcorner\, \Phi_X=\di\!\mathbf{p}_X$ follows by definition and it implies
$JX \,\lrcorner \,  \Phi_X=-J\di\!\mathbf{p}_X$. Hence for $\mathbf{p}_X$  being a Killing potential
we have to show that  $\nabla^g(J\di\!\mathbf{p}_X)  = - \nabla^g (JX \, \lrcorner \,  \Phi_X)$ is skew symmetric.  This is done as follows.
From the Hermitian Killing equation \eqref{HK} and having $X \lrcorner \Psi_X=0$ we 
immediately obtain 
\begin{equation*}
\begin{split}
\nabla^g_U(JX \lrcorner \Phi_X) \; = \;  \nabla_U^g(JX) \,  \lrcorner \,  \Phi_X  \, + \,    JX \, \lrcorner \, \nabla^g_U  \Phi_X 
\; = \;   \nabla_U^g(JX)\,  \lrcorner \, \Phi_X \,  +  \,  JX \, \lrcorner \, U \,  \lrcorner\,  \Psi_X      .
\end{split}
\end{equation*}
The last summand is obviously skew symmetric, so we only have to discuss the first summand.
For the rest of the argument we will identify $2$-forms and skew-symmetric endomorphisms. 
Then $\di\!X^{\flat} \in \Lambda^{1,1}M $ commutes with $J$ and 
we have $\nabla_U^g(JX) \, \lrcorner \,\Phi_X = \frac{1}{2} (  \Phi_X \circ \di\!X^{\flat} \circ J)U$, where
we also used that the Killing vector field $X$ satisfies $\nabla^gX = \frac 12 \di\!X^{\flat}$  and that  $J$ is parallel.
Because $[\bbE,Q]=0$ in $\so(TM)$ we obtain
\begin{equation*}
\begin{split}
\Phi_X \circ \di\!X^{\flat} \circ J \;=\;  &-\frac{2E}{m}z_X\Phi_X \, + \, \frac{2E(m-4)}{m(m+4)}z_X((\di\!X^{\flat})_{\bbE}  \;  +  \;  (\di\!X^{\flat})_{Q})\\[1ex]
&
\qquad\qquad\qquad\qquad
+  \, (\di\!X^{\flat})_{\bbE} \circ (\di\!X^{\flat})_{\bbE} \circ J  \,  -  \,  (\di\!X^{\flat})_{Q} \circ (\di\!X^{\flat})_{Q} \circ J ,
\end{split}
\end{equation*}
where we decompose $\Phi_X$ according to its definition given in Proposition \ref{HK1} and we also write
$
\di\! X^{\flat} =  (\di\!X^{\flat})_{\bbE} +  (\di\!X^{\flat})_{Q} + \frac{2E}{m} z_X J
$.
It follows that $\Phi_X \circ \di\!X^{\flat} \circ J$ is skew-symmetric and thus 
$\nabla^g(J\di\!\mathbf{p}_X)$ is skew symmetric too. 
Hence $\mathbf{p}_X$ is a Killing potential as claimed. Note that part (iii) in Lemma \ref{Ham-new} is required in order to check that $\int_M \mathbf{p}_X\vol=0$.
\end{proof}

That $\mathbf{p}_X$ is a Killing potential can be also derived directly from \eqref{Lap-23} and \eqref{Lap-24} combined with the identity $\Delta^gz_X^2=4Ez_X^2-2g(X,X)$ and (iv) in Lemma \ref{Ham-new}. The proof above has 
the conceptual advantage of explaining the geometric origin of $\mathbf{p}_X$.

To finish this section we derive a few integral invariants which will be used in the next section to shed light 
on the invariant polynomial describing the obstruction to deformation of elements in $\scrE^{+}(g)$.
\begin{lema} \label{int-eqp}
For any  Killing vector fields $X,Y \in  \gg $ we have the following 
\begin{itemize}
\item[(i)] $(m-4)\int_M \bfe_Xz_Y\vol=-\frac{(m^2-4)E}{m}\int_Mz_X^2z_Y \vol$
\medskip
\item[(ii)] $(m-4)\int_M \bfq_Xz_Y \vol=3E\int_Mz_X^2z_Y \vol$
\medskip
\item[(iii)] $(m-4)\int_M \mathbf{p}_Xz_Y\vol=-\frac{Em(m+8)}{m+4}\int_Mz_X^2z_Y\vol$.
\end{itemize}
\end{lema}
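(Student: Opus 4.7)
The plan is to derive all three identities by Green's formula, using the eigenvalue-type equations \eqref{Lap-23} and \eqref{Lap-24} from Lemma \ref{Ham-new}, (ii) against the relation $\Delta^gz_Y=2Ez_Y$ satisfied by any Killing potential. The crucial preliminary observation will be the scalar identity
\begin{equation*}
\int_M g(X,X)\,z_Y\,\vol \;=\; E\int_M z_X^2 z_Y\,\vol \qquad \text{for all } X,Y\in\gg.
\end{equation*}
This follows from $\Delta^g z_X^2=4Ez_X^2-2g(X,X)$ (already used in the proof of Proposition \ref{Kpot2}) and self-adjointness of $\Delta^g$: indeed $\int z_Y\,\Delta^g z_X^2=\int z_X^2\,\Delta^g z_Y=2E\int z_X^2 z_Y$, hence $\int g(X,X)z_Y=2E\int z_X^2z_Y-\tfrac12\cdot 2E\int z_X^2 z_Y=E\int z_X^2 z_Y$.

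For part (ii), I would pair $z_Y$ with \eqref{Lap-23}: using self-adjointness of $\Delta^g$ and $\Delta^g z_Y=2Ez_Y$ yields
\begin{equation*}
(2E-4\Lambda_Q)\int_M\bfq_X z_Y\,\vol \;=\;-\frac{6\Lambda_Q}{m}\int_M g(X,X)\,z_Y\,\vol.
\end{equation*}
Substituting $\Lambda_Q=\tfrac{mE}{m+4}$ gives $2E-4\Lambda_Q=-\tfrac{2E(m-4)}{m+4}$ on the left and $-\tfrac{6E}{m+4}$ on the right; the factor $(m+4)$ cancels and the preliminary identity converts $\int g(X,X)z_Y$ into $E\int z_X^2z_Y$, producing exactly (ii). Part (i) is obtained by the same procedure applied to \eqref{Lap-24}: with $\Lambda_{\bbE}=\tfrac{4E}{m+4}$ one gets $2E-4\Lambda_{\bbE}=\tfrac{2E(m-4)}{m+4}$ and the right hand side simplifies to $-\tfrac{2E(m^2-4)}{m(m+4)}\int g(X,X)z_Y$; applying the preliminary identity and clearing $(m+4)$ delivers (i).

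Finally (iii) is purely algebraic once (i) and (ii) are in hand. I would plug the explicit formula for $\mathbf{p}_X$ from Proposition \ref{Kpot2} into $\int \mathbf{p}_X z_Y\,\vol$ and note that the constant term $\tfrac{m^2+8m+12}{4\vol(M)}\mu_2(X,X)$ contributes zero since $\int_M z_Y\,\vol=0$ for any Killing potential. What remains is
\begin{equation*}
(m-4)\int_M\mathbf{p}_Xz_Y\,\vol \;=\; E\int_Mz_X^2z_Y\,\vol\left[-\frac{m^2-4}{m}-3-\frac{(m-4)^2}{m(m+4)}\right],
\end{equation*}
and the bracket simplifies, after placing over the common denominator $m(m+4)$, to $-\tfrac{m(m+8)}{m+4}$ because the numerator $(m^2-4)(m+4)+3m(m+4)+(m-4)^2$ telescopes to $m^2(m+8)$. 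This yields (iii). The only obstacle worth flagging is this last bookkeeping of coefficients; every other ingredient is an immediate consequence of Lemma \ref{Ham-new} and Proposition \ref{Kpot2}.
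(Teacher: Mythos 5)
Your proposal is correct and follows essentially the same route as the paper: establish $\int_M g(X,X)z_Y\vol=E\int_M z_X^2z_Y\vol$ by integration by parts, pair $z_Y$ against \eqref{Lap-23} and \eqref{Lap-24} using $\Delta^g z_Y=2Ez_Y$, and deduce (iii) from (i), (ii) and the formula for $\mathbf{p}_X$. The only (immaterial) difference is that the paper derives the preliminary identity from $g(X,X)z_Y=g(\di(z_Xz_Y),\di z_X)-\tfrac12 g(\di z_Y,\di z_X^2)$ rather than from $\Delta^g z_X^2=4Ez_X^2-2g(X,X)$, and your coefficient bookkeeping in all three parts checks out.
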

\begin{proof}
First record the general fact
\begin{equation} \label{idXz}
\int_Mg(X,X)z_Y\vol  \,  =  \, E\int_Mz_X^2z_Y\vol ,
\end{equation}
which  actually holds for any K\"ahler-Einstein. The proof  follows  by integration by parts in the identity 
$g(X,X)z_Y=g(\di(z_Xz_Y),\di\!z_X)-\frac{1}{2}g(\di\!z_Y,\di\!z_X^2)$, since moment maps belong to $\ker(\Delta^g-2E)$.

\noindent
(i) and (ii) follow by first taking the $L^2$ scalar product with $z_Y$ in \eqref{Lap-23} and \eqref{Lap-24}, then using \eqref{idXz}.
Part (iii) follows from (i), (ii) and the definition of the Killing potential $\mathbf{p}_X$.
\end{proof}

\subsubsection{The zero locus of the invariant forms $\mu_3$ and $\nu$} \label{z-locus}
We will see that the zero locus of the cubic forms $\mu_3$ and $\nu$ plays a fundamental role in the  description of infinitesimal
Einstein deformations which are integrable to second order. To provide an explicit  geometric description of 
their zero locus we proceed as follows.

Define the multilinear map $\mathbf{P}:\gg \times \gg \times \gg \to \bbR$ according to 
\begin{equation*}
\mathbf{P}:=-\frac{1}{2\Lambda_Q}\nu-\frac{E(m+6)}{m+4}\mu_3
\end{equation*}
According to Lemma \ref{nu} and to the properties of $\mu_3$ outlined just above that Lemma we have that 
$\mathbf{P}$ defines a symmetric and invariant cubic form of $S^3(\gg)$. Below we relate this new cubic 
form to Killing potentials of the type $\mathbf{p}_X$, where is $X \in \gg$.
\begin{lema} \label{bfP}
We have 
\begin{equation*}
\mathbf{P}(X,X,Y)=\int_M\mathbf{p}_Xz_Y\vol
\end{equation*}
for all $X,Y$ in $\gg$.
\end{lema}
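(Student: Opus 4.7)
The plan is to rewrite both sides of the claimed identity as explicit linear combinations of $\int_M \bfq_X z_Y\,\vol$, $\int_M \bfe_X z_Y\,\vol$ and $\int_M z_X^2 z_Y\,\vol$ and then verify the identity coefficient by coefficient, using a single pointwise identity from Lemma \ref{Ham-new}, (iv) to eliminate the $\bfe_X$-term.

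First I would unpack $\mathbf{P}(X,X,Y)$. The term $\mu_3(X,X,Y)$ is just $\int_M z_X^2 z_Y\,\vol$ by definition, and for $\nu$ I would use Lemma \ref{Ham-new}, (i), which says $|(\di\! X^{\flat})_Q|^2=4\Lambda_Q\bfq_X$; this converts $\nu(X,X,Y)$ into $4\Lambda_Q\int_M \bfq_X z_Y\,\vol$. Substituting into the definition of $\mathbf{P}$ gives
$$\mathbf{P}(X,X,Y)=-2\int_M\bfq_X z_Y\,\vol-\frac{E(m+6)}{m+4}\int_M z_X^2 z_Y\,\vol.$$

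Next I would expand the right-hand side $\int_M \mathbf{p}_X z_Y\,\vol$ via the explicit formula for $\mathbf{p}_X$ given in Proposition \ref{Kpot2}. Since $z_Y$ is a normalised Killing potential with $\int_M z_Y\,\vol=0$, the constant term $\frac{m^2+8m+12}{4\vol(M)}\mu_2(X,X)$ integrates out against $z_Y$, leaving
$$\int_M\mathbf{p}_X z_Y\,\vol=\int_M(\bfe_X-\bfq_X)z_Y\,\vol-\frac{E(m-4)}{m(m+4)}\int_M z_X^2 z_Y\,\vol.$$
Comparing with the expression for $\mathbf{P}(X,X,Y)$ from the previous step, the claim reduces to the single integral identity
$$\int_M(\bfq_X+\bfe_X)z_Y\,\vol=-\frac{E(m+1)}{m}\int_M z_X^2 z_Y\,\vol.$$

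To establish this last identity I would use Lemma \ref{Ham-new}, (iv), which states that $g(X,X)+\frac{E}{m}z_X^2+\bfq_X+\bfe_X$ equals a constant (a multiple of $\mu_2(X,X)/\vol(M)$). Taking the $L^2$ inner product with $z_Y$, the constant contributes nothing since $\int_M z_Y\,\vol=0$, and the identity \eqref{idXz} disposes of the $g(X,X)$ contribution as $E\int_M z_X^2 z_Y\,\vol$. Collecting coefficients then yields exactly the displayed identity. No genuine obstacle arises: the mild technical point worth noticing is that by proceeding through Lemma \ref{Ham-new}, (iv) rather than the individual relations of Lemma \ref{int-eqp} one avoids dividing by $(m-4)$, so the argument is uniform in $m$ and in particular handles the case $m=4$ (i.e.\ $\Gr_2(\mathbb{C}^4)$) without any additional work.
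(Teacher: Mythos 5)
Your proof is correct and follows essentially the same route as the paper: both expand $\mathbf{p}_X$ via Proposition \ref{Kpot2}, pair Lemma \ref{Ham-new}\,(iv) with \eqref{idXz} to obtain $\int_M(\bfe_X+\bfq_X)z_Y\vol=-\tfrac{E(m+1)}{m}\mu_3(X,X,Y)$, and identify the $\bfq_X$-term with $\nu$ through Lemma \ref{Ham-new}\,(i). The only difference is the direction of the bookkeeping (you start from $\mathbf{P}$, the paper starts from $\mathbf{p}_X$), which is immaterial.
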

\begin{proof}
From the expression for the Killing potential $\mathbf{p}_X$ given in Proposition \ref{Kpot2} it follows that 
$\int_M\mathbf{p}_Xz_Y\vol=\int_M(\bfe_X-\mathbf{q}_X)z_Y\vol-\frac{E(m-4)}{m(m+4)}\int_Mz_X^2z_Y\vol$. Now take 
the $L^2$ scalar product with $z_Y$ in the identity in part (iv) of Lemma \ref{Ham-new} whilst keeping in mind that $\int_Mg(X,X)z_Y\vol=E\mu_3(X,X,Y)$, as granted by \eqref{idXz}. After a short calculation it follows that 
$\int_M(\bfe_X+\mathbf{q}_X)z_Y\vol=-\frac{E(m+1)}{m}\mu_3(X,X,Y)$. Using this we obtain 
$$ \int_M\mathbf{p}_Xz_Y\vol=-2\int_M\mathbf{q}_Xz_Y\vol-\frac{E(m+6)}{m+4}\mu_3(X,X,Y).
$$
The claim follows from having $\mathbf{q}_X=\frac{1}{4\Lambda_Q}\vert (\di\!X^{\flat})_Q \vert^2$, as guaranteed by 
Lemma \ref{Ham-new},(i).
\end{proof}
Now we are ready to show that $\mathbf{P}$ provides a geometric description, at manifold level, 
of the generator of the space of invariant elements in $S^3(\gg)$. This is intrinsic, in the sense that it does not use the identification $\gg \cong \su(n+2)$. Expressing $\mathbf{P}$ in terms of the local invariants 
$\mathbf{p}_X$ and $z_X$ is fundamental for the computation of the obstruction polynomial in section \ref{alg-t}.
\begin{pro} \label{nu-main}
The following hold
\begin{itemize}
\item[(i)] the zero locus of $\mathbf{P}$, that is $\{X \in \gg : \mathbf{P}(X,X,\cdot)=0 \}$, coincides with the quadric 
$$ 
\mathscr{C}:=\{X \in \gg : \mathbf{p}_X=0 \ \mathrm{at \ each \ point \ in} \ M\}  
$$
\item[(ii)] the cubic form $\mathbf{P}$ is not identically zero
\item[(iii)] the quadric $\mathscr{C} \subseteq \gg$ is isomorphic to the hyperquadric $\mathscr{C}(n)$ defined 
in \eqref{hq-i}.
\end{itemize}
\end{pro}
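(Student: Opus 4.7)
I would establish the three parts in the order (i), (ii), (iii), with the last being a clean combination of the first two. The role of (i) is to give a geometric interpretation of the zero locus of $\mathbf{P}$ via the vanishing of the Killing potentials $\mathbf{p}_X$; the role of (ii) is to identify $\mathbf{P}$, up to a non-zero scalar, with the algebraic cubic $P_0$; and (iii) is then a transport of Lemma \ref{vanC} across the identification $\gg \cong \su(n+2)$.

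For (i), Lemma \ref{bfP} immediately rewrites $\mathbf{P}(X,X,Y) = \int_M \mathbf{p}_X z_Y\,\vol$, so $X$ lies in the zero locus of $\mathbf{P}$ if and only if $\mathbf{p}_X$ is $L^2$-orthogonal to every Killing potential $z_Y$. The decisive input is Proposition \ref{Kpot2}: $\mathbf{p}_X$ itself is a (zero-mean) Killing potential, hence $\mathbf{p}_X = z_{X'}$ for a unique $X' \in \gg$, namely $X' = -J\grad \mathbf{p}_X$. Using the identity $\la Y_1,Y_2\ra_{L^2} = 2E\,\la z_{Y_1},z_{Y_2}\ra_{L^2}$ recalled at the beginning of Section \ref{gra}, the orthogonality condition becomes $\la X',Y\ra_{L^2} = 0$ for all $Y \in \gg$. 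Since $X'$ already belongs to $\gg$, this forces $X' = 0$, hence $\mathbf{p}_X \equiv 0$; the converse direction is immediate.

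For (ii), the plan rests on Remark \ref{cub-su}: the space of invariant cubic forms on the simple Lie algebra $\su(n+2)$ is one-dimensional, generated by $P_0$. Hence, after fixing $\gg \cong \su(n+2)$, we have $\mathbf{P} = c P_0$, and the task is to show $c \neq 0$. When $m \neq 4$ (i.e.\ $n \geq 3$), combining Lemma \ref{bfP} with Lemma \ref{int-eqp}(iii) gives $\mathbf{P}(X,X,Y) = -\tfrac{Em(m+8)}{(m-4)(m+4)}\mu_3(X,X,Y)$, reducing the question to showing $\mu_3 \not\equiv 0$. Since $\mu_3$ is itself invariant on $\su(n+2)$, it suffices to evaluate $\int_M z_X^3\,\vol$ on the Killing field associated with one explicit diagonal trace-free matrix. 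The main obstacle is the exceptional case $n=2$, where Lemma \ref{int-eqp}(i)--(ii) (both having the factor $m-4=0$ on the left) forces $\mu_3 \equiv 0$ so that $\mathbf{P} = -\tfrac{1}{2\Lambda_Q}\nu$, and one must show $\nu \not\equiv 0$ directly. I would argue by contradiction: assuming $\mathbf{P} \equiv 0$, part (i) gives $\mathbf{p}_X \equiv 0$ for every $X \in \gg$; together with Lemma \ref{Ham-new}(iv) and the equality $\Lambda_Q = \Lambda_\bbE$ valid in dimension $m=4$, this yields the pointwise identity $|(\di\!X^{\flat})_{\bbE}|^2 = |(\di\!X^{\flat})_Q|^2$ for every Killing field, a restrictive condition to be ruled out by selecting an $X$ for which the $\bbE$- and $Q$-projections of $\di\!X^{\flat}$ have manifestly different norms at some point.

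For (iii), parts (i) and (ii) combine cleanly: by (ii), $\mathbf{P} = c P_0$ with $c \neq 0$, so the zero loci of $\mathbf{P}$ and $P_0$ coincide as subsets of $\gg$; by (i), the zero locus of $\mathbf{P}$ is exactly $\mathscr{C}$; by Lemma \ref{vanC}, the zero locus of $P_0$ on $\su(n+2)$ is the hyperquadric $\mathscr{C}(n)$. Transporting via $A \mapsto X_A$ gives the isomorphism $\mathscr{C} \cong \mathscr{C}(n)$. The sole substantive difficulty across the whole proposition is therefore concentrated in (ii), and specifically in the non-vanishing of $c$ for the degenerate dimension $m=4$.
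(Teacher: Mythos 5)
Your parts (i) and (iii) follow the paper's proof essentially verbatim: (i) is exactly Lemma \ref{bfP} plus the observation that $\mathbf{p}_X$ is itself a Killing potential, and (iii) is the same transport of Lemma \ref{vanC} through the one-dimensionality of invariant cubics. The issue is concentrated in (ii), and there your plan for the main case $m \neq 4$ has a genuine gap. You propose to reduce the non-vanishing of $\mathbf{P}$ to the non-vanishing of $\mu_3$ and then to settle the latter by ``evaluating $\int_M z_X^3\,\vol$ on one explicit diagonal trace-free matrix.'' That integral is not a routine computation: it is the third moment of the Duistermaat--Heckman measure of the coadjoint orbit, and the paper explicitly points out (in the remark following Proposition \ref{mu3}) that the non-vanishing of $\mu_3$ was previously established in \cite{H2} only by a substantial harmonic-analysis argument using the localisation formula. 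More importantly, your logical direction is reversed relative to the paper: the paper does \emph{not} use $\mu_3 \neq 0$ as an input to prove $\mathbf{P} \neq 0$; it proves $\mathbf{P} \neq 0$ directly and then \emph{deduces} $\mu_3 \neq 0$ (Proposition \ref{mu3}) as a consequence, which is precisely the simplification the authors advertise. As written, your $m \neq 4$ argument defers the entire difficulty to an unperformed computation.

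The paper's actual argument for (ii) is uniform in $m$ and purely pointwise: restrict to a Killing field $X$ in the isotropy algebra $\gg_p$ at a point $p$ (so $g(X,X)=0$ at $p$), use Lemma \ref{Ham-new}(iv) together with the constraint $\mathbf{p}_X = 0$ to eliminate $\mu_2(X,X)$ and arrive at the pointwise relation $\vert (\di\!X^{\flat})_{\bbE}\vert^2 - \vert (\di\!X^{\flat})_Q\vert^2 = \frac{4E^2(m-4)}{m(m+4)} z_X^2$ at $p$; identifying $\gg_p \cong \u(1)\oplus\sp(1)\oplus\su(n)$ this is a quadratic condition that manifestly fails for some element of $\mathfrak{k}$, so $\mathscr{C} \cap \mathfrak{k} \neq \mathfrak{k}$ and hence $\mathscr{C} \neq \gg$. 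Your contradiction argument for the special case $m=4$ (deriving $\vert (\di\!X^{\flat})_{\bbE}\vert^2 = \vert (\di\!X^{\flat})_Q\vert^2$ for all $X$ and refuting it) is exactly this argument specialised to $m=4$, where the $z_X^2$ term drops out; had you run the same isotropy-algebra argument for general $m$ you would have recovered the paper's proof and avoided the appeal to an explicit integral altogether. To repair your proposal, replace the $m\neq 4$ branch of (ii) by this pointwise argument (and note that your $m=4$ sketch still needs the one-line exhibition of an $X \in \gg_p$ whose $\bbE$- and $Q$-projections of $\di\!X^{\flat}$ have different norms at $p$, e.g.\ one coming from the $\su(n)$-factor of the isotropy algebra).
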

\begin{proof}
(i) By Lemma \ref{bfP} a Killing vector field $X$ is in the zero locus of $\mathbf{P}$ if and only
if $ \la \mathbf{p}_X,  z_Y \ra_{L^2} = 0$ for all $Y \in \gg$. Since $\mathbf{p}_X$ is a Killing potential this implies, as remarked at the
beginning of Section \ref{gra}, that $\mathbf{p}_X =0$, i.e. the zero locus of $\mathbf{P}$ equals  $ \mathscr{C}$.\\
(ii) We will show that $\mathbf{P}$ is not identically zero by showing that its zero locus $ \mathscr{C}$ is a proper subset of $\gg \cong \su(n+2)$. 
If $X \in \gg$ belongs to the isotropy algebra $\gg_p:=\{X \in \gg : X_p=0 \}$ at some point $p \in M$ then Lemma \ref{Ham-new}, (iv) ensures that 
$\frac{E}{m}z_X^2 +\bfq_X +\bfe_X  =\frac{E(m^2+8m+12)}{4m\vol(M)}\mu_2(X,X)
$ at $p$. If in addition 
$X$ belongs to $\mathscr{C}$, we eliminate $\mu_2(X,X)$  using the constraint $\mathbf{p}_X=0$ in order to arrive at $4\bfe_X-m\bfq_X=\frac{E(m-4)}{m}z_X^2$ at $p$. Since $\bfe_X$ and $\bfq_X$ are determined from 
$\di\!X^{\flat}$ as in Lemma \ref{Ham-new},(i)
 it follows that $\vert (\di\!X^{\flat})_{\bbE}\vert^2-\vert (\di\!X^{\flat})_Q\vert^2=
\frac{4E^2(m-4)}{m(m+4)}z_X^2$ at the point $p$.
Identifying the isotropy algebra $\gg_p$ and $\mathfrak{k}=\u(1) \oplus \sp(1) \oplus \su(n)$ we thus obtain an inclusion 
$$ 
\mathscr{C} \cap \mathfrak{k}  \subseteq \{(\lambda,A,B) \in \mathfrak{k}: \vert A \vert^2-
\vert B \vert^2=-\frac{4E^2(m-4)}{m(m+4)}\lambda^2 \} \neq \mathfrak{k}.
$$
It follows that $\mathscr{C} \neq \gg \cong \su(n+2)$, since otherwise we would have $ \mathscr{C} \cap \mathfrak{k} = \mathfrak{k}$.
Hence the cubic form $\mathbf{P}$ does not vanish identically.\\
(iii) The quadric  $\mathscr{C}$ occurs as the zero locus of a non identically zero, invariant cubic form 
on $\gg$, which is unique up to scale. Thus after identifying $\gg$ and $\su(n+2)$ as explained in Remark \ref{cub-su} the claim follows by using Lemma \ref{vanC},(i).
\end{proof}
As a direct consequence of the construction of the fundamental form $\mathbf{P}$ we obtain detailed information on 
the forms $\mu_3$ and $\nu$. As shown below, the complex dimension $m=4$ turns out to be rather special as the 
cubic polynomial $\mu_3$ vanishes in this dimension.  
\begin{pro} \label{mu3}
The following hold
\begin{itemize}
\item[(i)] If $m \neq 4$ then  the cubic polynomial $\mu_3$ is not identically zero. Its zero locus can be written as
$\{X \in \gg :  \la z_X^2,  z_Y \ra_{L^2} = 0 \ \mathrm{for \ all} \ Y \in \gg\}$ and it coincides with the quadric
$
\mathscr{C}$
\item[(ii)] If $m=4$  then  $\mu_3=0$. The cubic polynomial $\nu$ is not identically zero. Its zero locus is given by the
quadric $\mathscr{C}$.
\medskip
\end{itemize}
\end{pro}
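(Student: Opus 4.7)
The plan is to deduce Proposition \ref{mu3} from Proposition \ref{nu-main} by establishing an explicit proportionality, with a possibly vanishing coefficient in the critical dimension $m=4$, between the invariant cubic form $\mathbf{P}$ and the cubic $\mu_3$. The key observation is that Lemma \ref{bfP} rewrites the left-hand side of Lemma \ref{int-eqp}, (iii) as $(m-4)\mathbf{P}(X,X,Y)$, while its right-hand side is, by definition of $\mu_3$, equal to $-\frac{Em(m+8)}{m+4}\mu_3(X,X,Y)$. Hence the pointwise identity
\[
(m-4)\,\mathbf{P}(X,X,Y) \; = \; -\frac{Em(m+8)}{m+4}\,\mu_3(X,X,Y) \quad \mathrm{for \ all} \ X,Y \in \gg
\]
holds; polarisation in the repeated variable $X$ then upgrades it to the identity of symmetric invariant cubic forms $(m-4)\mathbf{P} = -\frac{Em(m+8)}{m+4}\mu_3$ on $\gg$.

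For part (i), I would note that when $m\neq 4$ the displayed identity exhibits $\mu_3$ as a non-zero scalar multiple of $\mathbf{P}$; Proposition \ref{nu-main}, (i) and (ii), then simultaneously yields that $\mu_3$ is not identically zero and that its zero locus coincides with $\mathscr{C}$. The alternative description of this zero locus follows at once from $\mu_3(X,X,Y)=\la z_X^2, z_Y\ra_{L^2}$, which rephrases the condition $\mu_3(X,X,\cdot)=0$ as orthogonality of $z_X^2$ to every Killing potential $z_Y$.

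For part (ii), specialising to $m=4$ in the above identity forces $\mu_3\equiv 0$. Substituting this into the definition $\mathbf{P}=-\frac{1}{2\Lambda_Q}\nu - \frac{E(m+6)}{m+4}\mu_3$ recorded just before Lemma \ref{bfP}, one finds that in dimension $m=4$ we have $\nu=-2\Lambda_Q\mathbf{P}$, so a further application of Proposition \ref{nu-main} gives that $\nu$ is not identically zero and that its zero locus is precisely $\mathscr{C}$.

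There is no genuine obstacle in this argument, since all the substantive geometric input, namely the construction of the Killing potentials $\mathbf{p}_X$ attached to the Hermitian Killing forms $\Phi_X$, the integral identities in Lemma \ref{int-eqp}, and the non-vanishing of $\mathbf{P}$ together with the identification of its zero locus in Proposition \ref{nu-main}, is already in place. The only point requiring mild care is the polarisation step, but this is routine for symmetric trilinear forms in characteristic zero, so the proof reduces to a clean bookkeeping of the already-established identities.
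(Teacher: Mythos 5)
Your proposal is correct and follows essentially the same route as the paper: both reduce the statement to Proposition \ref{nu-main} by exhibiting $\mathbf{P}$ as an explicit multiple of $\mu_3$ (respectively of $\nu$ when $m=4$) via the integral identities of Lemma \ref{int-eqp}. The only cosmetic difference is that you combine Lemma \ref{bfP} with part (iii) of Lemma \ref{int-eqp} to get $(m-4)\mathbf{P}=-\tfrac{Em(m+8)}{m+4}\mu_3$ directly, whereas the paper first derives $\nu=\tfrac{12\Lambda_Q E}{m-4}\mu_3$ from part (ii) and then substitutes into the definition of $\mathbf{P}$; both are immediate consequences of the same established identities.
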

\begin{proof}
(i) When $m \neq 4$ we have $\nu=\frac{12\Lambda_QE}{m-4}\mu_3$
as it follows by using in part (ii) of Lemma \ref{int-eqp} that $\vert (\di\!X^{\flat})_Q \vert^2=4\Lambda_Q \mathbf{q}_X$ and $\Lambda_Q=\frac{mE}{m+4}$. Having $\nu$ and $\mu_3$ thus proportional yields 
$\mathbf{P}=-\frac{Em(m+8)}{m^2-16}\mu_3$ and both claims on $\mu_3$ follow from Proposition \ref{nu-main}.

\noindent
(ii) When $m=4$ the vanishing of $\mu_3$ is granted by part (iii) in Lemma \ref{int-eqp}. Thus 
$\nu=-2\Lambda_Q\mathbf{P}$ and both claims follow again from parts (i) and (ii) in Proposition \ref{nu-main}.
\end{proof}
To finish this section a few remarks are in order.


\begin{rema}
For another proof of the non-vanishing of $\mu_3$ when $m\neq 4$, see \cite{H2}; in fact in that reference 
the non-vanishing of $\mu_3$ is proved for all Grassmannians $\mathrm{Gr}_p(\bbC^{p+q})$ with $p \neq q$. 
This result is proved in \cite{H2}
by a veritable {\it{tour de force}} in harmonic analysis using the Duistermaat-Heckman localisation formula. 
However, in our approach, using geometric analysis, the proof of these results is considerably simplified. In addition  \cite{H2} gives no geometrical
description of  the zero locus of $\mu_3$ as we do in Proposition \ref{mu3}. 

Finally, we note that the vanishing of $\mu_3$ when 
$m=4$ is also proved in \cite{H2} by a direct argument that works for all Grassmannians $\mathrm{Gr}_n(\bbC^{2n})$. 
It is based on the observation from \cite{GaGo} that $\mathrm{Gr}_n(\bbC^{2n})$ admits an anti-holomorphic involution.

\end{rema}
\subsection{The obstruction polynomial} \label{alg-t}
We wish to use Theorem \ref{E-geq0} in order to understand the obstruction theory of $g$. 
Recall that  it is enough to compute the cubic 
polynomial 
\begin{equation*}
\begin{split}
P(X) \; =  \;\; &3 \, \la \o_J \wedge \di\! \varepsilon(X), \varepsilon(X) \wedge \di\! \varepsilon(X)\ra_{L^2}  \,  -  \,8E \la \varepsilon(X)^2\circ J,\varepsilon(X)\ra_{L^2}\\
=
\; \;  & 3 \, \la \o_J \wedge \di\! \varepsilon(X), \varepsilon(X) \wedge \di\! \varepsilon(X)\ra_{L^2}  \, - \, 4E \la \varepsilon(X) \wedge \varepsilon(X), \varepsilon(X) \wedge \o_J \ra_{L^2}.
\end{split}
\end{equation*}
To see this, take $F=G = \varepsilon (X)$ in Theorem \ref{E-geq0}  
and use the parametrisation of $\scrE^+(g)$ given by the map $ X \mapsto \varepsilon(X) \circ J$. For the second equality we also use the formula of Lemma \ref{Ls}. Since for any $X \in \mathfrak{g}$ the exterior derivative 
$\di\!\varepsilon(X)$ is proportional to $X  \, \lrcorner \, \widetilde{\Omega}$ the problem of determining the obstruction polynomial is thus fully amenable to algebraic computation. Note that $P(X)$ coincides with the Koiso obstruction polynomial computed on $h = \varepsilon(X) \circ J$.


\subsubsection{Algebraic preparation of the computation}
Recall that for any Killing vector field $X \in \gg$ the Lie derivative $\L_X$ commutes with the projection onto $\Lambda^{1,1}_0M$, since $X$
is holomorphic, and with the projection onto $Q $,  since $X$ is quaternionic. Hence, $\varepsilon([X,Y])=\L_X \varepsilon(Y)$, i.e.
$\varepsilon$ is an iso\-mor\-phism of representations and we can identify $P$ with an $\ad$-invariant cubic polynomial on the Lie algebra $\su(n+2)$. 

As explained in Remark \ref{cub-su}, identifying $\su(n+2)$ and $\mathfrak{g}$ we find a constant $c \in \bbR$ such that
\begin{equation} \label{const}
P(X_A) =  c P_0(A,A,A) \quad  \mbox{for all} \ A \in \su(n+2)
\end{equation}
where $P_0$ is the basic  cubic forms on $\su(n+2)$ defined in Remark \ref{cub-su}.
Therefore the {\it{a priori}} answer to the deformation problem depends on the values for $c$. If $c=0$ every deformation is unobstructed, whilst if $c \neq 0$ the set of unobstructed deformations is isomorphic to the hyperquadric $\mathscr{C}(n)$.

Keeping this in mind, the remainder of this section is dedicated to the computation of the obstruction polynomial $P$ and of the constant $c$ above. Whenever $h \in S^2M$ we consider $\iota_h : \Lambda^{\star}M \to \Lambda^{\star}M$ defined  by 
$\iota_h(\alpha)=\sum he_k \wedge e_k \, \lrcorner \,  \alpha$ and note this is a symmetric operator. We also consider $\Pr :S^2M \to S^2M$ acting according to $\Pr(h)=\sum_a I_a \circ h \circ I_a$, whenever $\{I_a\}$ is a local basis in $Q$. Next we record that 
whenever $v \in TM$ and $h \in S^{2}_0M$ satisfies $hJ=Jh$ and $\tr(hJ)=0$ we have 
\begin{equation} \label{lin-rel}
\begin{split}
g(\iota_h(v \lrcorner \o_J^2),v \lrcorner \o_J^2)   &=  \;4(m-3)g(hv,v)\\[1ex]
g(\iota_h(v \lrcorner \o_J^2),v \lrcorner \Omega ) \; & =  \; -  12g(v,hv)  \,+ \,  8 g(v,\Pr(h)v)  \, + \, 8m 
g(hJ,(v \wedge Jv)_Q)   \\[1ex]
g(\iota_h(v \lrcorner \Omega),v \lrcorner \Omega )\;  &=  \; - 4 g(v,(3h+(m+4)\Pr(h))v ).
\end{split}
\end{equation}
The proof of the first two relations follows by straightforward algebraic computation using only 
the definitions. The proof of the third identity is contained in Lemma \ref{a-A1} of Appendix A.

\medskip

\begin{lema} \label{alg-1}
Assume that $v \in TM$ and $ F \in \mathbb{E} \oplus Q$. Then 
$$ 
g(\o_J \wedge ( v \,  \lrcorner \, \widetilde{\Omega}), F \wedge (v \, \lrcorner \, \widetilde{\Omega}))
\; = \; \, 4c_1g(F, v \wedge Jv) \, + \, 4c_2g(F_Q, v \wedge Jv)
$$
where $F_Q$ indicates the component of $F$ on $Q$ and the coefficients are
$$
-3c_1=(m-1)^2(m+3)-15m+9, \quad 9c_2=4(m-1)(m-4)(m+4).
$$
\end{lema}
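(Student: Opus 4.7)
The plan is to rewrite the left-hand side as a single quadratic expression $g(\iota_h(v\lrcorner\widetilde{\Omega}), v\lrcorner\widetilde{\Omega})$ with $h=FJ$, so that the three algebraic identities in \eqref{lin-rel} apply directly.

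The first step is to prove the commutator identity
\begin{equation*}
L^{\star}_{\omega_J}(F \wedge \gamma) \;=\; F \wedge L^{\star}_{\omega_J}\gamma \,+\, \iota_{FJ}\gamma,
\end{equation*}
valid for any primitive $(1,1)$-form $F$ and any form $\gamma$. To derive this I would expand $L^{\star}_{\omega_J}=\tfrac12\sum_k Je_k \lrcorner e_k\lrcorner$ and apply the Leibniz rule for interior products across $F\wedge\gamma$. The primitivity $L^{\star}_{\omega_J}F=0$ eliminates one term; the remaining correction $\tfrac12\sum_k[(Je_k\lrcorner F)\wedge (e_k\lrcorner\gamma)-(e_k\lrcorner F)\wedge (Je_k\lrcorner\gamma)]$ then collapses to $\iota_{FJ}\gamma$ by writing the components in terms of $h=FJ$, which commutes with $J$ since $F$ is of type $(1,1)$, and invoking the invariance of $\iota_h$ under the orthonormal basis change $e_k\mapsto Je_k$.

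The second step is to apply this identity with $\gamma=v\lrcorner\widetilde{\Omega}$. Since interior products anticommute, $L^{\star}_{\omega_J}$ commutes with $v\lrcorner\,\cdot$, and combined with the primitivity of $\widetilde{\Omega}$ (Lemma \ref{Ot}) this gives $L^{\star}_{\omega_J}\gamma=0$. Adjointness then yields
\begin{equation*}
g(\omega_J \wedge\gamma, F \wedge\gamma)\;=\;g(\gamma,L^{\star}_{\omega_J}(F\wedge\gamma))\;=\;g(\iota_{FJ}\gamma,\gamma).
\end{equation*}
Expanding $\widetilde{\Omega}=\omega_J^2+\tfrac{m-1}{3}\Omega$ and using the symmetry of $\iota_{FJ}$ for $g$, the LHS decomposes as the sum of the three scalar products in \eqref{lin-rel} for $h=FJ$, with coefficients $1$, $\tfrac{2(m-1)}{3}$ and $\tfrac{(m-1)^2}{9}$ respectively.

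The final step is bookkeeping. Since the underlying skew endomorphism of $F$ satisfies $g(hv,v)=-g(F,v\wedge Jv)$; since $J$ commutes with each $I_a$ in the Hermitian symmetric structure of $\mathrm{Gr}_2(\bbC^{n+2})$, one has $\Pr(FJ)=\Pr(F)\,J$, and since $\Pr$ acts as $-3$ on $\bbE$ and $+1$ on $Q$ (implicit in the proof of Lemma \ref{int-eps}(ii)), we get $\Pr(F)=-3F+4F_Q$ for $F\in\bbE\oplus Q$, so that $g(\Pr(h)v,v)=3g(F,v\wedge Jv)-4g(F_Q,v\wedge Jv)$. Together with $hJ=-F$, which gives $g(hJ,(v\wedge Jv)_Q)=-g(F_Q,v\wedge Jv)$, substituting into \eqref{lin-rel} and collecting coefficients produces exactly $4c_1\, g(F,v\wedge Jv)+4c_2\, g(F_Q,v\wedge Jv)$ with the stated values of $c_1,c_2$. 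The main obstacle will be the careful verification of the identity in step one; the remaining computation is routine, though the arithmetic tracking the emergence of $(m-1)^2(m+3)-15m+9$ in $c_1$ requires attention.
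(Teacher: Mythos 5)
Your proposal is correct and follows essentially the same route as the paper: reduce the left-hand side via adjointness and primitivity to $g(\iota_{FJ}(v\lrcorner\widetilde{\Omega}),v\lrcorner\widetilde{\Omega})$, expand $\widetilde{\Omega}=\omega_J^2+\tfrac{m-1}{3}\Omega$, and feed the three identities of \eqref{lin-rel} together with $\Pr(FJ)=(-3F+4F_Q)J$ into the bookkeeping. The only difference is that you make explicit the commutator identity $L^{\star}_{\omega_J}(F\wedge\gamma)=F\wedge L^{\star}_{\omega_J}\gamma+\iota_{FJ}\gamma$, which the paper uses implicitly; your sign conventions and the resulting coefficients $c_1,c_2$ check out.
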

\begin{proof}
Since $F$ is primitive by assumption and  $\widetilde{\Omega}$ is primitive by Lemma \ref{Ot} we find, with $h = FJ$, that
\begin{equation*}
\begin{split}
g(\o_J \wedge (v \, \lrcorner \, &\widetilde{\Omega}), F \wedge (v \, \lrcorner \, \widetilde{\Omega})) \; = \;
g(v \, \lrcorner \, \widetilde{\Omega},L^{\star}_{\o_J}(F \wedge  (v  \, \lrcorner \, \widetilde{\Omega})) \; = \;
g(v \; \lrcorner \; \widetilde{\Omega},\iota_h(v  \, \lrcorner \, \widetilde{\Omega}))\\
=\;\, &g(v \, \lrcorner  \, \o_J^2,\iota_h(v \,  \lrcorner \, \o_J^2)) \, + \, 
\frac{2(m-1)}{3}g(v \, \lrcorner  \,\O,\iota_h(v  \, \lrcorner  \,\o_J^2)) \, + \, \frac{(m-1)^2}{9}g(v \, \lrcorner \, \O,\iota_h(v \lrcorner \O)).
\end{split}
\end{equation*}
To obtain the last line we have expanded $\widetilde{\O}=\o_J^2+\frac{m-1}{3}\O$ and taken into account that $\iota_h$ is a symmetric operator.
Writing $F=F_\bbE +F_Q$ also observe that we have the identity $\Pr(h)=-3F_\bbE J+F_QJ=(-3F+4F_Q)J$. The claim thus follows by direct calculation from the relations 
in \eqref{lin-rel}.
\end{proof}
\subsubsection{Computation of the first summand in the obstruction polynomial}
One direct consequence of this algebraic lemma is that the only quantities needed to harness the first summand of the obstruction polynomial $P$ are 
$$\la (\di\!X^{\flat})_0,X^{\flat} \wedge (JX)^{\flat}\ra_{L^2} \ \mathrm{together \ with} \  
\la (\di\!X^{\flat})_Q,X^{\flat} \wedge (JX)^{\flat}\ra_{L^2}$$ where 
$X \in \gg$. In turn, these can be determined by using the moment maps $\bfq_X$ and $\bfe_X$ from the previous section. 
This observation is made precise below.
\begin{coro} \label{1st-half}
The following hold for all Killing vector fields $X \in \gg$
\begin{itemize}
\item[(i)] when $m \neq 4$ we have
$\la \o_J \wedge \di\! \varepsilon(X), \varepsilon(X) \wedge \di\! \varepsilon(X)\ra_{L^2}=c_3 \mu_3(X,X,X)$ where the constant  
$
c_3=-\frac{8E^4(m^2-4)}{m^3(m-4)}(2c_1+c_2)  
$
\medskip
\item[(ii)] when $m=4$ we have $\la \o_J \wedge \di\! \varepsilon(X), \varepsilon(X) \wedge \di\! \varepsilon(X)\ra_{L^2}=2E^2\nu(X,X,X).$
\end{itemize}
\end{coro}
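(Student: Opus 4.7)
\medskip

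\noindent\textbf{Proof proposal.} The plan is to substitute the formula $\di\!\varepsilon(X)=-\frac{E}{m}X\lrcorner\widetilde{\Omega}$ from Proposition \ref{par1},(ii) on both sides of the inner product, apply Lemma \ref{alg-1} pointwise, and then reduce the whole computation to two $L^2$-invariants on the Lie algebra $\gg$. Concretely, because $\varepsilon(X)\in\Gamma(\bbE\oplus Q)$ by construction, taking $v=X$ and $F=\varepsilon(X)$ in Lemma \ref{alg-1} yields the pointwise identity
\begin{equation*}
g(\o_J\wedge\di\!\varepsilon(X),\varepsilon(X)\wedge\di\!\varepsilon(X)) \;=\; \frac{E^{2}}{m^{2}}\bigl(4c_{1}\,g(\varepsilon(X),X^{\flat}\wedge(JX)^{\flat})+4c_{2}\,g(\varepsilon(X)_{Q},X^{\flat}\wedge(JX)^{\flat})\bigr).
\end{equation*}
Since $\varepsilon(X)_{Q}=-\tfrac{m^{2}-4}{3m}(\di\!X^{\flat})_{Q}$, after integration this leaves only the two scalars
$I_{\bbE}:=\langle(\di\!X^{\flat})_{\bbE},X^{\flat}\wedge(JX)^{\flat}\rangle_{L^{2}}$ and
$I_{Q}:=\langle(\di\!X^{\flat})_{Q},X^{\flat}\wedge(JX)^{\flat}\rangle_{L^{2}}$ to be evaluated.

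Both of these are computed by the Hamiltonian technique of Section \ref{hamI}. Using $(JX)^{\flat}=\di\!z_{X}$ together with Lemma \ref{Ham-new},(i), the pointwise identity $g(\alpha,X^{\flat}\wedge(JX)^{\flat})=\alpha(X,JX)$ valid for any $2$-form $\alpha$, and integration by parts give
\begin{equation*}
I_{Q}\;=\;\langle\di\!\bfq_{X},(JX)^{\flat}\rangle_{L^{2}} \;=\;\langle\bfq_{X},\di^{\star}\!\di\!z_{X}\rangle_{L^{2}}\;=\;2E\!\int_{M}\!\bfq_{X}z_{X}\vol,
\end{equation*}
and analogously $I_{\bbE}=2E\int_{M}\bfe_{X}z_{X}\vol$, using that $\Delta^{g}z_{X}=2Ez_{X}$.

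For the case $m\neq 4$ addressed in (i), parts (i) and (ii) of Lemma \ref{int-eqp} convert these two integrals directly into multiples of $\mu_{3}(X,X,X)=\int_{M}z_{X}^{3}\vol$; gathering the resulting rational coefficients and using the definition of $\varepsilon(X)$ yields the stated constant $c_{3}=-\frac{8E^{4}(m^{2}-4)}{m^{3}(m-4)}(2c_{1}+c_{2})$ after straightforward bookkeeping.  For (ii), Proposition \ref{mu3},(ii) tells us that $\mu_{3}\equiv 0$, so Lemma \ref{int-eqp} becomes vacuous and a different route is needed. Here the identity $\bfq_{X}=\tfrac{1}{4\Lambda_{Q}}|(\di\!X^{\flat})_{Q}|^{2}$ from Lemma \ref{Ham-new},(i) immediately identifies $\int_{M}\bfq_{X}z_{X}\vol$ with $\tfrac{1}{4\Lambda_{Q}}\nu(X,X,X)$, while the companion integral $\int_{M}\bfe_{X}z_{X}\vol$ is recovered by pairing Lemma \ref{Ham-new},(iv) with $z_{X}$ and using the vanishing of $\mu_{3}$ together with $\int_{M}z_{X}\vol=0$, giving $\int_{M}\bfe_{X}z_{X}\vol=-\int_{M}\bfq_{X}z_{X}\vol$.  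Substituting the values $c_{1}=-4$, $c_{2}=0$ at $m=4$ then produces the claimed coefficient $2E^{2}$.

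The main obstacle I anticipate is \emph{not} the algebra of Lemma \ref{alg-1} but rather the careful sign and normalisation tracking in going from $\di\!\varepsilon(X)=-\tfrac{E}{m}X\lrcorner\widetilde{\Omega}$ through the two integration-by-parts steps, and the conceptually distinct treatment required for the degenerate case $m=4$: there the naive identity from Lemma \ref{int-eqp} collapses to $0=0$ and $\mu_{3}$ is no longer a sensible invariant in which to express the answer, so one must pivot to the quaternion-K\"ahler invariant $\nu$ using the explicit formula for $\bfq_{X}$ in terms of $|(\di\!X^{\flat})_{Q}|^{2}$ and extract the missing integral $\int_{M}\bfe_{X}z_{X}\vol$ indirectly from Lemma \ref{Ham-new},(iv).
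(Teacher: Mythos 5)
Your proposal is correct and follows essentially the same route as the paper: apply Lemma \ref{alg-1} to $\di\!\varepsilon(X)=-\tfrac{E}{m}X\lrcorner\widetilde{\Omega}$, reduce everything to the two invariants $\la (\di\!X^{\flat})_{\bbE},X^{\flat}\wedge (JX)^{\flat}\ra_{L^2}$ and $\la (\di\!X^{\flat})_{Q},X^{\flat}\wedge (JX)^{\flat}\ra_{L^2}$ via the moment maps of Lemma \ref{Ham-new},(i), and evaluate them with Lemma \ref{int-eqp} when $m\neq 4$, respectively with $|(\di\!X^{\flat})_Q|^2=4\Lambda_Q\bfq_X$ and $\Lambda_Q=E/2$ when $m=4$. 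Your derivation of $\int_M\bfe_Xz_X\vol=-\int_M\bfq_Xz_X\vol$ from Lemma \ref{Ham-new},(iv) is just a repackaging of the paper's direct observation that $\la (\di\!X^{\flat})_0,X^{\flat}\wedge(JX)^{\flat}\ra_{L^2}=0$ (both rest on $X\lrcorner(\di\!X^{\flat})_0=-\di(g(X,X)+\tfrac{E}{m}z_X^2)$ and the vanishing of $\mu_3$), and all constants check out.
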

\begin{proof}
Let $F:=\varepsilon(X)=(\di\!X^{\flat})_0+t(\di\!X^{\flat})_Q$; in particular we have 
$F_Q=-\frac{m^2-4}{3m}(\di\!X^{\flat})_Q$ and $F_{\bbE}=(\di\!X^{\flat})_{\bbE}$. 
Also recall that $\di\!F=-\frac{E}{m}X \, \lrcorner \,  \widetilde{\O}$ by Proposition \ref{par1},(ii). Then Lemma \ref{alg-1} gives
\begin{equation} \label{cons-1}
\begin{split}
g(\omega_J \wedge \di\!F,F \wedge \di\!F)  
\; = \;  \frac{4E^2}{m^2} \, \big (c_1g(F, X^{\flat} \wedge (JX)^{\flat}) \, + \, c_2g(F_Q, X^{\flat} \wedge 
(JX)^{\flat}) \big ) .
\end{split}
\end{equation}

\noindent
Assuming first that $m \neq 4$ we use  Lemma \ref{Ham-new}, (i) to get 
\begin{equation*}
\begin{split}
&\la (\di\!X^{\flat})_Q, X^{\flat} \wedge (JX)^{\flat} \ra_{L^2}=
\la X \lrcorner (\di\!X^{\flat})_Q,(JX)^{\flat}\ra_{L^2}=\langle \di\!\bfq_X,\di\!z_X\rangle_{L^2}=2E\int_M\bfq_Xz_X\vol\\
=&\frac{6E^2}{m-4}\mu_3(X,X,X)
\end{split}
\end{equation*}
with the last equality being due to Lemma \ref{int-eqp},(ii). Similarly, but using this time part (i) in Lemma \ref{int-eqp} at the end of the 
argument we get $\la (\di\!X^{\flat})_{\bbE}, X^{\flat} \wedge (JX)^{\flat}\ra_{L^2}=-\frac{2E^2(m^2-4)}{m(m-4)}\mu_3(X,X,X)$. Combining these two facts then yields 
\begin{equation} \label{prodds-1} 
\begin{split}
&\la F, X^{\flat} \wedge (JX)^{\flat} \ra_{L^2} = -\frac{4E^2(m^2-4)}{m(m-4)}\mu_3(X,X,X) \;\; \;\; \\
&\la F_Q, X^{\flat} \wedge (JX)^{\flat} \ra_{L^2}=-\frac{2E^2(m^2-4)}{m(m-4)}\mu_3(X,X,X)
\end{split}
\end{equation}
and the claim on the value of $c_3$ then follows. 

When $m=4$ we have $c_2=0$; in addition, $X \lrcorner (\di\!X^{\flat})_0=-\di(g(X,X)+\frac{E}{m}z_X^2)$ hence after successively using \eqref{idXz} and the vanishing of $\mu_3$ as granted by Proposition \ref{mu3} we get 
$$
\la (\di\!X^{\flat})_0, X^{\flat} \wedge (JX)^{\flat} \ra_{L^2}=-\la \di(g(X,X)+\frac{E}{m}z_X^2),\di\!z_X \ra_{L^2}=-2E
\int_M \big (g(X,X)z_X+\frac{E}{m}z_X^3\big )\vol=0.
$$
At the same time by using Lemma \ref{Ham-new}, (i) and  
$(JX)^{\flat} = \di\! z_X$ we get 
$$\la (\di\!X^{\flat})_Q, X^{\flat} \wedge (JX)^{\flat}\ra_{L^2}=\la \di\!\mathbf{q}_X, \di\!z_X\ra_{L^2}=2E\la \mathbf{q}_X,z_X\ra_{L^2}=\nu(X,X,X).$$ 
The last equality follows from $\vert(\di\!X^{\flat})_Q \vert^2=4\Lambda_Q\mathbf{q}_X$(see Lemma \ref{Ham-new}) and having $\Lambda_Q=\frac{E}{2}$ when $m=4$.

Since we also have $c_1=-4$ and $t=-2$, the claim finally follows from \eqref{cons-1}.
\end{proof}

\subsubsection{Computation of the second summand}
To finalise the computation of the obstruction polynomial $P$ we need to determine its second summand, that is 
$\la \varepsilon(X) \wedge \varepsilon(X), \varepsilon(X) \wedge \o_J \ra_{L^2}$.
This is done in two steps, starting with the following general lemma as a first step.

\begin{lema} \label{int-next2}
Let $(M^{2m},g,J)$ be a compact K\"ahler-Einstein with $\ric^g=Eg$ and $E>0$. For any
Killing vector field $X \in \mathfrak{aut}(M,g)$ and any infinitesimal Einstein deformation of the form 
$ F \circ J \in \scrE^{+}(g)$ we have 
$$
\la (\di\!X^{\flat})_0 \wedge (\di\!X^{\flat})_0,F \wedge \o_J\ra_{L^2}  \; =  \; 
 -\frac{2(m-4)E}{m} \la F, X^{\flat} \wedge (JX)^{\flat} \ra_{L^2} .
$$
\end{lema}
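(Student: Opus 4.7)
The strategy is to reduce the left-hand side to primitivity/Lefschetz manipulations plus one careful integration by parts. I would start by expanding $A := (\di X^{\flat})_0 = \di X^{\flat} - \tfrac{2E}{m}z_X\omega_J$ and compute $\di A = -\tfrac{2E}{m}(JX)^{\flat}\wedge\omega_J$ from $\di z_X = (JX)^{\flat}$ and $X\lrcorner\omega_J^{\wedge 2} = 2(JX)^{\flat}\wedge\omega_J$. The identity $\di(X^{\flat}\wedge A) = \di X^{\flat}\wedge A - X^{\flat}\wedge\di A$ then rearranges to
\[
A\wedge A \;=\; \di(X^{\flat}\wedge A) \;-\; \tfrac{2E}{m}z_X\,\omega_J\wedge A \;-\; \tfrac{2E}{m}X^{\flat}\wedge(JX)^{\flat}\wedge\omega_J,
\]
so that pairing with $F\wedge\omega_J$ splits into a Stokes-exact piece plus two algebraic correction terms.

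For each correction term I would use the adjointness $\la\alpha, L\beta\ra = \la\Lambda\alpha,\beta\ra$ together with the Lefschetz commutator $[\Lambda, L] = (m-2)\,\mathrm{id}$ on $2$-forms and the primitivity $\Lambda F = 0$. Both terms reduce to explicit multiples of $\la F, X^{\flat}\wedge(JX)^{\flat}\ra_{L^2}$, via the basic integration-by-parts identity
\[
\la z_X\,\di X^{\flat}, F\ra_{L^2} \;=\; \la F, X^{\flat}\wedge(JX)^{\flat}\ra_{L^2},
\]
which is a direct consequence of $\di^{\star}F = 0$ and $\grad z_X = JX$. For the $\di$-exact piece, Stokes together with $\di\di X^{\flat} = 0$ transforms it into $\la\Lambda((\di X^{\flat})^{\wedge 2}), F\ra_{L^2}$; here I would apply the pointwise formula $\Lambda(\beta\wedge\beta) = 2(\Lambda\beta)\beta + 2\beta J\beta$ for a $(1,1)$-form $\beta$ (the natural generalisation of Lemma \ref{Ls} to non-primitive $2$-forms) with $\beta = \di X^{\flat}$, using $\Lambda(\di X^{\flat}) = 2Ez_X$.

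The main obstacle is then the remaining pointwise contribution $2\,\la(\di X^{\flat})^2\circ J, F\ra_{L^2}$, which is genuinely second-order in $X$. Writing $\di X^{\flat} = 2\nabla X$ as a skew endomorphism, I would integrate by parts in the trace expression $\tr((\nabla X)^2\cdot FJ)$, using the Ricci identity combined with $\ric = Eg$ and $\delta X^{\flat}=0$ to get the key formula $\nabla_l\nabla_i X^l = E\,X_i$ for a Killing field on an Einstein manifold. One term then vanishes by $\di^{\star}F = 0$, while the remaining curvature contraction is simplified by the K\"ahler--Einstein identity $\rho = E\omega_J$ and the $(1,1)$-type of the curvature operator, producing a further multiple of $\la F, X^{\flat}\wedge(JX)^{\flat}\ra_{L^2}$. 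Careful bookkeeping of the four contributions --- the two algebraic Lefschetz terms, the trace term coming from $2(\Lambda\beta)\beta$, and this last second-order term --- then collapses to exactly the coefficient $-\tfrac{2(m-4)E}{m}$.
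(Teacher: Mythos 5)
Your reduction is consistent with the lemma up to the very last step: the two Lefschetz correction terms indeed contribute $-\tfrac{4(m-2)E}{m}\la F,X^{\flat}\wedge (JX)^{\flat}\ra_{L^2}$, the exact piece does equal $\la \Lambda((\di X^{\flat})^{\wedge 2}),F\ra_{L^2}$, and your pointwise formula $\Lambda(\beta\wedge\beta)=2(\Lambda\beta)\beta+2\beta J\beta$ for $(1,1)$-forms is correct. Balancing the books, your argument is complete if and only if $\la (\di X^{\flat})^2\circ J,F\ra_{L^2}=-E\,\la F,X^{\flat}\wedge (JX)^{\flat}\ra_{L^2}$, and it is exactly this identity that your sketch does not establish. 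If you integrate by parts in $\int \tr((\nabla X)^2 JF)$ so as to create $\nabla_l\nabla_iX^l=EX_i$, the companion term is $\int X_i\,\nabla_kX_j\,\nabla_j(JF)_{ki}$, where the derivative of $JF$ is contracted against $\nabla X$ rather than against an index of $JF$: it does not vanish by $\di^{\star}F=0$ and is not a curvature contraction. If instead you integrate by parts so that the companion term is the genuine divergence $\sum_k\nabla_k(JF)_{ki}$ (which does vanish for a coclosed $(1,1)$-form), the surviving term is $\int\sum_i R(X,e_i,X,(JF)e_i)\,\vol$. This pairs the Jacobi operator $R(X,\cdot,X,\cdot)$ with the \emph{trace-free} $J$-invariant symmetric tensor $JF$, so neither $\ric=Eg$ nor the $(1,1)$-type of the curvature operator controls it; and rewriting it via the Killing identity $R(X,U)X=-\tfrac12\nabla_U\grad\vert X\vert^2-(\nabla X)^2U$ just reproduces $\int\tr((\nabla X)^2JF)$ after integration, i.e.\ the computation is circular.

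A decisive symptom of the gap: your treatment of this term uses only that $X$ is Killing, $\ric=Eg$, $\di^{\star}F=0$ and type considerations, but never the eigenvalue equation $\Delta F=2EF$ contained in the hypothesis $FJ\in\scrE^{+}(g)$. The factor $E$ attached to $F$ on the right-hand side of the lemma comes precisely from $\di^{\star}\di F=2EF$, so no amount of curvature manipulation on the $X$-side can produce it. The paper avoids the problem by never descending to the pointwise Lefschetz formula: it uses the K\"ahler identity $\di^{\star}L_{\o_J}F=J\di F$ to move everything onto $\di F$, namely $\la \di X^{\flat}\wedge\di X^{\flat},F\wedge\o_J\ra_{L^2}=\la \di z_X\wedge\di X^{\flat},\di F\ra_{L^2}=\la \di(X^{\flat}\wedge\di z_X),\di F\ra_{L^2}=2E\la X^{\flat}\wedge\di z_X,F\ra_{L^2}$, and then passes from $\di X^{\flat}$ to $(\di X^{\flat})_0$ with one further integration by parts. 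You should either adopt that route wholesale, or at least use it to prove the missing identity $\la(\di X^{\flat})^2\circ J,F\ra_{L^2}=-E\la F,X^{\flat}\wedge (JX)^{\flat}\ra_{L^2}$, replacing the final paragraph of your argument.
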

\begin{proof}
First record that the K\"ahler identities and having $F \in \Omega^{1,1}M$ co-closed ensure that 
$\di^{\star}L_{\o_J}F=J\di\!F$. Thus
$$
\la \di\!X^{\flat}  \wedge \di\!X^{\flat},F \wedge \o_J\ra_{L^2}=\la \di(X^{\flat} \wedge \di\!\!X^{\flat}), L_{\o_J}F\ra_{L^2}=\la X^{\flat} \wedge \di\!X^{\flat},J\di\!F
\ra_{L^2}=\la \di\!z_X \wedge \di\!X^{\flat},\di\!F \ra_{L^2}
$$
where we also used  $\di\!X^{\flat} \in \Omega^{1,1}M$ in the last equation.
In the same spirit, further integration by parts shows that 
$$
\la \di\!z_X \wedge \di\!X^{\flat},\di\!F \ra_{L^2} \; = \; 
\la \di (X^{\flat} \wedge \di\!z_X), \di\!F\ra_{L^2} \; = \; 2E \la X^{\flat} \wedge \di\!z_X, F\ra_{L^2}.$$
Because $F$ is primitive and $L^{\star}_{\o_J}L_{\o_J}F=(m-2)F$ we have 
$$ 
g((\di\!X^{\flat})_0 \wedge (\di\!X^{\flat})_0,F \wedge \o_J) \; = \; g(\di\!X^{\flat}  \wedge \di\!X^{\flat},F \wedge \o_J) \,  -  \,\frac{4(m-2)E}{m}
g(z_X\di\!X^{\flat},F).
$$
Again because $F$ is co-closed we have 
$$\la z_X\di\!X^{\flat},F\ra_{L^2}=\la \di(z_XX^{\flat})-
\di\!z_X \wedge X^{\flat}, F\ra_{L^2}=\la X^{\flat} \wedge \di\!z_X,F \ra_{L^2}$$ and the claim 
follows after substituting in the calculation from above.
\end{proof}
Returning to the complex Grassmannian set-up we come to the second step and  the main calculation which is based
on two algebraic observations. First we note
\begin{equation} \label{sq-Q}
L_{\o_J}^{\star} \bigl ((\di\!X^{\flat})_Q \wedge (\di\!X^{\flat})_Q \bigr )=-\frac{2}{m}\vert (\di\!X^{\flat})_Q\vert^2\o_J.
\end{equation}
This is established by computing $(\di\!X^{\flat})_Q$ in a local basis of $Q$ while keeping in mind that 
$L_{\o_J}^{\star}(\o_a \wedge \o_b)=0$ for $1 \leq a \neq b \leq 3$, whilst $L_{\o_J}^{\star}(\o_a \wedge \o_a)=-2\o_J $ for $1 \leq a \leq 3$. 
See also the proof of Lemma \ref{Ot} for algebraic arguments of this type. The next algebraic observation is

\begin{lema}\label{last}
Any Killing vector field $X \in \mathfrak{aut}(M,g)$ satisfies 
$$
g((\di\!X^{\flat})_0 \wedge (\di\!X^{\flat})_Q,\varepsilon(X) \wedge \o_J)=0.
$$ 
\end{lema}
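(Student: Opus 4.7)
The plan is to write $A := (\di\!X^{\flat})_{\bbE}$ and $B := (\di\!X^{\flat})_Q$, so that by Proposition \ref{par1}(i) one has $(\di\!X^{\flat})_0 = A + B$ and $\varepsilon(X) = A - \frac{m^2-4}{3m}B$. Expanding the bilinear form then reduces the claim to the pointwise vanishing of the four inner products
\begin{equation*}
g(A \wedge B, A \wedge \o_J), \ g(A \wedge B, B \wedge \o_J), \ g(B \wedge B, A \wedge \o_J), \ g(B \wedge B, B \wedge \o_J).
\end{equation*}
The structural facts I would rely on at each point are: $A, B$ are primitive elements of $\Lambda^{1,1}_0 M$; $[A, J] = [B, J] = 0$ since both are of type $(1,1)$; and $[A, B] = 0$ because $[\bbE, Q] = 0$.

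First I would dispatch the two $B \wedge B$ terms using \eqref{sq-Q}: taking the adjoint of the wedge with $\o_J$ gives $g(B \wedge B, F \wedge \o_J) = g(L^{\star}_{\o_J}(B \wedge B), F) = -\frac{2}{m}\vert B \vert^2 g(\o_J, F)$, which vanishes for any primitive $F$. Next, for the two $A \wedge B$ terms I would polarise Lemma \ref{Ls} to the identity $L^{\star}_{\o_J}(F \wedge G) = FJG + GJF$ for primitive $F, G \in \Lambda^2_0 M$ (viewed as skew endomorphisms). The three commutation relations above then collapse $AJB + BJA$ to $2JAB$, and using $g(F_1, F_2) = -\frac{1}{2}\tr(F_1 F_2)$ on skew endomorphisms reduces the remaining two inner products to $-\tr(JA^2 B)$ and $-\tr(JAB^2)$, respectively.

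The trace $\tr(JAB^2)$ is easy: in any local basis $\{I_1, I_2, I_3\}$ of $Q$ satisfying $I_aI_c + I_cI_a = -2\delta_{ac}\id$, the element $B = \sum_a b_a I_a$ satisfies $B^2 = -\frac{\vert B \vert^2}{m}\id$, so together with the primitivity relation $\tr(JA) = 0$ we get zero. The hard part will be the vanishing of $\tr(JA^2 B)$. For this I would consider at each point $p$ the trilinear form $\tau : \bbE_p \otimes \bbE_p \otimes Q_p \to \bbR$, $\tau(A_1, A_2, B) := \tr(JA_1 A_2 B)$, and verify --- using cyclicity of the trace together with the pairwise commutativity noted above --- that $\tau$ is invariant under the natural $\bbE_p \oplus Q_p$-action. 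Since the two actions are on disjoint factors, the space of invariant trilinear forms factors as $(\bbE_p^* \otimes \bbE_p^*)^{\bbE_p} \otimes (Q_p^*)^{Q_p}$; its second factor vanishes because $Q_p \cong \sp(1)$ is simple, so $\tau \equiv 0$. Therefore $\tr(JA^2 B) = \tau(A, A, B) = 0$, and the lemma follows.
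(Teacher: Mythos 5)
Your proof is correct. It shares the paper's overall skeleton --- expand via Proposition \ref{par1}\,(i), move $\o_J$ across the inner product using the adjoint $L^{\star}_{\o_J}$, dispose of the $Q\wedge Q$ contributions with \eqref{sq-Q}, and treat the cross terms with the polarised form $L^{\star}_{\o_J}(F\wedge G)=FJG+GJF$ of Lemma \ref{Ls} --- but it diverges at the decisive orthogonality step. The paper proves the stronger pointwise statement that $F_{\bbE}\circ F_Q\circ J\perp \bbE\oplus Q$ for arbitrary $F_{\bbE}\in\bbE$ and $F_Q\in Q$, exploiting that $\bbE$ and $Q$ are commuting Lie algebra bundles (products of two elements of $Q$ lie in $Q\oplus\bbR\,\id$, symmetrised products of two elements of $\bbE$ composed with $J$ lie in $\bbE\oplus\bbR\,\o_J$); as a result it obtains $g(F\wedge F_Q,\o_J\wedge G)=0$ for all $F,G\in\bbE\oplus Q$, not only for the pair coming from a single Killing field. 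You instead prove only the two special cases you need, $\tr(JA^2B)=0$ and $\tr(JAB^2)=0$: the second from the quaternion relation $B^2=-\tfrac{1}{m}\vert B\vert^2\id$ together with primitivity of $A$, and the first by a Schur-type argument, checking that $(A_1,A_2,B)\mapsto \tr(JA_1A_2B)$ is $(\bbE_p\oplus Q_p)$-invariant and using $(Q_p^{*})^{Q_p}=0$ for the simple algebra $\sp(1)$. I verified both the invariance computation and the factorisation of the space of invariants; everything goes through. The paper's route buys a more general orthogonality relation with essentially no computation, while yours is more hands-on and self-contained. As a side remark, your $\tr(JA^2B)=0$ also follows in one line from the observation that $JA^2$ commutes with $Q$, hence lies in $\Lambda^2_{\sp}M$, which is orthogonal to $Q$ by the splitting of $\Lambda^2M$ recalled in the proof of Lemma \ref{int-eps}.
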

\begin{proof}
First record the following algebraic facts. Due to \eqref{sq-Q} and having $Q$ a Lie algebra bundle we have 
$F \circ G \in Q \oplus \bbR \o_J$ for all $F,G$ in $Q$. Since $\bbE$ is the centraliser of $Q$ in $\su(TM)$, see (i) in Lemma \ref{int-eps}, we also have that $(F\circ G+G \circ F)J \in \bbE \oplus \bbR \o_J$ for all 
$F,G \in \bbE$. A direct orthogonality argument based on these facts leads to 
$$ F_E \circ F_Q \circ J \perp (\bbE \oplus Q) \ \mathrm{whenever} \ F_E \in \bbE \ \mathrm{and} \ F_Q \in Q.
$$
Hence if $F=F_{\bbE}+F_Q \in \bbE \oplus Q$ we have, according to \eqref{sq-Q} and by taking into account that $F_{\bbE},F_Q$ are primitive, 
$$L_{\o_J}^{\star}(F \wedge F_Q)=L_{\o_J}^{\star}(F_{\bbE} \wedge F_Q)-\frac{2}{m}g(F_Q,F_Q)J=2F_{\bbE} \circ F_Q \circ J-\frac{2}{m}g(F_Q,F_Q)J \perp \bbE \oplus Q.$$ Thus 
$g(F \wedge F_Q, \o_J \wedge G)=0$ whenever $F,G \in \bbE \oplus Q$ and the claim follows by taking $F=(\di\!X^{\flat})_0$
and $G=\varepsilon(X)$.
\end{proof}

Finally we are  able to compute the second summand of the obstruction polynomial.

\begin{coro} \label{2nd-half}
For any Killing vector field  $X \in \mathfrak{aut}(M,g)$ the following holds for $m \neq 4$
$$ 
\la \varepsilon(X) \wedge \varepsilon(X), \varepsilon(X) \wedge \o_J \ra_{L^2}
\; = \; 
\frac{8(m^2-4)E^3}{m^2}
\mu_3(X,X,X) .
$$

\vspace{-.7ex}

\noindent
In case $m=4$ the left hand side vanishes, i.e. $ \la \varepsilon(X) \wedge \varepsilon(X), \varepsilon(X) \wedge \o_J \ra_{L^2} = 0$.
\end{coro}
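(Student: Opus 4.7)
The plan is to expand the wedge product $\varepsilon(X) \wedge \varepsilon(X)$ using the decomposition $\varepsilon(X) = (\di\!X^{\flat})_0 + t (\di\!X^{\flat})_Q$, where $t=-\frac{(m-1)(m+4)}{3m}$, and show that the resulting $L^2$-pairing with $\varepsilon(X) \wedge \o_J$ splits into three pieces, two of which vanish for algebraic/structural reasons.

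Concretely, I would first write
\begin{equation*}
\varepsilon(X) \wedge \varepsilon(X) \, = \, (\di\!X^{\flat})_0 \wedge (\di\!X^{\flat})_0 \, + \, 2t\, (\di\!X^{\flat})_0 \wedge (\di\!X^{\flat})_Q \, + \, t^2 \, (\di\!X^{\flat})_Q \wedge (\di\!X^{\flat})_Q.
\end{equation*}
The middle term contributes $0$ to the pairing with $\varepsilon(X) \wedge \o_J$ by Lemma \ref{last}, since the latter asserts the pointwise vanishing of $g((\di\!X^{\flat})_0 \wedge (\di\!X^{\flat})_Q, \varepsilon(X) \wedge \o_J)$. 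For the third term, using the adjointness $\la \alpha, \beta \wedge \o_J\ra = \la L_{\o_J}^{\star}\alpha, \beta\ra$ together with \eqref{sq-Q} yields
\begin{equation*}
\la (\di\!X^{\flat})_Q \wedge (\di\!X^{\flat})_Q, \varepsilon(X) \wedge \o_J\ra_{L^2} \, = \, -\frac{2}{m} \la |(\di\!X^{\flat})_Q|^2 \o_J, \varepsilon(X)\ra_{L^2} \, = \, 0,
\end{equation*}
since $\varepsilon(X) \in \Lambda^{1,1}_0 M$ is primitive and hence pointwise orthogonal to $\o_J$.

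Thus only the first piece survives. Applying Lemma \ref{int-next2} with $F = \varepsilon(X)$ (legitimate since $\varepsilon(X) \circ J \in \scrE^+(g)$ by Proposition \ref{par1},(iii)) gives
\begin{equation*}
\la (\di\!X^{\flat})_0 \wedge (\di\!X^{\flat})_0, \varepsilon(X) \wedge \o_J\ra_{L^2} \, = \, -\frac{2(m-4)E}{m} \la \varepsilon(X), X^{\flat} \wedge (JX)^{\flat}\ra_{L^2}.
\end{equation*}
When $m \neq 4$, substituting the value of $\la \varepsilon(X), X^{\flat} \wedge (JX)^{\flat}\ra_{L^2} = -\frac{4E^2(m^2-4)}{m(m-4)}\mu_3(X,X,X)$ already obtained in \eqref{prodds-1} produces exactly $\frac{8E^3(m^2-4)}{m^2}\mu_3(X,X,X)$, as claimed.

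For the case $m=4$, the same three-term decomposition applies. The first term now vanishes because of the explicit factor $(m-4)$ in Lemma \ref{int-next2}, while the second and third vanish for the same reasons as above; this gives the claimed vanishing of the left-hand side. I do not foresee a substantial obstacle: the computation is purely algebraic/structural once the three lemmas \ref{int-next2}, \ref{last} and identity \eqref{sq-Q} are available, and the mild bookkeeping concern is only to confirm that the value of $t$ from Proposition \ref{par1} is consistent with the cancellations, which it is since $t$ enters only through the middle (vanishing) cross term.
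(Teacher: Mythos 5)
Your proposal is correct and follows essentially the same route as the paper: reduce to the $(\di\!X^{\flat})_0 \wedge (\di\!X^{\flat})_0$ term via Lemma \ref{last}, \eqref{sq-Q} and primitivity of $\varepsilon(X)$, then apply Lemma \ref{int-next2} and the first equation of \eqref{prodds-1}. You merely spell out the three-term expansion that the paper compresses into one sentence.
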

\begin{proof}
First we find 
$ \la \varepsilon(X) \wedge \varepsilon(X), \varepsilon(X) \wedge \o_J \ra_{L^2}
=
\la (\di\!X^{\flat})_0 \wedge (\di\!X^{\flat})_0, \varepsilon(X) \wedge \o_J \ra_{L^2}$
by combining   \eqref{sq-Q} and Lemma \ref{last}, and recalling that $\varepsilon(X)$ is primitive. 
Then Lemma \ref{int-next2} implies
$$ 
\la \varepsilon(X) \wedge \varepsilon(X), \varepsilon(X) \wedge \o_J \ra_{L^2}
\; = \; 
- \tfrac{2(m-4)E}{m} \la \varepsilon(X), X^{\flat} \wedge (JX)^{\flat} \ra_{L^2}.
$$
We see that the right hand side vanishes for $m=4$. In the case $m \neq 4$ the claim follows
from  the first equation in \eqref{prodds-1}.
\end{proof}

\subsubsection{The main result and proof of Theorem \ref{main5}}\label{rig}
We now have both summands of the obstruction polynomial realised as multiples of the invariant cubic polynomials $\mu_3$
for $m \neq 4$ respectively $\nu$ in the case $m=4$. Putting these formulas together and using the description of the zero locus
of the two cubic polynomials leads to the main result of this work: a complete solution of the deformation problem to second
order on the complex $2$-plane Grassmannian.

\begin{teo} \label{pol-fin}
Let $M^{2m}=\mathrm{Gr}_2(\bbC^{n+2})$ be equipped with its canonical Hermitian symmetric structure $(g,J)$. Then 
the following hold.
\begin{itemize}
\item[(i)] 
The obstruction polynomial can be written as
$$
P   =   \frac{16E^4(m^2-4)^2(m-1)}{3m^3(m-4)}\mu_3 \ \; \;\mathrm{if} \ m \neq 4
 \quad \mathrm{and} \quad
 P=6E^2\nu \ \; \; \mathrm{if} \ m=4
$$

\medskip

\item[(ii)] 
Any infinitesimal Einstein deformation $h \in \scrE(g) =  \scrE^{+}(g)$, written as $h = \varepsilon(X) J $ with  
a Killling vector field $X \in \gg$,
is integrable to second order if and only 
\begin{equation*}
\bfe_X-\bfq_X  \,  =   \,  \frac{E(m-4)}{m(m+4)}\bigl (z_X^2+\frac{m^2+8m
+12}{4\vol}\mu_2(X,X)
\bigr ) \ \; \mathrm{at\ each\ point\ in} \ M
\end{equation*}
or equivalently if $X \,  \lrcorner  \,\Phi_X=0$ over $M$.

\medskip

\item[(iii)] 
The set of infinitesimal Einstein deformations in $\scrE(g) \cong \su(n+2)$ which are integrable to second order is isomorphic to the hyperquadric $\mathscr{C}(n) \subseteq \su(n+2)$.
\end{itemize}
\end{teo}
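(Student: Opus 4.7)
The plan is to assemble the computations already carried out in Corollaries \ref{1st-half} and \ref{2nd-half} into an explicit formula for the obstruction polynomial $P$, then use the identification of its zero locus with the quadric $\mathscr{C}$ via the results on invariant cubic forms in Section \ref{z-locus}, and finally invoke the algebraic description of $\mathscr{C}(n)$ from Lemma \ref{vanC}.

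For part (i), I would substitute the expressions given by Corollaries \ref{1st-half} and \ref{2nd-half} into
$$P(X) = 3\la \o_J \wedge \di\!\varepsilon(X), \varepsilon(X) \wedge \di\!\varepsilon(X)\ra_{L^2} - 4E\la \varepsilon(X) \wedge \varepsilon(X), \varepsilon(X) \wedge \o_J\ra_{L^2}.$$
When $m \neq 4$ both terms are scalar multiples of $\mu_3(X,X,X)$; after substituting the explicit values of $c_1,c_2$ from Lemma \ref{alg-1} into $c_3$, pulling out the common factor $16E^4(m^2-4)/m^2$ and bringing the two summands over the denominator $3m(m-4)$, the remaining numerator reduces to $m^3 - m^2 - 4m + 4 = (m-1)(m-2)(m+2)$, which yields the stated coefficient. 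When $m=4$ the second summand vanishes outright by Corollary \ref{2nd-half} and the first is precisely $3 \cdot 2E^2\nu(X,X,X) = 6E^2\nu(X,X,X)$ by Corollary \ref{1st-half}(ii).

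For part (ii), Theorem \ref{E-geq0} applies because $\scrE^{-}(g) = 0$ on the complex Grassmannian, see Remark \ref{rmk3.1}. Transporting the bilinear integrability condition through the isomorphism $\varepsilon \colon \gg \to \scrE^{+}(g)$ of Proposition \ref{par1}, the element $\varepsilon(X)J$ is integrable to second order if and only if the polarised cubic $P$ satisfies $P(X,X,Y) = 0$ for all $Y \in \gg$; that is, $X$ lies in the zero locus of $P$ in the sense of Definition \ref{def-zloc}. By the formula from part (i), this zero locus equals the zero locus of $\mu_3$ when $m \neq 4$ and of $\nu$ when $m=4$, which in both cases coincides with the quadric $\mathscr{C}$ by Proposition \ref{mu3}. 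Proposition \ref{Kpot2} then identifies $\mathscr{C} = \{X \in \gg : \mathbf{p}_X \equiv 0 \text{ on } M\}$, and unpacking the explicit formula for $\mathbf{p}_X$ given there produces the stated pointwise equation, while the equivalent geometric reformulation $X \lrcorner \Phi_X = 0$ follows from the identity $X \lrcorner \Phi_X = \di\!\mathbf{p}_X$ together with $\int_M \mathbf{p}_X \vol = 0$. Part (iii) is then immediate from Proposition \ref{nu-main}(iii), which provides the isomorphism $\mathscr{C} \cong \mathscr{C}(n)$.

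The main obstacle is the bookkeeping in part (i): one must push the rational coefficients $c_1, c_2$ through several cancellations and recognise that the numerator collapses to $(m-1)(m^2-4)$ to match the claimed normalisation. Conceptually, however, all the deep input has been prepared upstream, through the moment map identities of Lemma \ref{Ham-new}, the Hermitian Killing form in Proposition \ref{HK1}, and the invariant-theoretic description of $\mathscr{C}$ via $\mathbf{P}$. Once these are in hand, Theorem \ref{pol-fin} is essentially an assembly of the pieces.
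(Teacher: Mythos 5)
Your proposal is correct and follows essentially the same route as the paper: part (i) by combining Corollaries \ref{1st-half} and \ref{2nd-half} and simplifying the coefficient (your factorisation of the numerator as $(m-1)(m^2-4)$ is exactly the cancellation that occurs), part (ii) by identifying the zero locus of $P$ with that of $\mu_3$ (resp.\ $\nu$) and hence with $\mathscr{C}=\{X : \mathbf{p}_X\equiv 0\}$ via Propositions \ref{mu3}, \ref{nu-main} and \ref{Kpot2}, and part (iii) from the isomorphism $\mathscr{C}\cong\mathscr{C}(n)$. The only point worth making explicit is that the scalar coefficient in front of $\mu_3$ (resp.\ $\nu$) is nonzero for the relevant integer values of $m$, which is what lets you pass from the zero locus of $P$ to that of $\mu_3$.
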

\begin{proof}
(i) First assume that $m\neq 4$. Combining Corollary \ref{1st-half}, (i) and Corollary \ref{2nd-half} yields
$$ 
P(X)=-\frac{8E^4(m^2-4)}{m^3(m-4)} \big(3(2c_1+c_2)+4m(m-4) \big) \int_Mz^3_X \vol.
$$
The claim follows by expanding the polynomial expressions for $c_1$ and $c_2$, as given in Lemma \ref{alg-1}. 
When $m=4$ the expression for $P$ follows from Corollary \ref{1st-half}, (ii) together with Proposition \ref{2nd-half}.

\noindent
(ii) Clearly $P$
has no roots in $\mathbb{N}$. Thus $P$ is not identically zero and by Proposition \ref{mu3} it follows that  its zero locus is precisely the quadric 
$\mathscr{C}$. Hence an infinitesimal Einstein deformation $h= \varepsilon(X)J$ is integrable to second order if and only if the Killing potential
$ \mathbf{p}_X$ vanishes, which by Proposition \ref{Kpot2} is equivalent to the equation in (ii).

\noindent
(iii) follows from Proposition \ref{mu3}.
\end{proof}

The most striking application of our geometric description of infinitesimal Einstein deformations which are integrable to second order
is the following rigidity result.

\begin{teo} \label{rigid}
Assume that $n$ is odd. The symmetric Einstein metric on the complex $2$-plane Grassmannian $\mathrm{Gr}_2(\bbC^{n+2})$ is
isolated in the moduli space of Einstein metrics. All its infinitesimal Einstein deformations are obstructed to second order.
\end{teo}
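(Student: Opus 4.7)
The plan is to assemble Theorem \ref{rigid} as an essentially immediate consequence of Theorem \ref{pol-fin}, (iii) combined with the algebraic parity statement of Lemma \ref{vanC}, (ii). The logical skeleton is short: identify the set of second-order integrable deformations with a hyperquadric in $\su(n+2)$, and then show that this hyperquadric collapses to $\{0\}$ when $n$ is odd.

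First I would recall from Theorem \ref{pol-fin}, (iii) that the map $X \mapsto \varepsilon(X)\circ J$ gives a bijection between $\scrE(g)\cong\su(n+2)$ and the hyperquadric $\mathscr{C}(n)\subseteq \su(n+2)$ of those $X$ whose associated deformation is integrable to second order. This already uses the full force of the paper: the parametrisation of $\scrE(g) = \scrE^{+}(g)$ by Killing vector fields via $\varepsilon$ (Proposition \ref{par1}), the fact that $\scrE^{-}(g)=0$ on the Grassmannian so that Theorem \ref{E-geq0} applies, and the explicit computation of the obstruction polynomial in Theorem \ref{pol-fin}, (i) which shows it is a nonzero multiple of the generator $P_0$ of the one-dimensional space of $\ad$-invariant cubics on $\su(n+2)$.

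Next I would invoke Lemma \ref{vanC}, (ii) to conclude that $\mathscr{C}(n)=\{0\}$ whenever $n$ is odd. The argument there is elementary: any nonzero $A \in \mathscr{C}(n)$ can be normalised so that $A^2 = -\id$, which forces $AJ_0$ to be a symmetric involution with even-dimensional $\pm 1$-eigenspaces (since $AJ_0$ commutes with $J_0$); the trace-free condition $\tr(AJ_0)=0$ then equates these two multiplicities, forcing $n+2$ to be even, i.e.\ $n$ even. Hence for $n$ odd the only element of $\mathscr{C}(n)$ is $0$.

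Combining these two ingredients yields at once that the only $h \in \scrE(g)$ integrable to second order is $h=0$, which is the second assertion of the theorem. For the first assertion, I would note that any smooth curve of Einstein metrics $g_t$ with $g_0 = g$ yields a tangent vector $h = g^{-1}\dot g$ which lies in $\scrE(g)$ after gauge fixing via the Ebin slice theorem, and by definition such an $h$ is integrable to (at least) second order. Since no nontrivial such $h$ exists, any such curve must be trivial modulo diffeomorphisms and rescaling, so $g$ is isolated in the moduli space of Einstein metrics. There is no hard step: the computational work has already been carried out in Section \ref{alg-t} and the algebraic content of $\mathscr{C}(n)=\{0\}$ for $n$ odd is Lemma \ref{vanC}, (ii); the proof of Theorem \ref{rigid} is simply the juxtaposition of these two facts.
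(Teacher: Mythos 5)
Your proposal is correct and follows essentially the same route as the paper: combine Theorem \ref{pol-fin}, (iii) with Lemma \ref{vanC}, (ii) to conclude that $\mathscr{C}(n)=\{0\}$ for $n$ odd, hence every element of $\scrE(g)$ is obstructed to second order. The only place where you are slightly less careful than the paper is the deduction of isolation in the moduli space: the absence of non-trivial smooth curves of Einstein metrics through $g$ does not by itself rule out, say, a sequence of non-isometric Einstein metrics converging to $g$; one needs the real-analytic structure of the premoduli space (Kuranishi-type slice argument), which is exactly what the paper's citations to \cite[12.51]{Besse} and \cite[Proposition 4.6]{Ko1} supply.
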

\begin{proof}
When $n$ is odd we have $\mathscr{C}(n)=\{0\}$, a fact which has been proved in Lemma \ref{vanC}. Thus any element 
in $\scrE(g)$ is obstructed to second order hence there are no non-trivial curves of Einstein metrics through $g$. The claim now follows from the arguments in \cite[12.51]{Besse}.

In fact any Einstein metric suitably closed to $g$ in the sense of \cite[Definition 2.1]{Ko1} must be related to $g$ by a gauge transformation; this follows from \cite[Proposition 4.6]{Ko1}.
\end{proof}

\appendix

\section{Proof of the third relation in \eqref{lin-rel}} \label{formu-3}
Here we will work in a slightly more general set-up and consider $C:\End(TM) \to \End(TM)$ given by 
$Ch:=\sum_a I_ahI_a$. Direct algebraic computation based on the quaternion relations within the triple 
$\{I_1,I_2,I_3\}$ shows that 
\begin{equation} \label{A1}
C\{h,I_a\}=-\{Ch,I_a\}-2\{h,I_a\}
\end{equation}
for all $h \in \End(TM)$ and also that  
\begin{equation} \label{A2}
C^2=-2C+3.
\end{equation}
Below we prove a more general version of the third equation in \eqref{lin-rel}, in the sense that we do not assume that $h=FJ$ with $F$ in $\Lambda^{1,1}_0M$. 
\begin{lema} \label{a-A1}
Assuming that $(h,v) \in S^2_0M \oplus TM$ we have 
\begin{equation*}
g(\iota_{h}(v \lrcorner \Omega), v \lrcorner \Omega)=-4g(((m+4)Ch+3h)v,v).
\end{equation*}
\end{lema}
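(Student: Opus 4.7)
The plan is to exploit the fact that $\iota_h$ is a derivation of degree zero on $\Lambda^{\star}M$, extending the endomorphism $h$ on $T^{\star}M$ via the musical isomorphism. This follows at once from the antiderivation property of the interior product applied to the defining formula $\iota_h \alpha = \sum_k (he_k)^{\flat} \wedge (e_k \lrcorner \alpha)$. Rewriting $v \lrcorner \Omega = 2\sum_a (I_av)^{\flat} \wedge \omega_a$ and invoking the derivation property would give
\begin{equation*}
\iota_h(v \lrcorner \Omega) \; = \; 2\sum_a \bigl[(hI_av)^{\flat} \wedge \omega_a \, + \, (I_av)^{\flat} \wedge \iota_h\omega_a \bigr],
\end{equation*}
where the $2$-form $\iota_h \omega_a$ corresponds to the skew-symmetric endomorphism $\{I_a,h\}=I_ah+hI_a$.

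The pivotal observation that makes this approach work is that
\begin{equation*}
g(\iota_h\omega_a,\, \omega_b) \; = \; -\tfrac{1}{2}\, \tr(\{I_a, I_b\}\, h) \; = \; 0 \qquad \text{for all } a, b,
\end{equation*}
as an immediate consequence of the Clifford relation $\{I_a, I_b\}=-2\delta_{ab}\id$ together with $\tr h = 0$. Pairing $\iota_h(v \lrcorner \Omega)$ with $v \lrcorner \Omega = 2\sum_b (I_bv)^{\flat} \wedge \omega_b$, this orthogonality eliminates one of the four families of cross-terms that would otherwise appear. For the remaining ones I would use the standard identity
\begin{equation*}
g(\alpha \wedge \beta,\, \gamma \wedge \delta) \; = \; g(\alpha, \gamma)\,g(\beta, \delta) \, - \, g(\gamma^{\sharp} \lrcorner \beta,\, \alpha^{\sharp} \lrcorner \delta),
\end{equation*}
for $\alpha,\gamma$ one-forms, together with $g(\omega_a, \omega_b) = m\, \delta_{ab}$ and the elementary identity $I_bv \lrcorner \omega_a = (I_aI_bv)^{\flat}$, to reduce the inner product to a double sum of pointwise scalars built from the vectors $I_av$, $hI_av$, $I_ahI_bv$ and $hI_aI_bv$.

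The hard part is then the bookkeeping of the six off-diagonal contributions ($a \neq b$) in each of these double sums. Using the quaternionic relations $I_aI_b = \epsilon_{abc}I_c$ with $I_bI_a = -I_aI_b$, the $g$-orthogonality of each $I_a$ (so $g(I_ax, I_ay) = g(x,y)$), and the symmetry of $h$ (which gives in particular $g(hI_av, I_av) = -g(I_ahI_av, v)$), each off-diagonal term reduces to a common multiple of $g(Chv, v)$; by contrast, a multiple of $g(hv,v)$ survives only from the diagonal part of the term $\sum_{a,b}g(hI_aI_bv,\, I_bI_av)$. Assembling the four pieces with their signs and multiplicities should then produce
\begin{equation*}
g(\iota_h(v \lrcorner \Omega),\, v \lrcorner \Omega) \; = \; -12\, g(hv, v) \, - \, 4(m+4)\, g(Chv, v),
\end{equation*}
which is the claimed identity. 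The structural relations \eqref{A1}--\eqref{A2} are not strictly needed for this approach, although they would offer an alternative, more conceptual route to the same combinatorial reduction.
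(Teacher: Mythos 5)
Your argument is correct, but after the common first step it follows a genuinely different route from the paper's. Both proofs start from $v\lrcorner\Omega=2\sum_a(I_av)^{\flat}\wedge\omega_a$ and the derivation property of $\iota_h$, identifying $\iota_h\omega_a$ with the skew endomorphism $\{h,I_a\}$. The paper then transfers everything back onto $\Omega$: each pairing is rewritten as $g(v^{\flat}\wedge\,\cdot\,,L^{\star}_{\,\cdot\,}\Omega)$ and evaluated via $L^{\star}_{\omega_a}\Omega=2(m+1)\omega_a$ and $L^{\star}_{\{h,I_a\}}\Omega=2C\{h,I_a\}$ (the latter from \eqref{int-OO}), so that the whole combinatorial content is absorbed into the operator identities \eqref{A1} and \eqref{A2}. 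You instead expand the inner product of wedge products directly; your preliminary observations are all correct ($g(\iota_h\omega_a,\omega_b)=-\tfrac12\tr(\{I_a,I_b\}h)=0$ by $\tr h=0$, $g(\omega_a,\omega_b)=m\delta_{ab}$, and $I_bv\lrcorner\omega_a=(I_aI_bv)^{\flat}$), and I checked that the three surviving families of terms contribute $-m\,g(Chv,v)$, $-g(Chv,v)$ and $-3g(Chv,v)-3g(hv,v)$ respectively, so that after the overall factor of $4$ the bookkeeping does close to $-4(m+4)g(Chv,v)-12g(hv,v)$ as you claim. Your route is more elementary and self-contained -- it needs neither \eqref{A1}--\eqref{A2} nor the $L^{\star}$ computations borrowed from the proof of Lemma \ref{Ot} -- at the price of a longer case-by-case quaternionic reduction which your write-up describes but does not fully execute; the paper's route packages the same combinatorics into reusable operator identities, which is also what makes the corollary following the lemma essentially immediate.
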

\begin{proof}
We have $v \lrcorner \Omega=2\sum_a (I_av)^{\flat} \wedge \omega_a$; since $\iota_{h}((I_av)^{\flat} \wedge \omega_a)=(hI_av)^{\flat} \wedge \omega_a+(I_av)^{\flat} \wedge \{h,I_a\}$ it follows that 
\begin{equation} \label{A23}
\begin{split}
g(\iota_{h}(v \lrcorner \Omega), v \lrcorner \Omega)=&2\sum_a g((hI_av)^{\flat} \wedge \omega_a+(I_av)^{\flat} \wedge \{h,I_a\},v \lrcorner \Omega)\\
=&2\sum_a g(v^{\flat} \wedge (hI_av)^{\flat},L^{\star}_{\omega_a}\Omega)+
2\sum_a g(v^{\flat} \wedge (I_av)^{\flat},L^{\star}_{\{h,I_a\}}\Omega).
\end{split}
\end{equation}
We now compute both summands independently. 
Using the formulas established during the proof of Lemma \ref{Ot} it follows that $L^{\star}_{\omega_a}\Omega=2(m+1)\omega_a$. Therefore 
\begin{equation} \label{A24}
\sum_a g(v^{\flat} \wedge (hI_av)^{\flat},L^{\star}_{\omega_a}\Omega)=2(m+1)\sum_a\omega_a(v,hI_av)
=-2(m+1)g((Ch)v,v).
\end{equation}
To compute the second summand, we first observe that using \eqref{A1} yields 
\begin{equation*}
\begin{split}
&g(v^{\flat} \wedge (I_av)^{\flat},
C\{h,I_a\})=-g(v^{\flat} \wedge (I_av)^{\flat},\{Ch,I_a\}+2\{h,I_a\})\\
=&-g(\{Ch,I_a\}v,I_av)-2g(\{h,I_a\}v,I_av)\\
=&-g((Ch)I_av+I_a(Ch)v,I_av)-2g(hI_av+I_ahv,I_av)\\
=&g((I_aChI_a)v,v)-g((Ch)v,v)+2g((I_ahI_a)v,v)-2g(hv,v)
\end{split}
\end{equation*}
after also expanding the anti-commutators and using that $I_a$ is a skew-symmetric isometry w.r.t. $g$. Summing over $a$ and taking \eqref{A2} into account leads to
\begin{equation*}
\sum_ag(v^{\flat} \wedge (I_av)^{\flat},C\{h,I_a\})=g((C^2h-Ch-6h)v,v)=-3g((Ch+h)v,v).
\end{equation*}
On the other hand, since $h$ is symmetric and trace free the form $\{h,I_a\}$ is orthogonal to $Q$, hence 
as in \eqref{int-OO} we get $L^{\star}_{\{h,I_a\}}\Omega=2C\{h,I_a\}$. Therefore 
\begin{equation*}
\sum_a g(v^{\flat} \wedge (I_av)^{\flat},L^{\star}_{\{h,I_a\}}\Omega)=2\sum_ag(v^{\flat} \wedge (I_av)^{\flat},
C\{h,I_a\})=-6g((Ch+h)v,v).
\end{equation*}
The claim follows by plugging \eqref{A24} and the above displayed equation into \eqref{A23}.
\end{proof}
Another way of presenting the identity in Lemma \ref{a-A1} is given below.
\begin{coro}We have
\begin{equation*} \label{rel-llf}
g(\iota_{h_1} \Omega, \iota_{h_2} \Omega)=-4(m+4)g(Ch_1,h_2)+12mg(h_1,h_2)
\end{equation*}
whenever $h_1,h_2$ belong to $S^2_0M$.
\end{coro}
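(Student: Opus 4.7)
The plan is to deduce this corollary from Lemma \ref{a-A1} by polarisation, the only additional input being a single pointwise identity of the form $g(v\lrcorner\Omega,w\lrcorner\Omega)=c\,g(v,w)$. Both sides of the claim are symmetric bilinear in $(h_1,h_2)$: for $12m\,g(h_1,h_2)$ this is trivial, and for $g(Ch_1,h_2)=\sum_a\tr(I_ah_1I_ah_2)$ the symmetry in $h_1,h_2$ is an immediate consequence of the cyclic property of the trace. By polarisation it thus suffices to prove the diagonal version
\[
|\iota_h\Omega|^2 \;=\; 12m\,|h|^2 \,-\, 4(m+4)\,g(Ch,h),\qquad h \in S^2_0M.
\]

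The bridge to Lemma \ref{a-A1} is the pointwise commutation rule $v\lrcorner\iota_h\alpha = (hv)\lrcorner\alpha + \iota_h(v\lrcorner\alpha)$, valid for any form $\alpha$ and immediate from the definition $\iota_h\alpha=\sum_k he_k\wedge e_k\lrcorner\alpha$. Using the self-adjointness of $\iota_h$ and substituting $v=he_k$ one then splits
\[
|\iota_h\Omega|^2 \;=\; \sum_k g(he_k\lrcorner\iota_h\Omega,\,e_k\lrcorner\Omega) \;=\; \underbrace{\sum_k g(h^2 e_k\lrcorner\Omega,\,e_k\lrcorner\Omega)}_{\mathrm{(I)}} \,+\, \underbrace{\sum_k g(\iota_h(he_k\lrcorner\Omega),\,e_k\lrcorner\Omega)}_{\mathrm{(II)}}.
\]

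The sum $\mathrm{(II)}$ is the polarised version of Lemma \ref{a-A1}: since $(m+4)Ch+3h$ is $g$-symmetric and $\iota_h$ is self-adjoint, that lemma polarises in $v$ to $g(\iota_h(v_1\lrcorner\Omega),v_2\lrcorner\Omega)=-4\,g(((m+4)Ch+3h)v_1,v_2)$. Specialising to $(v_1,v_2)=(he_k,e_k)$, summing over $k$, and using $\tr(h^2)=|h|^2$ and $\tr(Ch\cdot h)=g(Ch,h)$ yields $\mathrm{(II)}=-4(m+4)g(Ch,h)-12|h|^2$. The sum $\mathrm{(I)}$ is controlled by the auxiliary identity $g(v\lrcorner\Omega,w\lrcorner\Omega)=12(m+1)\,g(v,w)$ for all $v,w\in TM$, which I would prove directly from $v\lrcorner\Omega=2\sum_a(I_av)^\flat\wedge\omega_a$ by means of the wedge pairing $g(u_1\wedge\alpha,u_2\wedge\beta)=g(u_1,u_2)g(\alpha,\beta)-g(u_2^\sharp\lrcorner\alpha,u_1^\sharp\lrcorner\beta)$ together with $|\omega_a|^2=m$, $\langle\omega_a,\omega_b\rangle=0$ for $a\neq b$, and the quaternion relations applied separately to the diagonal $a=b$ and cross $a\neq b$ contributions. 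This gives $\mathrm{(I)}=12(m+1)\tr(h^2)=12(m+1)|h|^2$, and adding the two contributions produces $[12(m+1)-12]|h|^2-4(m+4)g(Ch,h)=12m|h|^2-4(m+4)g(Ch,h)$, as required.

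The only step with genuine content is the auxiliary identity for $g(v\lrcorner\Omega,w\lrcorner\Omega)$; once in hand, the corollary reduces to a formal contraction of the polarised Lemma \ref{a-A1}. Alternatively, that auxiliary identity could be derived from $\mathrm{Sp}(1)\cdot\mathrm{Sp}(n)$-invariance (which forces proportionality to $g$) plus a single trace calculation to pin down the constant.
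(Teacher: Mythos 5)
Your proposal is correct and follows essentially the same route as the paper: both rest on the commutation rule $v\lrcorner\iota_h\Omega=\iota_h(v\lrcorner\Omega)+hv\lrcorner\Omega$, the polarised form of Lemma \ref{a-A1}, and the auxiliary identity $g(v\lrcorner\Omega,w\lrcorner\Omega)=12(m+1)g(v,w)$. The only cosmetic differences are that you polarise in $h$ and contract against $\sum_k he_k\otimes e_k$ where the paper polarises in $v$ and tests against decomposable $h_2$ (which forces it to also check $g(\iota_{h_1}\Omega,\Omega)=0$), and that you compute the auxiliary identity directly from $v\lrcorner\Omega=2\sum_a(I_av)^\flat\wedge\omega_a$ rather than invoking parallelism, irreducible holonomy and $g(\Omega,\Omega)=6m(m+1)$.
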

\begin{proof}
First observe that 
$\iota_{h_1}(v \lrcorner \O)=v \lrcorner \iota_{h_1}\O-h_1v \lrcorner \O$ whenever $v \in TM$; by Lemma \ref{a-A1}  this leads to $ g(\iota_{h_1}\O, v^{\flat} \wedge (v \lrcorner \O))=g(h_1v \lrcorner \O,v \lrcorner \O)-4g(((m+4)Ch_1+3h_1)v,v).$

The tensor $(v,w) \mapsto g(v \lrcorner \O, w \lrcorner \O)$ is parallel hence it must be proportional to the metric $g$, since the latter has irreducible holonomy. To determine the proportionality factor recall that $L^{\star}_{\o_a}\O=2(m+1)\o_a$ from which we derive the equality $g(\O,\O)=6m(m+1)$. Thus, $g(v \lrcorner \O, w \lrcorner \O)=
12(m+1)g(v,w)$. These facts ensure that 
$$ g(\iota_{h_1}\O, v^{\flat} \wedge (v \lrcorner \O))=-4(m+4)g(Ch_1v,v)+12mg(h_1v,v).$$
In particular, since $4\O=\sum_i e^i \wedge e_i \lrcorner \O$ whenever $\{e_i\}$ is an orthonormal basis in $TM$ and $h_1,Ch_1$ are trace-free, we get $g(\iota_{h_1}\O,\O)=0$. The claim follows now by polarisation 
and using decomposable elements of the form $h_2=v^{\flat} \otimes w+w^{\flat} \otimes v-\frac{1}{m}
g(v,w)\id$ with $v,w \in TM$.
\end{proof}
\section{The Koiso obstruction polynomial}\label{koiso-plus}

In this section  we want to relate the integrand $P(h)$ of the Koiso obstruction of an element $h \in \scrE(g)$ to  
the divergence of the Fr\"olicher-Nijenhuis bracket
$ [h,h]^{\FN}$. This will also give a direct proof of Proposition \ref{div-bra}.
Recall that $P(h)$ was defined via
 \begin{equation*}
2P(h) \;:= \; 3  g(\nabla^2_{e_i,he_i}h,h) \, - \, 6 g ((\nabla^2_{e_i,e_j}h)he_i,he_j) \, + \, 2 E \, \tr(h^3),
\end{equation*} 
%
In order to simplify the notation we will use $\nabla$ instead of $\nabla^g$ within this appendix.
We start with differentiating in \eqref{bra-na} and obtain
\begin{equation}\label{bracket}
\begin{split}
\delta^g [h,h]^{\FN}_X \; = &\;  (\nabla^2_{e_i,he_i}h)X \,  - \,   (\nabla^2_{e_i,hX}h)e_i  \, -  \, (\nabla_{(\nabla_{e_i}h)X}h)e_i\\[1ex]
&
\qquad\qquad\qquad
 - \,  (\nabla_{e_i}h)^2 X   \, +  \,  (\nabla_{e_i}h) (\nabla_Xh)e_i \, +  \, h(\delta^g \di_{\nabla}h)_X.
\end{split} 
\end{equation}
We now compute separately the $L^2$ inner product of each summand above with $h$. 
The scalar product with  the first summand on the right hand side  already gives the first summand in  $P(h)$.
From the Ricci identity and $\delta^g h=0$ we find for the second summand that
\begin{equation*} \label{inter-3}
- g( (\nabla^2_{e_i,he_j}h)e_i,he_j ) \; = \; g(  (\ring{R}h)h,h) \, -  \,  E \, \tr(h^3).
\end{equation*}
The scalar product with the fourth summand can be expressed using the identity 
$$
 \nabla^{\star}\nabla h^2 \; = \; (\nabla^{\star}\nabla h) \circ h \, + \, h \circ (\nabla^{\star}\nabla h) \, - \, 2(\nabla_{e_i}h)^2
$$
and the fact that $\nabla^{\star}\nabla$ is self adjoint we obtain the  expression 
\begin{equation*} 
- \, \langle (\nabla_{e_i}h)^2,h\rangle_{L^2}  \;=\;  - \frac{1}{2}\langle \nabla^{\star}\nabla h,h^2\rangle_{L^2}.
\end{equation*}
The scalar product in the third summand can be expressed in the following way
\begin{equation*}
\begin{split}
g( (\nabla_{(\nabla_{e_i}h)e_j}h)e_i,he_j) \;=  \; &\; g( (\nabla_{e_i}h)e_j,e_k) \,
g( (\nabla_{e_k}h)e_i,he_j) \;= \; g( (\nabla_{e_k}h)e_i,h(\nabla_{e_i}h)e_k) \\[1ex]
=&-g( (\nabla_{e_k}h)e_i,(\nabla_{e_i}h)he_k) +\langle (\nabla_{e_k}h)e_i,(\nabla_{e_i}h^2)e_k\rangle.
\end{split}
\end{equation*}
The second summand in the last line can be modified by introducing a divergence term:
$$
 g( (\nabla_{e_k}h)e_i,(\nabla_{e_i}h^2)e_k )  \,  + \,  g(
(\nabla^2_{e_i,e_k}h)e_i,h^2e_k ) \; = \; -\di^{\star}(X \mapsto \langle (\nabla_{e_k}h)X,h^2e_k\rangle)
$$
after integrating by parts and using the Ricci identity on the second summand above we find 
\begin{equation*} \label{inter-2}
\begin{split}
\langle (\nabla_{(\nabla_{e_i}h)e_j}h)e_i,he_j\rangle_{L^2} \; = & \; - \langle (\nabla_{e_k}h)e_i,(\nabla_{e_i}h)he_k\rangle_{L^2}
\, + \, \langle \ring{R}h,h^2 \rangle_{L^2} \, - \, E\int_M\tr(h^3)\\
=  & \quad
\la (\nabla^2_{e_i,e_j}h)he_i,he_j\ra_{L^2}
\, + \, \langle \ring{R}h,h^2 \rangle_{L^2} \, - \, E\int_M\tr(h^3) ,
\end{split}
\end{equation*}
where the second summand of the Koiso obstruction $P(h)$ appears on the right hand side due to the equation
$g( (\nabla^2_{e_i,e_j}h)he_i,he_j) = - g( (\nabla_{e_j}h)e_i,(\nabla_{e_i}h)he_j) - \di^{\star}(X \mapsto g( (\nabla_{e_j}h)hX,he_j))$,
Note that this also relates the second summand in $P(h)$ and  the scalar product with the fifth summand in \eqref{bracket}.
Putting things together leads to 
\begin{equation*}
\langle \delta^g [h,h]^{\FN},h\rangle_{L^2}
\; = \;
\langle (\nabla^2_{e_i,he_i}h),h\rangle_{L^2} 
\, - \, 
2 \la  ((\nabla^2_{e_i,e_j}h)he_i,he_j \ra
\, + \, 
\langle \delta^g \di_{\nabla}h-\frac{1}{2}\nabla^{\star}\nabla h,h^2\rangle_{L^2}.
\end{equation*}
Hence after integration and with the Weitzenb\"ock formula  $\delta^g \di_{\nabla}=\nabla^{\star}\nabla-\ring{R}+E$ we find 
\begin{equation*}
\begin{split}
2\int_M P(h)\vol \; =  & \; 3 \, \langle \delta^g [h,h]^{\FN},h\rangle_{L^2} \, - \, 3 \, \langle \delta^g \di_{\nabla}h-\frac{1}{2}\nabla^{\star}\nabla h,h^2\rangle_{L^2}
\, + \, 2E\int_M \tr(h^3)\vol \\[.5ex]
= & \;
3 \, \langle \delta^g [h,h]^{\FN},h\rangle_{L^2} \, - \, \frac32 \,  \la  \Delta_E h, h^2 \ra_{L^2} \, - \, E\int_M \tr(h^3)\vol  .
\end{split}
\end{equation*}


\begin{thebibliography}{99}
\bibitem{Ak}
Takao Akahori, Kimio Miyajima, 
\textit{An analogy of Tian-Todorov theorem on deformations of CR-structures}, 
Compositio Math. {\bf{85}} (1993), no. 1, 57--85.

\bibitem{Al}
Dmitri V.Alekseevsky, Stefano Marchiafava, Massimiliano Pontecorvo, \textit{
Spectral properties of the twistor fibration of a quaternion K\"ahler manifold},
J. Math. Pures Appl. (9) {\bf{79}} (2000), no. 1, 95--110.


\bibitem{ACG}
Vestislav Apostolov, David Calderbank, Paul Gauduchon, 
\textit{Hamiltonian $2$-forms in K\"ahler geometry. I. General theory},
J. Differential Geom. {\bf{73}} (2006), no. 3, 359--412.

\bibitem{BHMW}
 Wafaa Batat, Stuart James Hall, Thomas Murphy, James Waldron,
 \textit{Rigidity of $\SU_n$-type symmetric spaces}, Int. Math. Res. Not. IMRN 2024, no. 3, 2066--2098. 

\bibitem{Besse}
Arthur Besse, 
\textit{Einstein manifolds}, 
Springer-Verlag, Berlin, 2008.


\bibitem{bourg}
Jean-Pierre  Bourguignon, 
 \textit{Les varietes de dimension 4 a signature non nulle dont la courbure est harmonique sont d'Einstein},
Invent. Math. {\bf 63} (1981), no. 2, 263-286. 


\bibitem{Dai}
Xianzhe Dai, Xiaodong Wang, Guofang Wei, \textit{
On the variational stability of K\"ahler-Einstein metrics}, 
Comm. Anal. Geom. {\bf{15}} (2007), no. 4, 669--693.


\bibitem{Fo}
Lorenzo Foscolo, \textit{Deformation theory of nearly K\"ahler manifolds}, J. Lond. Math. Soc. (2) {\bf{95}} (2017), no.2, 586--612.

\bibitem{GaGo}
Jacques Gasqui, Hubert Goldschmidt, \textit{
Radon transforms and the rigidity of the Grassmannians}, 
Annals of Mathematics Studies, 156. Princeton University Press, Princeton, NJ, 2004.

\bibitem{Gago2}
Jacques Gasqui, Hubert Goldschmidt, 
\textit{Infinitesimal isospectral deformations of the Grassmannian of 3-planes in $\mathbb{R}^6$},
Mém. Soc. Math. Fr. (N.S.) No. 108 (2007),


\bibitem{H}
Stuart J. Hall, 
\textit{The canonical Einstein metric on $G_2$ is dynamically unstable under the Ricci flow},  
Bull. Lond. Math. Soc. {\bf{51}} (2019), no.3, 399--405.

\bibitem{H2}
Stuart J.Hall, Thomas Murphy, James Waldron, 
\textit{Compact Hermitian symmetric spaces, coadjoint orbits, and the dynamical stability of the Ricci flow},
J. Geom. Anal. {\bf{31}} (2021), no. 6, 6195--6218.

\bibitem{AU}
Konstantin Heil, Andrei Moroianu, Uwe Semmelmann,
\textit{Killing and conformal Killing tensors},
J. Geom. Phys. {\bf 106} (2016), 383-400. 

\bibitem{koba}
 Shoshichi Kobayashi, Katsumi Nomizu
 \textit{Foundations of differential geometry Vol. 1},
 Reprint of the 1963 original. Wiley Classics Library. A Wiley-Interscience Publication. John Wiley \& Sons, Inc., New York, 1996. xii+329 pp.

\bibitem{Ko3}
Norihito Koiso, \textit{Rigidity and stability of Einstein metrics-The case of compact symmetric spaces}, 
Osaka Math. J. {\bf 17} (1980), no. 1, 51--73. 

\bibitem{Ko1}
Norihito Koiso, 
\textit{Rigidity and deformability of Einstein metrics},
Osaka Math. J. {\bf{19}} (1982), 643--668.

\bibitem{Ko2}
Norihito Koiso, \textit{Einstein metrics and complex structures}, 
Invent. Math. {\bf{73}} (1983), no. 1, 71--106.




\bibitem{SW}
Wolfram Kramer, Uwe Semmelmann, Gregor Weingart, 
\textit{Quaternionic Killing spinors},  Ann. Global Anal. Geom. {\bf 16} (1998), no. 1, 63-87



\bibitem{Kr}
Klaus Kr\"oncke, \textit{Rigidity and infinitesimal deformability of Ricci solitons}, 
J. Geom. Anal. {\bf{26}} (2016), no. 3, 1795--1807.

\bibitem{Na1} 
Paul-Andi Nagy, 
\textit{Connections with totally skew-symmetric torsion and nearly-K\"ahler geometry}, Handbook of pseudo-Riemannian geometry and supersymmetry, 347--398,
IRMA Lect. Math. Theor. Phys., 16, Eur. Math. Soc., Z\"urich, 2010.

\bibitem{NS-I}
Paul-Andi Nagy, Uwe Semmelmann, 
\textit{Conformal Killing forms in K\"ahler geometry}, 
Illinois J. Math. {\bf{66}} (2022), no. 3, 349--384.

\bibitem{NaS}
Paul-Andi Nagy, Uwe Semmelmann, 
\textit{Deformations of nearly $\G_2$ structures}, 
J. Lond. Math. Soc. (2) {\bf{104}} (2021), no. 4, 1795--1811. 

\bibitem{NS}
Paul-Andi Nagy, Uwe Semmelmann, 
\textit{The $\G_2$ geometry of $3$-Sasaki structures},  J. Geom. Anal. {\bf{34}} (2024), no. 2, Paper No. 61, 53 pp.

\bibitem{TO}
Tristan Ozuch, \textit{Integrability of Einstein deformations and desingularisations},  Comm. Pure Appl. Math. {\bf{77}} (2024), no. 1, 177--220.

\bibitem{PS}
Fabio Podesta , Andrea Spiro, 
\textit{On moduli spaces of Ricci solitons}, 
J. Geom. Anal. {\bf{25}} (2015), no. 2, 1157--1174.

\bibitem{sala}
Simon Salamon,
\textit{Quaternionic K\"ahler manifolds}, Invent. Math. {\bf 67} (1982), no. 1, 143-171. 

\bibitem{Top}
Peter Topping, \textit{Lectures on the Ricci flow}, 
London Mathematical Society Lecture Note Series, 325. Cambridge University Press, Cambridge, 2006.

\bibitem{coev2}
Craig van Coevering, 
\textit{Deformations of Killing spinors on Sasakian and $3$-Sasakian manifolds},
J. Math. Soc. Japan {\bf{69}} (2017), no. 1, 53--91.



\bibitem{WW}
Jun Wang, McKenzie Y. Wang, 
\textit{Einstein metrics on $S^2$-bundles}, 
Math. Ann. {\bf{310}} (1998), no. 3, 497--526.

\bibitem{WaZi}
McKenzie Y. Wang, Wolfgang Ziller, \textit{
Einstein metrics on principal torus bundles}, J. Differential Geom. {\bf{31}} (1990), no. 1, 215--248.
\end{thebibliography}
\end{document}